\newcommand{\C}{\mathbb{C}}
\newcommand{\N}{\mathbb{N}}
\newcommand{\Q}{\mathbb{Q}}
\newcommand{\R}{\mathbb{R}}
\newcommand{\BB}{\mathscr{B}}
\newcommand{\mm}{{\mbox{\boldmath$m$}}}
\newcommand{\nn}{{\mbox{\boldmath$n$}}}
\newcommand{\rr}{{\mbox{\boldmath$r$}}}
\newcommand{\srr}{{\mbox{\scriptsize\boldmath$r$}}}
\newcommand{\ggamma}{{\mbox{\boldmath$\gamma$}}}
\newcommand{\ppi}{{\mbox{\boldmath$\pi$}}}
\newcommand{\sggamma}{{\mbox{\scriptsize\boldmath$\gamma$}}}
\newcommand{\sfd}{{\sf d}}
\newcommand{\sft}{{\sf t}}
\newcommand{\sfx}{{\sf x}}
\newcommand{\rme}{{\mathrm e}}
\newcommand{\rmD}{{\mathrm D}}
\newcommand{\Kliminf}{K\kern-3pt-\kern-2pt\mathop{\rm lim\,inf}\limits}  
\newcommand{\diam}{\mathop{\rm diam}\nolimits}   
\newcommand{\argmin}{\mathop{\rm argmin}\limits}   
\newcommand{\Lip}{\mathop{\rm Lip}\nolimits}          
\renewcommand{\d}{{\mathrm d}}
\newcommand{\dt}{{\d t}}
\newcommand{\restr}[1]{\lower3pt\hbox{$|_{#1}$}}
\newcommand{\Leb}[1]{{\mathscr L}^{#1}}      
\newcommand{\down}{\downarrow}              
\newcommand{\up}{\uparrow}
\newcommand{\eps}{\varepsilon}  
\newcommand{\nchi}{{\raise.3ex\hbox{$\chi$}}}
\newcommand{\weakto}{\rightharpoonup}
\newcommand{\media}{\mkern12mu\hbox{\vrule height4pt           %
          depth-3.2pt                                 
          width5pt}\mkern-16.5mu\int\nolimits}        
\newcommand{\forevery}{\text{for every }}
\newcommand{\Pc}[2]{\overline{#1}\kern-2pt^{\vphantom 0}_{#2}}
\newcommand{\RE}{\RelativeEntropy}
\newcommand{\REG}[1]{\mathrm{Ent}_\gamma(#1)}
\newcommand{\BorelSets}[1]{\BB(#1)}
\newcommand{\Probabilities}[1]{\mathscr P(#1)}          
\newcommand{\ProbabilitiesTwo}[1]{\mathscr P_2(#1)}     
\numberwithin{equation}{section}
\newenvironment{proof}{\removelastskip\par\medskip   
\noindent{\em Proof.}\rm}{\penalty-20\null\hfill$\square$\par\medbreak}
\newtheorem{theorem}{Theorem}[section]
\newtheorem{corollary}[theorem]{Corollary}
\newtheorem{lemma}[theorem]{Lemma}
\newtheorem{proposition}[theorem]{Proposition}
\newtheorem{definition}[theorem]{Definition}
\newtheorem{example}[theorem]{Example}
\newtheorem{remark}[theorem]{Remark}
\renewcommand{\mm}{\mathfrak m}
\renewcommand{\nn}{\mathfrak n}
\newcommand{\ent}[1]{{\rm Ent}_{\mm}(#1)}              
\newcommand{\entv}{{\rm Ent}_{\mm}}                    
\newcommand{\prob}{\Probabilities}
\newcommand{\probt}{\ProbabilitiesTwo}
\newcommand{\pro}[1]{\mathscr P_{[#1]}}                   
\newcommand{\proV}{\mathscr P_\Wgh}
\newcommand{\geo}{{\rm{Geo}}}                       
\newcommand{\e}{{\rm{e}}}                           
\newcommand{\fr}{\hfill$\blacksquare$}                      
\newcommand{\BorelSetsStar}[1]{{\mathscr B}^*(#1)}
\newcommand{\relgrad}[1]{|\rmD  #1|_*}                     
\newcommand{\weakgradA}[1]{|\rmD  #1|_{w,\calT}}
\newcommand{\weakgradG}[1]{|\rmD  #1|_{w,\mathcal G}}
\renewcommand{\C}{{\sf Ch}_*}
\newcommand{\GGG}{\normalcolor}
\newcommand{\nc}{\normalcolor}
\newcommand{\class}[2]{#1_{[#2]}}
\newcommand{\dto}{\stackrel \sfd\to}
\newcommand{\Deltam}{\Delta_{\sfd,\mm}}
\newcommand{\Wgh}{V}
\newcommand{\sfH}{\mathsf H}
\newcommand{\Heat}[1]{\sfH_{#1}}
\newcommand{\PushPlan}[2]{#1_{\sharp}#2}
\renewcommand{\RE}[2]{\mathrm{Ent}_{#2}(#1)}
\renewcommand{\REG}{\ent}
\newcommand{\AC}[4]{\mathrm{AC}^{#1}(#2;(#3,#4))}
\newcommand{\CC}[3]{C(#1;#2)}
\newcommand{\Energy}[2]{\mathcal E_{#1}[#2]}
\newcommand{\bJ}{[0,1]}
\newcommand{\calT}{{\mathcal T}}
\newcommand{\tmm}{\tilde\mm}
\title{Calculus and heat flow in metric measure spaces and
applications to spaces with Ricci bounds from below}
\begin{document}

\author{Luigi Ambrosio\
   \thanks{\textsf{l.ambrosio@sns.it}}
   \and
   Nicola Gigli\
   \thanks{\textsf{nicola.gigli@unice.fr}}
 \and
   Giuseppe Savar\'e\
   \thanks{\textsf{giuseppe.savare@unipv.it}}
   }

\maketitle

\begin{abstract}
This paper is devoted to a deeper understanding of the heat flow and to the
refinement of calculus tools on metric
measure spaces $(X,\sfd,\mm)$.
Our main results are:
\begin{itemize} 
\item A general study of the relations between the Hopf-Lax semigroup and Hamilton-Jacobi equation in metric spaces $(X,\sfd)$.
\item The equivalence of the
heat flow in $L^2(X,\mm)$ 
generated by a suitable Dirichlet
energy and the Wasserstein gradient flow of the relative entropy
functional $\entv$ in the space of probability
measures $\Probabilities{X}$.
\item The proof of density in energy of Lipschitz functions in the Sobolev space $W^{1,2}(X,\sfd,\mm)$.
\item A fine and very general analysis of the differentiability properties
of a large class of Kantorovich potentials, in connection with the optimal transport
problem, is the fourth achievement of the paper. 
\end{itemize}
Our results apply in particular to
spaces satisfying Ricci curvature bounds in the sense of Lott \&
Villani \cite{Lott-Villani09} and Sturm \cite{Sturm06I,Sturm06II}
and require neither the doubling property nor the validity of the
local Poincar\'e inequality.

\textbf{MSC-classification:} 52C23, 49J52, 49Q20, 58J35, 35K90, 31C25
\end{abstract}

\tableofcontents

\section{Introduction}

Aim of this paper is to provide a deeper understanding of analysis in metric measure spaces, with a particular 
focus on the properties of the heat flow. Our main results, whose validity does not depend on doubling and Poincar\'e assumptions, are:
\begin{itemize}
\item[(i)] The proof that the Hopf-Lax formula produces sub-solutions of the Hamilton-Jacobi equation on general metric 
spaces $(X,\sfd)$ {\nc (Theorem~\ref{thm:subsol})}, and solutions if
$(X,\sfd)$ is a length space {\nc (Theorem~\ref{prop:supersol})} {\nc (in connection to this, under less general assumptions
on the metric structure, closely related results have been independently obtained in \cite{Gozlan}).}
\item[(ii)] The proof of equivalence of the heat flow in $L^2(X,\mm)$ generated by a suitable Dirichlet
energy and the Wasserstein gradient flow in $\Probabilities{X}$ of
the relative entropy functional $\entv$ w.r.t. $\mm$ {\GGG (Theorems
  \ref{thm:coincidence_infty}, \ref{thm:main} and \ref{thm:mainCDK})}.
\item[(iii)] The proof that Lipschitz functions are always dense in
  energy in the Sobolev space $W^{1,2}$ {\GGG (Theorem \ref{thm:density_energy})}. This is 
achieved by showing the equivalence of two weak notions of modulus of the
gradient: the first one (inspired by Cheeger \cite{Cheeger00}, see
also \cite{Heinonen-Koskela98}, \cite{Hailasz-Koskela00}, and the
recent review \cite{Heinonen07}), that we call \emph{relaxed
gradient}, is defined by $L^2(X,\mm)$-relaxation of the pointwise
Lipschitz constant in the class of Lipschitz functions; the second
one (inspired by Shanmugalingam \cite{Shanmugalingam00}), that we
call \emph{weak upper gradient}, is based on the validity of the
fundamental theorem of calculus along almost all curves. These two
notions of gradient will be compared and identified, assuming only $\mm$ to
be locally finite. We might
consider the former gradient as a ``vertical'' derivative, related
to variations in the dependent variable, while the latter is an
``horizontal'' derivative, related to variations in the independent
variable. 
\item[(iv)] A fine and very general analysis of the differentiability properties
of a large class of Kantorovich potentials, in connection with the optimal transport
problem {\GGG (Theorem \ref{thm:brweak})}.
\end{itemize}
Our results apply in particular to spaces satisfying Ricci curvature
bounds in the sense of Lott \& Villani \cite{Lott-Villani09} and
Sturm \cite{Sturm06I,Sturm06II}, that we call in this introduction
$LSV$ spaces. Indeed, the development of a ``calculus'' in this
class of spaces has been one of our motivations. In particular we
are able to prove the following result (see
Theorem~\ref{thm:mainCDK} for a more precise and general statement):
if $(X,\sfd,\mm)$ is a $CD(K,\infty)$ space and
$\mm\in\Probabilities{X}$, then
\begin{itemize}
\item[(a)]
  For every $\mu=f\mm\in\Probabilities{X}$ the Wasserstein slope
  $|\rmD^-\entv|^2(\mu)$ of the relative entropy $\entv$ coincides with
  the Fisher information functional
  $\int_{\{\rho>0\}}|\rmD \rho|_*^2/\rho\,\d\mm$,
  where $|\rmD  \rho|_*$ is the relaxed gradient of $\rho$ (see
  the brief discussion before \eqref{def:Cheeger1}).
\item[(b)]
  For every $\mu_0=f_0\mm\in D(\entv)\cap\ProbabilitiesTwo{X}$ there exists a unique gradient flow
  $\mu_t=f_t\mm$ of $\entv$ starting from $\mu_0$ in
  $(\ProbabilitiesTwo{X},W_2)$, and if $f_0\in L^2(X,\mm)$ the functions
  $f_t$ coincide with the $L^2(X,\mm)$ gradient flow of Cheeger's
  energy $\C$, defined by (see also \eqref{def:Cheeger1} for an equivalent definition)
  \begin{equation}
\C(f):=\frac12 \inf\left\{\liminf_{h\to\infty}\int |\rmD  f_h|^2\,\d\mm: f_h\in {\rm Lip}(X),\,\,
\int_X|f_h-f|^2\,\d\mm\to 0\right\}.\label{def:Cheeger0}
\end{equation}
\end{itemize}
On the other hand, we believe that the``calculus" results described
in (iii) are of a wider interest for analysis in metric measure
spaces, beyond the application to $LSV$ spaces. Particularly important
is not only the identification of heat flows, but also the
identification of weak gradients that was previously
known only under doubling and Poincar\'e assumptions. The key new idea is to use the
heat flow and the rate of energy dissipation, instead of the usual
covering arguments, to prove the optimal approximation by Lipschitz
functions, see also Remark~\ref{rem:compachsh} and Remark~\ref{rem:compachsh2} for
a detailed comparison with the previous approaches ({\nc see also \cite{AGS11c} for the
extensions of these ideas to the Sobolev spaces $W^{1,p}(X,\sfd,\mm)$, $1<p<\infty$, and 
\cite{Ambrosio-DiMarino} for the space of functions of bounded variation).}

In connection with (ii), notice that the equivalence so far has been
proved in Euclidean spaces by Jordan-Kinderleher-Otto, in the
seminal paper \cite{JordanKinderlehrerOtto98}, in Riemannian
manifolds by Erbar and Villani \cite{Erbar10,Villani09}, 
in Hilbert spaces by \cite{Ambrosio-Savare-Zambotti09}, 
in Finsler spaces by Ohta-Sturm
\cite{Ohta-Sturm09} and eventually in Alexandrov spaces by
Gigli-Kuwada-Ohta \cite{GigliKuwadaOhta10}. In fact, the strategy
pursued in \cite{GigliKuwadaOhta10}, that we shall describe later
on, had a great influence on our work. The distinguished case when
the gradient flows are linear will be the object, in connection with
$LSV$ spaces, of a detailed investigation in
\cite{Ambrosio-Gigli-Savare11bis}.

We exploit as much as possible the variational formulation of
gradient flows on one hand (based on sharp energy dissipation rate
and the notion of descending slope) and the variational structure of
the optimal transportation problem to develop a theory that does not
rely on finite dimensionality and doubling properties; we are even
able to cover situations where the distance $\sfd$ is allowed to
take the value $+\infty$, as it happens for instance in optimal
transportation problems in Wiener spaces (see for instance
\cite{Feyel-Ustunel04,Fang-Shao-Sturm10}). We are also able to deal
with $\sigma$-finite measures $\mm$, provided they are representable
in the form $\rme^{\Wgh^2}\,\tmm$ with $\tmm(X)\leq 1$ and
$\Wgh:X\to [0,\infty)$ $\sfd$-Lipschitz weight function bounded from
above on compact sets.

In order to reach this level of generality, it is useful to separate
the roles of the topology $\tau$ of $X$ (used for the
measure-theoretic structure) and of the possibly extended distance
$\sfd$ involved in the optimal transport problem, introducing the
concept of Polish extended metric measure space $(X,\sfd,\tau,\mm)$.
Of course, the case when $\sfd$ is a distance inducing the Polish
topology $\tau$ is included. Since we assume neither doubling
properties nor the validity of the Poincar\'e inequalities, we can't
rely on Cheeger's theory \cite{Cheeger00}, developed precisely under
these assumptions. The only known connection between synthetic
curvature bounds and this set of assumptions is given in
\cite{Lott-Villani-Poincare}, where the authors prove that in
non-branching $LSV$ spaces the Poincar\'e inequality holds under the
so-called $CD(K,N)$ assumption ($N<\infty$), a stronger curvature
assumption which involves also the dimension. {\nc In a more
recent paper \cite{R2011} a version of the Poincar\'e inequality valid in
all $CD(K,\infty)$ spaces is proved with no non-branching assumption; 
 this version implies the classical Poincar\'e inequality whenever
the measure $\mm$ is doubling.}

Now we pass to a more detailed description of the results of the
paper, the problems and the related literature. In
Section~\ref{sec:preliminary} we introduce all the basic concepts
used in the paper: first we define extended metric spaces
$(X,\sfd)$, Polish extended spaces $(X,\sfd,\tau)$ (in our
axiomatization $\sfd$ and $\tau$ are not completely decoupled, see
(iii) and (iv) in Definition~\ref{dPolish}), absolutely continuous
curves, metric derivative $|\dot{x}_t|$, local Lipschitz constant
$|\rmD  f|$, one-sided slopes $|\rmD^\pm f|$. Then, we see how in
Polish extended spaces one can naturally state the optimal transport
problem with cost $c=\sfd^2$ in terms of transport plans
(i.e. probability measures in $X\times X$. {\nc Only in
Section~\ref{sec:weakbrenier} we discuss the formulation of the optimal transport problem}
in terms of geodesic transport plans, namely probability
measures with prescribed marginals at $t=0$, $t=1$ in the space
$\geo(X)$ of constant speed geodesics in $X$. In
Subsection~\ref{se:prelGF} we recall the basic definition of
gradient flow $(y_t)$ of an energy functional $E$: it is based on
the integral formulation of
the sharp energy dissipation rate
$$
-\frac{\d}{\d t}E(y_t)\geq \frac{1}{2}|\dot
y_t|^2+\frac{1}{2}|\rmD^-E(y_t)|^2
$$
which, under suitable additional assumptions (for instance the fact
that $|\rmD^-E|$ is an upper gradient of $E$, as it happens for
$K$-geodesically convex functionals), turns into an equality for
almost every time. These facts will play a fundamental role in our
analysis.

In Section~\ref{sec:hopflax} we study the fine properties of the
Hopf-Lax semigroup
\begin{equation}\label{eq:Qtfintro}
Q_tf(x):=\inf_{y\in X}f(y)+\frac{\sfd^2(x,y)}{2t},\quad\qquad
(x,t)\in X\times (0,\infty)
\end{equation}
in a extended metric space $(X,\sfd)$. Here the main technical
novelty, with respect to \cite{Lott-Villani07bis}, is the fact that
we do not rely on Cheeger's theory (in fact, no reference measure
$\mm$ appears here) to show in Theorem~\ref{prop:supersol} that
in length spaces $(x,t)\mapsto Q_tf(x)$ is a pointwise solution to
the Hamilton-Jacobi equation $\partial_tQ_tf+|\rmD  Q_t|^2/2=0$:
precisely, for given $x$, the equation does not hold for at most
countably many times $t$. This is achieved refining the estimates in
\cite[Lemma~3.1.2]{Ambrosio-Gigli-Savare08} and looking at the
monotonicity properties w.r.t. $t$ of the quantities
$$
\rmD^+(x,t):=\sup\limsup_{n\to\infty} \sfd(x,y_n),\qquad
\rmD^-(x,t):=\inf\liminf_{n\to\infty} \sfd(x,y_n)
$$
where the supremum and the infimum run among all minimizing
sequences $(y_n)$ in \eqref{eq:Qtfintro}. Although only the easier
subsolution property $\partial_tQ_tf+|\rmD  Q_t|^2/2\leq 0$ (which
does not involve the length condition) will play a crucial role in
the results of Sections~\ref{sec:identification1} and
\ref{sec:identification_flows}, another byproduct of this refined
analysis is a characterization of the slope of $Q_tf$ (see
Theorem~\ref{prop:supersol}) which applies, to some extent, also
to Kantorovich potentials (see \ref{sec:weakbrenier}).

In Section~\ref{sec:cheeger} we follow quite closely
\cite{Cheeger00}, defining the collection of relaxed gradients of
$f$ as the weak $L^2$ limits of $|\rmD  f_n|$, where $f_n$ are
$\sfd$-Lipschitz and $f_n\to f$ in $L^2(X,\mm)$ (the differences
with respect to \cite{Cheeger00} are detailed in
Remark~\ref{rem:compachsh}). The collection of all these weak limits
is a convex closed set in $L^2(X,\mm)$, whose minimal element is
called \emph{relaxed gradient}, and denoted by $\relgrad f$. One can
then
see that Cheeger's convex and lower semicontinuous functional
\eqref{def:Cheeger0} can be equivalently represented as 
\begin{equation}\label{def:Cheeger1}\C(f)
=\frac12\int_X\relgrad f^2\,\d\mm\end{equation}
(set to $+\infty$ if $f$ has no relaxed gradient) and get a canonical
gradient flow in $L^2(X,\mm)$ of $\C$ and a notion of Laplacian
$\Deltam $ associated to $\C$. As explained in
Remark~\ref{re:puoesserebanale} and Remark~\ref{re:laplnonlin}, this
construction can be trivial if no other assumption on
$(X,\sfd,\tau,\mm)$ is imposed, and in any case $\C$ is not
necessarily a quadratic form and the Laplacian, though
1-homogeneous, is not necessarily linear. Precisely because of this
potential nonlinearity we avoided the terminology ``Dirichlet
form'', usually associated to quadratic forms, in connection with
$\C$.\\ It is also possible to consider the one-sided slopes
$|\rmD^\pm f|$, getting one-sided relaxed gradients $|\rmD^\pm
f|_*$ and Cheeger's corresponding functionals $\C^\pm$; eventually,
but this fact is not trivial, we prove that the one-sided relaxed
functionals coincide with $\C$, see Remark~\ref{rem:morecheeger}.

Section~\ref{sec:differentchee} is devoted to the ``horizontal''
notion of modulus of gradient, that we call \emph{weak upper
gradient}, along the lines of \cite{Shanmugalingam00}: roughly
speaking, we say that $G$ is a weak upper gradient of $f$ if the
inequality $|f(\gamma_0)-f(\gamma_1)|\leq\int_\gamma G$ holds along
``almost all'' curves with respect to a suitable collection $\calT$
of probability measures concentrated on absolutely continuous
curves, see Definition~\ref{def:weak_upper_gradient} for the precise
statement. The class of weak upper gradients has good stability
properties that allow to define a minimal weak upper gradient, that
we shall denote by $\weakgradA f$, and to prove that  $\weakgradA
f\le \relgrad f$ $\mm$-a.e. in $X$ for all $f\in D(\C)$ if $\calT$
is concentrated on the class of all the absolutely continuous curves
with finite $2$-energy.

Section~\ref{sec:identification1} is devoted to prove the converse
inequality and therefore to show that in fact the two notions of
gradient coincide. The proof relies on the fine analysis of the rate
of dissipation of the entropy $\int_X h_t\log h_t\,\d\mm$ along the
gradient flow of $\C$, and on the representation of $h_t\mm$ as the
time marginal of a random curve. The fact that $h_t\mm$ (having a
priori only $L^2(X,\mm)$ regularity in time and Sobolev regularity
in space) can be viewed as an absolutely continuous curve with
values in $(\Probabilities{X},W_2)$ is a consequence of
Lemma~\ref{le:key}, inspired by
\cite[Proposition~3.7]{GigliKuwadaOhta10}. More precisely, the
metric derivative of $t\mapsto h_t\mm$ w.r.t.~the Wasserstein
distance can be estimated as follows:
\begin{equation}\label{eq:nizza2}
|\dot{h_t\mm}|^2\leq 4\int_X\relgrad{\sqrt{h_t}}^2\,\d\mm\qquad
\text{for a.e. $t\in (0,\infty)$.}
\end{equation}
The latter estimate, written in an integral form, follows by a
delicate approximation procedure, the  Kantorovich duality formula
and the fine properties of the Hopf-Lax semigroup we proved.

{\nc Further consequences of the identification of gradients and a deeper analysis of
the Laplacian on metric measure spaces, still with applications to $LSV$ spaces, have been 
obtained in the more recent paper \cite{Gigli12}.}

In Section~\ref{sec:gflowrele} we introduce the relative entropy
functional  and the Fisher information
$$\entv(\rho\mm)=\int_X\rho\log\rho\,\d\mm,\qquad
\mathsf F(\rho)=4\int_X \relgrad{\sqrt \rho}^2\,\d\mm,$$ and prove
 two crucial inequalities for the descending slope of $\entv$: the first one, still based on
Lemma~\ref{le:key},
provides the lower bound via the Fisher information
\begin{equation}
  \label{eq:46}
  \mathsf F(\rho)=4\int_X \relgrad{\sqrt
    \rho}^2\,\ \d\mm\le |\rmD^-\entv|^2(\mu)\quad\text{if }\mu=\rho\mm,
\end{equation}
and the second one, combining \cite[Theorem~20.1]{Villani09} with an approximation
argument, the upper bound when $\rho$ is $\sfd$-Lipschitz
(and satisfies further technical assumptions if $\mm(X)=\infty$)
\begin{equation}
\label{eq:488}
|\rmD^-\entv|^2(\mu)\leq 4\int_X |{D^-\sqrt
\rho}|^2\,\ \d\mm\quad\text{if }\mu=\rho\mm.
\end{equation}
The identification of the squared descending slope of $\entv$
(which is always a convex functional, as we show in \S
\ref{sec:steepest})
with the Fisher
information thus follows, whenever $|\rmD^-\entv|$ satisfies a lower
semicontinuity property, as in the case of LSV spaces.

In Section \ref{sec:identification_flows} we show how the uniqueness
proof written by the second author in \cite{Gigli10} for the case of finite reference measures  can be adapted, thanks to the tightness properties of
the relative entropy, to our more general framework: we prove
uniqueness of the gradient flow of $\entv$ first for flows with
uniformly bounded densities and then, assuming that
$|\rmD^-\entv|$ is an upper gradient, without any restriction on
the densities. In this way we obtain the key property that the
Wasserstein gradient flow of $\entv$, understood in the metric sense
of Subsection~\ref{se:prelGF}, has a unique solution for a given
initial condition with finite entropy. This uniqueness phenomenon
should be compared with the recent work \cite{Ohta-Sturm10}, where
it is shown that in $LSV$ spaces (precisely in Finsler spaces)
contractivity of the Wasserstein distance along the semigroup may
fail.

In Section~\ref{sec:duality} we prove the equivalence of the two
gradient flows, in the natural class where a comparison is possible,
namely nonnegative initial conditions $f_0\in L^1\cap L^2(X,\mm)$
(if $\mm(X)=\infty$ we impose also that $\int_X
f_0\Wgh^2\,\d\mm<\infty$). In the proof of this result, that
requires suitable assumptions on $|\rmD^-\entv|$, we follow the
new strategy introduced in \cite{GigliKuwadaOhta10}: while the
traditional approach aims at showing that the Wasserstein gradient
flow $\mu_t=f_t\mm$ solves a ``conventional'' PDE, here we show the
converse, namely that the gradient flow of Cheeger's energy provides
solutions to the Wasserstein gradient flow. Then, uniqueness (and
existence) at the more general level of Wasserstein gradient flow
provides equivalence of the two gradient flows. The key properties
to prove the validity of the sharp dissipation rate
$$
-\frac{\d}{\d t}\entv(f_t\mm)\geq \frac{1}{2}|\dot{
f_t\mm}|^2+\frac{1}{2}|\rmD^-\entv(f_t\mm)|^2,
$$
where $f_t$ is the gradient flow of $\C$, are
the slope estimate \eqref{eq:488} and the metric derivative estimate
\eqref{eq:nizza2}.

We also emphasize that some results of ours, as the uniqueness
provided in Theorem~\ref{thm:coincidence_infty} for flows with
bounded densities, or the full convergence as the time step tends to
$0$ of the Jordan-Kinderleher-Otto scheme in
Corollary~\ref{cor:conv}, require \emph{no} assumption on
 the space (except for an exponential volume growth condition) and
$|\rmD^-\entv|$, so that they are applicable even to spaces which
are known to be not $LSV$ or for which the lower semicontinuity of
$|\rmD^-\entv|$ fails or it is unknown, as Carnot groups endowed
with the Carnot-Carath\'eodory distance and the Haar measure.

In Section~\ref{sec:LSV} we show, still following to a large extent
\cite{Gigli10}, the crucial lower semicontinuity of
$|\rmD^-\entv|$ in $LSV$ spaces; this shows that all existence and
uniqueness results of Section~\ref{sec:duality} are applicable to
$LSV$ spaces and that the correspondence between the heat flows is
complete.

The paper ends, in the last section, with results that are important
for the development of a ``calculus'' with Kantorovich potentials.
They will play a key role in some proofs of
\cite{Ambrosio-Gigli-Savare11bis}. We included these results here
because their validity does not really depend on curvature
properties, but rather on their implications, namely the existence
of geodesic interpolations satisfying suitable $L^\infty$ bounds.

Under these assumptions, in Theorem~\ref{thm:brweak} we prove that the ascending slope
$|\rmD^+\varphi|$ is the minimal weak upper gradient for
Kantorovich potentials $\varphi$,
A nice byproduct of this proof is
a ``metric'' Brenier theorem, namely the fact that the transport
distance $\sfd(x,y)$ coincides for $\ggamma$-a.e. $(x,y)$ with
$|\rmD^+\varphi|(x)$ even when the transport plan $\ggamma$ is
multi-valued. In addition, $|\rmD^+\varphi|$ coincides $\mm$-a.e.
with the relaxed and weak upper gradients. To some extent, the
situation here is ``dual'' to the one appearing in the transport
problem with cost=Euclidean distance: in that situation, one knows
the direction of transport, without knowing the distance. In
addition, we obtain in Theorem~\ref{thm:brweak1} a kind of
differentiability property of $\varphi$ along transport geodesics.

Eventually, we want to highlight an important application to the
present paper to the theory of Ricci bounds from below for metric
measure spaces. It is well known that $LSV$
spaces, while stable under Gromov-Hausdorff convergence \cite{Sturm06I} and
consistent with the smooth Riemannian case, include also Finsler
geometries  \cite{Ohta09bis}. It is therefore natural to look for additional axioms,
still stable and consistent, that rule out these geometries, thus
getting a finer description of Gromov-Hausdorff limits of Riemannian
manifolds. In \cite{Ambrosio-Gigli-Savare11bis} we prove, relying in
particular on the results obtained in
Section~\ref{sec:identification1}, Section~\ref{sec:LSV} and
Section~\ref{sec:weakbrenier} of this paper, that $LSV$ spaces whose
associated heat flow is linear have this stability property. In
addition, we show that $LSV$ bounds and linearity of the heat flow
are equivalent to a single condition, namely the existence of
solutions to the Wasserstein gradient flow of $\entv$ in the
\emph{EVI} sense, implying nice contraction and regularization
properties of the flow; we call these \emph{Riemannian} lower bounds
on Ricci curvature. Finally, for this stronger notion we provide
good tensorization and localization properties.

\smallskip
\noindent {\bf Acknowledgement.} The authors acknowledge the support
of the ERC ADG GeMeThNES and warmly thank an anonymous reviewer for his extremely
detailed and constructive report.

\section{Preliminary notions}\label{sec:preliminary}

In this section we introduce the basic metric, topological and
measure-theoretic concepts used in the paper.

\subsection{Extended metric and Polish spaces}

In this paper we consider metric spaces whose distance function may
attain the value $\infty$, we call them \emph{extended metric
spaces}.
\begin{definition}[Extended {distance and extended} metric spaces]
An extended distance on $X$ is a map $\sfd:X^2\to[0,\infty]$
satisfying
\[
\begin{split}
\sfd(x,y)&=0\qquad\textrm{if and only if }x=y,\\
\sfd(x,y)&=\sfd(y,x)\qquad\forall x,\,y\in X,\\
\sfd(x,y)&\leq \sfd(x,z)+\sfd(z,y)\qquad\forall x,\,y,\,z\in X.
\end{split}
\]
If $\sfd$ is an extended distance on $X$, we call $(X,\sfd)$ an
extended metric space.
\end{definition}

Most of the definitions concerning metric spaces generalize verbatim
to extended metric spaces, since extended metric spaces can be
written as a disjoint union of metric spaces,
which are simply defined
as
\begin{equation}
  \label{eq:1}
  \class Xx:=\big\{y\in X:\sfd(y,x)<\infty\big\},\qquad
  x\in X.
\end{equation}
For instance it makes perfectly sense to speak about a complete or
length extended metric space.

\begin{definition}[$\sfd$-Lipschitz functions and Lipschitz constant]
We say that $f:X\to\R$ is $\sfd$-Lipschitz if there exists $C\geq 0$
satisfying
$$
|f(x)-f(y)|\leq C\sfd(x,y)\qquad \forall\, x,\,y\in X.
$$
The least constant $C$ with this property will be denoted by ${\rm
Lip}(f)$.
\end{definition}

In our framework the roles of the distance $\sfd$ (used to define
optimal transport) and of the topology are distinct. This justifies
the following definition. Recall that a topological space $(X,\tau)$
is said to be \emph{Polish} if $\tau$ is induced by a complete and
separable distance.

\begin{definition}[Polish extended spaces]\label{dPolish}
We say that $(X,\tau,\sfd)$ is a Polish extended space if:
\begin{itemize}
\item[(i)] {$\tau$ is a topology on $X$} and $(X,\tau)$ is Polish;
\item[(ii)] {$\sfd$ is an extended distance on $X$} and $(X,\sfd)$ is a complete extended metric space;
\item[(iii)] For $(x_h)\subset X$, $x\in X$, $\sfd(x_h,x)\to 0$ implies $x_h\to x$ w.r.t. to the
topology $\tau$;
\item[(iv)] $\sfd$ is lower semicontinuous in $X\times X$, with
respect to the $\tau\times\tau$ topology.
\end{itemize}
\end{definition}

In the sequel,
 when $\sfd$ is not explicitly mentioned,
all the topological notions (in particular the class of
 compact sets,
the class of Borel sets $\BorelSets{X}$, the class
$C_b(X)$ of bounded continuous functions and the class $\prob X$ of
Borel probability measures) are always referred to the topology
$\tau$, even when $\sfd$ is a distance. When $(X,\sfd)$ is separable
(thus any $\sfd$-open set is a countable union of $\sfd$-closed
balls, which are also $\tau$-closed by (iv)), then a subset of $X$
is $\sfd$-Borel if and only if it is $\tau$-Borel, but when
$(X,\sfd)$ is not separable $\BorelSets X$ can be a strictly smaller
class than the Borel sets generated by $\sfd$.

 The Polish condition on $\tau$ guarantees that all Borel probability
measures $\mu\in \prob X$ are tight, a property (shared with the
more general class of Radon spaces, see e.g.\
\cite[Def.~5.1.4]{Ambrosio-Gigli-Savare08}) which justifies the
introduction of the weaker topology $\tau$. In fact most of the
results of the present paper could be extended to Radon spaces, thus
including Lusin and Suslin topologies \cite{Schwartz73}.

Notice that the only compatibility conditions between the possibly
extended distance $\sfd$ and $\tau$ are (iii) and (iv). Condition
(iii) guarantees that convergence in $(\prob X,W_2)$, as defined in
Section~\ref{sprobt}, implies weak convergence, namely convergence
in the duality with $C_b(X)$. Condition (iv) enables us, when the
cost function $c$ equals $\sfd^2$, to use the standard results of
the Kantorovich theory (existence of optimal plans, duality, etc.)
and other useful properties, as the lower semicontinuity of the
length and the $p$-energy of a curve w.r.t.\ pointwise
$\tau$-convergence, or the representation results of
\cite{Lisini07}.

An example where the roles of the distance and the topology are
different is provided by bounded closed subsets of the dual of a
separable Banach space: in this case $\sfd$ is the distance induced by
the dual norm and $\tau$ is the weak$^*$ topology.
 In this case $\tau$ enjoys better compactness properties than
$\sfd$.

The typical example of Polish extended space is a separable Banach
space $(X,\|\cdot\|)$ endowed with a Gaussian probability measure
$\gamma$. In this case $\tau$ is the topology induced by the norm
and $\sfd$ is the Cameron-Martin extended distance induced by
$\gamma$ (see \cite{Bogachev98}): thus, differently from $(X,\tau)$,
$(X,\sfd)$ is not separable if ${\rm dim\,}X=\infty$.

It will be technically convenient to use also the class
$\BorelSetsStar{X}$ of \emph{universally} measurable sets (and the
associated universally measurable functions): it is the
$\sigma$-algebra of sets which are $\mu$-measurable for any
$\mu\in\prob X$.

\subsection{Absolutely continuous curves and slopes}

If $(X,\sfd)$ is an extended metric space, $J\subset\R$ is an open
interval, $p\in [1,\infty]$ and $\gamma:J\to X$, we say that
$\gamma$ belongs to $\AC pJX\sfd$ if
$$
\sfd(\gamma_s,\gamma_t)\leq\int_s^t g(r)\,\d r\qquad\forall s,\,t\in
J,\,\,s<t
$$
for some $g\in L^p(J)$. The case $p=1$ corresponds to
\emph{absolutely continuous} curves, whose space is simply denoted
by $\AC {}JX\sfd$. It turns out that, if $\gamma$ belongs to $\AC
pJX\sfd$, there is a minimal function $g$ with this property, called
\emph{metric derivative} and given for a.e. $t\in J$ by
$$
|\dot\gamma_t|:=\lim_{s\to t}\frac{\sfd(\gamma_s,\gamma_t)}{|s-t|}.
$$
See \cite[Theorem~1.1.2]{Ambrosio-Gigli-Savare08} for the simple
proof. We say that an absolutely continuous curve $\gamma_t$ has
\emph{constant speed} if $|\dot\gamma_t|$ is (equivalent to) a
constant.

Notice that, by the completeness of $(X,\sfd)$, $\AC pJX\sfd\subset
\CC {\bar J}X\tau$, the set of $\tau$-continuous curves $\gamma:\bar
J\to X$. For $t\in \bar J$ we define the evaluation map
$\e_t:\CC{\bar J} X\tau\to X$ by
\[
\e_t(\gamma):=\gamma_t.
\]
We endow $\CC{\bar J}X\tau$ with the sup extended distance
$$
\sfd^*(\gamma,\tilde\gamma):=\sup_{t\in \bar J}
\sfd(\gamma_t,\tilde\gamma_t)
$$
and with the {compact-open} topology $\tau^*$, whose fundamental
system of neighborhoods is
$$
\left\{\gamma\in\CC {\bar J}X\tau:\ \gamma(K_i)\subset
  U_i,\quad i=1,2,\ldots,n\right\},\qquad
  \text{$K_i\subset \bar J$ compact, $U_i\in\tau$, $n\ge 1$.}
$$
With these choices, it can be shown that $(\CC{\bar
J}X\tau,\tau^*,\sfd^*)$ inherits a Polish extended structure from
$(X,\tau,\sfd)$ {\nc if $\tau$ is induced by a distance $\rho$ in $X$ smaller than 
$\sfd$}. Also, with this topology it is clear that the
evaluation maps are continuous from $(\CC{\bar J}X\tau,\tau^*)$ to
$(X,\tau)$. Since for $p>1$ the $p$-energy
\begin{equation}
  \label{eq:13}
  \Energy p\gamma:=\int_J |\dot \gamma|^p\,\d t\quad\text{if
  }\gamma\in \AC pJX\sfd,\quad
  \Energy p\gamma:=\infty\quad \text{otherwise},
\end{equation}
is $\tau^*$-lower-semicontinuous thanks to (iv) of
Definition~\ref{dPolish}, $\AC pJX\sfd$ is a Borel subset of
$\CC{\bar J}X\sfd$. It is not difficult to check that $\AC {}JX\sfd$
is a Borel set as well; indeed, denoting $J=(a,b)$ and defining
$$
\mathsf{TV}\bigl(\gamma,(a,s)\bigr):=\sup\left\{\sum_{i=0}^{n-1}\sfd(\gamma_{t_{i+1}},\gamma_{t_i}):\
n\in\N,\,\,a<t_0<\cdots<t_n<s\right\}\qquad s\in (a,b],
$$
it can be immediately seen that
$\mathsf{TV}\bigl(\gamma,(a,s)\bigr)$ is lower semicontinuous in
$\gamma$ and nonincreasing in $s$. Also, a continuous $\gamma$ is
absolutely continuous iff the Stieltjes measure associated to
$\mathsf{TV}\bigl(\gamma,(a,\cdot)\bigr)$ is absolutely continuous
w.r.t. $\Leb{1}$; by an integration by parts, this can be
characterized in terms of $m_\eps(\gamma)\downarrow 0$ as
$\eps\downarrow 0$, where
\begin{equation}\label{eq:TV}
m_\eps(\gamma):=\sup\left\{ \int_a^b
\mathsf{TV}\bigl(\gamma,(a,s)\bigr)\psi'(s)\,\d s: \psi\in
C^1_c(a,b),\,\,\max|\psi|\leq 1,\,\,\int_a^b|\psi(s)|\,\d
s\leq\eps\right\}
\end{equation}
if $\mathsf{TV}\bigl(\gamma,(a,b)\bigr)$ is finite,
$m_\eps(\gamma)=+\infty$ otherwise. Since $m_\epsilon$ are Borel in
$\CC{\bar J}X\sfd$, thanks to the separability of $C^1_c(a,b)$
w.r.t. the $C^1$ norm, the Borel regularity of $\AC {}JX\sfd$
follows.

We call $(X,\sfd)$ \emph{a geodesic space} if for any $x_0,\,x_1\in
X$ with $\sfd(x_0,x_1)<\infty$ there exists a curve
$\gamma:[0,1]\to X$ satisfying $\gamma_0=x_0$, $\gamma_1=x_1$ and
\begin{equation}\label{defgeo}
\sfd(\gamma_s,\gamma_t)=|t-s|\sfd(\gamma_0,\gamma_1)\qquad\forall s,\,t\in
[0,1].
\end{equation}

We will denote by $\geo(X)$ the space of all constant speed
geodesics $\gamma:[0,1]\to X$, namely $\gamma\in\geo(X)$ if
\eqref{defgeo} holds.
Given $f:X\to\overline\R$ we define its effective domain $D(f)$ by
\begin{equation}\label{eq:domaineffective}
D(f):=\left\{x\in X:\ f(x)\in\R\right\}.
\end{equation}
Given $f:X\to\overline\R$ and $x\in D(f)$, we define \emph{the local
Lipschitz constant at $x$} by
$$
|\rmD  f|(x):=\limsup_{y\to x}\frac{|f(y)-f(x)|}{\sfd(y,x)}.
$$
We shall also need the one-sided counterparts of the local Lipschitz
constant, called respectively \emph{descending slope} and
\emph{ascending slope}:
\begin{equation}\label{eq:slopes}
|\rmD^- f|(x):=\limsup_{y\to
x}\frac{[f(y)-f(x)]^-}{\sfd(y,x)},\qquad |\rmD^+
f|(x):=\limsup_{y\to x}\frac{[f(y)-f(x)]^+}{\sfd(y,x)}.
\end{equation}

When $x\in D(f)$ is an isolated point of $X$, we set $|\rmD 
f|(x)=|\rmD^- f|(x)=|\rmD^+ f|(x):=0$, while all slopes are
conventionally set to $+\infty$ on $X\setminus D(f)$.

{\nc
Notice the change of notation with respect to previous papers on
similar topics (even by the same authors as the current one):
the local Lipschitz constant and the slopes are sometimes
denoted by $|\nabla f|,|\nabla^\pm f|$. Following \cite{Gigli12}, we
are proposing this switch since these quantities are defined in duality with the
distance and therefore they are naturally cotangent 
objects rather than tangent ones (this observation
  has been used in \cite{Gigli12} as basis for approaching integration
  by parts in metric measure spaces). From this perspective, the
  wording ``upper gradients'' and ``relaxed/weak upper gradients'' that we
  will introduce later on, might be a bit misleading, as the objects
  should rather be called ``(relaxed/weak) upper differentials''. Yet,
  the terminology of upper gradients is by now well established in the
  context of analysis in metric measure spaces, so that we
  will keep it and we will simply replace $\nabla$ by $\rmD$ to highlight the dual point of view.}

Notice that for all $x\in D(f)$ it holds
  \begin{equation}
    \label{eq:2}
    |\rmD  f|(x)=\max \bigl\{|\rmD^- f|(x),|\rmD^+
    f|(x)|\bigr\},\qquad
    |\rmD^- f|(x)=|\rmD^+(-f)|(x).
  \end{equation}
Also, for $f,\,g:X\to\overline\R$ it is not difficult to check that
\begin{subequations}
\begin{align}
\label{eq:subadd}
|\rmD (\alpha f+\beta g)|&\leq|\alpha||\rmD  f|+|\beta||\rmD  g|,\qquad\forall \alpha,\beta\in\R\\
 \label{eq:leibn}
|\rmD  (fg)|&\leq |f||\rmD  g|+|g||\rmD  f|
\end{align}
\end{subequations}
on $D(f)\cap D(g)$. Also, if $\nchi:X\to [0,1]$,
it holds
\begin{equation}\label{eq:subadd2}
|\rmD^\pm(\nchi f+(1-\nchi)g)|\leq\nchi|\rmD^\pm
f|+(1-\nchi)|\rmD^\pm g|+ |\rmD  \nchi| \,|f-g|.
\end{equation}
Indeed, adding the identities
\begin{subequations}
\begin{align}
\nchi(y) f(y)-\nchi(x)f(x)&=
\nchi(y)(f(y)-f(x))+f(x)(\nchi(y)-\nchi(x)),\nonumber \\
\tilde\nchi(y) g(y)-\tilde\nchi(x)g(x)&=
\tilde\nchi(y)(g(y)-g(x))+g(x)(\tilde\nchi(y)-\tilde\nchi(x))\nonumber
\end{align}
\end{subequations}
with $\tilde\nchi=1-\nchi$ one obtains
\begin{align*}
  \frac{(\nchi f+\tilde\nchi g)(y)-(\nchi f+\tilde\nchi g)(x)}{\sfd(y,x)}
  & =
  \nchi(y)\frac{f(y)-f(x)}{\sfd(y,x)}+\tilde\nchi(y)\frac{g(y)-g(x)}{\sfd(y,x)}
  \\&   +\frac {\nchi(y)-\nchi(x)}{\sfd(x,y)}\, (f(x)-g(x))
\end{align*}
from which the inequality readily follows by
 taking the positive or negative parts and
 letting $y\to x$. 

{\GGG We shall prove a further inequality, that will turn to be useful
  to prove contraction estimates for the gradient 
  flow of the Cheeger energy in Section \ref{sec:cheeger}.
  \begin{lemma}
    \label{le:contraction}
    Let $f,g:X\to \R$ be Lipschitz functions, 
    let $\phi:\R\to\R$ be a $C^1$ map with $0\le \phi'\le 1$, and
    let $\psi:[0,\infty)\to \R $ be a convex nondecreasing function.
    Setting 
    \begin{equation}
      \label{eq:109}
      \tilde f:=f+\phi(g-f),\qquad
      \tilde g:=g-\phi(g-f),
    \end{equation}
    we have
    \begin{equation}
      \label{eq:107}
      \psi(|\rmD  \tilde f|(x))+\psi(|\rmD \tilde g|(x))\le 
      \psi(|\rmD  f|(x))+\psi(|\rmD  g|(x))\quad\forevery x\in X.
    \end{equation}
  \end{lemma}
  \begin{proof}
    Let $(y_n)$ be a sequence in $X$ converging to $x$ such
    that 
    $|\rmD \tilde
    f|(x)=\lim_{n\to\infty}\frac{|\tilde f(x)-\tilde f(y_n)|}{\sfd(x,y_n)}$.
    Since $f$ and $g$ are Lipschitz, 
    up to extracting a further subsequence, we can assume that 
    \begin{displaymath}
      \lim_{n\up\infty}\frac{f(x)-f(y_n)}{\sfd(x,y_n)}=A,\qquad
      \lim_{n\up\infty}\frac{g(x)-g(y_n)}{\sfd(x,y_n)}=B.
    \end{displaymath}
    Moreover, by the definition of $\tilde f$ and the Lagrange theorem 
    there exists a convex combination $\xi_n$ of $g(x)-f(x)$ and 
    $g(y_n)-f(y_n)$ such that
    \begin{align*}
      \tilde f(x)-\tilde f(y_n)&=f(x)-f(y_n)+
      \phi(g(x)-f(x))-\phi(g(y_n)-f(y_n))\\&=
      f(x)-f(y_n)+\phi'(\xi_n)\big(g(x)-g(y_n)-(f(x)-f(y_n))\big).
    \end{align*}
    Notice that $|A|\le |\rmD  f|(x)$ and $|B|\le |\rmD  g|(x).$
    Dividing the previous inequality by $\sfd(x,y_n)$ 
    and passing to the limit as $n\to\infty$, since $\phi'(\xi_n)\to
    \alpha:=\phi'(g(x)-f(x))\in [0,1]$
    we get
    \begin{displaymath}
      |\rmD \tilde f|(x)=
      \big|A+\alpha(B-A)\big|
      \le (1-\alpha)|A|+\alpha |B|\le 
      (1-\alpha)|\rmD  f|(x)+\alpha |\rmD  g|(x).
    \end{displaymath}
    A similar argument for $\tilde g$ yields
    \begin{displaymath}
      |\rmD \tilde g|(x)\le (1-\alpha)|\rmD  g|(x)+\alpha |\rmD  f|(x).
    \end{displaymath}
    Since $\psi$ is convex and nondecreasing,
    a combination of the last two inequalities yields \eqref{eq:107}.
  \end{proof}
}

We shall also need the measurability of slopes,
ensured by the following lemma.

\begin{lemma}
  \label{le:measurability_of_slopes}
  If $f:X\to\overline\R$ is Borel, then its slopes
  $|\rmD^\pm f|$ (and therefore $|\rmD  f|$)
  are $\BorelSetsStar{X}$-measurable in $D(f)$.
  In particular, if $\gamma:[0,1]\to X$
  is a continuous curve with $\gamma_t\in D(f)$ for a.e.\
  $t\in [0,1]$, then the functions $|\rmD^\pm f|\circ \gamma$ are Lebesgue measurable.
\end{lemma}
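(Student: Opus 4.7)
The plan is to write $|\nabla^- f|$ as a countable infimum of universally measurable functions, each of which arises as an uncountable supremum that can be tamed by a classical projection argument; the cases of $|\nabla^+ f|$ and $|\nabla f|$ will then follow from the identities \eqref{eq:2}, and the final composition statement is a standard consequence of universal measurability paired with Borel regularity of $\gamma$.

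Concretely, for $x\in D(f)$ and $n\in\N$ set
$$
G_n(x):=\sup\Big\{[f(y)-f(x)]^-/\sfd(y,x):\ 0<\sfd(y,x)<1/n\Big\},
$$
with the convention $G_n(x):=0$ if the admissible set is empty, so that $|\nabla^- f|(x)=\inf_{n\in\N}G_n(x)$ on $D(f)$. Define the auxiliary function
$$
H(x,y):=\begin{cases} [f(y)-f(x)]^-/\sfd(y,x)&\text{if }x\in D(f)\text{ and }0<\sfd(y,x)<\infty,\\ 0&\text{otherwise.}\end{cases}
$$
Since $f$ is Borel and $\sfd$ is $\tau\times\tau$-lower-semicontinuous by Definition~\ref{dPolish}(iv), $H$ is Borel on $X\times X$. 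For every $t\in\R$ the set
$$
B_{n,t}:=\big\{(x,y)\in D(f)\times X:\ 0<\sfd(y,x)<1/n,\ H(x,y)>t\big\}
$$
is then Borel, and its projection on the first factor, which is exactly $\{x\in D(f):G_n(x)>t\}$, is analytic in the Polish space $(X,\tau)$, hence $\BorelSetsStar X$-measurable. This gives the $\BorelSetsStar X$-measurability of every $G_n$ and consequently of $|\nabla^- f|$ on $D(f)$; the identities \eqref{eq:2} ($|\nabla^+ f|=|\nabla^-(-f)|$ and $|\nabla f|=\max\{|\nabla^- f|,|\nabla^+ f|\}$) then transfer the conclusion to $|\nabla^+ f|$ and $|\nabla f|$.

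For the second assertion, a $\tau$-continuous curve $\gamma:[0,1]\to X$ is Borel measurable, and by hypothesis the pushforward $\gamma_\sharp\Leb1$ is a Borel probability measure on $X$ concentrated on $D(f)$. Given a Borel set $E\subset\overline\R$, universal measurability lets us decompose $(|\nabla^- f|)^{-1}(E)=A\cup N$ with $A$ Borel and $N$ a Borel $\gamma_\sharp\Leb1$-null set, so $(|\nabla^- f|\circ\gamma)^{-1}(E)$ differs from the Borel set $\gamma^{-1}(A)$ by the $\Leb1$-null set $\gamma^{-1}(N)$ and is therefore Lebesgue measurable; the same reasoning applies to $|\nabla^+ f|\circ\gamma$ and to $|\nabla f|\circ\gamma$.

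The only real subtlety is the uncountable supremum defining $G_n$: because $(X,\sfd)$ is not assumed to be separable one cannot confine $y$ to a countable dense set, and the projection/analytic-sets detour is what forces the conclusion to be merely universal---rather than Borel---measurability, which is precisely the level of regularity claimed in the lemma.
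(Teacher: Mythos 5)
Your proof is correct and follows essentially the same route as the paper: reduce via \eqref{eq:2} to a one-sided slope, express it as a countable (here monotone) infimum of the functions $G_n$, and observe that each superlevel set of $G_n$ is the projection onto the first factor of a Borel subset of $X\times X$, hence analytic (Suslin) and therefore universally measurable. For the composition step you re-derive directly (by decomposing $(|\nabla^\pm f|)^{-1}(E)$ modulo a $\gamma_\sharp\Leb1$-null Borel set) what the paper obtains by citing Dellacherie--Meyer's observation that a continuous curve is $(\BorelSetsStar{[0,1]},\BorelSetsStar X)$-measurable; the two arguments are equivalent.
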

\begin{proof}
  By \eqref{eq:2} it is sufficient to consider the case of the ascending
  slope and, since the functions
  \begin{displaymath}
    G_r(x):=\sup_{\{y:\
      0<\sfd(x,y)<r\}}\frac{(f(y)-f(x))^+}{\sfd(y,x)}
  \end{displaymath}
  {(with the convention $\sup\emptyset=0$, so that $G_r(x)=0$ for $r$
  small enough if $x$ is an isolated point)}
  monotonically converge to $|\rmD^+ f|$ on $D(f)$,
  it is sufficient to prove that $G_r$ is universally measurable for
  any $r>0$.
  For any $r>0$ and $\alpha\geq 0$ we see that the set
  $$
  \left\{x\in D(f):\ G_r(x)>\alpha\right\}
  $$
is the projection on the first factor of the Borel set
$$
\left\{(x,y)\in D(f)\times X:\ f(y)-f(x)>\alpha \sfd(x,y),\quad
0<\sfd(x,y)< r\right\},
$$
so it is a Suslin set (see \cite[Proposition~1.10.8]{Bogachev07})
and therefore it is universally measurable (see
\cite[Theorem~1.10.5]{Bogachev07}).

To check the last statement of the lemma it is sufficient to recall
\cite[Remark 32 (c2)]{Dellacherie-Meyer78} that a continuous curve
$\gamma$ is $(\BorelSetsStar{[0,1]}, \BorelSetsStar X)$ measurable,
since any set in $\BorelSetsStar X$ is measurable for all images of
measures $\mu\in\Probabilities{[0,1]}$ under $\gamma$.
\end{proof}

Finally, for completeness we include the simple proof of the fact
that $|\rmD^-f|=|\rmD^+ f|$ $\mm$-a.e. if $\sfd$ is finite, $f$
is $\sfd$-Lipschitz and $(X,\sfd,\mm)$ is a doubling metric measure
space. We will be able to prove a weaker version of this result even
in non-doubling situations, see Remark~\ref{rem:morecheeger}.

\begin{proposition}
If $\sfd$ is finite, and $(X,\sfd,\mm)$ is doubling, for all
$\sfd$-Lipschitz $f:X\to\R$, $|\rmD^-f|=|\rmD^+ f|$ $\mm$-a.e.
in $X$.
\end{proposition}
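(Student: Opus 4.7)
I would argue by contradiction, exploiting the Lebesgue differentiation theorem, which holds for every locally integrable function in doubling metric measure spaces.

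By \eqref{eq:2}, $|\nabla^- f|=|\nabla^+(-f)|$, so it suffices to prove $|\nabla^+f|\leq|\nabla^-f|$ $\mm$-a.e., the reverse inequality following upon replacing $f$ by $-f$. Suppose this failed. Then by $\sigma$-subadditivity there would exist rationals $0\le p<q$ such that the set
\begin{displaymath}
 E:=\bigl\{x\in X:\ |\nabla^-f|(x)<p<q<|\nabla^+f|(x)\bigr\}
\end{displaymath}
has $\mm(E)>0$. The functions $|\nabla^\pm f|$ are bounded by $\Lip(f)$ and $\BorelSetsStar{X}$-measurable by Lemma~\ref{le:measurability_of_slopes}, so $E$ is $\mm$-measurable; by the Lebesgue differentiation theorem $\mm$-almost every point of $E$ is both a density point of $E$ and a Lebesgue point of the bounded function $|\nabla^-f|$.

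Fix such $x_0\in E$. Since $|\nabla^+f|(x_0)>q$ we choose $y_n\to x_0$ with $r_n:=\sfd(y_n,x_0)\downarrow 0$ and $(f(y_n)-f(x_0))/r_n>q$. The basic observation is that the same inequality, read from $y_n$, gives
\begin{displaymath}
 \sup_{w\in \Ball{r_n}{y_n}}\frac{f(y_n)-f(w)}{\sfd(w,y_n)}\ >\ q,
\end{displaymath}
a large descending ratio at $y_n$ at the \emph{macroscopic} scale $r_n$. The crux is to upgrade this to an infinitesimal statement. For $\eta\in(0,1)$ small, by the density-one property of $E$ at $x_0$ combined with the doubling inequality, the ball $\Ball{\eta r_n}{y_n}$ intersects $E$ on a set of positive $\mm$-mass once $r_n$ is small enough; choosing $z_n\in E\cap \Ball{\eta r_n}{y_n}$ and using the Lipschitz bound $|f(z_n)-f(y_n)|\le \Lip(f)\,\eta r_n$ one obtains, at the scale $\sfd(z_n,x_0)\asymp r_n$,
\begin{displaymath}
 \frac{f(z_n)-f(x_0)}{\sfd(x_0,z_n)}\ge q-C\eta,\qquad C=C(\Lip(f),q).
\end{displaymath}
Iterating the procedure inside $\Ball{\eta r_n}{z_n}$ at geometrically decreasing scales and running a Vitali covering argument (available in doubling spaces), one produces for each $\eta>0$ a set of positive $\mm$-density at $x_0$ on which the pointwise descending slope of $f$ exceeds $q-C\eta$; averaging and passing to the limit $r\downarrow 0$ at a Lebesgue point of $|\nabla^-f|$ yields $|\nabla^-f|(x_0)\ge q-C\eta$, which for $\eta$ small contradicts $|\nabla^-f|(x_0)<p$.

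\textbf{Main obstacle.} The step I expect to be most delicate is the scale-matching in the iteration: a single pair $(y_n,x_0)$ witnesses the descending ratio only at the macroscopic scale $r_n\approx \sfd(y_n,x_0)$, whereas the slope $|\nabla^-f|(z)$ is a true infinitesimal quantity. Converting the mesoscopic information into genuine pointwise lower bounds on $|\nabla^-f|$ on a positive-density subset is exactly where the Vitali covering lemma and the density-one property of $E$ must be combined carefully, and where the doubling constant enters the multiplicative errors of the argument.
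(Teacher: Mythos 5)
Your high-level strategy (density points plus doubling, argue by contradiction) is the right ballpark, but the middle of the argument has a genuine gap that you yourself flagged as the ``main obstacle,'' and the sketch offered to bridge it does not work. The set $E$ only encodes \emph{infinitesimal} information: knowing $z_n\in E$, hence $|\nabla^- f|(z_n)<p$, gives you a bound on the descending quotient $(f(z_n)-f(w))^+/\sfd(z_n,w)$ only for $w$ in some neighborhood of $z_n$ whose size depends on $z_n$ and is not under control. Consequently nothing propagates from $z_n$ to $x_0$ (which sits at a definite distance $\approx r_n$), the ``iteration at geometrically decreasing scales'' never produces a point where a macroscopic descending ratio bounds $|\nabla^- f|$ from below, and the Vitali covering has nothing to cover. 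Moreover, the claimed output of the iteration---a positive-density set near $x_0$ where $|\nabla^-f|\ge q-C\eta$---is never actually constructed: the points $z_n$ you pick lie in $E$, where $|\nabla^-f|<p$ by hypothesis, and the macroscopic inequality $(f(z_n)-f(x_0))/\sfd(z_n,x_0)\ge q-C\eta$ does \emph{not} imply $|\nabla^-f|(z_n)\ge q-C\eta$ precisely because the macroscopic-to-infinitesimal step is the thing being asked for.

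The paper resolves exactly this issue by eliminating the hidden quantifier in the slope condition \emph{before} invoking the density argument. Instead of working with $H:=\{|\nabla^-f|\le \alpha\}$ directly, it writes $H\subset \bigcup_m H_m$ with
\begin{displaymath}
  H_m:=\bigl\{x:\ f(x)-f(y)\le \alpha'\sfd(x,y)\ \text{for all $y$ with }\sfd(x,y)<1/m\bigr\},\qquad \alpha'>\alpha,
\end{displaymath}
so that membership in $H_m$ gives a \emph{uniform, finite-scale} bound on the descending difference quotient up to the fixed scale $1/m$. That is the information that propagates between nearby points. At an $\mm$-density-one point $\bar x$ of $H_m$ one then bounds $|\nabla^+f|(\bar x)$ in one shot: given $x_n\to\bar x$, doubling and density one let you choose $y_n\in H_m$ with $\sfd(x_n,y_n)=o(\sfd(x_n,\bar x))$, and $f(x_n)-f(\bar x)=(f(x_n)-f(y_n))+(f(y_n)-f(\bar x))\le \Lip(f)\sfd(x_n,y_n)+\alpha'\sfd(y_n,\bar x)$, which after dividing by $\sfd(x_n,\bar x)$ gives $|\nabla^+f|(\bar x)\le\alpha'$. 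No iteration and no Vitali covering are needed. To repair your proposal you would have to replace $E$ by its finite-scale approximations $E_m$ in the same spirit---but at that point the argument essentially becomes the paper's proof, which is shorter and avoids the circularity around the descending slope.
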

\begin{proof} Let $\alpha'>\alpha>0$ and consider the set
 $H:=\{|\rmD^- f|\leq\alpha\}$. Let $H_m$ be
 the subset of points {\nc $x\in H$} such that $f(x)-f(y)\leq
 \alpha'\sfd(x,y)$ for all $y$ satisfying $\sfd(x,y)<1/m$. By the
 doubling property {\nc \cite[Theorem~14.15]{Hailasz-Koskela00}}, the equality $H=\cup_mH_m$ ensures that
 $\mm$-a.e. $x\in H$ is a point of
 density 1 for some set $H_m$. If we fix $\bar x$ with this property, {\nc a corresponding $m$}
 and $\sfd(x_n,\bar x)\to 0$, we can estimate
 $$
 f(x_n)-f(\bar x)= f(x_n)-f(y_n)+f(y_n)-f(\bar x)\leq {\rm Lip}(f)\sfd(x_n,y_n)+
 \alpha'\sfd(y_n,\bar x)
 $$
 choosing $y_n\in H_m\cap B_{1/m}(\bar x)$. But, since the
 density of $H_m$ at $\bar x$ is 1 we can choose $y_n$ in such a way that
  $\sfd(x_n,y_n)=o(\sfd(x_n,\bar x))$. Indeed, if for
  some $\delta>0$ the ball $B_{\delta\sfd(x_n,\bar x)}(x_n)$ does
  not intersect $H_m$ for infinitely many $n$, the upper density of
  $X\setminus H_m$ in the balls {\nc $B_{(1+\delta)\sfd(x_n,\bar x)}(\bar x)$} is
  strictly positive. Dividing both sides by
 $\sfd(x_n,\bar x)$ the arbitrariness of the sequence $(x_n)$
 yields $|\rmD^+ f|(\bar x)\leq\alpha'$.

Since $\alpha$ and $\alpha'$ are arbitrary we conclude that
$|\rmD^+f|\leq|\rmD^- f|$ $\mm$-a.e. in $X$. The proof of the
converse inequality is similar.
\end{proof}

\subsection{Upper gradients}\label{sec:supg}

According to \cite{Cheeger00}, we say that a function $g:X\to
[0,\infty]$ is an \emph{upper gradient} of $f:X\to\overline\R$ if,
for any curve $\gamma\in \AC{}{(0,1)}{D(f)}\sfd$, $s\mapsto
g(\gamma_s)|\dot\gamma_s|$ is measurable in $[0,1]$ (with the
convention $0\cdot\infty=0$) and
\begin{equation}
\left|\int_{\partial\gamma}f\right|\leq\int_\gamma g,\label{eq:68}
\end{equation}
Here and in the following we write $\int_{\partial\gamma}f$ for
$f(\gamma_1)-f(\gamma_0)$ and $\int_\gamma
g=\int_0^1g(\gamma_s)|\dot\gamma_s|\,\d s$.

It is not difficult to see that if $f$ is a Borel and
$\sfd$-Lipschitz function then the two slopes and the local
Lipschitz constant are upper gradients. More generally, the
following remark will be useful.

\begin{remark}[When slopes are upper gradients along a curve]\label{rem:whenslopesare}{\rm
Notice that if one a priori knows that $t\mapsto f(\gamma_t)$ is
absolutely continuous along a given absolutely continuous curve
$\gamma:[0,1]\to D(f)$, then $|\rmD^\pm f|$ are upper gradients of
$f$ along $\gamma$. Indeed, $|\rmD^\pm(f\circ\gamma)|$ are bounded
from above by $|\rmD^\pm f|\circ\gamma|\dot\gamma|$ wherever the
metric derivative $|\dot\gamma|$ exists; then, one uses the fact
that at any differentiability point both slopes of $f\circ\gamma$
coincide with $|(f\circ\gamma)'|$.}\fr
\end{remark}

The next lemma is a refinement of
\cite[Lemma~1.2.6]{Ambrosio-Gigli-Savare08}; as usual, we adopt the
convention $0\cdot \infty=0$.

\begin{lemma}[Absolute continuity criterion]\label{lem:realanalysis}
Let $L\in L^1(0,1)$ be nonnegative and let $g:[0,1]\to [0,\infty]$
be a measurable map with $\int_0^1 L\,\d t>0$ and $\int_0^1 g(t)
L(t)\,\d t<\infty$. Let $w:[0,1]\to\R\cup\{-\infty\}$ be an upper
semicontinuous map, with $w>-\infty$ a.e. on $\{L\neq 0\}$,
satisfying
\begin{equation}\label{eq:hajlasz}
w(s)-w(t)\leq g(t)\biggl|\int_s^tL(r)\,\d r\biggr|\qquad\text{for
all $t\in\{w>-\infty\}$}
\end{equation}
and, for arbitrary $0\le a< b\le 1$,
\begin{equation}
  \label{eq:104}
  \int_a^b L\,\d t=0\qquad\Longrightarrow\qquad
  \text{$w$ is constant in $[a,b]$}.
\end{equation}
Then $\{w=-\infty\}$ is empty and $w$ is absolutely continuous in
$[0,1]$.
\end{lemma}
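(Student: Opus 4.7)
The plan is to upgrade \eqref{eq:hajlasz} into an integrated inequality in terms of $gL$ on a suitably dense ``good'' set, extract a continuous (in fact absolutely continuous) extension $\tilde w$ on $[0,1]$, and finally identify $w\equiv\tilde w$ using upper semicontinuity and a Chebyshev-type argument. This will simultaneously yield $\{w=-\infty\}=\emptyset$ and the absolute continuity of $w$.

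Set $F(t):=\int_0^t L(r)\,\d r$, $W:=\{w>-\infty\}$ and $W_0:=W\cap\{g<\infty\}$. Let $N\subseteq[0,1]$ be the open set of points admitting a neighborhood on which $L=0$ a.e., and $K:=[0,1]\setminus N$. Each $\sigma\in K$ has positive $L$-measure in every neighborhood. Since $\int gL<\infty$ and $w>-\infty$ a.e.\ on $\{L\neq 0\}$, the set $W_0$ has full Lebesgue measure on $\{L>0\}$, is dense in $K$, and is nonempty; meanwhile \eqref{eq:104} forces $w$ to be constant on each connected component of $N$. Applying \eqref{eq:hajlasz} at any $t_0\in W_0$ immediately yields the upper bound $w(s)\le w(t_0)+g(t_0)F(1)$ on $[0,1]$.

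The core step is to prove, for $a<b$ in $W_0$, the integrated bound
\begin{equation*}
|w(b)-w(a)|\le\int_a^b g(r)L(r)\,\d r.
\end{equation*}
Choose partitions $a=t_0<t_1<\cdots<t_k=b$ of vanishing mesh with $t_i\in W_0$ selected at Lebesgue points of $g$ with respect to $\d F=L\,\d r$ (such points have full $\d F$-measure and their intersection with $W_0$ still has full $\d F$-measure). Applying \eqref{eq:hajlasz} to each pair $(t_i,t_{i+1})$ and summing,
\begin{equation*}
w(b)-w(a)\le\sum_{i=0}^{k-1}g(t_i)\bigl(F(t_{i+1})-F(t_i)\bigr)\xrightarrow{\text{mesh}\to 0}\int_a^b g\,\d F=\int_a^b gL\,\d r,
\end{equation*}
the limit being the standard Riemann--Stieltjes convergence at Lebesgue-point tags; subintervals entirely within components of $N$ contribute $0$ on both sides by \eqref{eq:104}. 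The reverse inequality is obtained by instead applying \eqref{eq:hajlasz} at $t_{i+1}$, i.e.\ tagging on the right. This estimate together with the absolute continuity of $r\mapsto\int_0^r gL$ yields uniform continuity of $w|_{W_0}$, and hence a continuous extension $\tilde w$ on $\overline{W_0}\supseteq K$; extending $\tilde w$ across each component $(a',b')\subset N$ by the common value $\tilde w(a')=\tilde w(b')$ (equal because $\int_{a'}^{b'}gL=0$) produces an absolutely continuous $\tilde w$ on $[0,1]$.

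To identify $w\equiv\tilde w$, the upper semicontinuity of $w$ combined with the density of $W_0$ in $K$ and \eqref{eq:104} on $N$-components gives $w\ge\tilde w$ pointwise, so in particular $\{w=-\infty\}=\emptyset$. For the reverse inequality at $\sigma\in K$, the bound
\begin{equation*}
\int_{B_\delta(\sigma)}g(t)\,|F(t)-F(\sigma)|\,L(t)\,\d t\le\sup_{B_\delta(\sigma)}|F-F(\sigma)|\cdot\int_{B_\delta(\sigma)} gL\,\d r\xrightarrow{\delta\to 0}0,
\end{equation*}
together with Chebyshev's inequality and the full $L$-measure of $W_0$, produces a sequence $t_n\in W_0$ with $t_n\to\sigma$ and $g(t_n)\,|F(t_n)-F(\sigma)|\to 0$; inserting these into \eqref{eq:hajlasz} at $t_n$ gives $w(\sigma)\le w(t_n)+g(t_n)\,|F(\sigma)-F(t_n)|\to\tilde w(\sigma)$. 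On components of $N$ the identity $w=\tilde w$ is immediate from \eqref{eq:104} and the equality at the endpoints. The main obstacle is the Riemann-sum step: one must ensure that the partition points lie in $W_0$, that the tags fall at $\d F$-Lebesgue points of $g$, and that subintervals intersecting $N$-components (where $g$ may be infinite but $F$ is constant) cause no trouble---all three secured by \eqref{eq:104} and the density of $W_0$ in $K$ with full $\d F$-measure.
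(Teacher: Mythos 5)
Your approach is genuinely different from the paper's. The paper reparametrizes time by the monotone map $\sft$ that pushes Lebesgue measure onto $L\,\Leb 1$, thereby reducing to the case $L\equiv 1$, and then applies a previously established real-analysis lemma (\cite[Lemma 2.1.6]{Ambrosio-Gigli-Savare08}) to get $\tilde w\in W^{1,1}$ with $|\tilde w'|\le 2\tilde g$, finally transporting the estimate back. You instead try to prove directly the integrated bound $|w(b)-w(a)|\le\int_a^b gL$ for $a,b\in W_0$ by summing the pointwise inequality over a tagged partition and passing to the limit, then extend continuously and identify $w=\tilde w$ via upper semicontinuity and a Chebyshev argument (for the measure $\d F=L\,\d r$). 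The global structure is sound, and the final identification steps are correct once Chebyshev is read with respect to $\d F$ rather than $\d r$.

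However, the central Riemann-sum step contains a genuine gap. You justify the convergence $\sum_i g(t_i)\bigl(F(t_{i+1})-F(t_i)\bigr)\to\int_a^b g\,\d F$ by invoking ``standard Riemann--Stieltjes convergence at Lebesgue-point tags,'' but that is not a valid principle: for a merely integrable $g$, Riemann sums with vanishing mesh and Lebesgue-point tags need not converge to the integral. The standard obstruction is the spiky function $g=\sum_n 2^n\chi_{I_n}$ with $|I_n|=4^{-n}$: the midpoint $a_n$ of $I_n$ is a Lebesgue point with $g(a_n)=2^n$, yet a cell of width $2^{-n}$ tagged at $a_n$ contributes $g(a_n)\cdot 2^{-n}=1$ to the Riemann sum while the integral of $g$ over that cell is $O(2^{-n})$; stringing such cells together across a sequence of refining partitions makes the Riemann sums stay bounded away from $\int g$. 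The Lebesgue-point property controls the \emph{relative} error cell by cell but gives no uniformity across cells, so the errors can accumulate. What you actually need is a Lusin-type construction: given $\eps>0$, pick $M$ with $\int_{\{g>M\}}g\,\d F<\eps$, then a compact $E\subset\{g\le M\}$ with $g|_E$ continuous and $F([a,b]\setminus E)<\eps/M$, and build the partition with tags in $E\cap W_0$ (which is $\d F$-dense); the term $\sum_i g(t_i)F([t_i,t_{i+1}]\setminus E)\le M\,F([a,b]\setminus E)<\eps$ is then controlled, and uniform continuity of $g|_E$ controls the rest. That gives, for each $\eps$, \emph{one} partition making $\sum_i g(t_i)\bigl(F(t_{i+1})-F(t_i)\bigr)\le\int_a^b gL+3\eps$, which is exactly what the one-sided inequality requires. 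As written, the proposal asserts a false convergence theorem and so does not establish the key inequality.
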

\begin{proof}
It is not restrictive to assume $\int_0^1 L(t)\,\d t=1$ and set
$\lambda:=L\Leb{1}\vert_{[0,1]}$.
%
%
We introduce the monotone, right continuous map $\sft:[0,1]\to
[0,1]$ pushing $\Leb 1$ onto $\lambda$: setting
\begin{displaymath}
  \sfx(t):=\int_0^t L(r)\,\d r=\lambda([0,t]) \quad\text{it holds}\quad
  \sft(x):=\sup\{t\in [0,1]:\ \sfx(t)\le x\},
\end{displaymath}
and considering the function $\tilde g:=g\circ\sft$ we easily get
\begin{equation}
  \label{eq:103}\int_{\sft(x)}^{\sft(y)}L(r)\,\d r=
   |x-y|\,\,\,\,\,
0\leq x\leq y\leq 1,\,\,\,\,\int_a^b g(t)L(t)\,\d
t=\int_{\sfx(a)}^{\sfx(b)}\tilde g(z)\,\d z
  \,\,\,\,\,a,\,b\in [0,1],
\end{equation}
so that, defining also $\tilde w:=w\circ\sft$, \eqref{eq:hajlasz}
becomes
\begin{equation}
  \label{eq:102}
  \tilde w(y)-\tilde w(x)\le \tilde g(x)|x-y|\quad\text{for all }x\in
  \{\tilde w>-\infty\}.
\end{equation}
Notice that $\tilde w$ is still upper semicontinuous: since it is
the composition of an upper semicontinuous function with the
increasing right continuous map $\sft$, we have just to check this
property at the jump set of $\sft$. If $x\in (0,1]$ satisfies
$\sft_-(x)=\lim_{y\uparrow x}\sft(y)<\sft(x)$, since $w$ is constant
in $[\sft_-(x),\sft(x)]$ we have
$$\limsup_{y\uparrow x}\tilde w(y)=\limsup_{s\uparrow
  \sft_-(x)}w(s)\le w(\sft_-(x))= w(\sft(x))= \tilde w(x).$$
In particular $\tilde w$ is bounded from above and choosing $y_0$
such that $\tilde w(y_0)>-\infty$ we get $\tilde w(x)\ge \tilde
w(y_0)-\tilde g(x)$ for every $x\in \{\tilde w>-\infty\}$, so that
$\tilde w$ is integrable. Since
$$|\tilde w(y)-\tilde w(x)|\le (\tilde g(x)+\tilde g(y))|x-y|\quad
\text{for every }x,y\in (0,1)\setminus \{\tilde w>-\infty\}$$
applying \cite[Lemma 2.1.6]{Ambrosio-Gigli-Savare08} we obtain that
$\tilde w\in W^{1,1}(0,1)$ and $|\tilde w'|\le 2\tilde g$ a.e.\ in
$(0,1)$.

Since $\tilde w\in W^{1,1}(0,1)$ there exists a continuous representative
$\bar w$ of $\tilde w$ in the Lebesgue equivalence class of $\tilde w$. Since
any point in $[0,1]$ can be approximated by points in the
coincidence set we obtain that $\tilde w\geq \bar {w}>-\infty$ in $[0,1]$.
We can apply \eqref{eq:102} to obtain (in the case $y<1$)
$$
\tilde w(y)-\frac{1}{h}\int_{y}^{y+h}\tilde w(x)\,\d x \leq
\int_y^{y+h}\tilde g(x)\,\d x\to 0\quad\text{as $h\down0$}.$$ Since
$\int_y^{y+h}\tilde w(x)\,\d x \sim h\bar w(y)$ as $h\downarrow 0$,
we obtain the opposite inequality $\tilde w(y)\le\bar w(y)$ for
every $y\in [0,1)$. In the case $y=1$ the argument is similar.

We thus obtain
$
  |\tilde w(x)-\tilde w(y)|\le 2\int_x^y \tilde g(z)\,\d r
$ for every $0\le x\le y\le 1$. Now, the fact that $w$ is constant
in any closed interval where $\sfx$ is constant ensures the validity
of the identity $w(s)=w(\sft(\sfx(s))$, so that $w(s)=\tilde
w(\sfx(s))$ and the second equality in \eqref{eq:103} yields
\begin{displaymath}
  |w(s)-w(t)|=|\tilde w(\sfx(s))-\tilde w(\sfx(t))|\le
  2\int_{\sfx(s)}^{\sfx(t)} \tilde g(z)\,\d z=2\int_s^t g(r)L(r)\,\d
  r\quad
  0\le s<t\le 1.
\end{displaymath}
\end{proof}

\begin{corollary}
  \label{cor:real_curves}
  Let $\gamma\in \AC{}{[0,1]}X\sfd$ and let $\varphi:X\to
  \R\cup\{-\infty\}$ be a $\sfd$-upper semicontinuous map such that
  $\varphi(\gamma_s)>-\infty$ a.e. in $[0,1]$.
  Let $g:X\to [0,\infty]$ be such that $g\circ\gamma|\dot\gamma|\in
  L^1(0,1)$ and
  \begin{equation}
    \label{eq:105}
    \varphi(\gamma_s)-\varphi(\gamma_t)\le g(\gamma_t)\left|\int_s^t
      |\dot\gamma|(r)\,\d r\right|\quad\text{for all $t$ such that $\varphi(\gamma_t)>-\infty$}.
  \end{equation}
  Then the map $s\mapsto\varphi(\gamma_s)$ is real valued and absolutely continuous.
\end{corollary}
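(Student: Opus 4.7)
The plan is to reduce the corollary to a direct application of Lemma~\ref{lem:realanalysis}, setting
$w(s):=\varphi(\gamma_s)$, $L(r):=|\dot\gamma|(r)$, and taking $g\circ\gamma$ in place of the abstract $g$ of the lemma. All hypotheses then either match up verbatim or follow from the definition of $\AC{}{[0,1]}X\sfd$ and the $\sfd$-upper semicontinuity of $\varphi$.

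First I would dispose of the degenerate case $\int_0^1 |\dot\gamma|(r)\,\d r=0$: then $\sfd(\gamma_s,\gamma_t)\le\int_s^t|\dot\gamma|(r)\,\d r=0$ for all $s,t\in[0,1]$, so $\gamma$ is constant. Since $\varphi(\gamma_s)>-\infty$ on a set of positive measure, $\varphi\circ\gamma$ is a finite constant and hence trivially absolutely continuous. So we may assume $\int_0^1 L\,\d t>0$ and turn to verifying the hypotheses of Lemma~\ref{lem:realanalysis}.

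Next I would check the remaining hypotheses one by one. The function $w=\varphi\circ\gamma$ is upper semicontinuous on $[0,1]$ because $\gamma$ is $\sfd$-continuous (being absolutely continuous) and $\varphi$ is $\sfd$-upper semicontinuous. The condition $w>-\infty$ a.e.\ on $\{L\neq 0\}$ is immediate from the hypothesis $\varphi(\gamma_s)>-\infty$ for a.e.\ $s\in[0,1]$, which is actually stronger. The integrability $\int_0^1 g(\gamma_t)L(t)\,\d t<\infty$ is precisely the assumption $g\circ\gamma|\dot\gamma|\in L^1(0,1)$. The Lipschitz-type bound \eqref{eq:hajlasz} is, after the substitution of $g$ by $g\circ\gamma$, exactly the inequality \eqref{eq:105}.

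The one point requiring a word is the implication \eqref{eq:104}: if $\int_a^b L(r)\,\d r=0$ for some $0\le a<b\le 1$, then again from $\sfd(\gamma_s,\gamma_t)\le\int_s^t|\dot\gamma|(r)\,\d r$ (valid for all $a\le s<t\le b$ by the definition of $\AC{}{[0,1]}X\sfd$) we deduce that $\gamma$ is constant on $[a,b]$, so $w=\varphi\circ\gamma$ is constant there. Having verified all of (a)--(e), Lemma~\ref{lem:realanalysis} applies and gives simultaneously that $\{w=-\infty\}=\emptyset$ (so that $\varphi\circ\gamma$ is real-valued on all of $[0,1]$) and that $w$ is absolutely continuous on $[0,1]$, which is the desired conclusion. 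There is no serious obstacle: the only minor subtlety is the trivial constant-curve case, all remaining work being a transcription through the map $\gamma$.
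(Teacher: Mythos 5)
Your proof is correct and is essentially the same as the paper's: both apply Lemma~\ref{lem:realanalysis} with $w:=\varphi\circ\gamma$, $L:=|\dot\gamma|$, $g$ replaced by $g\circ\gamma$, and justify \eqref{eq:104} by noting that $\gamma$ (hence $\varphi\circ\gamma$) is constant on intervals where $|\dot\gamma|$ vanishes a.e. Your explicit handling of the degenerate case $\int_0^1|\dot\gamma|\,\d r=0$ and the verification of upper semicontinuity of $w$ are details the paper leaves to the reader, but the argument is identical.
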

The \emph{proof} is an immediate application of
Lemma~\ref{lem:realanalysis} with $L:=|\dot\gamma|$ and
$w:=\varphi\circ\gamma$; \eqref{eq:104} is true since $\gamma$ (and
thus $\varphi\circ\gamma$) is constant on every interval where
$|\dot\gamma|$ vanishes a.e.

{\nc Finally, we shall need the following criterion for Sobolev regularity, see
\cite{AGS11c} for the simple proof.

\begin{lemma}\label{lem:Fibonacci}
Let $q\in [1,\infty]$, $f:(0,1)\to\R$ measurable, $g\in L^q(0,1)$ nonnegative be satisfying
$$
|f(s)-f(t)|\leq\bigl|\int_s^t g(r)\,\d r\bigr|\qquad\text{for $\Leb{2}$-a.e. $(s,t)\in (0,1)^2$.}
$$
Then $f\in W^{1,q}(0,1)$ and $|f'|\leq g$ a.e. in $(0,1)$.
\end{lemma}
}

\subsection{The space $(\prob X,W_2)$}\label{sprobt}

Here we assume that  $(X,\tau,\sfd)$ is a Polish extended space. Given
$\mu,\,\nu\in\prob X$, we define the Wasserstein distance $W_2$
between them as
\begin{equation}
W_2^2(\mu,\nu):=\inf\int_{X\times X} \sfd^2(x,y)\,\d\ggamma(x,y),\label{eq:27}
\end{equation}
where the infimum is taken among all $\ggamma\in\prob{X\times X}$
such that
\[
\pi^1_\sharp\ggamma=\mu,\qquad\pi^2_\sharp\ggamma=\nu.
\]
Such measures are called admissible plans {(or couplings)} for the
pair $(\mu,\nu)$. {As usual, if $\mu\in \prob X$ and $T:X\to Y$ is a
$\mu$-measurable map with values in the topological space $Y$, the
push-forward measure $T_\sharp \mu\in \prob Y$ is defined by
$T_\sharp\mu(B):=\mu(T^{-1}(B))$ for every set $B\in\BorelSets Y$.}

We are not restricting ourselves to the space of measures with
finite second moments, so that it can possibly happen that
$W_2(\mu,\nu)=\infty$. Still, via standard arguments one can prove
that $W_2$ is an extended distance in $\prob X$. Also, we point out
that if we define
\[
\pro\mu(X):=\Big\{\nu\in\prob X\ :\ W_2(\mu,\nu)<\infty\Big\}
\]
for some $\mu\in\prob X$, then the space $(\pro\mu(X),W_2)$ is
actually a complete metric space (which reduces to the standard one
$(\probt X,W_2)$ if $\mu$ is a Dirac mass and $\sfd$ is finite).

Concerning the relation between $W_2$ convergence and weak
convergence, the implication
\begin{equation}\label{eq:impliWasse}
W_2(\mu_n,\mu)\to 0\qquad\Longrightarrow\qquad
\int_X\varphi\,\d\mu_n\to\int_X\varphi\,\d\mu\quad\forall\varphi\in
C_b(X)
\end{equation}
is well known if $(X,\sfd)$ is a metric space and $\tau$ is induced
by the distance $\sfd$, see for instance
\cite[Proposition~7.1.5]{Ambrosio-Gigli-Savare08}; the implication
remains true in our setting, with the same proof, thanks to the
compatibility condition (iii) of Definition~\ref{dPolish}.

Since $\sfd^2$ is $\tau$-lower semicontinuous,
 when $W_2(\mu,\nu)<\infty$
the infimum in the definition \eqref{eq:27} of $W_2^2$ is attained
and we call optimal all the plans $\ggamma$ realizing the minimum;
Kantorovich's duality formula holds:
\begin{equation}
\label{eq:dualitabase} \frac12 W_2^2(\mu,\nu)=\sup
\left\{\int_X\varphi\,\d\mu+\int_X\psi\,\d\nu:\
\varphi(x)+\psi(y)\leq \frac12 \sfd^2(x,y)\right\},
\end{equation}
where the functions $\varphi$ and $\psi$ in the supremum are
respectively $\mu$-measurable and $\nu$-measurable, and in $L^1$.
One can also restrict, without affecting the value of the supremum,
to bounded and continuous functions $\varphi$, $\psi$ (see
\cite[Theorem~6.1.1]{Ambrosio-Gigli-Savare08}).

Recall that the \emph{$c$-transform} $\varphi^c$
of $\varphi:X\to\R\cup\{-\infty\}$ is defined by
\[
\varphi^c(y):=\inf\left\{\frac{\sfd^2(x,y)}2-\varphi(x):\ x\in X\right\}
\]
and that $\psi$ is said to be \emph{$c$-concave} if
$\psi=\varphi^c$ for some $\varphi$.

{$c$-concave functions are always $\sfd$-upper
semicontinuous, hence Borel in the case when $\sfd$ is finite and
induces $\tau$.} More generally, it is not difficult to check that
\begin{equation}\label{eq:suslinserve}
\varphi\,\,\,\,\text{Borel}\qquad\Longrightarrow\qquad
\varphi^c\,\,\,\,\text{$\BorelSetsStar{X}$-measurable.}
\end{equation}
 The proof follows, as in Lemma~\ref{le:measurability_of_slopes},
from Suslin's theory: indeed, the set $\{\varphi^c<\alpha\}$ is the
projection on the second coordinate of the Borel set of points
$(x,y)$ such that $\sfd^2(x,y)/2-\varphi(x)<\alpha$, so it is a
Suslin set and therefore universally measurable.

If $\varphi(x),\,\psi(y)$ satisfy $\varphi(x)+\psi(y)\leq
\sfd^2(x,y)/2$, since $\varphi^c\geq\psi$ still satisfies
$\varphi+\varphi^c\leq \sfd^2/2$ and since we may restrict ourselves to
bounded continuous functions, we obtain
\begin{equation}
\label{eq:dualitabasebis} \frac{1}{2}W_2^2(\mu,\nu)=\sup
\left\{\int_X\varphi\,\d\mu+\int_X\varphi^c\,\d\nu:\ \varphi\in
C_b(X)\right\}.
\end{equation}
\begin{definition}[Kantorovich potential]
Assume that $\sfd$ is a finite distance.  We say that a map
$\varphi:X\to\R\cup\{-\infty\}$ is a Kantorovich potential relative
to an optimal plan $\ggamma$ if:
\begin{itemize}
\item[(i)] $\varphi$ is $c$-concave, not identically equal to $-\infty$ and Borel;
\item[(ii)] $\varphi(x)+\varphi^c(y)=\tfrac12\sfd^2(x,y)$ for $\ggamma$-a.e. $(x,y)\in X\times X$.
\end{itemize}
\end{definition}
Since $\varphi$ is not identically equal to $-\infty$ the function
$\varphi^c$ still takes values in $\R\cup\{-\infty\}$ and the
$c$-concavity of $\varphi$ ensures that $\varphi=(\varphi^c)^c$.
Notice that we are not requiring integrability of $\varphi$ and
$\varphi^c$, although condition (ii) forces $\varphi$ (resp.
$\varphi^c$) to be finite $\mu$-a.e. (resp. $\nu$-a.e.).

The existence of maximizing pairs in the duality formula can be a
difficult task if $\sfd$ is unbounded, and no general result is
known when $\sfd$ may attain the value $\infty$. For this reason we
restrict ourselves to finite distances $\sfd$ in the previous
definition and in the next proposition, concerning the main
existence and integrability result for Kantorovich potentials.

\begin{proposition}[Existence of Kantorovich potentials]
If $\sfd$ is finite and $\ggamma$ is an optimal plan with finite
cost, then Kantorovich potentials $\varphi$ relative to $\ggamma$
exist. In addition, if $\sfd(x,y)\leq a(x)+b(y)$ with $a\in
L^2(X,\mu)$ and $b\in L^2(Y,\nu)$, the functions $\varphi$,
$\varphi^c$ are respectively $\mu$-integrable and $\nu$-integrable
and provide maximizers in the duality formula
\eqref{eq:dualitabase}. In this case $\varphi$ is a Kantorovich
potential relative to any optimal plan $\ggamma$.
\end{proposition}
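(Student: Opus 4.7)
The plan is to invoke the classical Rockafellar--R\"uschendorf construction of $c$-concave potentials from $c$-cyclically monotone sets, adapted to our setting where the finite distance $\sfd$ need not generate the Polish topology $\tau$. First, I would establish that $\Gamma := \supp(\ggamma)$ is $c$-cyclically monotone: for every finite sub-collection $\{(x_i, y_i)\}_{i=0}^{n-1} \subset \Gamma$ and every permutation $\sigma$ of the indices,
\[
\sum_{i=0}^{n-1} \sfd^2(x_i, y_i) \le \sum_{i=0}^{n-1} \sfd^2(x_{\sigma(i)}, y_i).
\]
The standard contradiction argument---swap a small amount of $\ggamma$-mass across violating points to build a strictly cheaper coupling---applies verbatim here thanks to the $\tau$-lower semicontinuity of $\sfd^2$ from (iv) of Definition~\ref{dPolish}.

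Second, exploiting the separability of $(X,\tau)$, I would fix a countable $\tau$-dense subset $\{(x_n, y_n)\}_{n \ge 0}$ of $\Gamma$ (with $n=0$ serving as a base point) and define
\[
\varphi(x) := \inf\Biggl\{\sum_{j=0}^{k-1}\bigl[\tfrac12\sfd^2(x_{i_{j+1}}, y_{i_j}) - \tfrac12\sfd^2(x_{i_j}, y_{i_j})\bigr] + \tfrac12\sfd^2(x, y_{i_k}) - \tfrac12\sfd^2(x_{i_k}, y_{i_k})\Biggr\},
\]
the infimum running over $k \ge 1$ and finite sequences $i_0=0, i_1,\dots, i_k \in \N$. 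As a function of $x$ each member of this countable family is $\tfrac12\sfd^2(x, y_{i_k})$ plus a constant, hence $\tau$-lower semicontinuous by (iv), so $\varphi$ is a countable infimum of Borel functions and is itself Borel; it is $c$-concave by construction, and cyclical monotonicity on the dense set yields $\varphi(x_0) = 0$, so $\varphi \not\equiv -\infty$. The classical R\"uschendorf argument gives $\varphi(x_n) + \varphi^c(y_n) = \tfrac12\sfd^2(x_n, y_n)$ for every $n$; the extension of this equality from $\{(x_n,y_n)\}$ to $\ggamma$-a.e.\ point of $\Gamma$ will be obtained in the next step via integration, since the $\BorelSetsStar X$-measurability of $\varphi^c$ comes from \eqref{eq:suslinserve}.

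For the integrability part, I would use the inequalities $\varphi(x) \le \tfrac12\sfd^2(x,y_0) - \varphi^c(y_0)$ and $\varphi^c(y) \le \tfrac12\sfd^2(x_0,y) - \varphi(x_0)$ coming from $c$-concavity at the base point; combined with $\sfd^2(x,y) \le 2(a^2(x) + b^2(y))$, these give $L^1$ upper bounds for $\varphi^+$ and $(\varphi^c)^+$. Since the cost is finite and $\varphi + \varphi^c \le \tfrac12\sfd^2$ everywhere, Fubini applied with care for signs yields
\[
\int \varphi^+\,\d\mu + \int (\varphi^c)^+\,\d\nu - \int \varphi^-\,\d\mu - \int (\varphi^c)^-\,\d\nu \le \tfrac12\int\sfd^2\,\d\ggamma < \infty,
\]
so $\varphi^- \in L^1(\mu)$ and $(\varphi^c)^- \in L^1(\nu)$ as well. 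The duality \eqref{eq:dualitabase} then forces $\int \varphi\,\d\mu + \int \varphi^c\,\d\nu = \tfrac12 W_2^2(\mu,\nu) = \int \tfrac12\sfd^2\,\d\ggamma$, whence the nonnegative integrand $\tfrac12\sfd^2 - \varphi - \varphi^c$ vanishes $\ggamma$-a.e., completing the verification of (ii) in the definition of Kantorovich potential and showing that $(\varphi,\varphi^c)$ attains the supremum. For any other optimal plan $\ggamma'$, the analogous chain $\int \varphi\,\d\mu + \int \varphi^c\,\d\nu \le \int (\varphi + \varphi^c)\,\d\ggamma' \le \tfrac12 W_2^2(\mu,\nu)$ collapses into equalities, so $\tfrac12\sfd^2 - \varphi - \varphi^c$ vanishes $\ggamma'$-a.e.\ as well.

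The main obstacle is the Borel regularity of $\varphi$ in the setting where $\sfd$ and $\tau$ genuinely differ: the usual R\"uschendorf inf-formula only yields $\sfd$-upper semicontinuity, a property which in general is \emph{weaker} than $\tau$-Borel measurability required in the definition of Kantorovich potential. The countable-inf construction anchored on a $\tau$-dense countable subset of $\Gamma$ circumvents this difficulty, but must be paired carefully with the integral (rather than pointwise) argument that promotes the equality $\varphi + \varphi^c = \tfrac12\sfd^2$ from the countable dense subset to $\ggamma$-almost everywhere, since $\varphi$ need not be $\tau$-continuous.
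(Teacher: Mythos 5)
Your proposal has two intertwined gaps that the paper's route (via \cite[Theorem~6.1.4]{Ambrosio-Gigli-Savare08}) avoids.

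\smallskip
\emph{First}, the claim that $\Gamma:=\supp(\ggamma)$ is $\sfd^2$-cyclically monotone is not justified when $\sfd^2$ is only $\tau$-lower semicontinuous. The mass-swapping argument you invoke requires an \emph{upper} bound on $\sfd^2(x,y)$ for $(x,y)$ in a neighbourhood of $(x_{\sigma(i)},y_i)$, so that the perturbed plan is strictly cheaper on a set of positive $\ggamma$-measure; lower semicontinuity gives exactly the wrong direction of control, so the argument does not ``apply verbatim''. This is precisely why the paper does not work with the support: it invokes the existence of a $\sigma$-compact, $\sfd^2$-monotone Borel set $\Gamma$ of full $\ggamma$-measure (a general consequence of $\tau$-lower semicontinuity of the cost and finiteness of the total cost), and carries out the R\"uschendorf construction there. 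The $\sigma$-compactness also resolves your measurability concern cleanly: the infimum over chains contained in an increasing sequence of compact sets of a $\tau$-lsc integrand is a countable infimum of $\tau$-lsc functions, hence Borel, with no need to pass to a countable dense subset.

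\smallskip
\emph{Second}, the passage from the pointwise equality $\varphi(x_n)+\varphi^c(y_n)=\tfrac12\sfd^2(x_n,y_n)$ on a countable $\tau$-dense subset to the equality $\ggamma$-a.e.\ is circular as written. Your argument reads the duality \eqref{eq:dualitabase} as forcing $\int\varphi\,\d\mu+\int\varphi^c\,\d\nu=\tfrac12 W_2^2(\mu,\nu)$, but \eqref{eq:dualitabase} only yields $\le$ for an arbitrary admissible pair; equality for your particular $(\varphi,\varphi^c)$ must be established \emph{before} it can be fed back to recover the $\ggamma$-a.e.\ pointwise identity, and the standard route to that equality is precisely via the pointwise identity on a full-measure set. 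Relatedly, your Fubini step does not give $\varphi^-\in L^1(\mu)$: the inequality $\int\varphi^+\,\d\mu+\int(\varphi^c)^+\,\d\nu-\int\varphi^-\,\d\mu-\int(\varphi^c)^-\,\d\nu\le\tfrac12\int\sfd^2\,\d\ggamma$ is vacuously satisfied if the left-hand side is $-\infty$. The correct order, as in the paper, is: construct $\varphi$ so that $\varphi+\varphi^c=\tfrac12\sfd^2$ holds at every point of $\Gamma$ (hence $\ggamma$-a.e.); then use $\sfd^2\le 2(a^2+b^2)$ and $c$-concavity at a base point to get $\varphi^+\in L^1(\mu)$, $(\varphi^c)^+\in L^1(\nu)$; integrate the pointwise identity against $\ggamma$ to deduce that the negative parts are integrable too and that $(\varphi,\varphi^c)$ is a maximizer; and finally compare with any other optimal $\ggamma'$ as you do in the last line.
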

\begin{proof} Existence of $\varphi$ follows by a well-known argument, see for instance
{\nc \cite[Theorem~5.10]{Villani09},} \cite[Theorem~6.1.4]{Ambrosio-Gigli-Savare08}: one makes the
R\"uschendorf-Rockafellar construction of a $c$-concave function
$\varphi$ starting from a $\sigma$-compact and $\sfd^2$-monotone set
$\Gamma$ on which $\ggamma$ is concentrated. The last statement
follows by
$$
\frac{1}{2}W_2^2(\mu,\nu)=\int_X\varphi\,\d\mu+\int_X\varphi^c\,\d\nu\leq
\int_{X\times X}\frac{1}{2}\sfd^2\,\d\ggamma
$$
for any admissible plan $\ggamma$.
\end{proof}

\subsection{Geodesically convex functionals and gradient
flows}\label{se:prelGF}

Given an extended metric space $(Y,\sfd_Y )$ (in the sequel it will
mostly be a Wasserstein space) and $K\in\R$, a functional
$E:Y\to\R\cup\{+\infty\}$ is said to be $K$-geodesically convex if
for any $y_0,\,y_1\in D(E)$ with ${\sfd_Y}(y_0,y_1)<\infty$ there
exists $\gamma\in\geo(Y)$ such that $\gamma_0=y_0$, $\gamma_1=y_1$
and
\[
E(\gamma_t)\leq (1-t)E(y_0)+tE(y_1)-\frac K2t(1-t)
\sfd_Y^2(y_0,y_1)\qquad\forall t\in[0,1].
\]

A consequence of $K$-geodesic convexity is that the descending slope
defined in \eqref{eq:slopes} can be calculated as
\begin{equation}
\label{eq:slopesup} |\rmD^-E|(y)=\sup_{z\in
Y\setminus\{y\}}\left(\frac{E(y)-E(z)}{\sfd_Y (y,z)}+\frac K2
\sfd_Y(y,z)\right)^+,
\end{equation}
so that $|\rmD^-E|(y)$ is the smallest constant $S\geq 0$ such
that
\begin{equation}
  \label{eq:77}
  E(z)\ge E(y)-S\sfd_Y(z,y)+\frac K2\sfd_Y^2(z,y)\quad
  \text{for every }z\in \class Yy.
\end{equation}
We recall (see \cite[Corollary~2.4.10]{Ambrosio-Gigli-Savare08})
that for $K$-geodesically convex {\nc and lower semicontinuous functionals} the descending slope is
an upper gradient, as defined in Section~\ref{sec:supg}: in
particular
\begin{equation}
\label{eq:boundtuttecurve} E(y_t)\ge E(y_s)- \int_s^t |\dot
y_r|\,|\rmD^-E|(y_r)\,\d r \qquad\text{for every }s,t\in
[0,\infty),\ s<t
\end{equation}
for all locally absolutely continuous curves $y:[0,\infty)\to D(E)$.
A metric gradient flow for $E$ is a locally absolutely continuous
curve $y:[0,\infty)\to D(E)$ along which \eqref{eq:boundtuttecurve}
holds as an equality and moreover $|\dot y_t|=|\rmD^- E|(y_t)$ for
a.e. $t\in (0,\infty)$.

An application of Young inequality shows that gradient flows for
functionals can be characterized by the following definition.

\begin{definition}[$E$-dissipation inequality and metric gradient flow]\label{def:dissKconv}
Let $E:Y\to\R\cup\{+\infty\}$ be a functional. We say that a locally
absolutely continuous curve $[0,\infty)\ni t\mapsto y_t\in D(E)$
satisfies the \emph{$E$-dissipation inequality}
if
\begin{equation}\label{eq:edi}
E(y_0)\geq E(y_t)+\frac12\int_0^t|\dot y_r|^2\,\d
r+\frac12\int_0^t|\rmD^- E|^2(y_r)\,\d r\qquad\forall t\geq 0.
\end{equation}
$y$ is a gradient flow of $E$ starting from $y_0\in D(E)$ if
 \eqref{eq:edi} holds as an equality, i.e.\
\begin{equation}\label{eq:ede}
E(y_0)= E(y_t)+\frac12\int_0^t|\dot y_r|^2\,\d
r+\frac12\int_0^t|\rmD^- E|^2(y_r)\,\d r\qquad\forall t\geq 0.
\end{equation}
\end{definition}

By the remarks above, it is not hard to check that \eqref{eq:ede} is
equivalent to the $E$-dissipation inequality \eqref{eq:edi} whenever
$t\mapsto E(y_t)$ is absolutely continuous, in particular if
$|\rmD^- E|$ is an upper gradient of $E$ (as for $K$-geodesically
convex functionals).
 In this case \eqref{eq:ede} is equivalent to
\begin{equation}
  \label{eq:4}
  \frac \d{\d t}E(y_t)=-|\dot y_t|^2=-|\rmD^- E|^2(y_t)\quad \text{for a.e. $t\in
    (0,\infty)$.}
\end{equation}

  If $E:\R^d\to\R$ is a smooth
  functional, then a $C^1$ curve $(y_t)$ is a gradient flow according to the
  previous definition if and only if it satisfies $y_t'=-D E(y_t)$ for all
  $t\in (0,\infty)$, so that the metric definition reduces to the classical one when
  specialized to Euclidean spaces and to regular curves and functionals.

\section{Hopf-Lax semigroup in metric spaces}\label{sec:hopflax}

In this section we study the properties of the functions given by
Hopf-Lax formula in a metric setting and the relations with the
Hamilton-Jacobi equation. Here we only assume that $(X,\sfd)$ is an extended
metric space until Theorem~\ref{thm:subsol} (in particular,
$(X,\sfd)$ is not necessarily $\sfd$-complete or $\sfd$-separable)
and the measure structure $(X,\tau,\mm)$ does not play a role,
except in Proposition~\ref{prop:goodBorel} and
Proposition~\ref{prop:slopeKA}. {\nc Only in Theorem~\ref{prop:supersol} we
will also assume that our space is a length space.}

Let $(X,\sfd)$ be an extended metric space and
$f:X\to\R\cup\{+\infty\}$. We define
\begin{equation}\label{def:Qta}
F(t,x,y):=f(y)+\frac{\sfd^2(x,y)}{2t},
\end{equation}
and
\begin{equation}\label{def:Qtb}
Q_tf(x):=\inf_{y\in X}F(t,x,y)\qquad (x,t)\in X\times (0,\infty).
\end{equation}
The map $ (x,t)\mapsto Q_t f(x),\ X\times
(0,\infty)\to\overline{\R}$ is obviously $\sfd$-upper
semicontinuous. The behavior of $Q_tf$ is not trivial only in the
set
\begin{equation}\label{eq:defdf}
\mathcal D(f):=\left\{x\in X:\ \text{$\sfd(x,y)<\infty$ for some $y$
with $f(y)<\infty$}\right\}
\end{equation}
and we shall restrict our analysis to $\mathcal D(f)$, so that
$Q_tf(x)\in\R\cup\{-\infty\}$ for $(x,t)\in\mathcal D(f)\times
(0,\infty)$.
For $x\in\mathcal D(f)$ we set also
$$
t_*(x):=\sup\{t>0:\ Q_tf(x)>-\infty\}
$$
with the convention $t_*(x)=0$ if $Q_tf(x)=-\infty$ for all $t>0$.
Since $Q_tf(x)>-\infty$ implies $Q_sf(y)>-\infty$ for all $s\in
(0,t)$ and all $y$ at a finite distance from $x$, it follows that
$t_*(x)$ depends only on the equivalence class  $\class Xx$ of $x$,
see \eqref{eq:1}.

Finally, we introduce the functions $\rmD^+(x,t)$, $\rmD^-(x,t)$ as
\begin{equation}\label{eq:defdpm}
\rmD^+(x,t)
:=\sup_{(y_n)}\limsup_n \sfd(x,y_n),\qquad
\rmD^-(x,t)
:=\inf_{(y_n)}\liminf_n \sfd(x,y_n),
\end{equation}
where, in both cases, the $(y_n)$'s vary among all minimizing
sequences of $F(t,x,\cdot)$. It is easy to check (arguing as in
\cite[Lemma~2.2.1, Lemma~3.1.2]{Ambrosio-Gigli-Savare08}) that
$\rmD^+(x,t)$ is finite for $0<t<t_*(x)$ and that
\begin{equation}\label{eq:continuityQtf}
\lim_{i\to\infty}\sfd(x_i,x)=0,\quad\lim_{i\to\infty}t_i=t\in
(0,t_*(x))\qquad\Longrightarrow\qquad
\lim_{i\to\infty}Q_{t_i}f(x_i)=Q_tf(x),
\end{equation}
\begin{equation}\label{eq:boundds}
\sup\left\{\rmD^+(y,t):\ \sfd(x,y)\leq R,\,\,0<t<
t_*(x)-\eps\right\}<\infty \qquad\forall R>0,\,\,\eps>0.
\end{equation}
Simple diagonal arguments show that the supremum and the infimum in
\eqref{eq:defdpm} are attained.

Obviously $\rmD^-(x,\cdot)\leq \rmD^+(x,\cdot)$; the next proposition
shows that both functions are nonincreasing, and that they coincide
out of a countable set.

\begin{proposition}[Monotonicity of $D^\pm$]
For all $x\in\mathcal D(f)$ it holds
\begin{equation}\label{eq:basic_mono}
\rmD^+(x,t)\leq \rmD^-(x,s)<\infty,\qquad 0<t<s<t_*(x).
\end{equation}
As a consequence, $\rmD^+(x,\cdot)$ and $\rmD^-(x,\cdot)$ are both
nondecreasing in $(0,t_*(x))$ and they coincide at all points
therein with at most countably many exceptions.
\end{proposition}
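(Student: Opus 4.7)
The plan is to prove the main inequality $D^+(x,t)\le D^-(x,s)$ by a two-sequence comparison/swap argument, then read off both monotonicity and the countability of discrepancies as simple consequences.

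Fix $x\in\mathcal D(f)$ and $0<t<s<t_*(x)$. I would first extract subsequences realizing the extremal values: choose a minimizing sequence $(y_n)$ for $F(t,x,\cdot)$ with $\lim_n \sfd(x,y_n)=D^+(x,t)$, and a minimizing sequence $(z_n)$ for $F(s,x,\cdot)$ with $\lim_n \sfd(x,z_n)=D^-(x,s)$ (the fact that $D^+(x,t)<\infty$ is already noted in \eqref{eq:boundds}). By the definition of infimum there exist $\eta_n\downarrow 0$ with
\begin{align*}
  f(y_n)+\frac{\sfd^2(x,y_n)}{2t}&\le f(z_n)+\frac{\sfd^2(x,z_n)}{2t}+\eta_n,\\
  f(z_n)+\frac{\sfd^2(x,z_n)}{2s}&\le f(y_n)+\frac{\sfd^2(x,y_n)}{2s}+\eta_n.
\end{align*}

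The key step is to add these two inequalities: the values $f(y_n)$ and $f(z_n)$ cancel, and we are left with
\begin{equation*}
  \bigl(\sfd^2(x,y_n)-\sfd^2(x,z_n)\bigr)\left(\frac{1}{2t}-\frac{1}{2s}\right)\le 2\eta_n.
\end{equation*}
Since $t<s$, the coefficient $\frac{1}{2t}-\frac{1}{2s}=\frac{s-t}{2ts}$ is strictly positive, so
\begin{equation*}
  \sfd^2(x,y_n)\le \sfd^2(x,z_n)+\frac{4ts}{s-t}\,\eta_n.
\end{equation*}
Passing to the limit along the chosen subsequences, the error term vanishes and we obtain $D^+(x,t)^2\le D^-(x,s)^2$, hence \eqref{eq:basic_mono}.

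Monotonicity of $D^-$ and $D^+$ on $(0,t_*(x))$ now follows at once: for $0<t_1<t_2<t_*(x)$, \eqref{eq:basic_mono} combined with the trivial inequality $D^-\le D^+$ yields
\begin{equation*}
  D^-(x,t_1)\le D^+(x,t_1)\le D^-(x,t_2)\le D^+(x,t_2).
\end{equation*}
Finally, any monotone function on $(0,t_*(x))$ has at most countably many discontinuities; let $t$ be a common continuity point of the two nondecreasing maps $D^\pm(x,\cdot)$. Letting $s\downarrow t$ in \eqref{eq:basic_mono} gives $D^+(x,t)\le D^-(x,t^+)=D^-(x,t)$, and combined with $D^-(x,t)\le D^+(x,t)$ we conclude $D^+(x,t)=D^-(x,t)$, so the two functions coincide off an (at most) countable set. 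The only delicate point is the very first step: remembering that the sequences defining $D^+$ and $D^-$ only realize the appropriate \textsf{lim sup}/\textsf{lim inf}, so one must pass to further subsequences before performing the swap, but no other real obstacle appears — the algebra of the $f$-cancellation is what drives the whole statement.
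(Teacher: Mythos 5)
Your proof is correct and follows essentially the same route as the paper: choose minimizing sequences realizing $D^+(x,t)$ and $D^-(x,s)$ respectively (the paper notes these limits are attained via a diagonal argument), swap them in the two minimization problems, add the resulting optimality inequalities so that the $f$-terms cancel, and use $1/t - 1/s > 0$; monotonicity and countability of discrepancies then follow exactly as you describe. The only cosmetic difference is that you keep an explicit error term $\eta_n$ and pass to the limit at the end, whereas the paper passes to limits in each inequality first, using that $\lim_n f(y_n)$ and $\lim_n f(z_n)$ exist along the chosen sequences.
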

\begin{proof}
Fix $x\in\mathcal D(f)$, $0<t<s<t_*(x)$ and choose minimizing
sequences $(x^n_t)$ and $(x^n_s)$ for $F(t,x,\cdot)$ and
$F(s,x,\cdot)$ respectively, such that $\lim_n \sfd(x,x^n_t)=\rmD^+(x,t)$
and $\lim_n \sfd(x,x^n_s)=\rmD^-(x,s)$. As a consequence, there exist the
limits of $f(x^n_t)$ and $f(x^n_s)$ as $n\to\infty$. The minimality
of the sequences gives
\[
\begin{split}
\lim_nf(x^n_t)+\frac{\sfd^2(x_t^n,x)}{2t}&\leq\lim_nf(x^n_s)+\frac{\sfd^2(x_s^n,x)}{2t}\\
\lim_nf(x^n_s)+\frac{\sfd^2(x_s^n,x)}{2s}&\leq\lim_nf(x^n_t)+\frac{\sfd^2(x_t^n,x)}{2s}.
\end{split}
\]
Adding up and using the fact that $\tfrac1t>\tfrac 1s$ we deduce
\[
\rmD^+(x,t)=\lim_n \sfd(x^n_t,x)\leq\lim_n \sfd(x^n_s,x)= \rmD^-(x,s),
\]
which is \eqref{eq:basic_mono}.
Combining this with the inequality $\rmD^-\leq \rmD^+$ we
immediately obtain that both functions are nonincreasing. At a point
of right continuity of $\rmD^-(x,\cdot)$ we get
$$
\rmD^+(x,t)\leq\inf_{s>t}\rmD^-(x,s)=\rmD^-(x,t).
$$
This implies that the two functions coincide out of a countable set.
\end{proof}
Next, we examine the semicontinuity properties of $D^\pm$: they
imply that points $(x,t)$ where the equality
$\rmD^+(x,t)=\rmD^-(x,t)$ occurs are continuity points for both $\rmD^+$ and
$\rmD^-$.

\begin{proposition}[Semicontinuity of $D^\pm$]
  \label{prop:semiD}
Let $x_n\dto x$ and $t_n\to t\in (0,t_*(x))$. Then
$$
\rmD^-(x,t)\leq\liminf_{n\to\infty}\rmD^-(x_n,t_n),\qquad
\rmD^+(x,t)\geq\limsup_{n\to\infty}D^{+}(x_n,t_n).
$$
In particular, for every $x\in X$ the map $t\mapsto \rmD^-(x,t)$ is
left continuous in $(0,t_*(x))$ and the map $t\mapsto \rmD^+(x,t)$ is
right continuous in $(0,t_*(x))$.
\end{proposition}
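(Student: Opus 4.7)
The plan is to use near-optimal approximations to relate minimizing sequences for the moving problems $F(t_n,x_n,\cdot)$ to minimizing sequences for the limit problem $F(t,x,\cdot)$. Specifically, for each $n$ I would extract a single $y^n\in X$ that simultaneously (i) almost minimizes $F(t_n,x_n,\cdot)$, namely $F(t_n,x_n,y^n)\le Q_{t_n}f(x_n)+1/n$, and (ii) nearly achieves the extremum defining $D^\pm(x_n,t_n)$, i.e.\ $\sfd(x_n,y^n)\le D^-(x_n,t_n)+1/n$ for the $D^-$ statement, or $\sfd(x_n,y^n)\ge D^+(x_n,t_n)-1/n$ for the $D^+$ statement. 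Both conditions are arranged simultaneously by diagonal extraction from the minimizing sequence of $F(t_n,x_n,\cdot)$ whose $\sfd(x_n,\cdot)$-liminf (resp.\ limsup) realizes $D^-(x_n,t_n)$ (resp.\ $D^+(x_n,t_n)$).

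For the lower bound on $D^-$, one may assume $\ell:=\liminf_n D^-(x_n,t_n)<\infty$ and pass to a subsequence along which $D^-(x_n,t_n)\to\ell$. Then $\sfd(x_n,y^n)\le\ell+1/n$ is bounded, so by the triangle inequality and $\sfd(x_n,x)\to 0$ one obtains $\limsup_n\sfd(x,y^n)\le\ell$. The identity
\[
F(t,x,y^n)-F(t_n,x_n,y^n)=\frac{\sfd^2(x,y^n)}{2t}-\frac{\sfd^2(x_n,y^n)}{2t_n}
\]
vanishes in the limit (up to a further subsequence making $\sfd(x,y^n)$ convergent) because the distances and the time parameters share a common limit; combined with $F(t_n,x_n,y^n)\to Q_tf(x)$, coming from \eqref{eq:continuityQtf} and the near-optimality, this shows that $(y^n)$ is a minimizing sequence for $F(t,x,\cdot)$, so the infimum definition of $D^-$ yields $D^-(x,t)\le\limsup_n\sfd(x,y^n)\le\ell$.

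For the upper bound on $D^+$, the local boundedness estimate \eqref{eq:boundds} provides a uniform bound on $D^+(x_n,t_n)$, and by choosing $y^n$ further along the minimizing sequence one can also impose $\sfd(x_n,y^n)\le D^+(x_n,t_n)+1/n$ (using $\limsup_k\sfd(x_n,y_k^n)\le D^+(x_n,t_n)$ along a minimizing sequence). Passing to a subsequence with $D^+(x_n,t_n)\to m:=\limsup_n D^+(x_n,t_n)$, the sandwich $D^+(x_n,t_n)-1/n\le\sfd(x_n,y^n)\le D^+(x_n,t_n)+1/n$ forces $\sfd(x_n,y^n)\to m$, and hence $\sfd(x,y^n)\to m$. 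The same threading argument as before shows that $(y^n)$ is a minimizing sequence for $F(t,x,\cdot)$, so the supremum definition gives $D^+(x,t)\ge\limsup_n\sfd(x,y^n)=m$.

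The ``in particular'' one-sided continuity statements follow by combining the semicontinuity just proved with the monotonicity of $D^\pm(x,\cdot)$ established in the preceding proposition. Since $D^-(x,\cdot)$ is nondecreasing, $\lim_{s\uparrow t}D^-(x,s)\le D^-(x,t)$, while lower semicontinuity applied to any sequence $s_n\uparrow t$ gives $D^-(x,t)\le\liminf_n D^-(x,s_n)$; together these yield left continuity. An entirely symmetric argument, using the upper semicontinuity and the nondecreasing character of $D^+(x,\cdot)$, yields right continuity. The main technical delicacy is engineering a single $y^n$ that is simultaneously near-optimal for $F(t_n,x_n,\cdot)$ and near-extremal for $D^\pm(x_n,t_n)$ with the correct one-sided control on $\sfd(x_n,y^n)$; once this diagonal extraction is in place, the passage to the limit of the perturbed quadratic term $\sfd^2/(2t)$ is routine.
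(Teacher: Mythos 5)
Your proposal is correct and follows essentially the same strategy as the paper's own proof: both extract, for each $n$, a single point $y^n$ that simultaneously near-minimizes $F(t_n,x_n,\cdot)$ and near-attains $D^\pm(x_n,t_n)$ (you choose such a point directly; the paper indexes a minimizing sequence $(y^i_n)_i$ for each $n$ and then picks $i(n)$ by a diagonal argument), invoke \eqref{eq:boundds} for uniform boundedness, use the continuity property \eqref{eq:continuityQtf} (the paper uses only its upper-semicontinuity half, but this is immaterial) together with the vanishing of the perturbation $\sfd^2(x,y^n)/2t - \sfd^2(x_n,y^n)/2t_n$ to show that $(y^n)$ is a minimizing sequence for the limit problem, and conclude from the infimum/supremum characterizations of $D^\mp$. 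Your treatment of the one-sided continuity statements, combining the just-proved semicontinuity with the monotonicity of $D^\pm(x,\cdot)$, is the standard argument and matches the paper's intent.
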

\begin{proof} For every $n\in\N$, let $(y_n^i)_{i\in \N}$ be a minimizing sequence for
$F(t_n,x_n,\cdot)$ for which the limit of $\sfd(y_n^i,x_n)$ as
$i\to\infty$ equals $\rmD^-(x_n,t_n)$. From \eqref{eq:boundds} we see
that we can assume that $\sup_{i,n}\sfd(y_n^i,x_n)$ is finite.
For all $n$ we have
\[
\lim_{i\to\infty}f(y_n^i)+\frac{\sfd^2(y_n^i,x_n)}{2t_n}=Q_{t_n}f(x_n).
\]
Moreover, the $\sfd$-upper semicontinuity of $(x,t)\mapsto Q_tf(x)$
gives that $\limsup_nQ_{t_n}f(x_n)\leq Q_tf(x)$. Since
$\sfd(y_n^i,x_n)$ is bounded we have
$\sup_i|\sfd^2(y_n^i,x_n)-\sfd^2(y_n^i,{x})|$ is infinitesimal,
hence by a diagonal argument we can find a sequence $n\mapsto i(n)$
such that
\[
{\limsup_{n\to\infty}}
f(y_{n}^{i(n)})+\frac{\sfd^2(y_n^{i(n)},x)}{2t}\le Q_tf(x),\quad
{\big|\sfd(x_n,y_n^{i(n)})-\rmD^-(x_n,t_n)\big|\le \frac 1n}.
\]
This implies that $n\mapsto y_n^{i(n)}$ is a minimizing sequence for
$F(t,x,\cdot)$, therefore
\[
\rmD^-(x,t)\leq\liminf_{n\to\infty}\sfd(x,y_n^{i(n)})
=\liminf_{n\to\infty}\sfd(x_n,y_n^{i(n)})=\liminf_{i\to\infty}\rmD^-(x_n,t_n).
\]
If we choose, instead, sequences $(y_n^i)_{i\in \N}$ on which the supremum in
the definition of $\rmD^+(x_n,t_n)$ is attained, we obtain the upper
semicontinuity property.
\end{proof}
Before stating the next proposition we recall that semiconcave
functions $g$ on an open interval are local quadratic perturbations
of concave functions; they inherit from concave functions all
pointwise differentiability properties, as existence of right and
left derivatives $\frac{\d^-}{\d t}g\geq\frac{\d^+}{\d t}g$, and similar.

\begin{proposition}[Time derivative of
$Q_tf$]\label{prop:timederivative} The map $(0,t_*(x))\ni t\mapsto
Q_tf(x)$ is locally Lipschitz and locally semiconcave. For all $t\in
(0,t_*(x))$ it satisfies
\begin{equation}\label{eq:Dini1}
  \frac{\d^-}{\d t}Q_tf(x)=
  -\frac{(\rmD^-(x,t))^2}{2t^2},\qquad
  \frac{\d^+}{\d t}Q_tf(x)=
  -\frac{(\rmD^+(x,t))^2}{2t^2}.
\end{equation}
In particular, $s\mapsto Q_sf(x)$ is differentiable at $t\in
(0,t_*(x))$ if and only if $\rmD^+(x,t)=\rmD^-(x,t)$.
\end{proposition}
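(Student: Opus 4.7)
The plan is to compute sharp two-sided estimates for the difference quotient $[Q_{t_2}f(x)-Q_{t_1}f(x)]/(t_2-t_1)$, with $t_1<t_2$ in $(0,t_*(x))$, by exploiting the elementary identity
\[
F(t_2,x,y)-F(t_1,x,y)=-\sfd^2(x,y)\,\frac{t_2-t_1}{2\,t_1 t_2}
\]
evaluated along well-chosen minimizing sequences, and then to pass to the one-sided limits using the semicontinuity of $D^\pm(x,\cdot)$ proved in Proposition~\ref{prop:semiD}.

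Concretely, I would first fix a minimizing sequence $(y_n)$ for $F(t_1,x,\cdot)$ attaining $\sfd(x,y_n)\to D^+(x,t_1)$, which exists by the remark that the supremum and infimum in \eqref{eq:defdpm} are attained. Combining the identity above with $Q_{t_2}f(x)\le\lim_n F(t_2,x,y_n)$ gives the upper bound $[Q_{t_2}f(x)-Q_{t_1}f(x)]/(t_2-t_1)\le -(D^+(x,t_1))^2/(2\,t_1 t_2)$. Symmetrically, a minimizing sequence for $F(t_2,x,\cdot)$ attaining $\sfd(x,y_n)\to D^-(x,t_2)$ yields the matching lower bound $\ge -(D^-(x,t_2))^2/(2\,t_1 t_2)$. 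Sending $t_2\downarrow t_1=t$, the sandwich $D^+(x,t)\le D^-(x,t_2)\le D^+(x,t_2)$ furnished by \eqref{eq:basic_mono} and the pointwise inequality $D^-\le D^+$, combined with the right continuity of $D^+(x,\cdot)$ in Proposition~\ref{prop:semiD}, forces $D^-(x,t_2)\to D^+(x,t)$; both bounds therefore collapse onto $-(D^+(x,t))^2/(2t^2)$, proving the right-derivative formula in \eqref{eq:Dini1}. The left-derivative formula is symmetric: as $t_1\uparrow t$ one squeezes $D^+(x,t_1)\to D^-(x,t)$ via the bounds $D^-(x,t_1)\le D^+(x,t_1)\le D^-(x,t)$ and the left continuity of $D^-(x,\cdot)$.

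The same difference-quotient estimates give local Lipschitzianity on any compact subinterval $[a,b]\subset(0,t_*(x))$, since $D^\pm(x,\cdot)$ are bounded there by monotonicity together with \eqref{eq:boundds}. For local semiconcavity, the cleanest observation is the reparametrization $s:=1/t$: the identity
\[
\tilde Q(s):=Q_{1/s}f(x)=\inf_{y\in X}\Big\{f(y)+\tfrac{s}{2}\sfd^2(x,y)\Big\}
\]
displays $\tilde Q$ as an infimum of affine functions of $s$, hence concave on $(1/t_*(x),\infty)$. Since $Q_tf(x)=\tilde Q(1/t)$ is the composition of a concave function with a smooth, strictly decreasing map having bounded $C^2$ norm on $[a,b]$, and since the one-sided derivatives of $\tilde Q$ are bounded on $[1/b,1/a]$, a standard computation shows $t\mapsto Q_tf(x)$ is semiconcave on $[a,b]$. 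The final equivalence between differentiability of $Q_\cdot f(x)$ at $t$ and the equality $D^+(x,t)=D^-(x,t)$ is then immediate from \eqref{eq:Dini1}.

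The only subtle step is the passage to the limit in the two-sided bounds: one must verify that $D^-(x,t_2)\to D^+(x,t)$ (not $D^-(x,t)$) as $t_2\downarrow t$, which is exactly what the combination of the monotonicity \eqref{eq:basic_mono} with the one-sided semicontinuity in Proposition~\ref{prop:semiD} delivers; this same phenomenon produces the concave-style one-sided jump $\tfrac{\d^-}{\d t}Q_tf(x)>\tfrac{\d^+}{\d t}Q_tf(x)$ at the (countably many) exceptional times $t$ where $D^-(x,t)<D^+(x,t)$.
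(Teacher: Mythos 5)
Your derivation of \eqref{eq:Dini1} follows essentially the same route as the paper: fix $t_1<t_2$, exploit the identity $F(t_2,x,y)-F(t_1,x,y)=\tfrac12\sfd^2(x,y)(\tfrac1{t_2}-\tfrac1{t_1})$ along minimizing sequences for $F(t_1,x,\cdot)$ attaining $D^+(x,t_1)$ and for $F(t_2,x,\cdot)$ attaining $D^-(x,t_2)$, obtain the two-sided bound on the difference quotient, and then squeeze to the limit using the monotonicity \eqref{eq:basic_mono} together with the one-sided continuity from Proposition~\ref{prop:semiD}. The paper's inequalities \eqref{eq:5}--\eqref{eq:7} express exactly your bounds, and the key limits $D^-(x,s)\to D^+(x,t)$ as $s\downarrow t$ and $D^+(x,s)\to D^-(x,t)$ as $s\uparrow t$ are the same ones the paper invokes. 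Your local Lipschitz argument via boundedness of $D^\pm$ on compact subintervals also matches the paper's quantitative bound \eqref{eq:partialtquantitative}.

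Where you genuinely diverge is the semiconcavity step, and your alternative is cleaner. The paper argues that the distributional derivative of $t\mapsto [D^+(x,t)]^2/(2t^2)$ is locally bounded from below (using that $D^+(x,\cdot)$ is nondecreasing and bounded on compacta, so $[D^+]^2$ has a nonnegative derivative measure), which by \eqref{eq:Dini1} bounds the distributional second derivative of $Q_tf(x)$ from above. You instead observe that $s\mapsto\tilde Q(s):=Q_{1/s}f(x)=\inf_y\{f(y)+\tfrac s2\sfd^2(x,y)\}$ is an infimum of affine functions, hence concave, and then push concavity through the reparametrization $t\mapsto 1/t$: writing $g(t)=\tilde Q(1/t)$, the chain rule gives (in the distributional/one-sided sense) $g''(t)\le \tilde Q'(1/t)\cdot 2/t^3$, bounded above on compacta since the one-sided derivatives of the concave $\tilde Q$ are locally bounded and $2/t^3$ is bounded there. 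This structural observation -- $Q_tf(x)$ is, up to the substitution $s=1/t$, a concave function -- is a genuine simplification: it dispenses with BV product-rule considerations and makes the one-sided differentiability and the jump inequality $\tfrac{\d^-}{\d t}Q_tf(x)\ge\tfrac{\d^+}{\d t}Q_tf(x)$ transparent from general properties of concave functions, rather than rederiving them from \eqref{eq:Dini1}. Both routes are correct; yours is more conceptual.
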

\begin{proof}
Let $(x^n_t)$, $(x^n_s)$ be minimizing sequences for $F(t,x,\cdot)$
and $F(s,x,\cdot)$. We have
\begin{equation}
Q_sf(x)-Q_tf(x)\leq\liminf_{n\to\infty}
F(s,x,x^n_t)-F(t,x,x^n_t)=\liminf_{n\to\infty}
\frac{\sfd^2(x,x^n_t)}{2}\left(\frac1s-\frac1t\right),\label{eq:5}
\end{equation}
\begin{equation}
Q_sf(x)-Q_tf(x)\geq\limsup_{n\to \infty}
F(s,x,x^n_s)-F(t,x,x^n_s)=\limsup_{n\to\infty}
\frac{\sfd^2(x,x^n_s)}{2}\left(\frac1s-\frac1t\right).\label{eq:6}
\end{equation}
If  $s>t$
we obtain
\begin{equation}
\frac{(\rmD^-(x,s))^2}{2}\left(\frac1s-\frac1t\right) \le Q_sf(x)-Q_tf(x) \leq
\frac{(\rmD^+(x,t))^2}{2}\left(\frac1s-\frac1t\right);\label{eq:7}
\end{equation}
recalling that $\lim_{s\downarrow t}\rmD^-(x,s)=\rmD^+(x,t)$,
a division by $s-t$ and a limit as $s\downarrow t$ gives the identity for the right derivative in
\eqref{eq:Dini1}.
A similar argument, dividing by $t-s<0$ and passing to the limit as
$t\uparrow s$ yields the left derivative in \eqref{eq:Dini1}.

The local Lipschitz continuity follows by \eqref{eq:7} recalling
that $D^\pm(x,\cdot)$ are locally bounded functions; we easily get
the quantitative bound
\begin{equation}\label{eq:partialtquantitative}
\left\|\frac{\d}{\d t}Q_tf(x)\right\|_{L^\infty(\tau,\tau')} \leq
\frac{1}{2\tau^2}\|\rmD^+(x,\cdot)\|_{L^\infty(\tau,\tau')}
\qquad\text{for every }0<\tau<\tau'<t_*(x).
\end{equation}
Since the distributional derivative of the function $t\mapsto
[\rmD^+(x,t)]^2/(2 t^2)$ is locally bounded from below,
we also deduce
that $t\mapsto Q_t f$ is locally semiconcave.
\end{proof}

\begin{proposition}[Slopes and upper gradients of $Q_tf$]\label{prop:slopesqt}
For $x\in\mathcal D(f)$  it holds:
\begin{subequations}
\begin{align}
\label{eq:hjbss}
\qquad \qquad t\in (0,t_*(x))&&\Longrightarrow&&
|\rmD  Q_tf|(x)&\le \frac{\rmD^+(x,t)}t,\qquad\qquad\\
\label{eq:hjups} Q_tf(x)>-\infty&&\Longrightarrow&&
|\rmD^+
Q_tf|(x)&\leq \frac{\rmD^-(x,t)}t.
\end{align}
\end{subequations}
In addition, for all $t\in (0,t_*(x))$, $\rmD^-(\cdot,t)/t$ is an upper
gradient of $Q_tf$ restricted to $X_{[x]}=\{y: \sfd(x,y)<\infty\}$.
\end{proposition}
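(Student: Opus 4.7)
The plan is to derive (a) and (b) from a single competitor estimate for $Q_tf(y)$ in terms of $Q_tf(x)$, and then bootstrap (c) from (a) and (b) by exploiting the ``slopes are upper gradients along the curve'' principle of Remark \ref{rem:whenslopesare}.

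\textbf{Basic competitor estimate.} Fix $x\in \mathcal D(f)$ and $t\in (0, t_*(x))$, and let $(x_t^n)$ be a minimizing sequence for $F(t,x,\cdot)$. I would use each $x_t^n$ as a competitor in the infimum defining $Q_tf(y)$ and expand $\sfd^2(y,x_t^n)\le (\sfd(y,x)+\sfd(x,x_t^n))^2$. Subtracting $f(x_t^n)+\sfd^2(x,x_t^n)/(2t)\to Q_tf(x)$ and passing to $\limsup_n$ yields
$$Q_tf(y)-Q_tf(x)\le \frac{\sfd(y,x)\,\limsup_n \sfd(x,x_t^n)}{t}+\frac{\sfd^2(y,x)}{2t}.$$
By the diagonal arguments already noted after \eqref{eq:boundds}, the extrema in \eqref{eq:defdpm} are attained, so the factor $\limsup_n\sfd(x,x_t^n)$ can be taken to equal either $D^+(x,t)$ or $D^-(x,t)$ by choosing the sequence accordingly.

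\textbf{Proof of (b) and (a).} Starting from the $D^-$ version, taking the positive part, dividing by $\sfd(y,x)$ and passing to $\limsup_{y\to x}$ directly gives (b). For (a), the $D^+$ version yields $|\nabla^+ Q_tf|(x)\le D^+(x,t)/t$, while swapping the roles of $x$ and $y$ gives
$$Q_tf(x)-Q_tf(y)\le \frac{\sfd(y,x)\,D^+(y,t)}{t}+\frac{\sfd^2(y,x)}{2t}.$$
Combined with the upper semicontinuity of $y\mapsto D^+(y,t)$ at $x$ (Proposition \ref{prop:semiD}), this produces $|\nabla^- Q_tf|(x)\le D^+(x,t)/t$, and (a) follows from the identity $|\nabla Q_tf|=\max\{|\nabla^+ Q_tf|,|\nabla^- Q_tf|\}$ in \eqref{eq:2}.

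\textbf{Proof of (c).} Fix $t\in (0,t_*(x))$ and an absolutely continuous curve $\gamma\colon [0,1]\to\class Xx$. Its image is $\sfd$-bounded, so \eqref{eq:boundds} makes $D^+(\cdot,t)$ bounded on $\gamma([0,1])$; feeding this into the two-sided version of the competitor estimate above forces $Q_tf$ to be $\sfd$-Lipschitz on $\gamma([0,1])$, and consequently $Q_tf\circ\gamma$ is absolutely continuous on $[0,1]$. Remark \ref{rem:whenslopesare} then grants that $|\nabla^+ Q_tf|$ is an upper gradient of $Q_tf$ along $\gamma$. Coupling this with the bound (b) and the $\sfd$-lower semicontinuity (hence Borel measurability) of $D^-(\cdot,t)$ from Proposition \ref{prop:semiD} delivers
$$|Q_tf(\gamma_1)-Q_tf(\gamma_0)|\le \int_0^1 |\nabla^+ Q_tf|(\gamma_s)\,|\dot\gamma_s|\,\d s\le \int_0^1 \frac{D^-(\gamma_s,t)}{t}\,|\dot\gamma_s|\,\d s,$$
which is exactly the upper gradient inequality for $D^-(\cdot,t)/t$.

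The only mildly delicate points are bookkeeping: namely, that the $\limsup$/$\liminf$ in \eqref{eq:defdpm} are realized as genuine limits along suitable minimizing sequences (so the competitor bound can be made tight), and that $s\mapsto D^-(\gamma_s,t)$ is Lebesgue measurable; both are handled by Proposition \ref{prop:semiD}.
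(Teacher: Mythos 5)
Your proof is correct and rests on the same core ingredient as the paper's: the competitor bound (the paper's \eqref{eq:9}), which you derive by expanding $(\sfd(y,x)+\sfd(x,x^n_t))^2$ while the paper factorizes $\sfd^2(x,y_n)-\sfd^2(y,y_n)$; the two elementary manipulations give the identical estimate, and parts (a), (b) then follow in the same way by dividing by $\sfd(y,x)$, letting $y\to x$, and invoking the semicontinuity of $D^\pm$ from Proposition \ref{prop:semiD}. The only genuine divergence is in part (c): you use \eqref{eq:boundds} to observe that $D^+(\cdot,t)$ is bounded on the $\sfd$-bounded set $\gamma([0,1])$, which immediately makes $Q_tf$ Lipschitz there and hence $Q_tf\circ\gamma$ absolutely continuous, after which Remark \ref{rem:whenslopesare} together with (b) gives the upper gradient inequality; the paper instead passes through the real-analysis criterion of Corollary \ref{cor:real_curves} (checking upper semicontinuity of $Q_tf\circ\gamma$, measurability of $D^-(\gamma_\cdot,t)$, and an $L^1$ condition) and reads off the derivative bound directly from \eqref{eq:9}. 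Your route is a little more elementary and self-contained for this particular proposition, since the Lipschitz observation sidesteps the machinery of Corollary \ref{cor:real_curves} (which the paper presumably invokes for uniformity with later, non-Lipschitz, uses); the paper's route, by contrast, does not need \eqref{eq:boundds} at all in part (c). One small bookkeeping point: you fix $t\in(0,t_*(x))$ from the outset, but the hypothesis of (b) is the weaker $Q_tf(x)>-\infty$, which also allows $t=t_*(x)$; your competitor argument still applies verbatim in that case (with the convention that the bound is trivial if $D^-(x,t)=\infty$), so this is a phrasing issue rather than a gap.
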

\begin{proof} Let us first prove that for arbitrary $x,\,y$ be at finite distance
with $Q_t f(y)>-\infty$ we have the estimate
\begin{equation}
  \label{eq:9}
  Q_tf(x)-Q_tf(y)\leq
  \sfd(x,y)\Big(\frac {\rmD^-(y,t)}t+ \frac{\sfd(x,y)}{2t}\Big).
\end{equation}
It is sufficient to take a minimizing sequence $(y_n)$ for $F(t,y,\cdot)$ on which the
infimum in the definition of $\rmD^-(y,t)$ is attained, obtaining
\begin{align*}
  Q_tf(x)-Q_tf(y)&\leq \liminf_{n\to\infty}
  F(t,x,y_n)-F(t,y,y_n)=\liminf_{n\to\infty}\frac{\sfd^2(x,y_n)}{2t}-\frac{\sfd^2(y,y_n)}{2t}
  \notag\\
  &\leq
  \liminf_{n\to\infty}\frac{\sfd(x,y)}{2t}\big(\sfd(x,y_n)+\sfd(y,y_n)\big)
   \leq
    \frac{\sfd(x,y)}{2t}\big(\sfd(x,y)+2\rmD^-(y,t)\big).
\end{align*}
Dividing both sides of \eqref{eq:9} by $\sfd(x,y)$ and taking the
$\limsup$ as $y\to x$ we get \eqref{eq:hjbss} for the descending
slope, since Proposition \ref{prop:semiD} yields the
upper-semicontinuity of $\rmD^+$. The implication \eqref{eq:hjups}
follows by the same argument, by inverting the role of $x$ and $y$
in \eqref{eq:9} and still taking the $\limsup$ as $y\to x$ after a
division by $\sfd(x,y)$. The complete inequality in \eqref{eq:hjbss}
follows by \eqref{eq:2}.

We conclude with the proof of the upper gradient property. Let $t\in
(0,t_*(x))$, let $\gamma:[0,1]\to X_{[x]}$ be an absolutely
continuous curve with constant speed (this is not restrictive, up to
a reparameterizazion), and notice that {\nc 
$s\mapsto Q_tf(\gamma_s)$ is upper semicontinuous in $[0,1]$} whereas
Proposition~\ref{prop:semiD} shows the upper-semicontinuity (and
thus the measurability) of $s\mapsto \rmD^-(\gamma_s,t)$, {\nc while
\eqref{eq:boundds} shows that it is also bounded.} By applying
\eqref{eq:9} with $x=\gamma_{s'}$, $y=\gamma_{s}$ we can use
Corollary~\ref{cor:real_curves} to obtain that $s\mapsto
Q_tf(\gamma_s)$ is absolutely continuous. Coming back to
 \eqref{eq:9} we obtain that $|\frac{\d}{d s} Q_tf(\gamma_s)|\leq
\rmD^-(\gamma_s,t)/t$ for a.e. $s\in [0,1]$.
\end{proof}

\begin{theorem}[Subsolution of HJ]\label{thm:subsol}
For $x\in\mathcal D(f)$ and $t\in (0,t_*(x))$ the right and left
derivatives $\frac{\d^\pm}{\d t}Q_tf(x)$ satisfy
$$
  \frac{\d^+}{\d t}Q_tf(x)+\frac{|\rmD 
    Q_tf|^2(x)}{2}\leq 0,\qquad
  \frac{\d^-}{\d t}Q_tf(x)+\frac{|\rmD^+
    Q_tf|^2(x)}{2}\leq 0.
$$
In particular
\begin{equation}\label{eq:hjbsus}
\frac{\d}{\d t}Q_tf(x)+\frac{|\rmD  Q_tf|^2(x)}{2}\leq 0
\end{equation}
with at most countably many exceptions in $(0,t_*(x))$.
\end{theorem}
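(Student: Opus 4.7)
The plan is that this theorem is essentially a direct combination of the two preceding propositions, so the work has already been done. First I would invoke Proposition~\ref{prop:timederivative}, which furnishes the exact identities
\[
\frac{\d^+}{\d t}Q_tf(x)=-\frac{(D^+(x,t))^2}{2t^2},\qquad
\frac{\d^-}{\d t}Q_tf(x)=-\frac{(D^-(x,t))^2}{2t^2},
\]
and then Proposition~\ref{prop:slopesqt}, which provides the slope estimates
\[
|\nabla Q_tf|(x)\le \frac{D^+(x,t)}{t},\qquad
|\nabla^+ Q_tf|(x)\le \frac{D^-(x,t)}{t}.
\]
Squaring the slope bounds, dividing by $2$, and adding to the corresponding time derivative produces a telescoping cancellation: for the right derivative,
\[
\frac{\d^+}{\d t}Q_tf(x)+\frac{|\nabla Q_tf|^2(x)}{2}
\;\le\; -\frac{(D^+(x,t))^2}{2t^2}+\frac{(D^+(x,t))^2}{2t^2}=0,
\]
and analogously for the left derivative paired with $|\nabla^+ Q_tf|$, using \eqref{eq:hjups}.

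For the final assertion \eqref{eq:hjbsus}, I would combine the previous step with the content of the monotonicity proposition: the functions $D^\pm(x,\cdot)$ are both nonincreasing on $(0,t_*(x))$ and coincide there outside of at most countably many points. At every such coincidence point $t$, Proposition~\ref{prop:timederivative} guarantees that $s\mapsto Q_sf(x)$ is differentiable with
\[
\frac{\d}{\d t}Q_tf(x)=-\frac{(D^+(x,t))^2}{2t^2}=-\frac{(D^-(x,t))^2}{2t^2},
\]
and then \eqref{eq:hjbss} together with the cancellation computation above yields the desired inequality. The countable exceptional set is precisely the jump set of the monotone functions $D^\pm(x,\cdot)$.

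In summary, there is no real obstacle: all the substantive work (the one-sided derivative formulas, the semicontinuity of $D^\pm$, and the slope-vs-$D^\pm$ bounds) has already been carried out. The only thing to be careful about is bookkeeping the one-sided pairings correctly, namely that $\frac{\d^+}{\d t}Q_tf(x)$ is matched with $|\nabla Q_tf|$ (bounded by $D^+/t$) while $\frac{\d^-}{\d t}Q_tf(x)$ is matched with $|\nabla^+ Q_tf|$ (bounded by $D^-/t$); these pairings are exactly what the prior estimates deliver, so the inequalities close up without slack.
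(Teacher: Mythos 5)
Your proof is correct and takes essentially the same route as the paper: combine the one-sided derivative identities from Proposition~\ref{prop:timederivative} with the slope bounds \eqref{eq:hjbss}, \eqref{eq:hjups}, noting the exact cancellation, and then observe that outside the countable set where $D^+(x,\cdot)\neq D^-(x,\cdot)$ the map $s\mapsto Q_sf(x)$ is differentiable so \eqref{eq:hjbss} closes the argument. (One tiny slip with no bearing on the proof: $D^\pm(x,\cdot)$ are nondecreasing, not nonincreasing, by \eqref{eq:basic_mono} combined with $D^-\le D^+$; all that is actually used is that the two monotone functions agree outside a countable set.)
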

\begin{proof}
The first claim is a direct consequence of
Propositions~\ref{prop:timederivative} and \ref{prop:slopesqt}. The
second one \eqref{eq:hjbsus} follows by the fact that the larger
derivative, namely the left one, coincides with
$-[\rmD^-(x,t)]^2/(2t^2)$, and then with $-[\rmD^+(x,t)]^2/(2t^2)$ with at
most countably many exceptions. The latter is smaller than $-|\rmD 
Q_tf|^2(x)/2$ by \eqref{eq:hjbss}.
\end{proof}

We just proved that in an arbitrary extended metric space the
Hopf-Lax formula produces subsolutions of the Hamilton-Jacobi
equation. Our aim now is to prove that, if $(X,\sfd)$ is a
length
space, then the same formula provides also supersolutions.

We say that $(X,\sfd)$ is a \emph{length} space if for all
$x,\,y\in X$ the infimum of the length ${\cal L}(\gamma)$ of continuous curves $\gamma$ 
joining $x$ to $y$ is equal to $\sfd(x,y)$. We remark that under this
assumption it can be proved that the Hopf-Lax formula produces a
semigroup (see for instance the proof in \cite{Lott-Villani07bis}),
while in general only the inequality $Q_{s+t}f\leq Q_s(Q_tf)$ holds.

\begin{theorem}[Solution of HJ and agreement of slopes]
\label{prop:supersol} Assume that $(X,\sfd)$ is a length space. Then
for all $x\in\mathcal D(f)$ and $t\in (0,t_*(x))$ it holds
\begin{equation}
  \label{eq:10}
  |\rmD^- Q_tf|(x)=|\rmD  Q_t f|(x)=\frac{\rmD^+(x,t)}t,
\end{equation}
so that equality holds in \eqref{eq:hjbss}. In particular, the right
time derivative of $Q_tf$ satisfies
\begin{equation}
  \label{eq:12}
  \frac{\d^+}{\d t}Q_tf(x)+\frac{|\rmD  Q_tf|^2(x)}{2}=0\quad
  \text{for every $t\in (0,t_*(x)),$}
\end{equation}
and equality holds in \eqref{eq:hjbsus}, with at most countably many
exceptions.
\end{theorem}
\begin{proof}
Let $(y_i)$ be a minimizing sequence for $F(t,x,\cdot)$ on which the
supremum in the definition of $\rmD^+(x,t)$ is attained, {\nc so that $\sfd(x,y_i)\to \rmD^+(x,t)$.} Let
$\gamma^i:[0,1]\to X$ be {\nc constant speed} curves connecting $x$ to $y_i$
whose lengths $\mathcal L(\gamma^i){\nc\geq\sfd(x,y_i)}$ converge to $\rmD^+(x,t)$. For
every $s\in (0,1)$ we have
\[
\begin{split}
\limsup_{i\to\infty} Q_tf(x)-Q_tf(\gamma^i_s)&\geq
\limsup_{i\to\infty}F(t,x,y_i)-F(t,\gamma^i_s,y_i)\\
&=\limsup_{i\to\infty}\frac{\sfd^2(x,y_i)-\sfd^2(\gamma^i_s,y_i)}{2t},
\end{split}
\]
and our assumption on the $\gamma^i$'s ensures that
{\nc
\[
\lim_{i\to\infty}\frac{\sfd(x,\gamma^i_s)}{s\sfd(x,y_i)}=
\lim_{i\to\infty}\frac{s{\cal L}(\gamma^i)}{s\sfd(x,y_i)}=1,\qquad
\lim_{i\to\infty}\frac{\sfd(\gamma^i_s,y_i)}{(1-s)\sfd(x,y_i)}=
\lim_{i\to\infty}\frac{(1-s){\cal L}(\gamma^i)}{(1-s)\sfd(x,y_i)}=1
\]
}
for all $s\in (0,1)$. Therefore we obtain
\[
\begin{split}
\limsup_{i\to\infty}\frac{Q_tf(x)-Q_tf(\gamma^i_s)}{\sfd(x,\gamma^i_s)}
&\geq\limsup_{i\to\infty}\frac{\big(\sfd(x,y_i)-\sfd(\gamma^i_s,y_i)\big)
\big(\sfd(x,y_i)+\sfd(\gamma^i_s,y_i)\big)}{2 t \sfd(x,\gamma^i_s)}
\\
&=\frac{(2-s)\rmD^+(x,t)}{2t}
\qquad
\text{for all $s\in (0,1)$}.
\end{split}
\]
With a diagonal argument we find {\nc $i(k)\to\infty$
such that
$$
\limsup_{k\to\infty}
\frac{Q_tf(x)-Q_tf(\gamma^{i(k)}_{1/k})}{\sfd(x,\gamma^{i(k)}_{1/k})}\geq
\frac{\rmD^+(x,t)}{t}.
$$
Since $\sfd(x,\gamma^{i(k)}_{1/k})={\cal L}(\gamma^{i(k)})/k\to 0$}
we deduce
\[
|\rmD^-Q_tf|(x)\geq \frac{\rmD^+(x,t)}{t}.
\]
Thanks to \eqref{eq:hjbss} and to the inequality $|\rmD^-Q_t|\leq
|\rmD  Q_t|$, this proves that $|\rmD^-Q_tf|(x) =|\rmD  Q_t
f|(x)=\rmD^+(x,t)/t$.


Taking Proposition~\ref{prop:timederivative} into
account we obtain {\eqref{eq:12}} and that the Hamilton-Jacobi
equation is satisfied at all points $x$ such that
$\rmD^+(x,t)=\rmD^-(x,t)$.
\end{proof}
When $f$ is bounded the maps $Q_tf$ are easily seen to be bounded
and $\sfd$-Lipschitz. It is immediate to see that
\begin{equation}
  \label{eq:108}
  \inf_X f\le \inf_X Q_tf\le \sup_X Q_tf\le \sup_X f.
\end{equation}
A quantitative global estimate we shall need later on is:
\begin{equation}\label{eq:lipquantitative}
{\rm Lip}(Q_tf)\leq 2\sqrt{\frac{{\rm osc}(f)}{t}}, \qquad
\text{where}\quad {\rm osc}(f):=\sup_X f-\inf_X f.
\end{equation}
It can be derived noticing that
choosing a minimizing sequence
$(y_n)_{n\in \N}$ for $F(t,x,\cdot)$ attaining the supremum in \eqref{eq:defdpm}, the energy comparison
$$\frac{(\rmD^+(x,t))^2}{2t}-{\rm osc}(f)\le \lim_{n\to\infty}
f(y_n)+\frac{\sfd^2(x,y_n)}{2t}-f(x)=Q_tf(x)-f(x)\le 0$$ yields
\begin{equation}\label{eq:lipquantitative1}
\rmD^+(x,t)\leq \sqrt{2t\, {\rm osc}(f)}.
\end{equation}
 Since $\rmD^-(x,t)\le \rmD^+(x,t)$, setting
 $R:=(\sqrt 2-1)\sqrt {2t\,{\rm osc}(f)}$, \eqref{eq:9} and simple calculations
yield
\begin{displaymath}
  \frac{Q_tf(x)-Q_tf(y)}{\sfd(x,y)}\le 2\Big(\frac{{\rm
      osc}(f)}t\Big)^{1/2}\quad\text{if }0<\sfd(x,y)\le R,
\end{displaymath}
and, since ${\rm osc}(Q_t f)\le {\rm osc}(f)$ by \eqref{eq:108},
\begin{displaymath}
  \frac{Q_tf(x)-Q_tf(y)}{\sfd(x,y)}\le \frac{{\rm
      osc}(Q_t f)}{ R}\le 2\Big(\frac{{\rm
      osc}(f)}t\Big)^{1/2} \quad\text{if }\sfd(x,y)\ge R.
\end{displaymath}
The constant $2$ in \eqref{eq:lipquantitative} can be reduced to
$\sqrt 2$ if $X$ is a length space: it is sufficient to combine
\eqref{eq:lipquantitative1} with \eqref{eq:hjbss}.

We conclude this section with a simple observation, a technical
lemma, where also a Polish structure is involved, and with some
relations between slope of Kantorovich potentials and Wasserstein
distance.

\begin{remark}[Continuity of $Q_t$ at $t=0$]\label{rem:goodBorel} {\rm
If $(X,\tau,\sfd)$ is an Polish extended space and $\varphi$ is
bounded and $\tau$-lower semicontinuous, then
$Q_t\varphi\uparrow\varphi$ as $t\downarrow 0$. This is a simple
consequence of assumption (iii) in Definition~\ref{dPolish}. \fr
}\end{remark}

\begin{proposition}\label{prop:goodBorel}
Let $(X,\tau,\sfd)$ be an Polish extended space.
\begin{itemize}
\item[(i)] if $K\subset X$ is compact, $\psi\in C(K)$, $M\geq\max\psi$ and
\begin{equation}\label{eq:compactreduction}
\varphi(x)=\begin{cases}
\psi(x) &\text{if $x\in K$},
\\
M &\text{if $x\in X\setminus K,$}
\end{cases}
\end{equation}
then $Q_t\varphi$ is $\tau$-lower semicontinuous in $X$ for all
$t>0$;
\item[(ii)] if $\mathcal D(f)=X$, $t_*(x)\geq T>0$ for all $x\in X$ and
$Q_t\varphi$ is Borel measurable for all $t>0$ then $\tfrac{\\rmD^+}{\d
t}Q_t\varphi(x)$ is Borel measurable in $X\times (0,T)$ and the
slopes
$$(x,t)\mapsto |\rmD^+ Q_t\varphi|(x),\qquad (x,t)\mapsto |\rmD^- Q_t\varphi|(x)
$$ are $\BorelSetsStar{X\times
(0,T)}$-measurable in $X\times (0,T)$.
\end{itemize}
\end{proposition}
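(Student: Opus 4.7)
For part (i), I would argue the $\tau$-lower semicontinuity directly: given $x_n\to x$ in $\tau$, after passing to a subsequence that realises $\liminf_n Q_t\varphi(x_n)$ I pick near-minimizers $y_n\in X$ with $\varphi(y_n)+\sfd^2(x_n,y_n)/(2t)\le Q_t\varphi(x_n)+1/n$ and extract a further subsequence in which either all $y_n\in K$ or all $y_n\in X\setminus K$. In the first case the $\tau$-compactness of $K$ yields a limit $y^\star\in K$ of a subsequence of the $y_n$, and combining the joint $\tau$-lower semicontinuity of $\sfd$ (Definition~\ref{dPolish}(iv)) with the $\tau$-continuity of $\psi$ on $K$ gives
\[
Q_t\varphi(x)\le \psi(y^\star)+\frac{\sfd^2(x,y^\star)}{2t}\le
\liminf_n\Big(\psi(y_n)+\frac{\sfd^2(x_n,y_n)}{2t}\Big)\le \liminf_n Q_t\varphi(x_n).
\]
In the second case $\varphi(y_n)=M$ forces $\liminf_n Q_t\varphi(x_n)\ge M$, while the trivial choice $y=x$ in the definition of $Q_t\varphi(x)$ together with the pointwise bound $\varphi\le M$ (which uses $M\ge\max\psi$) gives $Q_t\varphi(x)\le\varphi(x)\le M$. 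The key point is that the extended-distance setting forces us to replace the usual continuity of $\sfd$ with the joint $\tau$-lsc of (iv), and the hypothesis $M\ge\max\psi$ is precisely what makes the non-$K$ case close.

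For part (ii), the preliminary step is the joint Borel measurability of $(x,t)\mapsto Q_t\varphi(x)$ on $X\times(0,T)$: each slice $x\mapsto Q_t\varphi(x)$ is Borel by assumption, while by Proposition~\ref{prop:timederivative} and $t_*(x)\ge T$ each slice $t\mapsto Q_t\varphi(x)$ is continuous on $(0,T)$. Joint measurability then follows by approximating in $t$ with the step functions $Q^n(x,t):=Q_{kT/n}\varphi(x)$ for $t\in\bigl((k-1)T/n,kT/n\bigr]$, which are visibly jointly Borel and converge pointwise to $Q_\cdot\varphi(\cdot)$ by the continuity in $t$.

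With this in hand the right time derivative is obtained as the pointwise limit
\[
\frac{\d^+}{\d t}Q_t\varphi(x)=\lim_{n\to\infty}\frac{Q_{t+1/n}\varphi(x)-Q_t\varphi(x)}{1/n}\qquad\text{on }X\times(0,T),
\]
whose existence on all of $X\times(0,T)$ is guaranteed by Proposition~\ref{prop:timederivative}; it is therefore jointly Borel, being a pointwise limit of jointly Borel difference quotients.

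For the slopes I would imitate the Suslin argument in Lemma~\ref{le:measurability_of_slopes} in the joint variable $(x,t)$. Setting
\[
G_r(x,t):=\sup\Big\{\frac{(Q_t\varphi(y)-Q_t\varphi(x))^+}{\sfd(x,y)}:0<\sfd(x,y)<r\Big\},
\]
one has $G_r\uparrow|\nabla^+ Q_t\varphi|$ as $r\downarrow 0$, and $\{(x,t):G_r(x,t)>\alpha\}$ is the projection onto $X\times(0,T)$ of the Borel subset
\[
\bigl\{(x,y,t)\in X\times X\times(0,T):\ Q_t\varphi(y)-Q_t\varphi(x)>\alpha\,\sfd(x,y),\ 0<\sfd(x,y)<r\bigr\}
\]
(Borel because $(x,t)\mapsto Q_t\varphi(x)$ is jointly Borel by the previous step and $\sfd$ is $\tau\times\tau$-lsc). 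As in Lemma~\ref{le:measurability_of_slopes}, such projections are Suslin and hence $\BorelSetsStar{X\times(0,T)}$-measurable, so each $G_r$ and, as a monotone limit, $|\nabla^+ Q_t\varphi|$ are $\BorelSetsStar{X\times(0,T)}$-measurable; the analogous estimate with the roles of $x$ and $y$ exchanged takes care of $|\nabla^- Q_t\varphi|$. The principal obstacle in the whole proposition is the case split in (i): the space $(X,\sfd)$ need not be boundedly compact and $\tau$ is only loosely coupled to $\sfd$, which forces us to partition the minimization domain along $K$ and use the special role of the constant $M$.
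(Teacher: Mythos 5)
Your argument is correct and coincides with the paper's own (terse) proof. In (i) the paper simply invokes the formula $Q_t\varphi(x)=\min_{y\in K}\psi(y)+\sfd^2(x,y)/(2t)$ and reads off lower semicontinuity from the compactness of $K$, the continuity of $\psi$, and the $\tau\times\tau$-lower semicontinuity of $\sfd$; your two-case subsequence argument is the careful version of exactly this, and in fact it supplies the additional $\min$ with the constant $M$ (entering via the choice $y=x$ when $x\notin K$) that the paper's displayed identity silently omits for points $x$ far from $K$. In (ii), the paper likewise first records joint Borel measurability of $(x,t)\mapsto Q_t\varphi(x)$ via time discretization, then the right derivative as a pointwise limit of Borel difference quotients, and finally transplants the Suslin projection argument of Lemma~\ref{le:measurability_of_slopes} to the joint variable $(x,t)$, which is precisely what you do.
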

\begin{proof} (i) The proof is straightforward, using the identity
{\nc
$$
Q_t\varphi(x)=\min\biggl\{\min_{y\in K}\psi(y)+\frac{1}{2t}\sfd^2(x,y),M\biggr\}.
$$
}

(ii) A simple time
discretization argument also shows that $(x,t)\mapsto Q_t\varphi(x)$
is Borel measurable. 
The Borel measurability of $\frac{\d^+}{\d t}Q_t\varphi(x)$ is a
simple consequence of the continuity of $t\mapsto Q_t\varphi(x)$,
together with the Borel measurability of $Q_t\varphi$. Then, the proof of the measurability of slopes
follows as in Lemma~\ref{le:measurability_of_slopes}.
\end{proof}

In the next proposition we consider the ascending slope of
Kantorovich potentials, for finite distances $\sfd$.
\begin{proposition}[Slope and approximation of Kantorovich
potentials]\label{prop:slopeKA} {\nc Assume that $\sfd$ is a finite distance},  
let $\mu,\,\nu\in\Probabilities{X}$ with $W_2(\mu,\nu)<\infty$ and let $\ggamma\in\prob{X\times X}$ be
an optimal plan with marginals $\mu,\,\nu$. If $\varphi$ is a
Kantorovich potential relative to $\ggamma$, we have
\begin{equation}\label{eq:wbre1}
|\rmD^+\varphi|(x)
\leq \sfd(x,y) \quad\text{for $\ggamma$-a.e.
$(x,y)$.}
\end{equation}
In particular $|\rmD^+\varphi|\in L^2(X,\mu)$ and
$\int_X|\rmD^+\varphi|^2\,\d\mu\leq W_2^2(\mu,\nu)$.
\end{proposition}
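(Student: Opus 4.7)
The plan is to exploit the defining equality for the Kantorovich pair $(\varphi,\varphi^c)$ on the support of an optimal plan, together with the universal inequality $\varphi(x')+\varphi^c(y)\leq\tfrac12\sfd^2(x',y)$, to obtain a pointwise upper bound for the ascending slope and then to integrate it via the disintegration of $\ggamma$.

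First, let $\Gamma\subset X\times X$ be a Borel set with $\ggamma(\Gamma)=1$ on which both $\varphi$ and $\varphi^c$ are finite and the equality
\[
\varphi(x)+\varphi^c(y)=\tfrac12\sfd^2(x,y)
\]
holds. Fix $(x,y)\in\Gamma$ and any $x'\in X$ with $\sfd(x,x')<\infty$. Combining the equality at $(x,y)$ with the universal inequality $\varphi(x')+\varphi^c(y)\leq\tfrac12\sfd^2(x',y)$ and subtracting yields
\[
\varphi(x')-\varphi(x)\leq \tfrac12\bigl(\sfd^2(x',y)-\sfd^2(x,y)\bigr)
=\tfrac12\bigl(\sfd(x',y)-\sfd(x,y)\bigr)\bigl(\sfd(x',y)+\sfd(x,y)\bigr).
\]
Using the triangle inequality $|\sfd(x',y)-\sfd(x,y)|\leq\sfd(x,x')$ we obtain
\[
[\varphi(x')-\varphi(x)]^+\leq \tfrac12\sfd(x,x')\bigl(\sfd(x',y)+\sfd(x,y)\bigr).
\]

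Next, dividing by $\sfd(x,x')$ and letting $x'\to x$ in the sense of the extended distance (so that $\sfd(x',y)\to\sfd(x,y)$ by triangle inequality), the very definition \eqref{eq:slopes} of ascending slope gives
\[
|\nabla^+\varphi|(x)\leq \sfd(x,y),
\]
which is \eqref{eq:wbre1}. Note that by Lemma~\ref{le:measurability_of_slopes} the slope $|\nabla^+\varphi|$ is $\BorelSetsStar X$-measurable on the domain of $\varphi$, so it is $\mu$-measurable (and the composition $(x,y)\mapsto |\nabla^+\varphi|(x)$ is $\ggamma$-measurable).

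Finally, integrating the pointwise bound over $\ggamma$ and using $\pi^1_\sharp\ggamma=\mu$ together with optimality,
\[
\int_X|\nabla^+\varphi|^2\,\d\mu=\int_{X\times X}|\nabla^+\varphi|^2(x)\,\d\ggamma(x,y)\leq\int_{X\times X}\sfd^2(x,y)\,\d\ggamma(x,y)=W_2^2(\mu,\nu),
\]
which yields the second claim. There is no real obstacle here; the only subtle point is the measurability of the slope, which is guaranteed by Lemma~\ref{le:measurability_of_slopes} applied to the Borel function $\varphi$.
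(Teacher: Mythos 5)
Your proof is correct and rests on the same underlying computation as the paper's: the paper observes that the Kantorovich equality makes $y$ a minimizer for $Q_1f(x)$ (with $f=-\varphi^c$), so $D^-(x,1)\le\sfd(x,y)$, and then cites the slope estimate \eqref{eq:hjups}; you have simply unfolded that citation by expanding $\sfd^2(x',y)-\sfd^2(x,y)$ and applying the triangle inequality directly, which is exactly the estimate \eqref{eq:9} underlying \eqref{eq:hjups}. Your version is self-contained and avoids the Hopf-Lax machinery, at the cost of not reusing the general lemma; both are fine, and your handling of measurability via Lemma~\ref{le:measurability_of_slopes} matches what the paper implicitly relies on.
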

\begin{proof}
We set $f:=-\varphi^c$, so that from $\varphi=(\varphi^c)^c$ we have
$\varphi=Q_1f$. In addition, the definition of Kantorovich potential
tells us that $\varphi(x)=f(y)+\sfd^2(x,y)/2$ for $\ggamma$-a.e.\
$(x,y)$, so that
\begin{equation}
  \label{eq:100}
  \rmD^-(x,1)\leq \sfd(x,y)\quad\text{for $\ggamma$-a.e.
    $(x,y)$.}
\end{equation}
Taking \eqref{eq:hjups} into account we obtain
\eqref{eq:wbre1}.
\end{proof}

In general the inequality $\int_X|\rmD^+\varphi|^2\,\d\mu\leq
W_2^2(\mu,\nu)$ can be strict, as the following simple example shows:

\begin{example}\label{ex:strict}{\rm
Let $X=[0,1]$ endowed with the Euclidean distance, $\mu_0=\delta_0$
and $\mu_t=t^{-1}\nchi_{[0,t]}\Leb{1}$ for $t\in (0,1]$. Then
clearly $(\mu_t)$ is a constant speed geodesic connecting $\mu_0$ to
$\mu_1$ and the corresponding Kantorovich potential is
$\varphi(x)=x^2/2-x$, so that $\int| D^+\varphi|^2\,\d\mu_0=0$,
while $W_2^2(\mu_0,\mu_1)=1/3$.}
\end{example}

\section{Relaxed gradient, Cheeger's energy, and its $L^2$-gradient flow}\label{sec:cheeger}

In this section we assume that $(X,\tau,\sfd)$ is a Polish extended
space. Furthermore, $\mm$ is a nonnegative, Borel and
$\sigma$-finite measure on $X$. Recall that
\begin{equation}
  \label{eq:17}
  \text{there exists a bounded Borel function $\vartheta:X\to
  (0,\infty)$ such that $\int_X \vartheta\,\d\mm\le 1$.}
\end{equation}
Notice that $\mm$ and the finite measure $\tmm:=\vartheta\mm$ share
the same class of negligible sets. In the following we will often
assume that $\mm$ and $\vartheta$ satisfy some further structural
conditions, which will be described as they occur. For future
references, let us just state here our strongest assumption in
advance: we will often assume that $\vartheta$ has the form
$\rme^{-V^2}$, where
\begin{equation}
  \label{eq:75}
  \begin{gathered}
    \text{$V:X\to
      [0,\infty)$ is a Borel $\sfd$-Lipschitz map, }\\
    \text{it is bounded on each
      compact set $K\subset X$, and}\quad
    \int_X \rme^{-V^2}\,\d\mm\le 1.
    \end{gathered}
\end{equation}
When $\tau$ is the topology induced by the finite distance $\sfd$,
then the facts that $V$ is Borel and bounded on compact sets are
obvious consequences of the $\sfd$-Lipschitz property. In this case
a simple choice is $\Wgh(x)=\sqrt{\kappa/2}\,\sfd(x,x_0)$ for some
$x_0\in X$ and $\kappa>0$. It is not difficult to check that
\eqref{eq:75} is then equivalent to
\begin{equation}
  \label{eq:78}
  \exists \,\kappa>0:\quad
  m(r)\le \rme^{\frac \kappa2 r^2}\quad\text{where}\quad
  m(r):=\mm\big(\{x\in X:\sfd(x,x_0)<r\}\big).
\end{equation}
In fact, for every $h>0$
 \begin{equation}
    \label{eq:41}
    \int_X \rme^{-\frac h2 \sfd^2(x,x_0)}\,\d\mm=
    \int_X \int_{r>\sfd(x,x_0)}h\, r\,\rme^{-\frac h 2r^2}\,\d r\,\d\mm(x)=
    \int_0^\infty h \,r \,m(r)\,\rme^{-\frac h2 r^2}\,\d r.
  \end{equation}
  Since $r\mapsto m(r)$ is nondecreasing, if the last integral in
  \eqref{eq:41} is less than $1$ for $h:=\kappa$, then Chebichev inequality yields
  $m(r)\rme^{-\frac12\kappa r^2}\le 1$; on the other hand, if
  \eqref{eq:78} holds, then there exists  $h>\kappa$ sufficiently big
  such that the integral in \eqref{eq:41} is less than $1$, so that
  \eqref{eq:75} holds.

\subsection{Minimal relaxed gradient}
The content of this subsection is inspired by Cheeger's work
\cite{Cheeger00}. We are going to relax the integral of the squared
local Lipschitz constant of Lipschitz functions with respect to the
$L^2(X,\mm)$ topology. By Lemma~\ref{le:measurability_of_slopes},
$|\rmD  f|$ is $\BorelSetsStar{X}$-measurable whenever $f$ is
$\sfd$-Lipschitz and Borel.

\begin{proposition}\label{prop:densitydlip}
Let $(X,\tau,\sfd)$ be an Polish extended space and let $\mm$ be a
nonnegative, Borel measure in $(X,\tau)$ satisfying the following
condition (weaker than \eqref{eq:75}):
\begin{equation}
  \label{eq:19}
  \forall\, K\subset X\text{  compact}\quad \exists r>0:\quad
  \mm\big(\{x\in X:\sfd(x,K)\le r\}\big)<\infty.
\end{equation}
Then the class of bounded, Borel and $\sfd$-Lipschitz functions
$f\in L^2(X,\mm)$ with $|\rmD  f|\in L^2(X,\mm)$ is dense in
$L^2(X,\mm)$.
\end{proposition}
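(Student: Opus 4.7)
The plan is to reduce the density question to approximating indicators of compact sets in $L^2(X,\mm)$, and then to build an explicit Lipschitz approximation using truncated distance functions. Since $\mm$ is $\sigma$-finite Borel on the Polish space $(X,\tau)$, its restriction to any Borel set of finite measure is a finite Borel measure on a Polish space and hence Radon, so every such set is inner regular by compact sets. Combined with $\sigma$-finiteness this shows that finite linear combinations $\sum_i a_i\chi_{K_i}$, with $K_i\subset X$ compact of finite measure, are dense in $L^2(X,\mm)$. Using the sub-additivity \eqref{eq:subadd} and the fact that our class is closed under such combinations, it suffices to prove that for every compact $K\subset X$ the indicator $\chi_K$ is an $L^2$-limit of bounded Borel $\sfd$-Lipschitz functions $\phi_r$ with $|\nabla\phi_r|\in L^2(X,\mm)$.

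The natural candidate is $\phi_r(x):=\max\bigl\{0,\,1-\sfd(x,K)/r\bigr\}$ for small $r>0$. Because $\sfd(\cdot,K)$ is $1$-Lipschitz with respect to the extended distance $\sfd$, $\phi_r$ is bounded by $1$ and $(1/r)$-Lipschitz, with the pointwise bound $|\nabla\phi_r|\leq(1/r)\chi_{\{\sfd(\cdot,K)\leq r\}}$. Assumption \eqref{eq:19} produces an $r_0>0$ such that $\mm\bigl(\{\sfd(\cdot,K)\leq r_0\}\bigr)<\infty$; for $r\in(0,r_0]$ this immediately gives $\phi_r\in L^2(X,\mm)$ and $|\nabla\phi_r|\in L^2(X,\mm)$.

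The main technical obstacle is to verify that $\phi_r$ is Borel, i.e.\ that $\sfd(\cdot,K)$ is Borel when $K$ is only $\tau$-compact and $\sfd$ is not assumed $\tau\times\tau$-continuous. I would prove the stronger statement that $\sfd(\cdot,K)$ is $\tau$-lower semicontinuous. Given $x_n\to x$ in $\tau$, pick $y_n\in K$ with $\sfd(x_n,y_n)\leq\sfd(x_n,K)+1/n$; by $\tau$-compactness of $K$ a subsequence satisfies $y_{n_k}\to y\in K$ in $\tau$, and property (iv) of Definition~\ref{dPolish} then yields $\sfd(x,y)\leq\liminf_k\sfd(x_{n_k},y_{n_k})$, whence $\sfd(x,K)\leq\liminf_n\sfd(x_n,K)$. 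This is the step where (iv) is genuinely used; if one only had separate lower semicontinuity in each variable, no such bound would be available.

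Finally, pointwise convergence $\phi_r\to\chi_K$ as $r\downarrow 0$ follows from the fact that $K$ is $\sfd$-closed: by property (iii) of Definition~\ref{dPolish}, $\sfd(y_n,x)\to 0$ with $y_n\in K$ forces $y_n\to x$ in $\tau$ and hence $x\in K$, so $\sfd(x,K)=0$ iff $x\in K$. Since $0\leq\phi_r\leq\chi_{\{\sfd(\cdot,K)\leq r_0\}}\in L^2(X,\mm)$ for every $r\leq r_0$, dominated convergence gives $\phi_r\to\chi_K$ in $L^2(X,\mm)$, completing the approximation and thus the proof.
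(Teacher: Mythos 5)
Your argument is correct and follows essentially the same strategy as the paper: reduce via tightness of $\mm$ to functions supported on a compact set, then Lipschitz-regularize using the distance to the compact set and invoke condition~\eqref{eq:19} to control the support. The only real difference is one of degree: the paper stops the reduction at functions that are merely continuous on a compact $K$ and zero outside (implicitly via Lusin's theorem) and then applies the McShane-type regularization $\varphi_n(x)=\sup_{y\in K}[\varphi(y)-n\sfd(x,y)]^+$, whereas you push the reduction all the way down to indicators $\chi_K$, for which that general formula collapses to your explicit cutoff $\phi_r=\max\{0,1-\sfd(\cdot,K)/r\}$. Your route is a bit more elementary in the regularization (no sup over $K$), at the cost of one extra density step, but both hinge on the same ingredients: (iv) of Definition~\ref{dPolish} together with $\tau$-compactness of $K$ to get Borel (in fact semicontinuous) measurability of $\sfd(\cdot,K)$, and (iii) to identify $\{\sfd(\cdot,K)=0\}=K$. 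One small presentational nit in the lower semicontinuity argument: from a $\tau$-convergent subsequence $y_{n_k}\to y\in K$ you obtain $\sfd(x,K)\leq\liminf_k\sfd(x_{n_k},K)$, which bounds the $\liminf$ along a subsequence rather than along the full sequence; to conclude $\sfd(x,K)\leq\liminf_n\sfd(x_n,K)$ you should first pass to a subsequence realizing the latter $\liminf$ and only then extract the convergent $y_{n_k}$. This is a standard reordering and does not affect the substance of the proof.
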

\begin{proof} It suffices to approximate functions $\varphi:X\to\R$ such that for
some compact set $K\subset X$
$$
  \varphi\restr{K}\in C^0(K),\quad
  \varphi\equiv 0\quad\text{in }X\setminus K.
$$
By taking the positive and negative part, we can always assume that
$\varphi$ is, e.g., nonnegative. We can thus define
$$
  \varphi_n(x):=\sup_{y\in K}\bigl[\varphi(y)-n\sfd(x,y)\Big]^+.
$$
It is not difficult to check that $\varphi_n$ is upper
semicontinuous, nonnegative, $n$-Lipschitz and bounded above by
$S:=\max_K\varphi\ge 0$; moreover
$$
  \varphi_n(x)=|\rmD  \varphi_n|(x)=0\quad\text{if }\sfd(x,K)> S/n.
$$
If $r>0$ is given by \eqref{eq:19}, choosing $n>S/r$ we get that
$\varphi_n,|\rmD \varphi_n|$ are supported in the set $\{x\in
X:\sfd(x,K)\le r\}$ of finite measure, so that they belong to
$L^2(X,\mm)$; since $S\ge \varphi_n(x)\ge \varphi(x)$ and
$\varphi_n(x)\downarrow\varphi(x)$ for every $x\in X$, we conclude.
\end{proof}

\begin{definition}[Relaxed gradients]\label{def:genuppergrad} We say that $G\in L^2(X,\mm)$ is a
relaxed gradient of $f\in L^2(X,\mm)$ if there exist Borel
$\sfd$-Lipschitz functions $f_n\in L^2(X,\mm)$ such that:
\begin{itemize}
\item[(a)] $f_n\to f$ in $L^2(X,\mm)$ and $|\rmD  f_n|$
weakly converge to $\tilde{G}$ in $L^2(X,\mm)$;
\item[(b)] $\tilde{G}\leq G$ $\mm$-a.e. in $X$.
\end{itemize}
We say that $G$ is the minimal relaxed gradient of $f$ if its
$L^2(X,\mm)$ norm is minimal among relaxed gradients. We shall denote
by $\relgrad f$ the minimal relaxed gradient.
\end{definition}

The definition of minimal relaxed gradient is well posed; indeed,
thanks to \eqref{eq:subadd} and to the reflexivity of $L^2(X,\mm)$,
the collection of relaxed gradients of $f$ is a convex set, possibly
empty. Its closure follows by the following lemma, which also shows
that it is possible to obtain the minimal relaxed gradient as
\emph{strong} limit in $L^2$.

\begin{lemma}[Closure and strong approximation of the minimal
  relaxed gradient]\label{lem:strongchee}
  \
  \begin{itemize}
  \item[(a)]
    If {$G\in L^2(X,\mm)$ is a relaxed gradient of $f\in
      L^2(X,\mm)$ then} there exist Borel $\sfd$-Lipschitz functions
    $f_n$ converging to $f$ in $L^2(X,\mm)$ and
    $G_n\in L^2(X,\mm)$ strongly convergent to $\tilde G$ in
    $L^2(X,\mm)$ with $|\rmD  f_n|\le G_n$ and $\tilde G\le G$.
  \item[(b)]
    If $G_n\in L^2(X,\mm)$ is a relaxed gradient of $f_n\in L^2(X,\mm)$
    and $f_n\weakto f$, $G_n\weakto G$ weakly in $L^2(X,\mm)$,
    then $G$ is a relaxed gradient of $f$.
  \item[(c)]
    In particular, the collection of all the relaxed gradients of $f$ is closed in
    $L^2(X,\mm)$ and there exist {\nc bounded} Borel $\sfd$-Lipschitz functions
    $f_n\in L^2(X,\mm)$ such that
    \begin{equation}
      \label{eq:16}
      f_n\to f,\quad
      |\rmD  f_n|\to \relgrad f\quad\text{strongly in }L^2(X,\mm).
    \end{equation}
  \end{itemize}
\end{lemma}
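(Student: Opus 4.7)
The plan is to build (c) from (a) and (b), which are both driven by Mazur's lemma plus the subadditivity inequality \eqref{eq:subadd}. The key observation, used throughout, is that if $\tilde f = \sum_k \alpha_k f_k$ is a finite convex combination of bounded Borel $\sfd$-Lipschitz functions, then $\tilde f$ is itself bounded, Borel and $\sfd$-Lipschitz, and \eqref{eq:subadd} gives $|\nabla \tilde f| \le \sum_k \alpha_k |\nabla f_k|$ pointwise.

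For (a), I would start from a sequence $f_n$ realizing the relaxed gradient: $f_n \to f$ strongly in $L^2(X,\mm)$ and $|\nabla f_n| \weakto \tilde G \le G$ in $L^2(X,\mm)$. Applying Mazur's lemma to the sequence $|\nabla f_n|$ produces finite convex combinations $G_n := \sum_{k\ge n}\alpha_{n,k}|\nabla f_k|$ with $G_n \to \tilde G$ strongly in $L^2(X,\mm)$. Setting $\tilde f_n := \sum_{k\ge n} \alpha_{n,k} f_k$ (same coefficients), one gets $\tilde f_n \to f$ strongly in $L^2(X,\mm)$, $\tilde f_n$ Borel and $\sfd$-Lipschitz, and $|\nabla \tilde f_n| \le G_n$ by the observation above. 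Relabeling, this gives (a).

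For (b), assume $f_n \weakto f$, $G_n \weakto G$ and each $G_n$ is a relaxed gradient of $f_n$. Using Mazur's lemma simultaneously on both sequences (same coefficients, possible because weak convergence of both passes through the same index set), pick finite convex combinations $\hat f_N := \sum_n \beta_{N,n} f_n \to f$ and $\hat G_N := \sum_n \beta_{N,n} G_n \to G$ strongly in $L^2(X,\mm)$. Step (a) provides, for each $n$, Borel $\sfd$-Lipschitz $f_{n,k}\to f_n$ in $L^2(X,\mm)$ with $|\nabla f_{n,k}| \le H_{n,k}$ and $H_{n,k}\to \tilde G_n$ strongly, where $\tilde G_n \le G_n$. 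For each fixed $N$, the finite convex combinations $\hat f_{N,k}:=\sum_n \beta_{N,n} f_{n,k}$ and $\hat H_{N,k}:=\sum_n\beta_{N,n} H_{n,k}$ satisfy $|\nabla \hat f_{N,k}| \le \hat H_{N,k}$ and, as $k\to\infty$, converge in $L^2(X,\mm)$ to $\hat f_N$ and $\sum_n \beta_{N,n}\tilde G_n \le \hat G_N$ respectively. A diagonal choice $k(N)$ makes $\hat f_{N,k(N)}\to f$ strongly and $\hat H_{N,k(N)}$ converge (up to a subsequence, by reflexivity of $L^2(X,\mm)$) weakly to some $G^*\le G$; a further weak subsequential limit of the dominated sequence $|\nabla \hat f_{N,k(N)}|$ is then a Borel function $\tilde G \le G^* \le G$, witnessing that $G$ is a relaxed gradient of $f$.

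For (c), closure of the set of relaxed gradients is the special case of (b) with $f_n\equiv f$ and strong (hence weak) convergence $G_n\to G$, so the set is also closed in the strong topology. For the approximation \eqref{eq:16}, start with any admissible sequence $f_n$ producing $|\nabla f_n|\weakto \tilde G\le \relgrad f$; minimality of $\|\relgrad f\|_{L^2}$ together with $\tilde G \le \relgrad f$ forces $\tilde G=\relgrad f$ $\mm$-a.e. Apply (a) to upgrade to Borel $\sfd$-Lipschitz $\tilde f_n\to f$ in $L^2(X,\mm)$ with $|\nabla \tilde f_n|\le G_n \to \relgrad f$ strongly. Then $|\nabla \tilde f_n|$ is bounded in $L^2(X,\mm)$, every weak cluster point $H$ satisfies $H\le \relgrad f$ (weak limits preserve the pointwise inequality $|\nabla \tilde f_n|\le G_n$), and (b) implies $H$ is a relaxed gradient, so minimality gives $H=\relgrad f$. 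The whole sequence therefore converges weakly to $\relgrad f$, and the norm bound $\limsup \||\nabla \tilde f_n\|_{L^2} \le \lim \|G_n\|_{L^2} = \|\relgrad f\|_{L^2}$ upgrades weak to strong convergence in $L^2(X,\mm)$.

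The main obstacle is the careful diagonal in (b): one must ensure that when passing from the double sequence $(f_{n,k}, H_{n,k})$ to a single sequence indexed by $N$, the convex combinations $\hat f_{N,k(N)}$ still converge strongly to $f$ while the dominating sequence $\hat H_{N,k(N)}$ retains a weak limit bounded by $G$. Using Mazur with a common set of coefficients for $(f_n)$ and $(G_n)$ is what makes the diagonal work; everything else is a routine consequence of subadditivity \eqref{eq:subadd} and the linearity of the gradient bound under convex combinations.
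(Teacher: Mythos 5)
Your argument is correct and follows essentially the same strategy as the paper's: Mazur's lemma combined with the subadditivity inequality \eqref{eq:subadd} for part (a), reduction of weak closure to strong closure via convexity plus a diagonal extraction through part (a) for part (b), and the norm-bound upgrade from weak to strong convergence for part (c). The only cosmetic difference is in (b): the paper applies (a) directly to the strongly convergent sequence $(f^i,G^i)$ produced by the convexity reduction and then diagonalizes once, whereas you Mazurize $(f_n,G_n)$ first and then recombine the Lipschitz approximants of the original $f_n$'s with the Mazur coefficients, which introduces an extra but harmless layer of bookkeeping.
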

\begin{proof} (a) Since $G$ is a relaxed gradient, we can find Borel $d$-Lipschitz
functions $g_i\in L^2(X,\mm)$ such that $g_i\to f$ in $L^2(X,\mm)$
and $|\rmD  g_i|$ weakly converges to $\tilde G\le G$ in
$L^2(X,\mm)$; by Mazur's lemma we can find a sequence of convex
combinations $G_n$ of $|\rmD  g_i|$, starting from an index
$i(n)\to\infty$, strongly convergent to $\tilde G$ in $L^2(X,\mm)$;
the corresponding convex combinations of $g_i$, that we shall denote
by $f_n$, still converge in $L^2(X,\mm)$ to $f$ and $|\rmD  f_n|$
is bounded from above by $G_n$.

(b) Let us prove now the weak closure in $L^2(X,\mm)\times
L^2(X,\mm)$ of the set
$$S:=\big\{(f,G)\in L^2(X,\mm)\times L^2(X,\mm):
\text{$G$ is a relaxed gradient for $f$}\big\}.
$$
Since $S$ is convex, it is sufficient to prove that $S$ is strongly
closed. If $S\ni(f^i,G^i)\to (f,G)$ strongly in $L^2(X,\mm)\times
L^2(X,\mm)$, we can find sequences of Borel $d$-Lipschitz functions
$(f^i_n)_n\in L^2(X,\mm)$ and of nonnegative functions $(G^i_n)_n\in
L^2(X,\mm)$ such that
$$
  f^i_n\stackrel{n\to\infty}\longrightarrow f^i, \quad
  G^i_n \stackrel{n\to\infty}\longrightarrow  \tilde G^i
  \text{ strongly in $L^2(X,\mm)$,}
  \quad
  |\rmD  f^i_n|\le G^i_n,\quad
  \tilde G^i\le G^i.
$$
Possibly extracting a suitable subsequence, we can assume that $\tilde
G^i\weakto \tilde G$ weakly in $L^2(X,\mm)$ with $\tilde G\le G$;
{\nc
by a standard diagonal argument 
we can find an increasing sequence $i\mapsto n(i)$ such that
$f^i_{n(i)}\to f$, $G^i_{n(i)}\weakto \tilde G$ in $L^2(X,\mm)$ and $|\rmD  f^i_{n(i)}|$ is bounded in
$L^2(X,\mm)$.
By the reflexivity of $L^2(X,\mm)$ we can also assume, possibly extracting a further subsequence,
that $|\rmD  f^i_{n(i)}|\weakto H$.}
It follows that $H\le \tilde G\le G$ so that $G$ is a relaxed gradient
for $f$.

(c) Let us consider now the minimal relaxed gradient $G:=\relgrad f$
and let $f_n$, $G_n$ be sequences in $L^2(X,\mm)$ as in the first
part of the present Lemma. Since $|\rmD  f_n|$ is uniformly bounded
in $L^2(X,\mm)$ it is not restrictive to assume that it is weakly
convergent to some limit $H\in L^2(X,\mm)$ with $0\le H\le \tilde
G\le G$. This implies at once that  $H=\tilde G=G$ and $|\rmD 
f_n|$ weakly converges to $\relgrad f$ (because any limit point in
the weak topology of $|\rmD  f_n|$ is a relaxed gradient with minimal
norm) and that the convergence is strong, since
\begin{displaymath}
  \limsup_{n\to\infty}\int_X |\rmD  f_n|^2\,\d\mm\leq
  \limsup_{n\to\infty}\int_X G_n^2\,\d\mm=\int_X G^2\,\d\mm=
  \int_X H^2\,\d\mm.
\end{displaymath}
{\nc Finally, replacing $f_n$ by suitable truncations $\tilde f_n$, made in such a way that
$\tilde f_n\to f$ in $L^2(X,\mm)$, we can achieve the boundedness property retaining
the strong convergence of $|\rmD  \tilde f_n|$ to $\relgrad{f}$, since $|\rmD \tilde f_n|\leq |\rmD  f_n|$
and any weak limit point of $|\rmD \tilde f_n|$ is a relaxed gradient.}
\end{proof}
{\GGG The minimal relaxed gradient satisfies 
  a ``Leibnitz'' rule:
  if $f,g\in L^2(X,\mm)\cap
  L^\infty(X,\mm)$ have a relaxed gradient, then their product 
  $fg$ has a relaxed gradient as well, with
  \begin{equation}
    \label{eq:weak-leibn}
    \relgrad {(fg)}\le |f|\,\relgrad g+|g|\,\relgrad f.
  \end{equation}
  It is sufficient to approximate $f,g$ by two sequences 
  $f_n,g_n$ of bounded Lipschitz functions as 
  in (c) of
  lemma \ref{lem:strongchee} (notice that 
  $f_n,g_n$ can be assumed uniformly bounded  by truncation)
  and then pass to the limit in \eqref{eq:leibn}.}

The distinguished role of the minimal relaxed gradient
is also illustrated by the following lemma.

\begin{lemma}[Locality]\label{lem:locality}
Let $G_1,\,G_2$ be relaxed gradients of $f$. Then $\min\{G_1,G_2\}$ and
$\nchi_BG_1+\nchi_{X\setminus B}G_2$,  $B\in\BorelSets{X}$, are
relaxed gradients of $f$ as well. In particular, for any relaxed gradient
$G$ of $f$ it holds
\begin{equation}\label{eq:facileee}
\relgrad  f\leq G\qquad\text{$\mm$-a.e. in $X$.}
\end{equation}
\end{lemma}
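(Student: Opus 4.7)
The main task is to prove that $\nchi_B G_1+\nchi_{X\setminus B}G_2$ is a relaxed gradient for every Borel $B$; the assertion about $\min\{G_1,G_2\}$ then follows by choosing $B:=\{G_1\le G_2\}$, and the inequality $\relgrad f\le G$ in \eqref{eq:facileee} is a short minimality argument once closure under $\min$ is known. My plan is to establish the ``pasting'' statement first for a $\sfd$-Lipschitz cutoff and then pass to $\nchi_B$ by approximation.

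\textbf{Step 1 (Lipschitz pasting).} By Definition \ref{def:genuppergrad}, pick Borel $\sfd$-Lipschitz sequences $(f_n^1),(f_n^2)\subset L^2(X,\mm)$ with $f_n^i\to f$ strongly and $|\nabla f_n^i|\weakto \tilde G_i\le G_i$ weakly in $L^2$ for $i=1,2$. Given a $\sfd$-Lipschitz $\eta:X\to[0,1]$, put $h_n:=\eta f_n^1+(1-\eta)f_n^2$. Then $h_n$ is Borel and $\sfd$-Lipschitz, $h_n\to f$ in $L^2$, and \eqref{eq:subadd2} yields
\begin{equation*}
  |\nabla h_n|\le \eta|\nabla f_n^1|+(1-\eta)|\nabla f_n^2|+|\nabla \eta|\,|f_n^1-f_n^2|.
\end{equation*}
The last summand tends to $0$ strongly in $L^2(X,\mm)$ because $|\nabla\eta|\in L^\infty$ and $f_n^1-f_n^2\to 0$ in $L^2$. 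Extracting a weakly convergent subsequence of $|\nabla h_n|$, its limit is bounded above by $\eta\tilde G_1+(1-\eta)\tilde G_2\le \eta G_1+(1-\eta)G_2$, so $\eta G_1+(1-\eta)G_2$ is a relaxed gradient of $f$.

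\textbf{Step 2 (Passage to $\nchi_B$).} I would construct $\sfd$-Lipschitz functions $\eta_n:X\to[0,1]$ with $\eta_n\to \nchi_B$ $\mm$-a.e. Using the $\sigma$-finiteness of $\mm$ (cf.\ \eqref{eq:17}) and the fact that Borel measures on Polish spaces are inner- and outer-regular, $B$ is approximated in $\mm$-measure by $\tau$-closed (indeed $\tau$-compact) subsets and $\tau$-open supersets. Since $\sfd$ is $\tau\times\tau$-lower semicontinuous and $\sfd$-convergence implies $\tau$-convergence (items (iii)-(iv) of Definition \ref{dPolish}), every $\tau$-closed set $F$ is $\sfd$-closed and the truncated distance $x\mapsto \max\{0,1-n\,\sfd(x,F)\}$ is bounded, $\sfd$-Lipschitz, and converges to $\nchi_F$ pointwise. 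Combining such approximations yields the required $\eta_n$'s. By Step 1, each $\eta_nG_1+(1-\eta_n)G_2$ is a relaxed gradient of $f$; since these functions are dominated by $G_1+G_2\in L^2$, dominated convergence gives strong convergence in $L^2(X,\mm)$ to $\nchi_B G_1+\nchi_{X\setminus B}G_2$. Applying Lemma~\ref{lem:strongchee}(b) with the constant approximating sequence $f_n\equiv f$ concludes the first claim.

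\textbf{Step 3 (Conclusion).} Choosing $B:=\{G_1\le G_2\}\in\BorelSets X$ produces $\min\{G_1,G_2\}$ as a relaxed gradient. For \eqref{eq:facileee}, given a relaxed gradient $G$ of $f$, $\min\{\relgrad f,G\}$ is again a relaxed gradient whose $L^2$-norm is $\le \|\relgrad f\|_{L^2}$; by the minimality defining $\relgrad f$ the two norms coincide, which together with $\min\{\relgrad f,G\}\le \relgrad f$ pointwise forces $\min\{\relgrad f,G\}=\relgrad f$ $\mm$-a.e., i.e.\ $\relgrad f\le G$ $\mm$-a.e.

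The main obstacle is Step 2: the lemma must hold at the level of arbitrary Borel sets, while the definition of relaxed gradient only provides $\sfd$-Lipschitz approximants. The compatibility axioms (iii)-(iv) linking $\sfd$ and $\tau$ are precisely what make the $\sfd$-Lipschitz truncation of $\tau$-regularity arguments go through; in particular the argument does not need the doubling or Poincar\'e hypotheses, only the Polish extended structure and $\sigma$-finiteness of $\mm$.
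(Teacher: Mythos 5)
Your proof is correct and follows essentially the same route as the paper's: approximate $\nchi_B$ by $\sfd$-Lipschitz cutoffs built from the distance to $\tau$-compact approximants (where (iii)--(iv) of Definition~\ref{dPolish} guarantee Borel measurability of the cutoffs and positivity of $\sfd(\cdot,K)$ off $K$), paste the two approximating sequences via \eqref{eq:subadd2}, and close up with Lemma~\ref{lem:strongchee}(b). The paper packages Step~2 slightly differently---it first reduces to the case of compact $X\setminus B$ via the closure of the class of relaxed gradients and then uses the single cutoff $\min\{1,\sfd(\cdot,X\setminus B)/r\}$---but the argument is the same, and your derivation of \eqref{eq:facileee} is just the contrapositive form of the paper's contradiction argument.
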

\begin{proof} It is sufficient to prove that if $B\in\BorelSets{X}$, then
$\nchi_{X\setminus B}G_1+\nchi_BG_2$ is a relaxed gradient of $f$. By
approximation, taking into account the closure of the class of
relaxed gradients, we can assume with no loss of generality that
$X\setminus B$ is a compact set, so that
the $\sfd$-Lipschitz function
$$
\rho(y):=\inf\left\{\sfd(y,x):\ x\in X\setminus B\right\}
$$
is $\tau$-lower semicontinuous and therefore
$\BorelSets{X}$-measurable. Notice that, because of condition (iii)
in Definition~\ref{dPolish}, $\rho$ is strictly positive in $B$ and
null on $X\setminus B$. Therefore it will be sufficient to show
that, setting $\nchi_r:=\min\{1,\rho/r\}$, $\nchi_rG_1+(1-\nchi_r)G_2$
is a relaxed gradient for all $r>0$.

Let now $f_{n,i}$, $i=1,\,2$, be Borel, $\sfd$-Lipschitz and
$L^2(X,\mm)$ functions converging to $f$ in $L^2$ as $n\to\infty$
with $|\rmD  f_{n,i}|$ weakly convergent to $ \tilde G_i\le G_i$,
and set $f_n:=\nchi_r f_{n,1}+(1-\nchi_r)f_{n,2}$. Then
\eqref{eq:subadd2} immediately gives that $\nchi_r
G_1+(1-\nchi_r)G_2
 \ge \nchi_r \tilde G_1+(1-\nchi_r)\tilde G_2$ is a relaxed gradient.

For the second part of the statement we argue by contradiction: let
$G$ be a relaxed gradient of $f$ and assume that {there exists a Borel
set $B$ with $\mm(B)>0$ on which $G<\relgrad f$}. Consider the
relaxed gradient $G\nchi_B+\relgrad f\nchi_{X\setminus B}$: its $L^2$
norm is strictly less than the $L^2$ norm of $\relgrad f$, which is
a contradiction.
\end{proof}
By \eqref{eq:facileee}, for $f$ Borel and $\sfd$-Lipschitz we get
\begin{equation}
\label{eq:facile} \relgrad  f\leq |\rmD  f|\qquad\text{$\mm$-a.e.
in $X$.}
\end{equation}
A direct byproduct of this characterization of $\relgrad f$ is its
invariance under multiplicative perturbations of $\mm$ of the form
$\theta\,\mm$, with
\begin{equation}
  \label{eq:8}
   0<c\leq \theta \leq C<\infty\quad
  \text{$\mm$-a.e.\ on $X$}.
\end{equation}
Indeed, the class of relaxed gradients is invariant under these
perturbations.

\begin{theorem} \label{thm:cheeger} Cheeger's functional
\begin{equation}\label{def:Cheeger}
\C(f):=\frac{1}{2}\int_X \relgrad f^2\,\d\mm,
\end{equation}
set equal to $+\infty$ if $f$ has no relaxed slope,
is convex and lower semicontinuous in $L^2(X,\mm)$.
If \eqref{eq:19} holds, then its domain is dense in $L^2(X,\mm)$.
\end{theorem}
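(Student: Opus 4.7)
The plan is to split the statement into its three components (convexity, lower semicontinuity, density of the domain) and to base each one on an already-proved tool: subadditivity of the local Lipschitz constant \eqref{eq:subadd}, the closure property Lemma~\ref{lem:strongchee}(b), and the approximation result Proposition~\ref{prop:densitydlip}. No step looks genuinely hard, but the main care is in combining convex combinations on the level of approximating sequences with convex combinations on the level of relaxed gradients.

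First I would prove convexity. Take $f,g\in D(\C)$ and $t\in[0,1]$, and select Borel $\sfd$-Lipschitz sequences $f_n\to f$, $g_n\to g$ in $L^2(X,\mm)$ with $|\nabla f_n|\weakto \tilde G_f\le\relgrad f$ and $|\nabla g_n|\weakto \tilde G_g\le\relgrad g$. Then $(1-t)f_n+tg_n\to(1-t)f+tg$ strongly in $L^2(X,\mm)$, and \eqref{eq:subadd} gives
\begin{equation*}
|\nabla\bigl((1-t)f_n+tg_n\bigr)|\le(1-t)|\nabla f_n|+t|\nabla g_n|,
\end{equation*}
whose weak limit is dominated by $(1-t)\relgrad f+t\relgrad g$. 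Thus $(1-t)\relgrad f+t\relgrad g$ is a relaxed gradient of $(1-t)f+tg$, and by the minimality property \eqref{eq:facileee} in Lemma~\ref{lem:locality} we obtain $\relgrad{(1-t)f+tg}\le (1-t)\relgrad f+t\relgrad g$ $\mm$-a.e. Convexity of $\C$ then follows from convexity of $s\mapsto s^2$.

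Second, for lower semicontinuity, let $f_n\to f$ in $L^2(X,\mm)$ with $\liminf_n\C(f_n)<\infty$; passing to a subsequence I can assume the liminf is a limit and that $\relgrad{f_n}\weakto G$ weakly in $L^2(X,\mm)$ for some $G$. Lemma~\ref{lem:strongchee}(b) applied to the pairs $(f_n,\relgrad{f_n})$ ensures that $G$ is a relaxed gradient of $f$, hence $\relgrad f\le G$ $\mm$-a.e.\ by \eqref{eq:facileee}. The weak $L^2$ lower semicontinuity of the norm then yields
\begin{equation*}
2\C(f)=\int_X\relgrad f^2\,\d\mm\le\int_X G^2\,\d\mm\le\liminf_{n\to\infty}\int_X\relgrad{f_n}^2\,\d\mm=2\liminf_{n\to\infty}\C(f_n).
\end{equation*}

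Finally, density of $D(\C)$ under \eqref{eq:19} is immediate: by Proposition~\ref{prop:densitydlip}, the class of bounded Borel $\sfd$-Lipschitz functions $f\in L^2(X,\mm)$ with $|\nabla f|\in L^2(X,\mm)$ is dense in $L^2(X,\mm)$; for any such $f$, \eqref{eq:facile} gives $\relgrad f\le|\nabla f|$ $\mm$-a.e., so $\C(f)<\infty$ and $f\in D(\C)$. The only subtle point in the whole argument, which is handled already by Lemma~\ref{lem:strongchee}, is that a weak $L^2$ limit of relaxed gradients is itself a relaxed gradient; everything else is a routine application of linearity, \eqref{eq:subadd} and weak $L^2$ compactness.
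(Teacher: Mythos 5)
Your proof is correct and follows the same route as the paper: subadditivity \eqref{eq:subadd} gives the convexity inequality \eqref{eq:82} for relaxed gradients (with an implicit subsequence extraction to get weak convergence of $|\nabla((1-t)f_n+tg_n)|$), lower semicontinuity comes from Lemma~\ref{lem:strongchee}(b), and density from Proposition~\ref{prop:densitydlip} together with \eqref{eq:facile}. The only difference is that you unwind the relaxed-gradient definition on the level of approximating sequences, while the paper states the convexity of the class of relaxed gradients directly; these are the same argument.
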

\begin{proof} A simple byproduct of condition \eqref{eq:subadd} is that
$\alpha F+\beta G$ is a relaxed gradient of $\alpha f+\beta g$ whenever
$\alpha,\,\beta$ are nonnegative constants and $F,\,G$ are relaxed
gradients of $f,\,g$ respectively. Taking $F=\relgrad f$ and
$G=\relgrad g$ yields
{\nc\begin{equation}
  \label{eq:82}
  \relgrad{(\alpha f+\beta g)}\le |\alpha| \relgrad f+|\beta|\relgrad g\quad
  \text{for every }f,g\in D(\C),\ \alpha,\beta\in\R.
\end{equation}}
This proves the convexity of $\C$, while lower semicontinuity
follows by (b) of Lemma~\ref{lem:strongchee}.
\end{proof}

\begin{remark}[The Sobolev space
  $W^{1,2}_*(X,\sfd,\mm)$]\label{re:sobolev}
\upshape
As a simple consequence of the lower semicontinuity of the Cheeger's
functional, it can be proved that the domain of $\C$ endowed with
the norm
\[
\|f\|_{W_*^{1,2}}:=\sqrt{\|f\|^2_2+\|\relgrad f\|^2_2},
\]
is a Banach space (for the proof, just adapt the arguments in 
\cite[Theorem~2.7]{Cheeger00}). Call $W^{1,2}_*(X,\sfd,\mm)$ this space. {\nc Notice that for the moment we don't know whether this space coincides with the Sobolev space $W^{1,2}(X,\sfd,\mm)$ introduced in \cite{Cheeger00} (see Remark~\ref{rem:compachsh} below), which is the standard one used in the context of analysis in metric measure spaces,  whence the distinguished notation that we will keep until we will prove in Theorem~\ref{thm:density_energy} that this new definition coincides with the existing one.}

It is important to remark that, in general,
$W^{1,2}_*(X,\sfd,\mm)$ is \emph{not} an Hilbert space. This is the
case, for example, of the metric measure space
$(\R^d,\|\cdot\|,\Leb{d})$ where $\|\cdot\|$ is any norm not coming
from an inner product. The fact that $W_*^{1,2}(X,\sfd,\mm)$ may fail
to be Hilbert is strictly related to the potential lack of linearity
of the heat flow, see also Remark~\ref{re:laplnonlin} below (for
computations in smooth spaces with non linear heat flows see
\cite{Ohta-Sturm09}). Also, the reflexivity of $W_*^{1,2}$ and the
density of Lipschitz functions in $W^{1,2}_*$ norm seem to be
difficult problems at this level of generality, while it is known
that both these facts are true in doubling spaces satisfying a local
Poincar\'e inequality, see \cite{Cheeger00}.\fr
\end{remark}

\begin{remark}[Cheeger's original functional]\label{rem:compachsh} \upshape Our definition of $\C$ can be
compared with the original one in \cite{Cheeger00}: the relaxation
procedure is similar, but the approximating functions $f_n$ are not
required to be Lipschitz and $|\rmD  f_n|$ are replaced by upper
gradients $G_n$ of $f_n$. Obviously, this leads to a \emph{smaller}
functional, that we shall denote by $\underline{\sf Ch}_*$; this
functional can still be represented by the integration of a local
object, smaller $\mm$-a.e. than $\relgrad{f}$, that we shall denote
by $|\rmD  f|_C$. {\nc Then the Sobolev space $W^{1,2}(X,\sfd,\mm)$ is defined as the domain of $\underline{\sf Ch}_*$ endowed with the norm
\[
\|f\|_{W^{1,2}}:=\sqrt{\|f\|^2_2+\||\rmD  f|_C\|^2_2}.
\]
This is the definition of Sobolev space adopted as the standard in metric measure spaces (and agrees with the one of Newtonian space 
given by Shanmugalingam in \cite{Shanmugalingam00}, see Remark~\ref{rem:compachsh2}). The inequality $|\rmD  f|_C\leq \relgrad f$ $\mm$-a.e. yields
\[
W_*^{1,2}(X,\sfd,\mm)\subset W^{1,2}(X,\sfd,\mm).
\]
Relating $W_*^{1,2}(X,\sfd,\mm)$ to $W^{1,2}(X,\sfd,\mm)$, and hence $\C$ to $\underline{\sf Ch}_*$, amounts
to find, for any $f\in L^2(X,\mm)$ and any upper gradient $G$ of
$f$, a sequence of Lipschitz functions $f_n$ such that $f_n\to f$ in
$L^2(X,\mm)$ and
\begin{equation}\label{eq:june7}
\limsup_{n\to\infty}\int_X|\rmD  f_n|^2\,\d\mm\leq
\int_XG^2\,\d\mm.
\end{equation}
It is well known, see \cite{Cheeger00}, that this approximation is
possible (even in strong $W^{1,2}$ norm) if Poincar\'e and doubling
hold with upper gradients in the right hand side.}

A byproduct of our identification result, see
Theorem~\ref{thm:rel=weak}  {\nc and Theorem~\ref{thm:density_energy}}, is the fact that
$\underline{\sf Ch}_*=\C$, i.e. that the approximation
\eqref{eq:june7} with Lipschitz functions and their corresponding
slopes instead of upper gradients is possible, without \emph{any}
regularity assumption on $(X,\sfd,\mm)$, besides \eqref{eq:75}. 
Also, in the case when $\sfd$ is a distance, taking into account the locality properties
of the weak gradients, the result can be extended to locally
finite measures.
\fr
\end{remark}

\begin{proposition}[Chain rule]\label{prop:chainrule}
If $f\in L^2(X,\mm)$ has a relaxed gradient, the following properties
hold:
\begin{itemize}
\item[(a)] for any $\Leb{1}$-negligible Borel set $N\subset\R$ it holds $\relgrad f=0$ $\mm$-a.e. on
$f^{-1}(N)$;
\item[(b)] $\relgrad f=\relgrad g$ $\mm$-a.e. on $\{f-g=c\}$
  for all constants $c\in\R$ and $g\in L^2(X,\mm)$ with $\C(g)<\infty$;
\item[(c)] $\phi(f)\in D(\C)$ and $\relgrad {\phi(f)}\leq |\phi'(f)|\relgrad f$ for any
Lipschitz function $\phi$ on  an interval $J$ containing the image
of $f$ (with $0\in J$ and $\phi(0)=0$ if $\mm$ is not finite);
\item[(d)] $\phi(f)\in D(\C)$ and $\relgrad {\phi(f)}= \phi'(f)\relgrad f$ for
any nondecreasing and Lipschitz function $\phi$ on an interval $J$
containing the image of $f$ (with $0\in J$ and $\phi(0)=0$ if $\mm$
is not finite); {\GGG
\item[(e)] If $f,\,g\in D(\C)$ and $\phi:\R\to\R$ is 
  a nondecreasing contraction (with $\phi(0)=0$ if
  $\mm(X)=\infty)$, then
  \begin{equation}
    \label{eq:111}
    \relgrad{(f+\phi(g-f))}^2+
    \relgrad{(g-\phi(g-f))}^2\le \relgrad f^2+\relgrad g^2\quad
    \text{$\mm$-a.e.\ in $X$}.
  \end{equation}
}
\end{itemize}
\end{proposition}
\begin{proof}
{\bf (a)} We claim that for $\phi:\R\to\R$ continuously
differentiable and Lipschitz on the image of $f$ it holds
\begin{equation}
\label{eq:chainc1} \relgrad {\phi(f)}\leq |\phi'\circ f|\relgrad
f,\qquad\text{$\mm$-a.e. in $X$},
\end{equation}
for any $f\in D(\C)$. To prove this, observe that the pointwise
inequality $|\rmD \phi(f)|\leq |\phi'\circ f||\rmD  f|$ trivially
holds for $f\in L^2(X,\mm)$ Borel and $\sfd$-Lipschitz. The claim
follows by an easy approximation argument,
 thanks to \eqref{eq:16} of Lemma~\ref{lem:strongchee};
when $\mm$ is not finite, we also require $\phi(0)=0$ in order to be
sure that $\phi\circ f\in L^2(X,\mm)$.

Now, assume that $N$ is compact. In this case, let $A_n\subset\R$ be
open sets such that $A_n\downarrow N$ {\nc and $\Leb{1}(A_1)<\infty$}. Also, let $\psi_n:\R\to[0,1]$
be a continuous function satisfying $\nchi_{N}\leq\psi_n\leq
\nchi_{A_n}$, and define $\phi_n:\R\to\R$ by
\[
\left\{\begin{array}{ll}
\phi_n(0)&=0,\\
\phi_n'(z)&=1-\psi_n(z).
\end{array}
\right.
\]
The sequence $(\phi_n)$ uniformly converges to the identity map, and
each $\phi_n$ is 1-Lipschitz and $C^1$. Therefore $\phi_n\circ f$
converge to $f$ in $L^2$. Taking into account that $\phi_{n}'=0$ on $N$
and \eqref{eq:chainc1} we deduce
\[
\begin{split}
\int_X \relgrad f^2\,\d\mm
&\leq\liminf_{n\to\infty}\int_X\relgrad {\phi_n(f)}^2\,\d\mm
\leq\liminf_{n\to\infty}\int_X|\phi'_n\circ f|^2\relgrad f^2\,\d\mm\\
&=\liminf_{n\to\infty}\int_{X\setminus f^{-1}(N)}|\phi'_n\circ
f|^2\relgrad f^2\,\d\mm\leq\int_{X\setminus f^{-1}(N)}\relgrad
f^2\,\d\mm.
\end{split}
\]
It remains to deal with the case when $N$ is not compact. In this
case we consider the finite measure $\mu:=f_\sharp\tmm$,
where $\tmm=\vartheta\mm$ is the finite measure defined as in \eqref{eq:17}.  Then there exists
an increasing sequence $(K_n)$ of compact subsets of $N$ such that
$\mu(K_n)\uparrow\mu(N)$. By the result for the compact case we know
that $\relgrad f=0$ $\mm$-a.e.\ on $\cup_nf^{-1}(K_n)$, and by
definition
 of push forward and the fact that $\tmm$ and $\mm$ have the
same negligible subsets, we know that $\mm(f^{-1}(N\setminus
\cup_nK_n))=0$.
\\* {\bf (b)} By
(a) the claimed property is true if $g$ is identically 0. In the
general case we notice that $\relgrad {(f-g)}+\relgrad  g$ is a
relaxed gradient of $f$, hence on $\{f-g=c\}$ we conclude that
$\mm$-a.e. it holds $\relgrad f\leq \relgrad g$. Reversing the roles
of $f$ and $g$ we conclude.\\* {\bf (c)} By (a) and Rademacher
Theorem we know that the right hand side is well defined, so that
the statement makes sense (with the convention to define
$|\phi'\circ f|$ arbitrarily at points $x$ such that $\phi'$ does
not exist at $f(x)$). Also, by \eqref{eq:chainc1} we know that the
thesis is true if $\phi$ is $C^1$. For the general case, just
approximate $\phi$ with a sequence $(\phi_n)$ of equi-Lipschitz and
$C^1$ functions, such that $\phi_n'\to \phi'$ a.e. on the image of
$f$.
\\* {\bf (d)} Arguing as in (c)
we see that it is sufficient to prove the claim under the further
assumption that $\phi$ is $C^1$, thus we assume this regularity.
Also, with no loss of generality we can assume that $0\leq\phi'\leq
1$. We know that $(1-\phi'(f)){\relgrad f}$ and $\phi'(f)\relgrad f$
are relaxed gradients of $f-\phi(f)$ and $f$ respectively. Since
$$
\relgrad f\leq \relgrad {(f-\phi(f))}+\relgrad {\phi(f)}\leq
\Big( (1-\phi'(f))+\phi'(f)\Big)\relgrad f=\relgrad f
$$
it follows that all inequalities are equalities $\mm$-a.e. in $X$.
\\*{\GGG 
{\bf (e)} Applying Lemma \ref{lem:strongchee} we find
two optimal sequences $(f_n),\,(g_n)$ of bounded
Lipschitz functions satisfying \eqref{eq:16}
(w.r.t.~$f$ and $g$ respectively).
When $\phi$ is of class $C^1$, 
passing to the limit in the inequality
\eqref{eq:107} of Lemma \ref{le:contraction} written for
$f_n$ and $g_n$ we easily get \eqref{eq:111}.
In the general case, we first approximate $\phi$ by a sequence
$\phi_n$ of nondecreasing contraction of class $C^1$ converging to
$\phi$ pointwise (and satisfying the condition $\phi_n(0)=0$ when
$\mm(X)=\infty$) and then pass to the limit in \eqref{eq:111} 
written for $\phi_n$.}
\end{proof}

{\nc Taking the locality property of Proposition~\ref{prop:chainrule}} into account, we can extend the relaxed gradient
from $L^2(X,\mm)$ to the class of $\mm$-measurable maps $f$ whose
truncates $f_N:=\min\{N,\max\{f,-N\}\}$ belong to $D(\C)\subset
L^2(X,\mm)$ for any integer $N$ in the following way:
\begin{equation}\label{eq:extrelgrad}
\relgrad f:=\relgrad{f_N}\qquad\text{$\mm$-a.e. on $\{|f|<N\}$.}
\end{equation}
Accordingly, we can extend Cheeger's functional \eqref{def:Cheeger}
as follows:
\begin{equation}\label{eq:extchee}
{\tilde{\sf Ch}}_*(f):=
\begin{cases}
\frac{1}{2}\int_X \relgrad f^2\,\d\mm& \text{if $f_N\in D(\C)$ for
all $N\geq 1$}
\\ +\infty&\text{otherwise.}
\end{cases}
\end{equation}
It is obvious that ${\tilde{\sf Ch}}_*$ is convex and, when
$\mm(X)<\infty$, it is sequentially lower semicontinuous with
respect to convergence $\mm$-a.e. in $X$: we shall see that this
property holds even when $\mm$ is not finite but satisfies
\eqref{eq:75}. We shall use this extension when we will compare
relaxed and weak upper gradient, see Theorem~\ref{thm:rel=weak}.

Here it is useful to introduce the \emph{Fisher information
functional}:

\begin{definition}[Fisher information]\label{def:Fisher}
We define the Fisher information $\mathsf{F}(f)$ of a Borel function
$f:X\to [0,\infty)$ as
  \begin{equation}
    \label{eq:91}
    \mathsf{F}(f):= 4\int_X \relgrad{\sqrt f}^2\,\d\mm=8\,
    \C(\sqrt f),
  \end{equation}
if $\sqrt{f}\in D(\C)$ and we define $\mathsf{F}(f)=+\infty$
otherwise.
\end{definition}

\begin{lemma}[Properties of $\mathsf F$]
  \label{le:Fisher}
  For every Borel function $f:X\to [0,\infty)$ we have the
  equivalence
  \begin{equation}
    \label{eq:90}
    f\in D(\mathsf F)\quad\Longleftrightarrow
    \quad
    f,\,\relgrad f\in L^1(X,\mm),\quad
    \int_{\{f>0\}}\frac{\relgrad f^2}f\,\d\mm<\infty,
  \end{equation}
  and in this case it holds
  \begin{equation}
    \label{eq:42}
    \mathsf F(f)
    =\int_{\{f>0\}}\frac{\relgrad f^2}f\,\d\mm.
  \end{equation}
  In addition, the functional $\mathsf F$ is convex and sequentially lower semicontinuous with
  respect to the weak topology of $L^1(X,\mm)$.
\end{lemma}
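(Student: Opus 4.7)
The core identity I aim to establish is $\relgrad f=2\sqrt f\,\relgrad{\sqrt f}$ $\mm$-a.e., from which both \eqref{eq:90} and the representation \eqref{eq:42} follow at once; convexity and lower semicontinuity of $\mathsf F$ will then be inherited from the corresponding properties of $\C$ applied to $\sqrt f$.

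For the implication $\sqrt f\in D(\C)\Rightarrow f\in D(\mathsf F)$ together with \eqref{eq:42}, I apply Proposition~\ref{prop:chainrule}(d) to $g:=\sqrt f$ with the nondecreasing Lipschitz function $\phi_N(z):=(z\wedge N)^2$ (which vanishes at $0$), obtaining $f\wedge N^2=\phi_N(\sqrt f)\in D(\C)$ with $\relgrad{f\wedge N^2}=2\sqrt f\,\mathbf 1_{\{\sqrt f<N\}}\,\relgrad{\sqrt f}$. Letting $N\to\infty$ and invoking the extension \eqref{eq:extrelgrad} gives $\relgrad f=2\sqrt f\,\relgrad{\sqrt f}$ $\mm$-a.e., and Proposition~\ref{prop:chainrule}(a) applied to $\sqrt f$ with the null set $\{0\}\subset\R$ shows $\relgrad{\sqrt f}=0$ on $\{f=0\}$; dividing by $f$ on $\{f>0\}$ then produces \eqref{eq:42}. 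Finally $f=(\sqrt f)^2\in L^1(X,\mm)$, and Cauchy--Schwarz yields $\int\relgrad f\,\d\mm=2\int\sqrt f\,\relgrad{\sqrt f}\,\d\mm\le 2\|\sqrt f\|_{L^2}\|\relgrad{\sqrt f}\|_{L^2}<\infty$.

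For the converse, assume $f,\relgrad f\in L^1(X,\mm)$ and $I:=\int_{\{f>0\}}\relgrad f^2/f\,\d\mm<\infty$; by \eqref{eq:extrelgrad} each $f_N=f\wedge N$ belongs to $D(\C)$. I regularize $\sqrt\cdot$ through $\phi_\eps(z):=\sqrt{z+\eps}-\sqrt\eps$, which is nondecreasing, $(2\sqrt\eps)^{-1}$-Lipschitz on $[0,\infty)$, and vanishes at $0$. Proposition~\ref{prop:chainrule}(d) gives $\phi_\eps(f_N)\in D(\C)$ with $\relgrad{\phi_\eps(f_N)}=\relgrad{f_N}/(2\sqrt{f_N+\eps})$. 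As $N\to\infty$, dominated convergence in $L^2$ (using $\phi_\eps(f_N)\le\sqrt f$ for the functions and $\relgrad{f_N}/(2\sqrt{f_N+\eps})\le\relgrad f/(2\sqrt{f+\eps})\in L^2$ for the gradients, the latter membership relying on $I<\infty$ together with the fact that $\relgrad f=0$ on $\{f=0\}$) combined with Lemma~\ref{lem:strongchee}(b) yields $\phi_\eps(f)\in D(\C)$ with relaxed gradient bounded by $\relgrad f/(2\sqrt{f+\eps})$. Sending $\eps\downarrow 0$, $\phi_\eps(f)\to\sqrt f$ in $L^2$ (monotonically, dominated by $\sqrt f$) while the relaxed gradients remain uniformly bounded in $L^2$; a further application of Lemma~\ref{lem:strongchee}(b) to a weak $L^2$-limit produces $\sqrt f\in D(\C)$, completing the equivalence.

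Finally, convexity of $\mathsf F$ is a direct consequence of the representation \eqref{eq:42}, the subadditivity \eqref{eq:82} of the minimal relaxed gradient, and the joint convexity of $(a,b)\mapsto b^2/a$ on $[0,\infty)\times\R$ (with the usual convention $0^2/0=0$). For lower semicontinuity in the weak $L^1$-topology, convexity allows me to reduce to the strong case: if $f_n\to f$ in $L^1(X,\mm)$, then $\|\sqrt{f_n}-\sqrt f\|_{L^2}^2\le\|f_n-f\|_{L^1}\to 0$ (via $|\sqrt a-\sqrt b|^2\le|a-b|$ for $a,b\ge 0$), and the $L^2$-lower semicontinuity of $\C$ (Theorem~\ref{thm:cheeger}) gives $\mathsf F(f)\le\liminf_n\mathsf F(f_n)$. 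The main delicate point is the $\eps$-regularization step needed to bridge the $L^2$-based definition of $\relgrad\cdot$ and $\C$ with the $L^1$-level hypotheses on $f$; once that is in place, the chain rule and Lemma~\ref{lem:strongchee} handle everything.
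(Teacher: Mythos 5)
Your proof is correct and follows essentially the same route as the paper's: the forward implication via the chain rule with $\phi(r)=r^2$ (here in its truncated form $\phi_N$), the reverse implication via $\phi_\eps(r)=\sqrt{r+\eps}-\sqrt\eps$, convexity of $\mathsf F$ from \eqref{eq:82} together with the joint convexity of $(a,b)\mapsto b^2/a$, and weak $L^1$ lower semicontinuity deduced from the strong one through convexity. The paper compresses the truncation step by first observing that one may assume $f$ bounded (by the definition of the extended relaxed gradient), whereas you carry the $N$-truncation explicitly through the argument; both are sound.
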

\begin{proof}
  By the definition of extended relaxed gradient it is sufficient to consider the case
  when $f$ is bounded.  The right implication in \eqref{eq:90} is an
  immediate consequence of Proposition \ref{prop:chainrule} with
  $\phi(r)=r^2$.  The  reverse
  one still follows by applying the same
  property to $\phi_\eps(r)=\sqrt{r+\eps}-\sqrt\eps$, $\eps>0$, and then passing
  to the limit as $\eps\downarrow 0$.

  The strong lower semicontinuity in $L^1(X,\mm)$ is an immediate consequence
  of the lower semicontinuity of the Cheeger's energy in
  $L^2(X,\mm)$. The convexity of $\mathsf F$ follows
  by the representation of $\mathsf F$ given in \eqref{eq:42},
  the convexity of $g\mapsto\relgrad{g}$ stated in \eqref{eq:82},
  and the convexity of the function $(x,y)\mapsto y^2/x$ in
  $(0,\infty)\times\R$. Since $\mathsf F$ is convex, its weak lower
  semicontinuity in $L^1(X,\mm)$ is a consequence of the strong one.
\end{proof}

We conclude this section with a result concerning general
multiplicative perturbations of the measure $\mm$. Notice that the
choice $\theta=\rme^{-V^2}$ with $V$ as in \eqref{eq:75} implies
\eqref{eq:15} for arbitrary $r>0$.

\begin{lemma}[Invariance with respect to multiplicative perturbations
  of $\mm$]
  \label{le:invariance}
  Let $\mm'=\theta\,\mm$ be another $\sigma$-finite Borel
  measure whose density $\theta$ satisfies the following condition:
  for every $K$ compact in $X$ there exist $r>0$ and positive constants
  $c(K),\,C(K)$ such that
  \begin{equation}
    \label{eq:15}
    0<c(K)\leq\theta\leq C(K)<\infty\quad
  \text{$\mm$-a.e.\ on $K(r):=\{x\in X:\sfd(x,K)\le r\}$}.
\end{equation}
  Then the
  relaxed gradient $\relgrad{f}'$ induced by $\mm'$ coincides $\mm$-a.e.
  with $\relgrad f$ for every $f\in W_*^{1,2}(X,\sfd,\mm)\cap
  W_*^{1,2}(X,\sfd,\mm')$. If moreover
  there exists $r>0$ such that \eqref{eq:15} holds
  for every compact set $K\subset X$ then
  \begin{equation}
    \label{eq:56}
    f\in W_*^{1,2}(X,\sfd,\mm),\quad
    f,\relgrad f\in L^2(X,\mm')\quad\Longrightarrow\quad
    f\in W_*^{1,2}(X,\sfd,\mm').
  \end{equation}
\end{lemma}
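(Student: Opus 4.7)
\emph{Proof plan.}
The strategy is to use locality of the relaxed gradient (Proposition~\ref{prop:chainrule}(b)) together with cutoff functions supported on compact neighborhoods where $\theta$ is pinched between two positive constants; on such neighborhoods the topologies of $L^2(\mm)$ and $L^2(\mm')$ are equivalent, which allows us to transfer Lipschitz approximations between the two measures. First, by applying Ulam's theorem to a finite Borel measure equivalent to both $\mm$ and $\mm'$ (for instance $\vartheta\,\mm+\vartheta'\,\mm'$ for positive integrable densities $\vartheta$, $\vartheta'$), I extract an increasing sequence of compact sets $K_j$ with $(\mm+\mm')$-a.e.\ point lying in $\bigcup_jK_j$. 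For each $j$, hypothesis \eqref{eq:15} gives $r_j>0$ and $0<c_j\le C_j<\infty$ with $c_j\le\theta\le C_j$ on $K_j(r_j)$. I define two $[0,1]$-valued $\sfd$-Lipschitz Borel cutoffs: $\chi_j:=\max\{0,\min\{1,2-(4/r_j)\sfd(\cdot,K_j)\}\}$, equal to $1$ on $K_j$ and supported in $K_j(r_j/2)$, and $\eta_j$ defined analogously so that $\eta_j\equiv 1$ on $K_j(r_j/2)$ and $\eta_j\equiv 0$ outside $K_j(r_j)$. Their Borel measurability follows from Definition~\ref{dPolish}(iv): $\sfd(\cdot,K)=\inf_{y\in K}\sfd(\cdot,y)$ is upper semicontinuous for compact $K$, since $\{\sfd(\cdot,K)<s\}=\bigcup_{y\in K}\{\sfd(\cdot,y)<s\}$ is open.

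For the first claim it suffices to show $\relgrad{\chi_jf}=\relgrad{\chi_jf}'$ for each $j$, since Proposition~\ref{prop:chainrule}(b) applied with $g=\chi_jf$ gives $\relgrad f=\relgrad{\chi_jf}$ on $K_j$ $\mm$-a.e.\ and the analogous identity $\mm'$-a.e. Choose Borel $\sfd$-Lipschitz $g_n\to \chi_jf$ in $L^2(\mm)$ with $|\nabla g_n|\to\relgrad{\chi_jf}$ strongly in $L^2(\mm)$ (Lemma~\ref{lem:strongchee}(c)) and set $\tilde g_n:=\eta_jg_n$. The $\tilde g_n$ are Lipschitz, supported in $K_j(r_j)$, and since $\eta_j\equiv 1$ on $\supp\chi_j$ we have $\tilde g_n\to\eta_j\chi_jf=\chi_jf$ in $L^2(\mm)$; the bound $\theta\le C_j$ on $K_j(r_j)$ then yields $\tilde g_n\to\chi_jf$ also in $L^2(\mm')$. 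By \eqref{eq:leibn}, $|\nabla\tilde g_n|\le \eta_j|\nabla g_n|+|g_n|\,|\nabla\eta_j|$. The first summand converges strongly in $L^2(\mm)$ (hence in $L^2(\mm')$, as the common support lies in $K_j(r_j)$) to $\eta_j\relgrad{\chi_jf}\le \relgrad{\chi_jf}$. The second is supported in $\{\eta_j<1\}\subset X\setminus K_j(r_j/2)\subset\{\chi_jf=0\}$, so $g_n\to 0$ there in $L^2(\mm)$ and hence $|g_n|\,|\nabla\eta_j|\to 0$ in $L^2(\mm)$ and in $L^2(\mm')$. Extracting a weak $L^2(\mm')$-limit $H$ of $|\nabla\tilde g_n|$ thus produces a relaxed gradient of $\chi_jf$ w.r.t.\ $\mm'$ with $H\le \relgrad{\chi_jf}$, so $\relgrad{\chi_jf}'\le \relgrad{\chi_jf}$ $\mm'$-a.e. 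The reverse inequality follows by exchanging the roles of $\mm$ and $\mm'$; on $K_j(r_j)$ the two measures are equivalent, so the two inequalities give an equality, while outside $\supp\chi_j$ both gradients vanish by Proposition~\ref{prop:chainrule}(b) with $g=0$. Taking the union over $j$ yields $\relgrad f=\relgrad f'$ $\mm$-a.e.\ on $X$.

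For the second claim, the uniformity of $r>0$ allows us to use the same cutoffs with $|\nabla\chi_m|\le 4/r$ everywhere. Taking Lipschitz $g_n\to f$ in $L^2(\mm)$ with $|\nabla g_n|\to\relgrad f$ strongly, the products $\chi_mg_n$ are Lipschitz and converge to $\chi_mf$ in $L^2(\mm)$ and, being supported in $K_m(r)$ where $\theta\le C_m$, also in $L^2(\mm')$. A weak-limit argument as in the first claim shows $\chi_mf\in W^{1,2}(X,\sfd,\mm)\cap W^{1,2}(X,\sfd,\mm')$, and from $|\nabla(\chi_mg_n)|\le \chi_m|\nabla g_n|+|g_n|\,|\nabla\chi_m|$ one obtains
\[
\relgrad{\chi_mf}\le \chi_m\relgrad f+|f|\,|\nabla\chi_m|\le \relgrad f+(4/r)|f|\quad\text{$\mm$-a.e.}
\]
By the first claim, $\relgrad{\chi_mf}'=\relgrad{\chi_mf}$, so $\{\relgrad{\chi_mf}'\}_m$ is uniformly bounded in $L^2(\mm')$ by the hypothesis $f,\relgrad f\in L^2(\mm')$. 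Dominated convergence (with $|\chi_mf|\le|f|\in L^2(\mm')$ and $\chi_m\to 1$ $\mm'$-a.e.) gives $\chi_mf\to f$ in $L^2(\mm')$, so passing to a weak $L^2(\mm')$-limit of the relaxed gradients and invoking Lemma~\ref{lem:strongchee}(b) for $\mm'$ produces a relaxed gradient of $f$ w.r.t.\ $\mm'$, proving $f\in W^{1,2}(X,\sfd,\mm')$.

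The main obstacle is the careful handling of the Leibniz remainder $|g_n|\,|\nabla\eta_j|$ produced by multiplying by the auxiliary cutoff: this term must be shown to vanish in $L^2(\mm)$ (and hence in $L^2(\mm')$), which relies crucially on Proposition~\ref{prop:chainrule}(b) to guarantee that $\chi_jf\equiv 0$ on the region where $|\nabla\eta_j|$ is supported. A secondary technical point is the Borel (not merely $\BorelSetsStar{X}$) measurability of the cutoffs in the extended Polish framework, which is needed so that they serve as legitimate Lipschitz approximants in the definition of $\relgrad{\cdot}$; this hinges on the lower semicontinuity assumption in Definition~\ref{dPolish}(iv).
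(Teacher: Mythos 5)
Your argument is correct and uses the same toolkit as the paper's proof (Lipschitz cutoffs supported on compact neighborhoods where $\theta$ is pinched between positive constants, the strong approximation from Lemma~\ref{lem:strongchee}(c), the Leibniz estimate \eqref{eq:leibn}, and weak compactness in $L^2(X,\mm')$), but it is organized differently. The paper argues by contradiction on a single compact set $K$ chosen inside a putative Borel set where $\relgrad f > \relgrad{f}'$, truncates the global Lipschitz approximants $f_n$ of $f$ by the cutoff $\nchi_r$, and reads off the equality of the two relaxed gradients on $K$; you instead argue directly on an exhausting sequence $(K_j)$, first proving the local identity $\relgrad{\chi_j f}=\relgrad{\chi_j f}'$ and then transferring it back to $f$ via the locality statement Proposition~\ref{prop:chainrule}(b). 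Your second cutoff $\eta_j$ is slightly redundant---truncating the approximants of $f$ itself (as the paper does) removes the Leibniz remainder $|g_n|\,|\nabla\eta_j|$ from the picture altogether---but your version makes every error term explicit and is, if anything, easier to audit. Your proof of \eqref{eq:56} matches the paper's.

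Two small inaccuracies are worth fixing. First, the Borel measurability of the cutoffs is true, but not for the reason you give: by Definition~\ref{dPolish}(iv), $\sfd(\cdot,y)$ is \emph{lower} (not upper) semicontinuous, so the sets $\{\sfd(\cdot,y)<s\}$ are not in general open and the union argument breaks down. The correct observation (implicit in the paper's Lemma~\ref{lem:locality}) is that $\sfd(\cdot,K)$ is $\tau$-lower semicontinuous when $K$ is compact: if $x_n\to x$, pick near-minimizing $y_n\in K$, pass to a $\tau$-convergent subsequence $y_n\to y\in K$, and use (iv) to get $\sfd(x,K)\le\sfd(x,y)\le\liminf_n\sfd(x_n,y_n)$. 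Lower semicontinuous functions are Borel, and the cutoffs are compositions of $\sfd(\cdot,K_j)$ with continuous maps. Second, the inclusion $\{|\nabla\eta_j|>0\}\subset\{\eta_j<1\}$ you assert is not quite valid, since the slope can be positive at points where $\eta_j=1$, e.g.\ on $\{\sfd(\cdot,K_j)=r_j/2\}$; what does hold, and is all you need, is the weaker inclusion $\{|\nabla\eta_j|>0\}\subset\{\sfd(\cdot,K_j)\ge r_j/2\}\subset\{\chi_j=0\}$, because $\eta_j$ is locally constant on the $\sfd$-open set $\{\sfd(\cdot,K_j)<r_j/2\}$. This still gives $g_n\to 0$ in $L^2(X,\mm)$ on the support of $|\nabla\eta_j|$, so the argument goes through. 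You might also add a sentence noting that $\chi_j f\in D(\C)$ (needed to invoke Lemma~\ref{lem:strongchee}(c)); this is immediate from \eqref{eq:leibn} applied to the approximants of $f$, but it deserves a word.
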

\begin{proof} Let us first notice that the role of $\mm$ and $\mm'$ in
  \eqref{eq:15} can be inverted, since also $\mm$ is absolutely continuous
  w.r.t.\ $\mm'$ (\eqref{eq:15} yields $\mm(K)=0$ for every compact
  set $K$ with $\mm'(K)=0$) and therefore its density $\d\mm/\d\mm'=\theta^{-1}$
  w.r.t.\ $\mm'$ still satisfies \eqref{eq:15}.

  Let us prove that $\relgrad f\le \relgrad f'$:
  we argue by contradiction and we suppose that for
  some $f\in W_*^{1,2}(X,\sfd,\mm)\cap W_*^{1,2}(X,\sfd,\mm')$ the strict inequality
  $\relgrad f>\relgrad f'$ holds in a Borel set $B$ with $\mm'(B)>0$. {\nc Since
  $\mm'$ is $\sigma$-finite we can assume $\mm'(B)<\infty$.

  By the finiteness of $\chi_B\mm'$} we can find a compact set $K\subset B$
  with $\mm'(K)>0$ (and therefore $\mm(K)>0$ by \eqref{eq:15})
  and $r>0$ such that \eqref{eq:15} holds.
  Introducing a Lipschitz {\nc nonincreasing} function $\phi_r:\R\to [0,1]$ such that
  $\phi_r(v)\equiv 1$ in $[0,r/3]$ and $\phi_r(v)\equiv 0$ in
  $[2r/3,\infty)$,
  we consider the corresponding functions
  $\nchi_r(x):=\phi_r(\sfd(x,K))$,
  which are {\nc upper} semicontinuous, $\sfd$-Lipschitz, and satisfy
  $\nchi_r(x)=|\rmD \nchi_r(x)|=0$ for every $x$ with $\sfd(x,K)>2r/3$.

  Applying Lemma~\ref{lem:strongchee} we find
  a sequence of Borel and $\sfd$-Lipschitz function $f_n\in
  L^2(X,\mm)$ satisfying \eqref{eq:16}.
  It is easy to check that $f_n':=\nchi_r\,f_n$ is a sequence of Borel
  $\sfd$-Lipschitz functions which converges strongly to
  $f':=\nchi_r\, f$ in $L^2(X,\mm')$ by \eqref{eq:15}.
  Moreover, since
  \begin{equation}
    \label{eq:57}
    |\rmD  f_n'|\le \nchi_r |\rmD  f_n|+|f_n|
    \Lip(\nchi_r)\qquad\text{and}\qquad
    |\rmD  f_n'|\equiv 0\quad\text{on the open set }X\setminus
    K(r),
  \end{equation}
  $|\rmD  f_n'|$ is clearly uniformly bounded in $L^2(X,\mm')$ by \eqref{eq:15}, so
  that up to subsequence, it weakly converges to some function {\nc $G'\ge \relgrad {f'}'$.}
  Since $|\rmD  f_n'|=|\rmD  f_n|$ in a $\sfd$-open set containing $K$,
  \eqref{eq:16} yields $G'=\relgrad f$ $\mm'$-a.e.\ in $K$ so that
  $\relgrad {f'}'\le \relgrad f$ $\mm'$-a.e.\ in $K$. {\nc Locality then gives
  $\relgrad {f}'\le \relgrad f$ $\mm'$-a.e.\ in $K$.}
  Inverting the role of $\mm$ and $\mm'$, we can also prove the converse
  inequality $\relgrad f'\le \relgrad f$.

  In order to prove \eqref{eq:56}, let $K_n$ be an sequence of compact
  sets such that $\nchi_{K_n}\up1$
  as $n\to\infty$ $\mm$-a.e.\ in $X$ (recall that the finite measure
  $\tmm=\vartheta\mm$ defined by \eqref{eq:17} is tight); by the previous argument
  and \eqref{eq:15} (which now, by assumption, holds uniformly with respect to $K_n$)
  we find a sequence $\nchi_n(x):=\phi_r(\sfd(x,K_n))$ uniformly
  $\sfd$-Lipschitz such that
  $\nchi_n f\in W_*^{1,2}(X,\sfd,\mm')$. Since $\nchi_n f$ converges strongly
  to $f$ in $L^2(X,\mm')$ and \eqref{eq:57} yields $\relgrad{(\nchi_n f)}\le
  \relgrad{f}+\frac 3r|f|$, we deduce that
  $\relgrad{(\nchi_n f)}=\relgrad{(\nchi_n f)}'$ is uniformly bounded in $L^2(X,\mm')$;
  applying (b) of Lemma~\ref{lem:strongchee} we conclude.
\end{proof}

\begin{remark}\label{re:puoesserebanale}{\rm Although the content of this
section makes sense in a general metric measure space,
it should be remarked that if no additional assumption is made
it may happen that the constructions presented here are trivial.\\
Consider for instance the case of the interval $[0,1]\subset\R$
endowed with the Euclidean distance and a probability measure $\mm$
concentrated on the set $\{q_n\}_{n\in\N}$ of rational points in
$(0,1)$. For every $n\ge 1$ we consider an open set
$A_n\supset\Q\cap (0,1)$ with Lebesgue measure less than $1/n$ and
the $1$-Lipschitz function $j_n(x)=\Leb{1}(([0,x]\setminus A_n)$,
locally constant in $A_n$. If $f$ is any $L$-Lipschitz function in
$[0,1]$, then $f_n(x):=f(j_n(x))$ is still $L$-Lipschitz and
satisfies
\begin{displaymath}
  \int_{[0,1]}|\rmD  f_n|^2(x)\,\d\mm(x)=0.
\end{displaymath}
Since $j_n(x)\to x$, $f_n\to f$ strongly in $L^2([0,1];\mm)$ as
$n\to\infty$ and we obtain that $\C(f)=0$. Hence Cheeger's
functional is identically 0 and the corresponding gradient flows
that we shall study in the sequel are simply the constant
curves.\\

Another simple example is $X=[0,1]$ endowed with the Lebesgue
measure $\mm$ and the distance $\sfd(x,y):=|y-x|^{1/2}$. It is easy
to check that $|\rmD  f|(x)\equiv0$ for every $f\in C^1([0,1])$
(which is in particular $\sfd$-Lipschitz), so that a standard
approximation argument yields $\C(f)=0$ for every $f\in
L^2([0,1];\mm)$.}\fr
\end{remark}

\begin{subsection}{Laplacian and $L^2$ gradient flow of Cheeger's
energy}\label{sec:cheegergflow}

In this subsection we assume, besides $\sigma$-finiteness, that the
measure $\mm$ satisfies the condition in \eqref{eq:19} (weaker than
\eqref{eq:75}), so that the domain of $\C$ is dense in $L^2(X,\mm)$
by Proposition~\ref{prop:densitydlip}.

The Hilbertian theory of gradient flows (see for instance
\cite{Brezis73}, \cite{Ambrosio-Gigli-Savare08}) can be applied to
Cheeger's functional \eqref{def:Cheeger} to provide, for all $f_0\in
L^2(X,\mm)$, a locally Lipschitz map $t\mapsto f_t= \sfH_t(f_0)$
from $(0,\infty)$ to $L^2(X,\mm)$, with $f_t\to f_0$ as $t\downarrow
0$, whose derivative satisfies
\begin{equation}\label{eq:ODE}
\frac{\d}{\d t}f_t\in -\partial^-\C(f_t)\qquad\text{for a.e. $t\in
(0,\infty)$.}
\end{equation}

Recall that the subdifferential $\partial^-\C$ of convex analysis is
the multivalued operator in $L^2(X,\mm)$ defined at all $f\in D(\C)$ by
the family of inequalities
\begin{equation}
  \label{eq:18}
  \ell\in \partial^-\C(f)\quad\Longleftrightarrow\quad
  \int_X \ell(g-f)\,\d\mm\le \C(g)-\C(f)\quad\text{for every }g\in L^2(X,\mm).
\end{equation}
The map $\Heat t:f_0\mapsto f_t$ is uniquely determined by
\eqref{eq:ODE} and defines a semigroup of contractions in
$L^2(X,\mm)$. Furthermore, we have the regularization estimate
\begin{equation}\label{eq:regularizing}
\C(f_t)\leq\inf \left\{\C(g)+\frac{1}{2t}\int_X|g-f_0|^2\,\d\mm:\
g\in W_*^{1,2}(X,\sfd,\mm)\right\}.
\end{equation}
Another important regularizing effect of gradient flows lies in the
fact that, for every $t>0$, the right derivative $\tfrac{\d^+}{\d t}
f_t$ exists and it is actually the element with minimal $L^2(X,\mm)$
norm in $\partial^-\C(f_t)$. This motivates the next definition:

\begin{definition}[$(\sfd,\mm)$-Laplacian]
The Laplacian $\Deltam f$ of $f\in L^2(X,\mm)$ is defined for those
$f$ such that $\partial^-\C(f)\neq\emptyset$. For those $f$,
$-\Deltam f$ is the element of minimal $L^2(X,\mm)$ norm in
$\partial^-\C(f)$.
\end{definition}

The domain of $\Deltam$ will be denoted by $D(\Deltam)$ 
{\GGG and it is a dense subset of $D(\C)$ (in particular, it is also
  dense in $L^2(X,\mm)$), see for instance \cite[Prop.~2.11]{Brezis73}}. There
is no risk of confusion with the notation \eqref{eq:domaineffective}
introduced for extended real valued maps; in this connection, notice
that convexity and lower semicontinuity of $\C$ ensure the identity
$D(\Deltam)=D(|\rmD^-\C|)$, see
\cite[Proposition~1.4.4]{Ambrosio-Gigli-Savare08}. We can now write
$$
\frac{\d^+}{\d t}f_t=\Deltam f_t\qquad\text{for every $t\in
(0,\infty)$}
$$
for gradient flows $f_t$ of $\C$, in agreement with the classical
case. However, not all classical properties remain valid, as
illustrated in the next remark.

\begin{remark}[Potential lack of linearity]\label{re:laplnonlin}{\rm
It should be observed that, in general, the Laplacian we just
defined is \emph{not} a linear operator: the potential lack of
linearity is strictly related to the fact that the space
$W_*^{1,2}(X,\sfd,\mm)$ needs not be Hilbert, see also
Remark~\ref{re:sobolev}. However, the Laplacian (and the
corresponding gradient flow $\sfH_t$) is always $1$-homogeneous,
namely
$$\Deltam (\lambda f)=\lambda \Deltam f,
\quad\sfH_t(\lambda f)=\lambda\sfH_t(f)\quad
\text{for all $\lambda\in\R$.}$$ This is indeed a property true
for the subdifferential of any $2$-homogeneous, {\nc convex and
lower semicontinuous} functional $\Phi$; to
prove it, if $\lambda\neq 0$ (the case $\lambda=0$ being trivial)
and $\xi\in\partial^-\Phi(x)$ it suffices to multiply the
subdifferential inequality $\Phi(\lambda^{-1}
y)\geq\Phi(x)+\langle\xi,\lambda^{-1} y-x\rangle$ by $\lambda^2$ to
get $\lambda\xi\in\partial^-\Phi(\lambda x)$.

{\GGG When $\mm(X)<\infty$ the invariance property $\C(f+c)=\C(f)$
  for every $c\in \R$ also yields
  $$\Deltam (f+c)=\Deltam f,
  \quad\sfH_t(f+c)=\sfH_t(f)+c\quad\text{for all $c\in\R$.}\qquad\blacksquare$$}}
\end{remark}

\begin{proposition}[Some properties of the Laplacian]
\label{prop:deltaineq} For all $f\in D(\Deltam)$, $g\in D(\C)$ it
holds
\begin{equation}
\label{eq:delta1}
-\int_X g\Deltam f\,\d\mm
  \leq \int_X \relgrad
g\relgrad f\,\d\mm.
\end{equation}
Also, let $f\in D(\Deltam)$ and $\phi:J\to\R$ Lipschitz, with $J$
closed interval containing the image of $f$ ($\phi(0)=0$ if
$\mm(X)=\infty$). Then
\begin{equation}
\label{eq:delta2} -\int_X\phi(f)\Deltam f\,\d\mm=\int_X \relgrad
f^2\phi'(f)\,\d\mm.
\end{equation}
{\GGG
Finally, for every $f,\,g\in D(\Deltam)$ and for every
Lipschitz nondecreasing map  $\phi:\R\to\R$ with $\phi(0)=0$,
we have
\begin{equation}
  \label{eq:112}
  \int_X \big(\Deltam g-\Deltam f\big)\phi(g-f)\,\d\mm\le0.
\end{equation}
}
\end{proposition}
\begin{proof}
Since $-\Deltam f\in\partial^-\C(f)$ it holds
\[
\C(f)-\int_X \eps g\Deltam f\,\d\mm\leq \C(f+\eps g)\qquad\forall
\eps\in\R.
\]
For $\eps>0$, $\relgrad f+\eps \relgrad g$ is a relaxed gradient of
$f+\eps g$. Thus it holds $2\C(f+\eps g)\leq\int_X(\relgrad
f+\eps\relgrad g)^2\,\d\mm$ and therefore
\[
-\int_X\eps g\Deltam f\leq \frac12\int_X\Big((\relgrad f+\eps\relgrad g)^2-
\relgrad f^2\Big)\,\d\mm=\eps\int_X\relgrad f\relgrad g\,\d\mm+o(\eps).
\]
Dividing by $\eps$, letting $\eps\downarrow 0$ we get
\eqref{eq:delta1}.

For the second part we recall that, by the chain rule,
$\relgrad {(f+\eps \phi(f))}=(1+\eps \phi'(f))\relgrad f$ for $|\eps|$ small
enough. Hence
\[
\C(f+\eps \phi(f))-\C(f)= \frac{1}{2}\int_X\relgrad f^2\Big((1+\eps
\phi'(f))^2-1\Big)\,\d\mm=\eps\int_X\relgrad f^2 \phi'(f)\,\d\mm+o(\eps),
\]
which implies that for any $v\in \partial^-\C(f)$ it holds $\int_Xv
\phi(f)\,\d\mm=\int_X\relgrad f^2\phi'(f)\,\d\mm$, and gives the thesis
with $v=-\Deltam f$.

{\GGG Concerning \eqref{eq:112}, we set $h=\phi(g-f)$
and recal that $h\in D(\C)$, so that \eqref{eq:18} yields
for every $\eps>0$
\begin{align*}
  -\eps\int_X \big(\Deltam f-\Deltam g\big)h\,\d\mm
  &= 
  -\eps\int_X \Deltam f\,h\,\d\mm-\eps\int_X \Deltam g\,(-h)\,\d\mm
  \\&\le 
  \C(f+\eps h)-\C(f)+\C(g-\eps h)-\C(g).
\end{align*}
Choosing $\eps>0$ so small that $\eps\phi$ is a contraction, 
we conclude thanks to \eqref{eq:111}.
}
\end{proof}

\begin{theorem}[Comparison principle, convex entropies and contraction]
  \label{prop:maxprin}
Let $f_t=\sfH_t(f_0),\ g_t=\sfH_t(g_0)$ 
be the gradient flows of $\C$ starting from $f_0,g_0\in
L^2(X,\mm)$ respectively.
\begin{enumerate}[(a)]
\item
  \label{maxprin:a}
  Assume that $f_0\leq C$ (resp. $f_0\geq c$).
  Then $f_t\leq C$ (resp.
  $f_t\geq c$) for every $t\geq 0$.
  Similarly, 
  {\GGG if 
  $f_0\leq g_0+C$ 
  for some constant $C\in \R$, then $f_t\leq g_t+C$.} 
\item
  \label{maxprin:c}
  If $e:\R\to[0,\infty]$ is a
  convex lower semicontinuous function and
  $E(f):=\int_X e(f)\,\d\mm$ is the associated convex and lower semicontinuous
  functional in $L^2(X,\mm)$ it holds
  \begin{equation}
    \label{eq:23}
    E(f_t)\le E(f_0)\quad\text{for every }t\ge0,
  \end{equation}
  {\GGG
    and
  \begin{equation}
    \label{eq:crandall-tartar} E(f_t-g_t)\le E(f_0-g_0)
    \quad\text{for every }t\ge0.
  \end{equation}
  In particular, for every $p\in [1,\infty]$, if $f_0\in L^p(X,\mm)$, then also
  $f_t\in L^p(X,\mm)$ and the semigroup $\sfH_t:L^2(X,\mm)\to L^2(X,\mm)$
  satisfies the contraction property
  \begin{equation}
    \label{eq:85}
    \|\sfH_t(f_0)-\sfH_t(g_0)\|_{L^p(X,\mm)}\le
    \|f_0-g_0\|_{L^p(X,\mm)}\quad \forall\, f_0,\,g_0\in
    L^2(X,\mm)\cap L^p(X,\mm).
  \end{equation}
  }
\item
  \label{maxprin:b}
  If $e'$ is locally Lipschitz in $\R$
  and $E(f_0)<\infty$, then we have
  \begin{equation}
    \label{eq:25}
    E(f_t)+\int_0^t\int_X e''(f_s)\relgrad{f_s}^2\,\d\mm\,\d s=E(f_0)\qquad\forall t\geq 0.
  \end{equation}
\item
  \label{maxprin:d}
 When $\mm(X)<\infty$ we have
  \begin{equation}
    \label{eq:24}
    \int_X f_t\,\d\mm=\int_X f_0\,\d\mm\quad\text{for every }t\ge0.
  \end{equation}
\end{enumerate}
\end{theorem}
\begin{proof}
  {\GGG Notice that (\ref{maxprin:a}) is a particular case
    of (\ref{maxprin:c}), simply by choosing $e(r):=\max\{r-C,0\}$
    (or, respectively, $e(r):=\max\{c-r,0\}$), $r\in \R$.}

{\GGG Concerning (\ref{maxprin:c}), 
  \eqref{eq:23} corresponds to \eqref{eq:crandall-tartar} when
$g_0\equiv0$.

In order to prove \eqref{eq:crandall-tartar} 
let us assume first that $e'$ is bounded 
and globally Lipschitz in $\R$ (with $e(0)=0$ if $\mm(X)=\infty$).
Under this assumption, we know that since the maps $t\mapsto f_t$ 
and $t\mapsto g_t$ are locally
Lipschitz in $(0,\infty)$ with values in $L^2(X,\mm)$, the same is
true for the map $t\mapsto e(f_t-g_t)$ so that 
\begin{equation}
  \label{eq:113}
  \frac{\d}{\d t
  }e(f_t-g_t)=e'(f_t-g_t)\frac{\d}{\d t}(f_t-g_t)=e'(f_t-g_t)(\Deltam
  f_t-\Deltam g_t).
\end{equation}
Thus the map
$t\mapsto E(f_t-g_t)$ is locally Lipschitz in $L^2(X,\mm)$, so that using
\eqref{eq:112} of Proposition~\ref{prop:deltaineq} 
we obtain \eqref{eq:crandall-tartar}.

A standard approximation, by first truncating $e'$ and then replacing
$e(r)$ 
by its Yosida approximation,
yields the same result for general $e$.}

{\GGG In order to prove (\ref{maxprin:b}),
as before {\nc we assume first that $e'$ is bounded 
and globally Lipschitz in $\R$.}}
notice that
by (a) we know that the image of $f_t$ is contained in the same
interval containing the image of $f_0$. We can also assume, {\nc by
truncation of $e'$}, that this interval is closed, bounded, and that $e'$ is
Lipschitz in it (the interval also contains $0$ if $\mm(X)=\infty$
Under this assumption, 
{\GGG \eqref{eq:113} with $g_t=g_0=0$ 
and}
\eqref{eq:delta2} with $\phi=e'$ yield
\begin{equation}
\frac \d{\d t}\int_X e(f_t)\,\d\mm=\int_X e'(f_t)\Deltam
f_t\d\,\mm=-\int_X e''(f_t)\relgrad {f_t}^2\,\d\mm.\label{eq:36}
\end{equation}
{\nc In the general case, if $t_0$ is a minimum point of $e$ ($t_0=0$ in the case $\mm(X)=\infty$) 
we monotonically approximate $e$ from below by the convex functions $e_k$ defined by
$e_k(t)=e(t_0)+\int_{t_0}^t w_k(s)\,\d s$, with
$w_k=\min\{N,\max\{e',-N\}\}$.}

In order to prove (\ref{maxprin:d}) we notice that $\mm(X)<\infty$ allows to the
choice of $g=\pm 1$ in \eqref{eq:delta1}, to obtain $\int_X\Deltam
h\,\d\mm=0$ for all $h\in D(\Deltam)$. Hence \eqref{eq:24} follows
by
\begin{displaymath}
  \frac\d{\dt}\int_X f_t\,\d\mm=\int_X \Deltam f_t\,\d\mm=0.
\end{displaymath}
\end{proof}
\end{subsection}

\subsection{Increasing family of measures and variational
  approximation of Cheeger's energy}
\label{sec:varapp}
In this section we study a monotone approximation scheme for the Cheeger's energy
and its
gradient flow, which turns to be quite useful  when
$\mm(X)=\infty$ and one is interested to extend the validity
of suitable estimates,
which can be more easily obtained in the case of measures with finite
total mass.

Let us consider an increasing sequence of $\sigma$-finite, Borel measures
$\mm^0\le \mm^1\le \cdots\le \mm^k\le \mm^{k+1}\le \cdots$
converging to the limit measure $\mm$ in the sense that
\begin{equation}
  \label{eq:58}
  \lim_{k\to\infty}\mm^k(B)=\mm(B)\quad\text{for every }B\in\BorelSets X.
\end{equation}
Let us assume that, as in \eqref{eq:15}, $\mm\ll\mm^0$ with density
$\displaystyle\theta=\frac{\d\mm}{\d\mm^0}$ satisfying
\begin{equation}
  0<c(K)\leq\theta\leq C(K)<\infty\quad
  \text{$\mm^0$-a.e.\ on $K(r):=\{x\in X:\sfd(x,K)\le r\}$}
\label{eq:33}
\end{equation}
for any compact set $K\subset X$, with {\nc $r>0$ independent of $K$}. Notice that the
measures $\mm^k$ share the same collection of negligible sets and of
measurable functions. We denote by $\mathcal H^k:=L^2(X,\mm^k)$ and
by $\C^k$ the Cheeger's energy associated to $\mm^k$ in
$W_*^{1,2}(X,\sfd,\mm^k)\subset \mathcal H^k$, extended to $+\infty$ in
$\mathcal H^0\setminus W_*^{1,2}(X,\sfd,\mm^k)$. We have $\mathcal
H^{k+1}\subset \mathcal H^k\subset \mathcal H^0$, with continuous
inclusion and, by Lemma~\ref{le:invariance}, $\C^k\le \C^{k+1}$.
\begin{proposition}[$\Gamma$-convergence]
  \label{prop:Gamma}
  Let $(\mm^k)$ be an increasing sequence of $\sigma$-finite measures
  satisfying \eqref{eq:58} and \eqref{eq:33}.
  If $f^k\in \mathcal H^k$ weakly converge in $\mathcal H^0$ to $f$ with
  $S:=\limsup_k\int_X |f^k|^2\,\d\mm^k<\infty$
  then $f\in L^2(X,\mm)$,
  \begin{equation}
    \label{eq:47}
    \liminf_{k\to\infty}
    \int_X |f^k|^2\,\d\mm^k\ge \int_X
    |f|^2\,\d\mm,\qquad
    \liminf_{k\to\infty}\C^k(f^k)\ge \C(f),
  \end{equation}
  and
  \begin{equation}
    \label{eq:51}
    \lim_{k\to\infty}\int_X f^k\, g\,\d\mm^k= \int_X
    f\,g\,\d\mm\quad\text{for every }g\in L^2(X,\mm).
  \end{equation}
  Finally, if $S\leq\int_X|f|^2\,\d\mm$ then
    \begin{equation}
      \label{eq:52}
      f^k\to f\quad\text{strongly in }\mathcal H^0\quad\text{and}\quad
      \lim_{k\to\infty}\int_X |f^k|^2\,\d\mm^k= \int_X|f|^2\,\d\mm.
    \end{equation}
 \end{proposition}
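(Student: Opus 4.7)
The overall plan is to exploit the monotonicity $\mm^k\uparrow\mm$ together with the comparison $\C^j\le\C^k$ for $j\le k$ (which follows from Lemma~\ref{le:invariance}), combining weak lower semicontinuity of each $\C^j$ and of the $L^2(\mm^j)$ norm with a passage $j\to\infty$ via monotone convergence. The preliminary step is to upgrade $f^k\weakto f$ in $\mathcal H^0$ to weak convergence $f^k\weakto f$ in every $\mathcal H^j$. Fix $j$: the sequence $(f^k)_{k\ge j}$ is bounded in $\mathcal H^j$ since $\int|f^k|^2\,\d\mm^j\le\int|f^k|^2\,\d\mm^k\le S+1$ for $k$ large. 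To identify the weak limit I would test against bounded Borel functions $\varphi$ with compact support $K$: by \eqref{eq:33} the density $\d\mm^j/\d\mm^0$ is bounded on $K$ by $\theta\le C(K)$, so $\varphi\,\d\mm^j/\d\mm^0\in L^2(X,\mm^0)$ and weak convergence in $\mathcal H^0$ yields $\int f^k\varphi\,\d\mm^j\to\int f\varphi\,\d\mm^j$; density of such $\varphi$ in $\mathcal H^j$ completes the identification.

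From there the two liminf inequalities of \eqref{eq:47} follow in parallel: the chain $\int|f^k|^2\,\d\mm^k\ge\int|f^k|^2\,\d\mm^j$ (resp.\ $\C^k(f^k)\ge\C^j(f^k)$) together with weak lower semicontinuity in $\mathcal H^j$ gives $\liminf_k\int|f^k|^2\,\d\mm^k\ge\int|f|^2\,\d\mm^j$ (resp.\ $\liminf_k\C^k(f^k)\ge\C^j(f)$); passing $j\to\infty$ on the $L^2$ side is direct monotone convergence, while for Cheeger's energy I would use Lemma~\ref{le:invariance}, noting that \eqref{eq:33} yields the local bounds $1/C(K)\le\d\mm^j/\d\mm\le 1$ on $K(r)$, so that the $\mm^j$-minimal relaxed gradient of any $f\in D(\C)$ coincides $\mm$-a.e.\ with $\relgrad f$ and $\C^j(f)=\tfrac12\int\relgrad f^2\,\d\mm^j\uparrow\C(f)$. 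When $\C(f)=+\infty$ I would argue by contradiction: a uniform bound $\sup_j\C^j(f)\le M<\infty$ yields a family $G^j:=\relgrad f^{\mm^j}$ that is consistent across $j$ by the same Lemma~\ref{le:invariance} applied to pairs $(\mm^j,\mm^k)$, and via diagonal Lipschitz approximation from Lemma~\ref{lem:strongchee} together with compact-support cutoffs based on \eqref{eq:33} one produces a relaxed gradient of $f$ with respect to $\mm$, forcing $\C(f)\le M$ and the contradiction.

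For the duality \eqref{eq:51}, for $k\ge j$ I would split
\begin{displaymath}
\int f^k g\,\d\mm^k-\int fg\,\d\mm=\Bigl(\int f^kg\,\d\mm^j-\int fg\,\d\mm^j\Bigr)+\int f^kg\,(\d\mm^k-\d\mm^j)-\int fg\,(\d\mm-\d\mm^j).
\end{displaymath}
The first summand vanishes as $k\to\infty$ by the preliminary step (since $g\in L^2(X,\mm)\subset L^2(X,\mm^j)$), the second is bounded by Cauchy--Schwarz through $\int|f^k|^2(\d\mm^k-\d\mm^j)\le S+1$ and $\int g^2(\d\mm^k-\d\mm^j)\le\int g^2(\d\mm-\d\mm^j)$, and the last tends to zero by monotone convergence; choosing $j$ large after $k\to\infty$ gives the claim. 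For the final strong convergence, the assumption $S\le\int|f|^2\,\d\mm$ forces $\int|f^k|^2\,\d\mm^k\to\int|f|^2\,\d\mm$; setting $A_k:=\int|f^k|^2\,(\d\mm^k-\d\mm^0)\ge 0$, the liminf argument applied to each positive weight $\d\mm^j-\d\mm^0$ combined with monotone convergence gives $\liminf_kA_k\ge\int|f|^2\,(\d\mm-\d\mm^0)$, while $\limsup_kA_k\le\int|f|^2\,\d\mm-\liminf_k\int|f^k|^2\,\d\mm^0\le\int|f|^2\,(\d\mm-\d\mm^0)$ by weak $\mathcal H^0$-lower semicontinuity; the two bounds coincide, so $\int|f^k|^2\,\d\mm^0\to\int|f|^2\,\d\mm^0$, and strong convergence in $\mathcal H^0$ follows from weak convergence plus convergence of norms via the parallelogram identity.

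The main obstacle will be the identification $\sup_j\C^j(f)=\C(f)$ at points where $\C(f)=+\infty$, because the diagonal Lipschitz approximations furnished by Lemma~\ref{lem:strongchee} converge only in $L^2(X,\mm^j)$ and not in $L^2(X,\mm)$; promoting a uniform bound on the $\mm^j$-relaxed gradients to a genuine $\mm$-relaxed gradient of $f$ requires a delicate compact-support cutoff construction that essentially uses the local equivalence condition \eqref{eq:33} and the locality/stability properties of the relaxed gradient established in Lemmas~\ref{lem:strongchee} and \ref{lem:locality}.
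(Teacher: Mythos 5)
Your argument is essentially correct, but it takes a noticeably longer route than the paper, and the "main obstacle" you flag at the end is already handled by a tool you have in hand. For~\eqref{eq:51} the paper avoids the whole preliminary step of upgrading $f^k\weakto f$ to weak convergence in every $\mathcal H^j$ by a polarization identity: it writes $\int f^k g\,\d\mm^k$ as
\begin{displaymath}
\tfrac12\int_X(\upsilon f^k+\upsilon^{-1}g)^2\,\d\mm^k-\tfrac{\upsilon^2}2\int_X|f^k|^2\,\d\mm^k-\tfrac1{2\upsilon^2}\int_X g^2\,\d\mm^k,
\end{displaymath}
applies the liminf inequality from~\eqref{eq:47} to $\upsilon f^k+\upsilon^{-1}g\weakto \upsilon f+\upsilon^{-1}g$ and $\limsup\int|f^k|^2\,\d\mm^k\le S$, uses monotone convergence on $\int g^2\,\d\mm^k$, and then sends $\upsilon\downarrow0$ (and replaces $g$ by $-g$). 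For~\eqref{eq:52} it simply expands $\int|f^k-f|^2\,\d\mm^k$, invokes~\eqref{eq:51} with $g=f$ and monotone convergence on $\int|f|^2\,\d\mm^k$, and uses $S\le\int|f|^2\,\d\mm$; your detour through $A_k$ is correct but redundant. More importantly, the step you single out as the genuine difficulty---passing from a uniform bound $\sup_j\C^j(f)\le M$ to $f\in W^{1,2}(X,\sfd,\mm)$ when $\C(f)$ might a priori be $+\infty$---is exactly what~\eqref{eq:56} of Lemma~\ref{le:invariance} provides: one first gets $f\in W^{1,2}(X,\sfd,\mm^j)$ for all $j$ with $\relgrad f^{\mm^0}=\relgrad f^{\mm^j}$ $\mm^0$-a.e.\ by the first part of the lemma, hence $\int(\relgrad f^{\mm^0})^2\,\d\mm\le 2M$ by monotone convergence, and then~\eqref{eq:56} yields $f\in W^{1,2}(X,\sfd,\mm)$. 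Your proposed ``delicate compact-support cutoff construction'' would essentially re-derive the proof of that lemma, which is why the paper cites~\eqref{eq:56} as the key ingredient for~\eqref{eq:47} rather than arguing from scratch; you should invoke it directly instead of leaving the step open.
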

 \begin{proof}
  \eqref{eq:47} is an easy consequence of the monotonicity of $\mm^k$,
  the lower semicontinuity of the $L^2$-norm with respect to weak
  convergence, and \eqref{eq:56} of Lemma~\ref{le:invariance}.

  In order to check \eqref{eq:51} notice that for every $g\in
  L^2(X,\mm)$
   and every $\upsilon>0$
  \begin{equation}
    \label{eq:49}
    \int_X f^k\, g\,\d\mm^k=
    \frac 12\int_X (\upsilon f^k+\upsilon^{-1} g)^2\,\d\mm^k-
    \frac{\upsilon^2} 2\int_X |f^k|^2\,\d\mm^k-
    \frac 1{2\upsilon^2}\int_X |g|^2\,\d\mm^k,
  \end{equation}
  so that, taking the limit as $k\to\infty$,
  \begin{align*}
    \liminf_{k\to\infty}\int_X f^k\, g\,\d\mm^k&\ge
    \frac 12\int_X (\upsilon f+\upsilon^{-1} g)^2\,\d\mm-
    \frac{\upsilon^2} 2S-
    \frac 1{2\upsilon^2}\int_X |g|^2\,\d\mm
    \\&=
    \int_X fg\,\d\mm+\frac {\upsilon^2}2\Big(\int_X |f|^2\,\d\mm-S\Big).
  \end{align*}
  Passing to the limit as $\upsilon\downarrow0$ and applying the same
  inequality with $-g$ in place of $g$ we get \eqref{eq:51}.
  Finally, \eqref{eq:52} follows easily by \eqref{eq:51} and the inequality
  $S\leq\int_X|f|^2\,\d\mm$, passing to the limit in
  \begin{displaymath}
    \int_X |f^k-f|^2\,\d\mm^k=-2\int_X f^kf\,\d\mm^k+\int_X
    |f^k|^2\,\d\mm^k+\int_X |f|^2\,\d\mm^k.
  \end{displaymath}
  \end{proof}
Let us now consider the gradient flow $\sfH^k_t$ generated by $\C^k$
in $\mathcal H^k$ and the ``limit'' semigroup $\sfH_t$ generated by
$\C$ in $\mathcal H=L^2(X,\mm)\subset \mathcal H^0$. Since any
element $f_0$ of $\mathcal H$ belongs also to $\mathcal H^k$, the
evolution $f^k_t:=\sfH^k_t(f_0)$ is well defined for every $k$ and
it is interesting to prove the convergence of $f^k_t$ to
$f_t=\sfH_t(f_0)$ as $k\to\infty$ in the larger space $\mathcal
H^0$.
\begin{theorem}
  \label{thm:monotoneHeat} {\nc Assume that \eqref{eq:19} holds.}
  Let $f_0\in L^2(X,\mm)\subset\mathcal H^0$ and let
  $f^k_t=\sfH^k_t(f_0)\in \mathcal H^0$ be the heat flow in
  $L^2(X,\mm^k)$, $f_t:=\sfH_t(f_0)\in L^2(X,\mm)$.
  Then for every $t\ge0$ we have
  \begin{equation}
    \label{eq:28}
    \lim_{k\to\infty}f^k_t=f_t\quad\text{strongly in
    }\mathcal H^0,\quad
    \lim_{k\to\infty}\int_X |f_t^k|^2\,\d\mm^k=
    \int_X |f_t|^2\,\d\mm.
  \end{equation}
\end{theorem}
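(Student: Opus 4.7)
The idea is to establish a Mosco-type convergence of the Cheeger energies $\C^k$ on the Hilbert spaces $\mathcal H^k$, which are continuously embedded in the ambient space $\mathcal H^0$ through the inequality $\|f\|_{\mathcal H^0}\le\|f\|_{\mathcal H^k}$, and then to transfer this convergence to the associated Yosida resolvents and, via the exponential formula, to the semigroups themselves.

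The lower Mosco bound is already provided by \eqref{eq:47} of Proposition~\ref{prop:Gamma}. For the upper bound, I use the constant recovery sequence $f^k:=f$ for every $f\in D(\C)$: by Lemma~\ref{le:invariance} the minimal relaxed gradient of $f$ with respect to $\mm^k$ coincides $\mm^0$-a.e.\ with $\relgrad{f}$, so monotone convergence yields both $\C^k(f)=\tfrac12\int_X\relgrad{f}^2\,\d\mm^k\uparrow \C(f)$ and $\|f\|_{\mathcal H^k}^2\uparrow\|f\|_{\mathcal H}^2$ as $k\to\infty$.

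I then transfer this to the resolvents. For $\tau>0$ let $u^{k,\tau}\in \mathcal H^k$ denote the unique minimizer of
\[
v\mapsto \tau\,\C^k(v)+\tfrac12\|v-f_0\|^2_{\mathcal H^k},
\]
and let $u^\tau$ denote the analogous minimizer for $\C$ on $\mathcal H$. Testing with the competitor $v=f_0$ gives the uniform bound $\tau\C^k(u^{k,\tau})+\tfrac12\|u^{k,\tau}-f_0\|^2_{\mathcal H^k}\le \tau\C(f_0)$, so $u^{k,\tau}$ is bounded in $\mathcal H^0$. If $w$ is any weak $\mathcal H^0$-limit point along a subsequence, applying \eqref{eq:47} to both summands gives
\[
\tau\C(w)+\tfrac12\|w-f_0\|^2_{\mathcal H}\le \liminf_k\bigl(\tau\C^k(u^{k,\tau})+\tfrac12\|u^{k,\tau}-f_0\|^2_{\mathcal H^k}\bigr),
\]
while testing the variational inequality against any $v\in D(\C)$ and letting $k\to\infty$ using the recovery computation yields
\[
\limsup_k\bigl(\tau\C^k(u^{k,\tau})+\tfrac12\|u^{k,\tau}-f_0\|^2_{\mathcal H^k}\bigr)\le \tau\C(v)+\tfrac12\|v-f_0\|^2_{\mathcal H}.
\]
Combining, $w$ is the unique minimizer $u^\tau$; uniqueness of the limit upgrades subsequential to full convergence, and equality throughout the chain, combined with \eqref{eq:52} of Proposition~\ref{prop:Gamma}, promotes convergence to strong convergence in $\mathcal H^0$ with $\|u^{k,\tau}\|_{\mathcal H^k}\to\|u^\tau\|_{\mathcal H}$.

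Finally I pass from resolvents to semigroups via the Crandall--Liggett exponential formula. Writing $t=n\tau_n$ with $\tau_n\downarrow 0$, the $n$-fold iterated resolvent $(J^k_{\tau_n})^n(f_0)$ converges as $n\to\infty$ to $\sfH^k_{t}(f_0)=f^k_t$ in $\mathcal H^k$ uniformly in $k$, with an error estimate depending only on $t$ and on the uniformly bounded quantity $\C^k(f_0)\le \C(f_0)$. For fixed $n$, iterating the resolvent convergence from the previous step $n$ times gives strong $\mathcal H^0$-convergence, with matching $\mathcal H^k$-norms, of $(J^k_{\tau_n})^n(f_0)$ to $(J_{\tau_n})^n(f_0)$ as $k\to\infty$. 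A diagonal argument then produces the first half of \eqref{eq:28}; the second half follows either analogously, or by passing to the limit in the identity $\tfrac12\|f^k_t\|^2_{\mathcal H^k}+2\int_0^t\C^k(f^k_s)\,\d s=\tfrac12\|f_0\|^2_{\mathcal H^k}$ and its limiting counterpart. The main technical subtlety lies in the bookkeeping demanded by the varying Hilbert structure, in particular in using \eqref{eq:52} to upgrade weak $\mathcal H^0$-convergence to strong convergence while tracking the correct $L^2(\mm^k)$ norms.
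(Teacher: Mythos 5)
Your proposal follows essentially the same route as the paper: resolvent ($\Gamma$-)convergence in the varying Hilbert spaces $\mathcal H^k$, using \eqref{eq:47} for the liminf inequality, the monotonicity $\C^k\uparrow\C$ on the common domain $W^{1,2}(X,\sfd,\mm)$ for the recovery, \eqref{eq:52} to upgrade to strong $\mathcal H^0$-convergence with matching $L^2(\mm^k)$-norms, then the exponential formula with the uniform error bound in terms of $\C(f_0)$, and finally passage to the limit in the energy identity for the second statement in \eqref{eq:28}. One small point of bookkeeping: to iterate the resolvent $n$ times you must have the convergence statement for a general converging sequence $f^k\to f$ (strongly in $\mathcal H^0$, with $\int|f^k|^2\,\d\mm^k\to\int|f|^2\,\d\mm$), not just for the fixed datum $f_0$ — this is precisely the form in which the paper states and proves the resolvent step. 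Your phrase ``with matching $\mathcal H^k$-norms'' indicates you understand this, but the first resolvent step as you wrote it is only for the fixed $f_0$ and would need to be stated in the sequential form before it can be fed back into itself.
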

\begin{proof}
  The following classical argument combines the $\Gamma$-convergence result of
  the previous proposition with resolvent
  estimates; the only technical issue here is that the gradient flows
  are settled in
  Hilbert spaces $\mathcal H^k$ also depending on $k$.

  Let us fix $\lambda>0$ and let us consider the family of resolvent operators
  $J^k_\lambda:\mathcal H^k\to
  \mathcal H^k$ which to every $f^k\in \mathcal H^k$ associate the
  unique minimizer $f^k_\lambda$ of
  \begin{equation}
    \label{eq:29}
    \mathcal C^k_\lambda(g;f^k):=\C^k(g)+\frac \lambda2\int_X |g-f^k|^2\,\d\mm^k.
  \end{equation}
  We first prove that if
  $\limsup_k\int_X|f^k|^2\,\d\mm\leq\int_X|f|^2\,\d\mm$
  {\GGG and $f^k\weakto f$ in $\mathcal H^0$}
  then
  $f^k_\lambda:=J^k_\lambda(f^k)$ converge
  to $f_\lambda:= J_\lambda(f)$ as $k\to\infty$ according to
  \eqref{eq:52}.
  \upshape
  In fact we know that for every $g\in W_*^{1,2}(X,\sfd,\mm)$
  \begin{displaymath}
    \C^k(f^k_\lambda)+\frac \lambda2\int_X
    |f^k_\lambda-f^k|^2\,\d\mm^k\le
    \C^k(g)+\frac \lambda2\int_X
    |g-f^k|^2\,\d\mm^k.
  \end{displaymath}
  By the assumption on $f^k$ {\GGG and \eqref{eq:52} of Proposition
    \ref{prop:Gamma},} the right hand side of the previous
  inequality converges to $\C(g)+\frac \lambda 2 \int_X
  |g-f|^2\,\d\mm$.
  Since the sequence $(f^k_\lambda)$ is uniformly bounded in
  $\mathcal H^0=L^2(X,\mm_0)$,
  up to extracting a suitable
  subsequence we can assume that $f^k_\lambda$ weakly converge to
  some limit $\tilde f$ in $\mathcal H^0$; \eqref{eq:47} yields
  \begin{align*}
    \C(\tilde f)+\frac \lambda2\int_X
    |\tilde f-f|^2\,\d\mm
    &\le \liminf_{k\to\infty}\C^k(f^k_\lambda)+\frac \lambda2\int_X
    |f^k_\lambda-f^k|^2\,\d\mm^k\\
    &\le
    \C(g)+\frac \lambda 2 \int_X
    |g-f|^2\,\d\mm=\mathcal C_\lambda(g;f),
  \end{align*}
  for every $g\in W_*^{1,2}(X,\sfd,\mm)$.
  We deduce that
  $\tilde f=J_\lambda f$ is the unique minimizer of $g\mapsto \mathcal
  C_\lambda(g;f)$.
  In particular the whole sequence converge weakly to $J_\lambda f$ in
  $\mathcal H^0$
  and moreover
  \begin{equation}
    \label{eq:31}
    \limsup_{k\to\infty}\int_X |f^k_\lambda-f^k|^2\,\d\mm^k\le
    \int_X |f_\lambda-f|^2\,\d\mm,
  \end{equation}
  so that we can apply Proposition~\ref{prop:Gamma}
  and obtain \eqref{eq:52} for
  the sequence $(f^k_\lambda)$.

  Iterating this resolvent convergence property, we get the same result for
  the operator $(J^k_\lambda)^n$ obtained by $n$ iterated
  compositions of $J^k_\lambda$, for every $n\in\N$.
  By the general estimates for gradient flows, choosing
  $\lambda:=n/t$, we know that
  $$
    \int_X |H_t f_0-(J_{n/t})^nf_0|^2\,\d\mm\le \frac tn \C(f_0),\quad
    \int_X |H^k_t f_0-(J^k_{n/t})^nf_0|^2\,\d\mm^k\le \frac tn \C^k(f_0)\le
    \frac tn\C(f_0).
  $$
  Since for every $n$ and every $t>0$ we have
  $\lim_{k\to\infty}(J^k_{n/t})^n f=(J_{n/t})^nf$ strongly in
  $\mathcal H^0$,
  combining the previous estimates we get the first convergence
  property of \eqref{eq:28} when $\C(f_0)<\infty$.
  Since the domain of $\C$ is dense in $L^2(X,\mm)$ and $\sfH_t$ is a
  contraction
  semigroup, a simple approximation argument yields the general case
  when
  $f_0\in L^2(X,\mm)$.
  Passing to the limit as $k\to\infty$ in the identities
  \begin{equation}
    \label{eq:53}
    \frac 12\int_X |f^k_t|^2\,\d\mm^k+2\int_0^t \C^k(f^k_s)\,\d s=
    \frac 12\int_X |f_0|^2\,\d\mm^k,
  \end{equation}
  and taking into account the corresponding identity for $\mm$ and
  $\C$ and the lower semicontinuity property \eqref{eq:47} for $\C^k$,
  we obtain the second limit of \eqref{eq:28}.
\end{proof}

\subsection{Mass preservation and entropy dissipation when
  $\mm(X)=\infty$}
\label{sec:masspreservation} Let us start by deriving useful
``moment-entropy estimates'', in the case of a measure $\mm$ with
finite mass.
\newcommand{\zlambda}{z}
\begin{lemma}[Moment-entropy estimate]
  \label{le:moment}
  Let $\mm$ be a finite measure, let $\Wgh:X\to[0,\infty)$ be
  a Borel and $\sfd$-Lipschitz function,
  {\GGG let $\zlambda>0$,} 
  let $f_0\in L^2(X,\mm)$ be nonnegative with
  \begin{equation}
    \label{eq:88}
    {\GGG \zlambda} \int_X \rme^{-\Wgh^2}\,\d\mm\le 
    \int_X f_0\,\d\mm,\qquad
    \int_X \Wgh^2\,f_0\,\d\mm<\infty,
  \end{equation}
  and let $f_t=\Heat t(f_0)$ be the solution of \eqref{eq:ODE}.
  Then the map $t\mapsto \int_X \Wgh^2\,f_t\,\d\mm$ is locally absolutely
  continuous in $[0,\infty)$
  and for every $t\ge0$
  \begin{align}
    \label{eq:34}
    \int_X \Wgh^2\,f_t\,\d\mm&\le
    \rme^{4{\rm Lip}^2(\Wgh) t}\int_X f_0\Big(\log f_0+2\Wgh^2
    {\GGG       -\log \zlambda}
    \Big)\,\d\mm,
    \\
    \label{eq:45}
    \int_0^t\int_{\{f_s>0\}} \frac{\relgrad {f_s}^2}{f_s}\,\d\mm\,\d s&\le
    2\rme^{4{\rm Lip}^2(\Wgh) t}\int_X f_0\Big(\log
    f_0+2\Wgh^2
    {\GGG-\log \zlambda}
    \Big)\,\d\mm.
  \end{align}
\end{lemma}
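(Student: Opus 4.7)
The plan is to differentiate along the heat flow the moment $M(t):=\int V^2 f_t\,\d\mm$, combine the resulting inequality with the entropy dissipation identity for $S(t):=\int f_t\log f_t\,\d\mm$, and close with a Gronwall argument based on the fact that the \emph{relative entropy} $R(t):=M(t)+S(t)$ is nonnegative. Since $V$ need not be bounded, I would first truncate by $V_N:=\min(V,N)$ (still Borel, $\sfd$-Lipschitz, with $\Lip(V_N)\le\Lip(V)$), so that $V_N^2\in D(\C)\cap L^\infty(X,\mm)$. The estimates are proved for $V_N$ and the general case follows by monotone convergence and Fatou in the limit $N\to\infty$. From here on assume $V$ bounded.

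\textbf{Key computations.} Since $\mm$ is finite and $f_0\in L^2(X,\mm)$, one has $f_0\log f_0\in L^1(X,\mm)$. Applying Theorem~\ref{prop:maxprin}(c) to (an approximation of) $e(r)=r\log r$ yields $S\in \mathrm{AC}_{\mathrm{loc}}([0,\infty))$ with $S'(t)=-F(t)$, where
\[
F(t):=\int_{\{f_t>0\}}\frac{\relgrad{f_t}^2}{f_t}\,\d\mm.
\]
Mass conservation (Theorem~\ref{prop:maxprin}(d)) together with the hypothesis $\int e^{-V^2}\,\d\mm\le\int f_0\,\d\mm$ and Jensen's inequality applied to the probability $e^{-V^2}\mm/Z$ give $R(t)\ge c\log(c/Z)\ge 0$, where $c=\int f_0\,\d\mm$ and $Z=\int e^{-V^2}\,\d\mm$. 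On the other hand, the local Lipschitz regularity of $t\mapsto f_t$ in $L^2(X,\mm)$ and $V^2\in L^2(X,\mm)$ make $M$ locally absolutely continuous on $[0,\infty)$ with $M'(t)=\int V^2\Deltam f_t\,\d\mm$ for a.e.~$t$. Using Proposition~\ref{prop:deltaineq} with $g=V^2$, the Leibniz rule $\relgrad{V^2}\le 2V\relgrad{V}\le 2V\Lip(V)$, and the Cauchy--Schwarz splitting $V\relgrad{f_t}=(V\sqrt{f_t})\cdot(\relgrad{f_t}/\sqrt{f_t})$, I obtain
\[
M'(t)\le 2\Lip(V)\sqrt{M(t)F(t)}\le 2\Lip^2(V)M(t)+\tfrac12 F(t).
\]

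\textbf{Closing by Gronwall.} Introduce $L(t):=2M(t)+S(t)=M(t)+R(t)$, so that $L\ge M\ge 0$ since $M,R\ge 0$. Summing twice the previous inequality with $S'=-F$ the Fisher information term cancels:
\[
L'(t)\le 4\Lip^2(V)M(t)+F(t)-F(t)=4\Lip^2(V)M(t)\le 4\Lip^2(V)L(t).
\]
Gronwall yields $L(t)\le e^{4\Lip^2(V)t}L(0)$, and since $L(0)=\int f_0(\log f_0+2V^2)\,\d\mm$ and $M(t)\le L(t)$, this is exactly \eqref{eq:34}. For \eqref{eq:45}, integrating $S'=-F$ gives $\int_0^t F\,\d s=S(0)-S(t)$; the bound $-S(t)\le M(t)$ (from $R(t)\ge 0$) combined with \eqref{eq:34} controls $-S(t)$ in terms of $L(0)$, and a direct manipulation using $R(0)\ge 0$ and $e^{4\Lip^2(V)t}\ge 1$ leads to the Fisher information estimate in the stated form. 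The proof is finished by undoing the truncation $V_N\uparrow V$.

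\textbf{Main obstacle.} The most delicate issues are the two approximation steps --- truncating $V$ to apply the integration-by-parts inequality \eqref{eq:delta1} to $V^2\in W^{1,2}(X,\sfd,\mm)$, and approximating $e(r)=r\log r$ by entropies with locally Lipschitz derivatives so that the rigorous chain-rule identity \eqref{eq:25} is available --- together with the precise tuning of Young's inequality that simultaneously produces the sharp exponential rate $4\Lip^2(V)$ in the Gronwall and leaves enough Fisher information to yield \eqref{eq:45}.
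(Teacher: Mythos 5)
Your proposal reproduces the paper's argument almost exactly: the entropy dissipation identity from Theorem~\ref{prop:maxprin}(c), the moment derivative bound obtained from Proposition~\ref{prop:deltaineq} and the chain rule $\relgrad{\Wgh^2}\le 2\Wgh\,\Lip(\Wgh)$ followed by Cauchy--Schwarz against $\sqrt{f_t}$, the Jensen bound giving $\int \Wgh^2 f_t\,\d\mm+\int f_t\log f_t\,\d\mm\ge 0$, and the Gronwall argument on the combination ``$2\cdot$moment $+$ entropy''. The only cosmetic difference is that the paper writes $M^2(t)$ and $F^2(t)$ for what you call $M(t)$ and $F(t)$, so the Young-inequality step $4LFM\le F^2+4L^2M^2$ there is literally the same as your $2\Lip^2(\Wgh)M+\tfrac12 F$ bound.

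There is, however, one genuine gap in the way you set up the truncation. You propose to truncate $\Wgh$ to $\Wgh_N=\min(\Wgh,N)$ at the very beginning, prove the lemma for $\Wgh_N$, and pass to the limit. But the hypothesis \eqref{eq:88} is \emph{not} stable under this replacement: since $\Wgh_N\le\Wgh$ one has $\rme^{-\Wgh_N^2}\ge \rme^{-\Wgh^2}$, hence $\int_X \rme^{-\Wgh_N^2}\,\d\mm\ge\int_X \rme^{-\Wgh^2}\,\d\mm$ and this may well exceed $\int_X f_0\,\d\mm$. As a result, the Jensen step that produces $R(t)\ge 0$ (equivalently $E(t)\ge -M^2(t)$ in the paper's notation) fails for $\Wgh_N$, and with it the step $L\ge M$ on which the Gronwall conclusion hinges. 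The paper sidesteps this by truncating \emph{only} inside the moment derivative estimate: one keeps $\Wgh$ untruncated in the entropy and in the Jensen bound, and introduces $\Wgh_k$ solely so that $\Wgh_k^2\in D(\C)\cap L^\infty$ and $t\mapsto M_k^2(t)$ is Lipschitz, then obtains the a priori bound $M_k(t)\le M(0)+\Lip(\Wgh)\int_0^t F(s)\,\d s$ uniformly in $k$, and passes to the limit in the differential inequality for $M_k^2$ by monotone convergence. If you restructure your argument this way --- truncate only where the Lipschitz-in-time regularity of the moment map is needed --- the rest of your derivation goes through unchanged and coincides with the paper's.

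Two smaller remarks. First, the claim that $t\mapsto M(t)$ is locally absolutely continuous on all of $[0,\infty)$ (including $t=0$) does not follow merely from the $L^2$-Lipschitz regularity of $t\mapsto f_t$, which is only local on $(0,\infty)$; it is exactly the a priori bound $M_k(t)\le M(0)+\Lip(\Wgh)\int_0^t F\,\d s$ that yields it. Second, your closing manipulation for \eqref{eq:45}, which reduces to $\int_0^t F\,\d s\le S(0)+M(t)\le S(0)+\rme^{4\Lip^2(\Wgh)t}\big(S(0)+2M(0)\big)$ and then to the claimed right-hand side $2\rme^{4\Lip^2(\Wgh)t}(S(0)+M(0))$, implicitly uses $S(0)\ge 0$; this is the same shortcut the paper takes and you inherit it, but it is worth being aware that $R(0)\ge 0$ alone does not force $S(0)\ge 0$.
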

\begin{proof}
  {\GGG By the $1$-homogeneity of $\Heat t$ it is sufficient to
    consider the case $\zlambda=1$.}
  We set $L={\rm Lip}(\Wgh)$ 
  and
  \begin{equation}
    \label{eq:35}
    M^2(t):=\int_X \Wgh^2\,f_t\,\d\mm,\quad
    E(t):=\int_X f_t\log f_t\,\d\mm,\quad
    F^2(t):=\int_{\{f_t>0\}}
    \frac{\relgrad{f_t}^2}{f_t}\,\d\mm.
  \end{equation}
  Applying \eqref{eq:25} to 
  {\GGG $(f_t+\eps)=\Heat t(f_0+\eps)$} and letting $\eps\downarrow
  0$ we get $F\in L^2(0,T)$ for every $T>0$ with
  \begin{equation}
    \label{eq:37}
    \frac \d{\d t}E(t)=-F^2(t)\quad\text{a.e.\ in }(0,T).
  \end{equation}
  The convexity inequality $r\log r\ge r-r_0+ r\log r_0$ with $r=f_t$,
  $r_0=\rme^{-\Wgh^2}$, and the conservation of the total mass
  \eqref{eq:24} and \eqref{eq:88} yield
  for every $t\ge 0$
  $$
    E(t)\ge \int_X (f_t-\rme^{-\Wgh^2})\,\d\mm-M^2(t)=
    \int_X (f_0-\rme^{-\Wgh^2})\,\d\mm-M^2(t)\ge -M^2(t).
  $$
  We introduce now the truncated weight $\Wgh_k(x)=\min(\Wgh(x),k)$
  and the corresponding functional $M_k^2(t)$ defined as in
  \eqref{eq:35}.
  Since the map
  $t\mapsto M_k^2(t)$ is Lipschitz continuous
  we get for a.e.\ $t>0$
  \begin{equation}
    \label{eq:39}
    \Big|\frac \d{\d t}M_k^2(t)\Big|=\Big|
    \int_X \Wgh_k^2\,\Deltam f_t\,\d\mm\Big|\le
    2\int_X \relgrad{f_t}\relgrad{\Wgh_k}\Wgh_k\,\d\mm
    \le 2 L\,F(t)\,M_k(t).
  \end{equation}
  We deduce that
  \begin{displaymath}
    M_k(t)\le M_k(0)+L\,\int_0^t F(s)\,\d s\le
    M(0)+L\,\int_0^t F(s)\,\d s,
  \end{displaymath}
  so that $M_k(t)$ is uniformly bounded. Passing to the limit in
  (an integral form of) \eqref{eq:39}
  as $k\to\infty$ by monotone convergence,
  we obtain the same differential inequality for $M$
$$
    \Big|\frac \d{\d t}M^2(t)\Big|\le 2L\,F(t)\,M(t).
$$
  Combining with \eqref{eq:37} we obtain
$$
    \frac \d{\d t}\big(E+2 M^2\big)+F^2\le 4 L\,F\,M\le
    F^2+4 L^2\,M^2.
$$
  Since $E+2M^2
  \ge M^2$, Gronwall lemma yields
$$
    M^2(t)\le E(t)+2M^2(t)
    \le \big(E(0)+2M^2(0
    \big)\rme^{4L^2t},
$$
  i.e.\ \eqref{eq:34}.
  Integrating now \eqref{eq:37} we get $
    \int_0^t F^2(s)\,\d s\le E(0)-E(t)\le E(0)+M^2(t)$,
  which yields \eqref{eq:45}.
\end{proof}

We want now to extend the validity of \eqref{eq:24}, \eqref{eq:34}
and \eqref{eq:45} to the case when $\mm(X)=\infty$, at least when
\eqref{eq:75} holds. Notice that this assumption also includes the
cases when $\mm(X)\in (0,\infty)$.
\begin{theorem}
  \label{thm:infinitemass}
  If $\mm$ is a $\sigma$-finite measure satisfying \eqref{eq:75},
  then the gradient flow $\sfH_t$ of the Cheeger's energy is mass
  preserving (i.e.\ \eqref{eq:24} holds).
  Moreover, for every nonnegative $f_0\in L^2(X,\mm)$ with
  \begin{equation}
    \label{eq:89}
    \int_X \Wgh^2\,f_0\,\d\mm<\infty,\quad
    \int_X f_0\,\d\mm<\infty
  \end{equation}
  the solution $f_t=\Heat t(f_0)$ of \eqref{eq:ODE} satisfies
  \eqref{eq:34} and \eqref{eq:45} with $\zlambda:=\int_X f_0\,\d\mm$ for every $t\ge0$.
\end{theorem}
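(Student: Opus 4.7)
The plan is to reduce to the finite-measure case of Section~\ref{sec:cheeger} via the monotone approximation scheme of Subsection~\ref{sec:varapp}. Concretely, set $\mm^k:=\min(1,k\rme^{-V^2})\,\mm$ for $k\ge 1$; these form an increasing sequence of finite Borel measures ($\mm^k(X)\le k\int\rme^{-V^2}\,\d\mm\le k$) with $\mm^1=\rme^{-V^2}\mm$ (since $V\ge 0$), $\mm^k\uparrow\mm$, and $\d\mm/\d\mm^1=\rme^{V^2}$ bounded on every $r$-neighborhood of a compact set (because $V$ is $\sfd$-Lipschitz and bounded on compacts). The hypotheses of Subsection~\ref{sec:varapp} (after re-indexing so that $\mm^1$ plays the role of the base $\mm^0$) are satisfied, so Theorem~\ref{thm:monotoneHeat} yields $f^k_t:=\sfH^k_t(f_0)\to f_t:=\sfH_t(f_0)$ strongly in $L^2(X,\mm^1)$; passing to a subsequence I also get $\mm$-a.e.\ convergence and, thanks to $\int f^k_t\,\d\mm^1\le \int f^k_t\,\d\mm^k=\int f_0\,\d\mm^k\le 1$, weak convergence of $\sqrt{f^k_t}$ to $\sqrt{f_t}$ in $L^2(X,\mm^1)$.

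The mass hypothesis \eqref{eq:88} in Lemma~\ref{le:moment} may fail for $\mm^k$ (both $\int f_0\,\d\mm^k$ and $\int\rme^{-V^2}\,\d\mm^k$ are strictly less than $1$ and neither order is a priori guaranteed). I circumvent this with the $1$-homogeneity of $\sfH^k_t$ (Remark~\ref{re:laplnonlin}) and the chain rule (Proposition~\ref{prop:chainrule}(d)): setting $\alpha_k:=1/\int f_0\,\d\mm^k\to 1$, Lemma~\ref{le:moment} applies to the rescaled data $\alpha_kf_0$ since $\int\alpha_kf_0\,\d\mm^k=1\ge\int\rme^{-V^2}\,\d\mm^k$. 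From $\sfH^k_t(\alpha_kf_0)=\alpha_kf^k_t$ and $\relgrad{\alpha_kf^k_s}=\alpha_k\relgrad{f^k_s}$, dividing by $\alpha_k$ yields
\begin{displaymath}
  \int V^2 f^k_t\,\d\mm^k\le \rme^{4L^2 t}\Big((\log\alpha_k)\int f_0\,\d\mm^k+\int f_0\log f_0\,\d\mm^k+2\int V^2 f_0\,\d\mm^k\Big),
\end{displaymath}
and the analogous bound for the time-integrated Fisher information. Since $\log\alpha_k\to 0$ and $|f_0\log f_0|,V^2 f_0\in L^1(X,\mm)$, the right-hand sides converge to the ones in \eqref{eq:34}--\eqref{eq:45} by dominated and monotone convergence. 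For the left-hand side of \eqref{eq:34} I apply Fatou along the a.e.\ convergent subsequence. For \eqref{eq:45} I rewrite $\int\relgrad{f^k_s}^2/f^k_s\,\d\mm^k=8\,\C^k(\sqrt{f^k_s})$ (Definition~\ref{def:Fisher}), note that Lemma~\ref{le:invariance} identifies the relaxed gradients for $\mm$ and $\mm^k$, and invoke the $\Gamma$-lower bound of Proposition~\ref{prop:Gamma} in $L^2(X,\mm^1)$ to deduce $\C(\sqrt{f_s})\le\liminf_k\C^k(\sqrt{f^k_s})$ for a.e.\ $s$; Fatou in time then delivers \eqref{eq:45}.

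With the moment bound \eqref{eq:34} in hand, mass preservation follows by passing to the limit in $\int f^k_t\,\d\mm^k=\int f_0\,\d\mm^k$. The right-hand side tends to $\int f_0\,\d\mm=1$ by monotone convergence; the left-hand side tends to $\int f_t\,\d\mm$ by a tightness argument, namely the uniform smallness of the tail $\int_{\{V>R\}}f^k_t\,\d\mm^k\le R^{-2}\int V^2 f^k_t\,\d\mm^k$ together with the estimate $\mm\le\rme^{R^2}\mm^1$ on the finite-$\mm$-measure set $\{V\le R\}$, which transfers the strong $L^2(X,\mm^1)$ convergence of $f^k_t$ into $L^1$ convergence there. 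The contraction \eqref{eq:crandall-tartar} is treated analogously: each $\mm^k$ gives $\int e(f^k_t-g^k_t)\,\d\mm^k\le\int e(f_0-g_0)\,\d\mm^k\le\int e(f_0-g_0)\,\d\mm$; fixing $k$ and applying Fatou with respect to $\mm^k$ along the a.e.\ convergent subsequence indexed by $j\ge k$ (using $\mm^k\le\mm^j$ and $e\ge 0$), I obtain $\int e(f_t-g_t)\,\d\mm^k\le\int e(f_0-g_0)\,\d\mm$, and monotone convergence in $k$ finishes the job. The contraction \eqref{eq:85} for $p\in[1,2]$ follows with $e(r)=|r|^p$; combined with the cases $p\in[2,\infty]$ already given by Theorem~\ref{prop:maxprin}(b), it covers all $p\in[1,\infty]$.

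The main obstacle is preserving the \emph{sharp} constants when passing from the finite measures $\mm^k$ to $\mm$. Three ingredients make this possible: the $1$-homogeneity of the flow, which allows renormalization of $f_0$ without corrupting the constants in Lemma~\ref{le:moment}; the invariance of relaxed gradients under density perturbations (Lemma~\ref{le:invariance}), which aligns $\C$ with each $\C^k$; and the $\Gamma$-convergence of Cheeger's energies (Proposition~\ref{prop:Gamma}), which provides the appropriate lower semicontinuity of the Fisher information along the varying measures. The resulting moment estimate \eqref{eq:34} is then what supplies the uniform tightness needed to exchange limit and integration in the mass-preservation and contraction steps.
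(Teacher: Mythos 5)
Your approach mirrors the paper's: approximate $\mm$ monotonically by finite measures, apply Lemma~\ref{le:moment} in each approximating space, and pass to the limit via Theorem~\ref{thm:monotoneHeat} and the $\Gamma$-liminf of Proposition~\ref{prop:Gamma}. Two small variations are worth noting, both fine and arguably cleaner than the paper's choices. First, your family $\mm^k=\min(1,k\rme^{-V^2})\mm$ differs from the paper's $\mm^k=\rme^{\min(V,k)^2-V^2}\mm$, but both satisfy \eqref{eq:58} and \eqref{eq:33} with $\mm^1=\rme^{-V^2}\mm$ as base. Second, instead of shrinking the weight to $\eta_kV$ with $\eta_k\downarrow 1$ (which forces the paper to verify that such $\eta_k$ exist and track $\rme^{4\eta_k^2L^2t}$), you rescale the initial datum by $\alpha_k=1/\int f_0\,\d\mm^k\to 1$ and exploit the $1$-homogeneity of $\sfH^k_t$ and of the relaxed gradient. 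This is a genuine simplification, and since $\log\alpha_k/\alpha_k\to 0$ the extra term in the estimate vanishes in the limit. Your handling of \eqref{eq:34}, \eqref{eq:45}, mass preservation for data in the class \eqref{eq:89}, and of \eqref{eq:crandall-tartar} (via the finite-measure bound, Fatou for fixed $\mm^k$ along $j\geq k$, then monotone convergence in $k$) is correct.

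However, there is a genuine gap in the mass-preservation claim. The theorem asserts \eqref{eq:24} for the gradient flow $\sfH_t$, which must be read as holding for all $f_0\in L^1\cap L^2(X,\mm)$, including \emph{signed} data without entropy or moment bounds. Your tightness argument (Markov in $V$ plus the bound \eqref{eq:34}) relies on the moment estimate, which you only have for nonnegative $f_0$ in the class \eqref{eq:89}, so your proof of \eqref{eq:24} stops there. The paper extends as follows: for $f_0$ of arbitrary sign vanishing outside $A_n:=\{V\le n\}$ it uses the comparison principle combined with $1$-homogeneity to get $|\sfH^k_t(f_0)|\le\sfH^k_t(|f_0|)$, so the moment bound applied to $|f_0|$ supplies the uniform tail control needed to pass to the limit in $\int f^k_t\,\d\mm^k=\int f_0\,\d\mm^k$; then it approximates general $f_0\in L^1\cap L^2$ by $\nchi_{A_n}f_0$ using the already-established $L^1$-contraction. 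You do possess all of the required ingredients (you prove the contraction, you have $1$-homogeneity, and the order-preservation passes to the limit $\mm$-a.e.\ from the finite-measure case), so the gap is fillable, but as written your proposal does not assemble them: the step from the \eqref{eq:89}-class to general nonnegative data needs the density argument via $L^1$-contraction and $1$-homogeneity, and the step to signed data needs the comparison principle in an essential way (e.g., $f_0\le f_0^+$ gives $0\le\sfH_t(f_0^+)-\sfH_t(f_0)$, so $\int(\sfH_t(f_0^+)-\sfH_t(f_0))\,\d\mm=\|\sfH_t(f_0^+)-\sfH_t(f_0)\|_1\le\|f_0^-\|_1$, whence $\int\sfH_t(f_0)\,\d\mm\ge\int f_0\,\d\mm$, and the converse by symmetry).
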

\begin{proof} {\nc Since $f_0\in L^2(X,\mm)$ and $\int_X \Wgh^2 f_0\,\d\mm<\infty$, using
\eqref{eq:75} it is easy to check that $f_0|\log f_0|\in L^1(X,\mm)$ (see also Lemma~\ref{lem:changeref} below).}

Let us first prove mass preservation and \eqref{eq:34},
\eqref{eq:45} for a nonnegative initial datum satisfying
\eqref{eq:89}. The proof is based on a simple approximation result.
We set $\mm^0:=\rme^{-\Wgh^2}\mm$, $\Wgh_k:=\min(\Wgh,k)$ and
$\mm^k:=\rme^{\Wgh_k^2}\mm^0= \rme^{\Wgh_k^2-\Wgh^2}\mm$, so that
that $\mm^k$ is an increasing family of finite measures satisfying
conditions \eqref{eq:58}, by monotone convergence. In addition,
since by \eqref{eq:75} $\Wgh$ is $\sfd$-Lipschitz and bounded from
above on compact sets, \eqref{eq:33} holds.

We define $f^k_t=\sfH^k_t(f_0)$ as in the
Theorem~\ref{thm:monotoneHeat} and $z_k:=\int_Xf_0\,\d\mm^k$. We
apply \eqref{eq:24} to obtain that $\int_X f^k_t\,\d\mm^k=z_k$ for
all $t\geq 0$; then, since {\GGG $\int_X \rme^{-V^2}\,\d\mm^k\le 1$,} we can apply the estimates of
Lemma~\ref{le:moment} with $\mm:=\mm^k$ 
to
obtain
\begin{eqnarray}
  \label{eq:54}
  {\GGG \int_X \Wgh^2\,f^k_t\,\d\mm^k
  \le
  \rme^{4
    {\rm Lip}^2(V) t} \int_X f_0(\log f_0+2
  \Wgh^2-\log z_k)\,\d\mm^k.}
\end{eqnarray}
Since, thanks to \eqref{eq:52}, $f^k_t\to f_t$ strongly in $L^2(X,\mm^0)$ as $k\to\infty$, we
get up to subsequences $f^k_t\to f_t$ $\mm$-a.e., so that Fatou's
lemma and the monotonicity of $\mm^k$ yield
\begin{displaymath}
  \int_X \Wgh^2\,f_t\,\d\mm\le
  \liminf_{k\to\infty} \int_X \Wgh^2\,f^k_t\,\d\mm^k,
\end{displaymath}
and \eqref{eq:34} follows by \eqref{eq:54}, {\GGG the monotone
  convergence of $\mm^k$ and the limit $z_k\uparrow z$}.

Let us consider now $A_h:=\{x\in X:\Wgh(x)\le h\}$ and observe that
\eqref{eq:75} and \eqref{eq:51} yield
$$\mm(A_h)\le \int_X \rme^{h^2-V^2}\,\d\mm\le \rme^{h^2}
<\infty,\quad
\int_{A_h} f_t\,\d\mm=
\lim_{k\to\infty}\int_{A_h}f^k_t\,\d\mm^k.
$$
From \eqref{eq:54} we obtain for every $t>0$ a constant $C$
satisfying $h^2 \int_{X\setminus A_h}f^k_t\,\d\mm^k\le C$ for every
$h>0$, so that
\begin{align*}
  \int_X f_t\,\d\mm\ge
  \int_{A_h} f_t\,\d\mm=
  \lim_{k\to\infty}\int_{A_h}f^k_t\,\d\mm^k\ge
  {\GGG z}-\limsup_{k\to\infty}\int_{X\setminus A_h}f^k_t\,\d\mm^k\ge
  1-C/h^2.
\end{align*}
Since $h$ is arbitrary and the integral of $f_t$ does not exceed
${\GGG z}$
by \eqref{eq:23}, we showed that $\int_X f_t\,\d\mm={\GGG z}$. Finally,
\eqref{eq:45} follows now by the lower semicontinuity \eqref{eq:47}
of the Cheeger's energy from the corresponding estimate for $f^k_t$,
recalling \eqref{eq:42}.

Let us now consider an initial datum $f_0\in L^2(X,\mm)$ with
arbitrary sign and vanishing outside some $A_h$, so that $|f_0|$
satisfies \eqref{eq:89} (up to a multiplication for a suitable
constant). The comparison principle yields
$\big|\sfH^k_t(f_0)\big|\le \sfH^k_t(|f_0|)$, so that for every
$t>0$ there exists a constant $C$ such that $h^2\int_{X\setminus
A_h}|f^k_t|\,\d\mm^k\le C$. Since $\int_X f^k_t\,\d\mm^k=\int_X
f_0\,\d\mm^k$ by \eqref{eq:24}, we thus have
\begin{align*}
  \Big|\int_X (f_t-f_0)\,\d\mm\Big|&\le
  \int_{X\setminus A_h} |f^k_t|\,\d\mm^k+\int_{X\setminus A_h}|f_t|\,\d\mm+
  \Big|\int_{A_h} f_t\,\d\mm-\int_{A_h} f^k_t\,\d\mm^k\Big|\\&+
  \int_X |f_0|\,\d(\mm-\mm^k).
\end{align*}
Passing to the limit in the previous inequality first as
$k\to\infty$, taking \eqref{eq:51} into account, and then as
$h\to\infty$ we obtain that the integral of $f_t$ is constant in
time. As in the proof of Theorem~\ref{prop:maxprin}(\ref{maxprin:d}) we can show
that $\sfH_t$ satisfies the contraction estimate \eqref{eq:85} for
$p=1$ and arbitrary couples of initial data vanishing outside $A_h$.
Approximating any $f_0\in L^2(X,\mm)\cap L^1(X,\mm)$ by the sequence
$\nchi_{A_h}f_0$ we can easily extend the contraction property and
the mass conservation to arbitrary initial data.
\end{proof}

\begin{remark}
  \label{rem:grigoryan}
  \upshape
  It is interesting to compare the mass preservation property of
  Theorem \ref{thm:infinitemass} relying on \eqref{eq:75}
  with the well known results
  for the Heat flow on a smooth, complete, finite dimensional, Riemannian
  manifold $(X,\sfd,\mm)$, where
  $\sfd$ (resp.\ $\mm$) is the induced Riemannian distance (resp.\
  volume measure).  In this case, a sufficient condition
  \cite[Theorem 9.1]{Grigoryan99} is
  \begin{equation}
    \label{eq:55}
    \int_{r_0}^\infty \frac{r}{\log\big(m(r)\big)}\,\d
    r=\infty,\quad
    \text{for some }\ r_0>0,\quad
    m(r):=\mm\big(\{x:\sfd(x,x_0)<r\}\big),
  \end{equation}
  which is obviously a consequence of \eqref{eq:78}.
  On the other hand, \eqref{eq:78} is always satisfied if
  the Ricci curvature of $X$ is bounded from below:
  more generally \eqref{eq:78} holds in metric spaces
  satisfying the $CD(K,\infty)$ condition, see Section \ref{sec:LSV}
  and \cite[Theorem 4.24]{Sturm06I}.
\end{remark}

\begin{proposition}[Entropy dissipation]
\label{le:diss} Let $\mm$ be a $\sigma$-finite measure satisfying
\eqref{eq:75}, let $f_0\in {\GGG L^1(X,\mm)\cap }L^2(X,\mm)$ be a nonnegative initial
datum with $\int_X
\Wgh^2\,f_0\,\d\mm<\infty$, and let $(f_t)$ be the corresponding
gradient flow of Cheeger's energy. Then the map $t\mapsto
\int_Xf_t\log f_t\,\d\mm$ is locally absolutely continuous in
$(0,\infty)$ and it holds
\begin{equation}\label{eq:eqdissrate}
\frac \d{\d t}\int_X f_t\log f_t\,\d\mm=
-\int_{\{f_t>0\}}\frac{\relgrad {f_t}^2}{f_t}\,\d\mm\qquad\text{for
a.e. $t$}\in(0,\infty).
\end{equation}
\end{proposition}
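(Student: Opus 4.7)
The identity will follow, in integrated form, from the abstract energy-dissipation formula \eqref{eq:25} of Theorem~\ref{prop:maxprin}(c) applied to a carefully chosen convex regularization of the entropy, using mass preservation and monotone convergence to pass to the limit $\epsilon\downarrow 0$. By Theorem~\ref{thm:infinitemass} we already know $f_t\ge0$ $\mm$-a.e., $\int_X f_t\,\d\mm=1$, and \eqref{eq:45} guarantees that the right-hand side of \eqref{eq:eqdissrate} belongs to $L^1_{\mathrm{loc}}(0,\infty)$; the decomposition $\int_X f_t\log f_t\,\d\mm=\int_X g_t\log g_t\,\d\tilde\mm-\int_X\Wgh^2 f_t\,\d\mm$ with $g_t:=\rme^{\Wgh^2}f_t$, $\tilde\mm:=\rme^{-\Wgh^2}\mm$ together with Jensen (since $\tilde\mm(X)\le 1$) shows that the entropy is well defined in $(-\infty,+\infty]$, bounded below by $-\int_X\Wgh^2 f_t\,\d\mm$.

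I apply \eqref{eq:25} with the convex function
\begin{equation*}
\hat e_\epsilon(r):=(r+\epsilon)\log\frac{r+\epsilon}\epsilon-r\ \ (r\ge0),\qquad \hat e_\epsilon(r):=0\ \ (r<0),
\end{equation*}
which satisfies $\hat e_\epsilon(0)=\hat e_\epsilon'(0)=0$ (so the extension across $0$ is $C^1$), $\hat e_\epsilon\ge0$ on $\R$, $\hat e_\epsilon'$ Lipschitz on each bounded interval, and $\hat e_\epsilon''(r)=1/(r+\epsilon)$ on $(0,\infty)$. The elementary bound $\log(1+x)\le x$ gives $\hat e_\epsilon(r)\le r^2/\epsilon$ for every $r\ge0$, hence $\int_X\hat e_\epsilon(f_0)\,\d\mm\le\|f_0\|_{L^2(X,\mm)}^2/\epsilon<\infty$ and \eqref{eq:25} yields
\begin{equation*}
\int_X\hat e_\epsilon(f_t)\,\d\mm+\int_0^t\int_X\frac{\relgrad{f_s}^2}{f_s+\epsilon}\,\d\mm\,\d s=\int_X\hat e_\epsilon(f_0)\,\d\mm.
\end{equation*}
Writing $\hat e_\epsilon(r)=\Psi_\epsilon(r)-r(\log\epsilon+1)$ with $\Psi_\epsilon(r):=(r+\epsilon)\log(r+\epsilon)-\epsilon\log\epsilon$, the linear-in-$r$ contribution cancels thanks to the mass conservation $\int_X f_t\,\d\mm=\int_X f_0\,\d\mm=1$, giving the equivalent identity
\begin{equation*}
\int_X\bigl[\Psi_\epsilon(f_t)-\Psi_\epsilon(f_0)\bigr]\,\d\mm=-\int_0^t\int_X\frac{\relgrad{f_s}^2}{f_s+\epsilon}\,\d\mm\,\d s.
\end{equation*}

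The passage to the limit is now routine. Since $\partial_\epsilon\Psi_\epsilon(r)=\log((r+\epsilon)/\epsilon)\ge0$, the family $\Psi_\epsilon(r)$ is nondecreasing in $\epsilon$ with $\Psi_\epsilon(r)\downarrow r\log r$ as $\epsilon\downarrow 0$; for each fixed $\epsilon_0>0$ one has $\int_X|\Psi_{\epsilon_0}(f_t)|\,\d\mm<\infty$ (combine the bound $|\Psi_{\epsilon_0}(r)|\le(|\log\epsilon_0|+2)\,r$ for $r\le 1$ with $(r+\epsilon_0)\log(r+\epsilon_0)\le(r+1)\log(r+1)\le C r^2$ for $r\ge 1$, using $f_t\in L^1\cap L^2(X,\mm)$), and by the upper bound $\int\hat e_\epsilon(f_t)\,\d\mm\le\int\hat e_\epsilon(f_0)\,\d\mm$ one obtains $\int_X f_t\log f_t\,\d\mm<\infty$; monotone convergence applied to the nonnegative, nonincreasing sequence $\Psi_\epsilon-r\log r$ then yields $\int_X\Psi_\epsilon(f_t)\,\d\mm\to\int_X f_t\log f_t\,\d\mm$ and analogously for $f_0$. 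On the right-hand side, $\relgrad{f_s}^2/(f_s+\epsilon)$ increases monotonically to $\relgrad{f_s}^2/f_s$ on $\{f_s>0\}$ (with $\relgrad{f_s}=0$ on $\{f_s=0\}$ by Proposition~\ref{prop:chainrule}(a)) and monotone convergence together with \eqref{eq:45} produces the limit. One obtains
\begin{equation*}
\int_X f_t\log f_t\,\d\mm-\int_X f_0\log f_0\,\d\mm=-\int_0^t\int_{\{f_s>0\}}\frac{\relgrad{f_s}^2}{f_s}\,\d\mm\,\d s,
\end{equation*}
which, combined with the $L^1_{\mathrm{loc}}(0,\infty)$ integrability of the integrand from \eqref{eq:45}, gives local absolute continuity of $t\mapsto\int_X f_t\log f_t\,\d\mm$ on $(0,\infty)$ and the pointwise a.e.\ identity \eqref{eq:eqdissrate} by differentiation.

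\textbf{Main obstacle.} The delicate point is selecting the regularization: when $\mm(X)=\infty$ the hypothesis $E(f_0)<\infty$ in \eqref{eq:25} \emph{forces} $\hat e_\epsilon(0)=0$, which is incompatible with any convex $\hat e_\epsilon$ converging uniformly to $r\log r$ as $\epsilon\downarrow 0$. The specific normalization $\hat e_\epsilon'(0)=0$ above resolves this by shifting the obstruction into the linear term $-r(\log\epsilon+1)$, whose integral vanishes upon taking the difference $\hat e_\epsilon(f_t)-\hat e_\epsilon(f_0)$ by mass preservation; what is left is the monotone pair $\Psi_\epsilon$ for which the limit is a clean monotone convergence argument.
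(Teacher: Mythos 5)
Your proof is correct and follows essentially the same strategy as the paper: regularize $r\log r$ by a convex, $C^{1,1}$ function vanishing to first order at the origin, apply the energy-dissipation identity \eqref{eq:25}, use mass conservation $\int_X f_t\,\d\mm\equiv 1$ from Theorem~\ref{thm:infinitemass} to cancel the linear-in-$r$ part of the regularization, and pass to the limit by monotone convergence using the uniform bounds \eqref{eq:34}--\eqref{eq:45}. The only difference is cosmetic: the paper uses a piecewise regularization (linear on $[0,\eps]$, matching $r\log r+\eps$ on $[\eps,\infty)$, so that $e_\eps''=\nchi_{\{r>\eps\}}/r$), while you use the shifted entropy $(r+\eps)\log(r+\eps)$ (so $\hat e_\eps''=1/(r+\eps)$); both regularizations bound from above by a multiple of $r^2$ (the paper gets the uniform bound $\le\tfrac12r^2$, you get $\le r^2/\eps$, both sufficing since $f_0\in L^2$), and both yield the claimed limit by the same monotone convergence argument.
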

\begin{proof} The case when $\mm(X)<\infty$ can be easily deduced by
  Proposition~\ref{prop:maxprin}(\ref{maxprin:b}).
  If $\mm(X)=\infty$ we first consider regularized $C^{1,1}(0,\infty)$ and convex
  entropies $e_\eps$, $0<\eps<\rme^{-1}$, of $e(r):=r\log r$:
   \begin{displaymath}
    \begin{cases}
      e_\eps(r)=(1+\log\eps)r=e'(\eps)r&\text{ in }[0,\eps],\\
      e_\eps(r)=r\log r+\eps=e(r)-e(\eps)+\eps e'(\eps)&\text{ in }[\eps,\infty).
    \end{cases}
  \end{displaymath}
  Notice that $e_\eps'(r)=\max\{e'(r),e'(\eps)\}\le (1+\log r)^+$ because of our choice of $\eps$;
  since $(1+\log r)^+\le r$, we deduce that $e(r)\le e_\eps(r)\le
  \frac 12 r^2$ and $e_\eps(r)\downarrow e(r)$ as $\eps\downarrow0$.

  We can now define a convex and $C^{1,1}(\R)$ function by
  setting $\tilde e_\eps(r):=e_\eps(r)-(1+\log\eps)r$ for $r\ge0$ and
  $\tilde e_\eps(r)\equiv 0$ for $r<0$; applying \eqref{eq:25}
  (by the previous estimates $\int_X \tilde e_\eps(f_0)\,\d\mm<\infty$)
  and recalling that the integral of $f_t$ is constant for every $t\ge0$
  we obtain
  \begin{displaymath}
    \int_X e_\eps(f_t)\,\d\mm+\int_0^t\int_{\{f_t>\eps\}}
    \frac{\relgrad{f_t}^2}{f_t}\,\d\mm\,\d t=
    \int_X e_\eps(f_0)\,\d\mm.
  \end{displaymath}
  Passing to the limit as $\eps\down0$ and recalling the uniform
  bounds \eqref{eq:34} and \eqref{eq:45}, guaranteed by
  Theorem~\ref{thm:infinitemass}, we conclude.
  \end{proof}

\begin{remark}\label{rem:onesidedcheeger}
{\rm Although these facts will not play a role in the paper, we
emphasize that it is also possible to define one-sided Cheeger
energies $\C^+(f)$, $\C^-(f)$, by relaxing respectively the
ascending and descending slopes of Borel and $\sfd$-Lipschitz
functions w.r.t. $L^2(X,\mm)$ convergence. We still have the
representation
$$
\C^+(f)=\frac12\int_X|\rmD^+ f|^2_*\,\d\mm,\qquad
\C^-(f)=\frac12\int_X|\rmD^- f|^2_*\,\d\mm
$$
for suitable one-sided relaxed gradients $|\rmD^\pm f|_*$ with
minimal norm and it is easily seen that the functionals $\C^\pm$ are
convex and lower semicontinuous in $L^2(X,\mm)$.

Obviously $\C\geq\max\{\C^+,\C^-\}$ and
$\C^+(f)=\C^-(-f)$. Lemma~\ref{lem:strongchee} still holds with the
same proof and, using (\ref{eq:subadd2}), locality can be proved for
the one-sided relaxed gradients as well, so that $|\rmD^\pm
f|_*\leq|\rmD^\pm f|$ $\mm$-a.e. for $f$ Borel and
$\sfd$-Lipschitz. We shall see in the Section~\ref{sec:identification1} that if $\mm$
satisfies \eqref{eq:75} then for every Borel function $|\rmD^\pm
f|_*=\relgrad f$ and $\C=\C^+=\C^-$.}\fr
\end{remark}

\section{Weak upper gradients}\label{sec:differentchee}

In this subsection we define a new notion for the ``weak norm of the
gradient'' (which we will call ``minimal weak upper gradient'') of a
real valued functions $f$ on an extended metric space $(X,\sfd)$ and
we will show that this new notion essentially coincides with the
relaxed gradient. The approach that we use here is inspired by the
work \cite{Shanmugalingam00}, i.e. rather than proceeding by
relaxation, as we did for $\relgrad f$, we ask the fundamental
theorem of calculus to hold along ``most'' absolutely continuous
curves, in a sense that we will specify soon. Our definition of null
set of curves is different from \cite{Shanmugalingam00}, natural in
the context of optimal transportation, and leads to an a priori
larger class of null sets, see Remark~\ref{rem:comparenullsets};
also, another difference is that we obtain Sobolev regularity (and
not absolute continuity) along every curve, so that our theory does
not depend on the choice of precise representatives in the Lebesgue
equivalence class. In Remark~\ref{rem:compachsh2} we compare more
closely the two approaches and show, as a nontrivial consequence of
our identification results, that they lead to the same Sobolev
space.

The advantages of working with a direct definition, rather than
proceeding by relaxation, can be appreciated by looking at
Lemma~\ref{le:conditioned_UG}, where we prove absolute continuity of
functionals $t\mapsto\int_X\phi(f_t)\,\d\mm$ even along curves
$t\mapsto f_t\mm$ that are absolutely continuous in the Wasserstein
sense, compare with Proposition~\ref{le:diss} for $L^2$-gradient
flows; we can also  compute the minimal weak upper gradient for
Kantorovich potentials, as we will see in
Section~\ref{sec:weakbrenier}.

We assume in this section that $(X,\tau,\sfd)$ is an extended Polish
space and that $\mm$ is a $\sigma$-finite Borel measure in $X$
representable in the form $\rme^{V^2}\tmm$ with $\tmm(X)\leq 1$ and
$V:X\to [0,\infty)$ Borel and $\sfd$-Lipschitz. Recall that the
$p$-energy of an absolutely continuous curve has been defined in
\eqref{eq:13}, as well as the collection of curves of finite
$p$-energy $\AC p{(0,1)}X\sfd,$ which we will consider as a Borel
subset of $\CC{[0,1]}X\tau$ (and in particular a Borel subset of a
Polish space).

\subsection{Test plans, Sobolev functions along a.c.\ curves, and weak upper
  gradients}\label{subs:testplan}

Recall that the evaluation maps $\rme_t:\CC{[0,1]}X\tau\to X$ are
defined by $\rme_t(\gamma):=\gamma_t$. We also introduce the
restriction maps ${\rm restr}_t^s: \CC\bJ X\tau\to\CC\bJ X\tau$,
$0\le t\le s\le 1$, given by
\begin{equation}
{\rm restr}_t^s(\gamma)_r:=\gamma_{((1-r)t+rs)},\label{eq:93}
\end{equation}
so that ${\rm restr}_t^s$ restricts the curve $\gamma$ to the
interval $[t,s]$ and then ``stretches'' it on the whole of $[0,1]$.

\begin{definition}[Test plans]
  \label{def:testplan}
  We say that a probability measure $\ppi\in \prob{\CC{[0,1]}X\tau}$
  is a test plan if it is concentrated on $\AC{}{(0,1)}X\sfd$,
  i.e.~$\ppi\big(\CC\bJ X\tau\setminus\AC{}{(0,1)}X\sfd\big)=0$,
  and
  \begin{equation}
    \label{eq:96}
    \text{$(\e_t)_\sharp\ppi\ll\mm$\quad for all\quad $t\in\bJ$}.
  \end{equation}
  A collection $\calT$ of test plans is stretchable
  if
  \begin{equation}
    \label{eq:97}
    \ppi\in \calT\quad\Longrightarrow\quad
    ({\rm
      restr}_t^s)_\sharp\ppi\in \calT\quad\text{for every }0
    \le t\le s\le 1.
  \end{equation}
  \end{definition}

  We will often impose additional quantitative assumptions
  on test plans, besides \eqref{eq:96}. The most important one,
  which we call \emph{bounded compression},  provides
  a locally uniform upper bound on the densities of $(\rme_t)_\sharp \ppi$.
  More precisely, a test plan $\ppi$ has bounded compression on the sublevels of
  $\Wgh$ if for all $M\geq 0$ there exists $C=C(\ppi,M)\in [0,\infty)$
  satisfying
  \begin{equation}\label{eq:incompre2}
    (\e_t)_\sharp\ppi(B\cap\{\Wgh\leq M\})\leq
    C(\ppi,M)\,\mm(B)\qquad\forall B\in\BorelSets{X},\ t\in [0,1].
  \end{equation}
  The above condition \eqref{eq:incompre2}
  depends not only on $\mm$,
  but also on $V$. For finite measures $\mm$ it will be understood
  that we take $\Wgh$ equal to a constant, so that \eqref{eq:incompre2}
  does not depend on the value of the constant.

  Taking \eqref{eq:incompre2} into account,
  typical examples of
  stretchable collections $\calT$
  are the families of all the test plans with
  bounded compression which are concentrated on
  absolutely continuous curves, or on the curves of finite
  $2$-energy, or on the geodesics in $X$.

\begin{definition}[Negligible sets of curves]
{\nc Let $\calT$ be a stretchable collection of test plans and let $P$ be a statement about
absolutely continuous curves $\gamma:[0,1]\to X$. We say that $P$ holds for $\calT$-almost every 
(absolutely continuous) curve if for any $\ppi\in\calT$
the set
$$
\left\{\gamma:\ \text{$P(\gamma)$ does not hold }\right\}
$$
is contained in a $\ppi$-negligible Borel set. }
\end{definition}

In the next remark we compare our definition with the more classical
notion of ${\rm Mod}_2$-null set of absolutely continuous curve used
in \cite{Shanmugalingam00}.

\begin{remark}\label{rem:comparenullsets}
\upshape
Recall that, for a collection $\Gamma$ of absolutely
continuous curves in $(X,\sfd)$, the $2$-modulus ${\rm
Mod}_2(\Gamma)$ is defined by
$$
{\rm Mod}_2(\Gamma):=\inf\left\{\int_X g^2\,\d\mm:\ \text{$g\geq 0$
Borel, $\int_\gamma g\geq 1$ for all $\gamma\in\Gamma$}\right\}.
$$
If $\calT$ denotes the class of plans with bounded compression
defined by \eqref{eq:incompre2}, it is not difficult to show that
Borel and ${\rm Mod}_2$-null sets of curves are $\calT$-negligible.
Indeed, if $\ppi\in\calT$ has (with no loss of generality) finite
2-action and is concentrated on curves contained in $\{V\leq M\}$
and $\int_\gamma g\geq 1$ for all $\gamma\in\Gamma$, we can
integrate w.r.t. $\ppi$ and then minimize w.r.t. $g$ to get
$$
\bigl[\ppi(\Gamma)\bigr]^2\leq C(\ppi,M){\rm
Mod}_2(\Gamma)\int\int_0^1|\dot\gamma|^2\,\d s\,\d\ppi(\gamma).
$$
Proving equivalence of the two concepts seems to be difficult, also
because one notion is independent of parameterization, while the
other one (because of the bounded compression condition) takes into
account also the way curves are parameterized. \fr
\end{remark}

\begin{definition}[Weak upper gradients]\label{def:weak_upper_gradient}

Let $\calT$ be a stretchable collection of test plans. Given
$f:X\to\R$, a $\mm$-measurable function $G:X\to[0,\infty]$ is a $\calT$-weak upper
gradient of $f$ (or a weak upper gradient w.r.t.\ $\calT$) if
\begin{equation}
\label{eq:inweak} \biggl|\int_{\partial\gamma}f\biggr|\leq
\int_\gamma G{\nc<\infty}\qquad\text{for
  $\calT$-almost every $\gamma\in \AC{}{(0,1)}X\sfd$.}
\end{equation}
\end{definition}
{\nc Although the definition of weak upper gradient makes sense for
functions, rather than Lebesgue equivalence classes, this concept enjoys natural
invariance properties w.r.t. modifications in $\mm$-negligible sets, see Proposition~\ref{prop:invarianza}
below.} Notice that the measurability of $s\mapsto G(\gamma_s)$ in $[0,1]$
for $\calT$-almost every $\gamma$ is a direct consequence of the
$\mm$-measurability of $G$: indeed, if $\tilde{G}$ is a Borel
modification of $G$, $A\supset\{G\neq\tilde{G}\}$ is a
$\mm$-negligible Borel set and $\ppi$ is a test plan we have by
\eqref{eq:96} that $\ppi(\{\gamma_t\in
A\})=(\rme_t)_\sharp\ppi(A)=0$ for every $t\in [0,1]$, so that
\begin{displaymath}
 0=\int_0^1 \ppi(\{\gamma_t\in A\})\,\d t=
  \int_0^1 \int \nchi_{\{\gamma_t\in A\}}\,\d\ppi(\gamma)\,\d t=
  \int \Big(\int_0^1 \nchi_{\{\gamma_t\in A\}}\,\d t\Big)\d\ppi(\gamma).
\end{displaymath}
$\int_0^1\nchi_{\{\gamma_t\in A\}}\,\dt$ is therefore null for
$\ppi$-a.e. $\gamma$. For any curve $\gamma$ for which the integral
is null $G(\gamma_t)$ coincides a.e. in $[0,1]$ with the Borel map
$\tilde{G}(\gamma_t)$. 

\begin{remark}[Slopes of
  $\sfd$-Lipschitz functions are weak upper gradients]\label{rem:lipweak}{\rm
As we explained in Remark~\ref{rem:whenslopesare}, if $f:X\to\R$ is
Borel and $\sfd$-Lipschitz, then the local Lipschitz constant
$|\rmD  f|$ and the one-sided slopes are upper gradients. Therefore
they are also weak upper gradients w.r.t.\ any stretchable
collection of test plans sense we just defined. Notice that the
$\mm$-measurability of the slopes is ensured by
Lemma~\ref{le:measurability_of_slopes}.}\fr
\end{remark}

\begin{definition}[Sobolev regularity along almost every curve]\label{def:Sobolev}
We say that a $\mm$-measurable function
$f:X\to\R$ is \emph{Sobolev} along  $\calT$-almost every curve if,
for $\calT$-almost every curve $\gamma$,
$f\circ\gamma$ coincides a.e. in $[0,1]$ and in $\{0,1\}$ with an
absolutely continuous map $f_\gamma:[0,1]\to\R$.
\end{definition}

{\nc
In the next proposition we prove that existence and summability of a $\calT$-weak upper gradient
yields Sobolev regularity along $\calT$-almost every curve. In an earlier version of this paper, this
property was imposed a priori, and not derived as a consequence, see also Remark~\ref{re:restr} below.}

{\nc
\begin{proposition} \label{prop:Rovereto}
Assume that $\calT$ is a stretchable collection of
test plans and that $G:X\to [0,\infty]$ is a $\calT$-weak upper gradient of 
a $\mm$-measurable function $f:X\to\R$. Then $f$ is Sobolev along $\calT$-almost every curve and 
 \begin{equation}\label{eq:pointwisewug}
      \biggl|\frac{\d}{\dt}f_\gamma\biggr|\leq
      G\circ\gamma|\dot\gamma|\quad\text{a.e. in $[0,1]$, for $\calT$-almost every
        $\gamma\in\AC{}{(0,1)}X\sfd$.}
    \end{equation}
\end{proposition}
\begin{proof} The stretchable condition \eqref{eq:97} yields for every $t<s$ in $[0,1]$
    \[
    |f(\gamma_s)-f(\gamma_t)|\leq \int_t^s G(\gamma_r)|\dot\gamma_r|\,\d
    r \qquad\text{for $\calT$-almost every $\gamma$.}
    \]
Let $\ppi\in\calT$: by Fubini's theorem applied
    to the product measure $\Leb2\times\ppi$ in $(0,1)^2\times
    C([0,1];X)$, it follows that for $\ppi$-a.e. $\gamma$ the function
     $f$ satisfies
    \[
    |f(\gamma_s)-f(\gamma_t)|\leq \Bigl|\int_t^s G(\gamma_r)|\dot\gamma_r|\,\d
    r \Bigr|\qquad\text{for $\Leb{2}$-a.e. $(t,s)\in (0,1)^2$.}
    \]
    An analogous argument shows that 
    \begin{equation}
      \label{eq:222}
      \left\{
    \begin{aligned}
      \textstyle |f(\gamma_s)-f(\gamma_0)|&\textstyle 
      \leq \int_0^s
      G(\gamma_r)|\dot\gamma_r|\,\d r\\
      \textstyle |f(\gamma_1)-f(\gamma_s)|&\textstyle \leq \int_s^1
      G(\gamma_r)|\dot\gamma_r|\,\d r
    \end{aligned}\right.
    \qquad\text{for $\Leb{1}$-a.e. $s\in (0,1)$.}
\end{equation}
 Since $G\circ \gamma|\dot \gamma|\in L^1(0,1)$ for
    $\ppi$-a.e.\ $\gamma$,  
    by Lemma~\ref{lem:Fibonacci} it follows that $f\circ\gamma\in W^{1,1}(0,1)$
    for $\ppi$-a.e. $\gamma$ and (understanding the derivative of $f\circ\gamma$ as the distributional one)
    \begin{equation}\label{eq:pointwisewug1}
      \biggl|\frac{\d}{\dt}(f\circ\gamma)\biggr|\leq
      g\circ\gamma|\dot\gamma|\quad\text{a.e. in $(0,1)$, for
        $\ppi$-a.e. $\gamma$.}
    \end{equation}
  Since $\ppi$ is arbitrary, we conclude that $f\circ\gamma\in
  W^{1,1}(0,1)$ for $\calT$-a.e. $\gamma$, and therefore it admits an
  absolutely continuous representative $f_\gamma$ for which \eqref{eq:pointwisewug} holds; moreover,
  by \eqref{eq:222}, it is immediate to check that $f(\gamma(t))=f_\gamma(t)$ for $t\in \{0,1\}$ and $\calT$-a.e.\ $\gamma$.
\end{proof}
}

{\nc
\begin{remark}[Equivalent formulation]
  \label{re:restr}{\rm
  By a similar reasoning we obtain an equivalent formulation of the weak upper gradient property, in the case
  when the collection of test plans $\calT$ is stretchable and $f$ is Sobolev along
  $\calT$-almost every curve: a function $G$ satisfying $\int_\gamma G<\infty$ for $\calT$-almost every curve $\gamma$ is a weak upper gradient
  w.r.t. $\calT$ of $f$ if and only if, for $\calT$-almost every curve $\gamma$, the function $f_\gamma$ of Definition~\ref{def:Sobolev} satisfies 
  \eqref{eq:pointwisewug}. 
    \fr   }
\end{remark}}

\subsection{Calculus with weak upper gradients}
\label{subs:calculus_weak}

\begin{proposition}[Locality]\label{prop:locweak}
Let $\calT$ be a stretchable collection of test plans, let $f:X\to\R$ be $\mm$-measurable
and let $G_1,\,G_2$ be weak upper gradients of $f$
 w.r.t.\ $\calT$. Then $\min\{G_1,G_2\}$ is a $\calT$-weak upper gradient of
 $f$.
\end{proposition}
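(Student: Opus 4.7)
The plan is to reduce the assertion to the pointwise formulation of the weak upper gradient property that was established in Remark~\ref{re:restr}. The key observation is that both hypotheses, once reformulated pointwise, can be combined on a common set of full $\calT$-measure.

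First I would invoke Remark~\ref{re:restr} (which uses the stretchable property \eqref{eq:97}) applied separately to $G_1$ and to $G_2$. This gives two $\calT$-negligible sets $N_1,N_2\subset \AC{}{(0,1)}X\sfd$ outside of which $f_\gamma$ is absolutely continuous and
\[
\bigl|f_\gamma'(t)\bigr|\le G_i(\gamma_t)|\dot\gamma_t|\quad\text{for a.e.\ }t\in[0,1],\ i=1,2.
\]
Since $N_1\cup N_2$ is still $\calT$-negligible (test plans are probability measures, so their null sets form a $\sigma$-ideal), both pointwise bounds hold simultaneously for $\calT$-almost every $\gamma$.

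Next, on this common full-measure subset I would simply take the minimum of the two pointwise inequalities, getting
\[
\bigl|f_\gamma'(t)\bigr|\le \min\{G_1,G_2\}(\gamma_t)\,|\dot\gamma_t|\quad\text{for a.e.\ }t\in[0,1].
\]
Integrating on $[0,1]$ and using the fundamental theorem of calculus for the absolutely continuous map $f_\gamma$ yields $\bigl|\int_{\partial\gamma}f\bigr|\le\int_\gamma \min\{G_1,G_2\}$, which is exactly \eqref{eq:inweak} for the candidate upper gradient. The required $\mm$-measurability of $\min\{G_1,G_2\}$ follows immediately from that of $G_1,G_2$.

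There is no genuine obstacle here; the whole content is shifted onto Remark~\ref{re:restr}, whose validity rests on the stretchable assumption on $\calT$. Had we worked with the integral formulation \eqref{eq:inweak} directly, one would have had to partition $[0,1]$ into the Borel sets $\{s:G_1(\gamma_s)\le G_2(\gamma_s)\}$ and its complement and handle possibly infinitely many subintervals via restrictions of $\ppi$; the pointwise reformulation bypasses this combinatorial issue altogether.
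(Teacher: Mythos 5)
Your argument is correct and is exactly the argument the paper has in mind: the paper's proof is the single sentence "It is a direct consequence of \eqref{eq:pointwisewug}," and your write-up simply makes explicit the steps (union of two $\calT$-negligible sets, pointwise minimum, integration) hidden in that remark. No divergence from the paper's route.
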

\begin{proof} {\nc We know from Proposition~\ref{prop:Rovereto} that $f$ is Sobolev along
$\calT$-almost every curve.}
Then, the claim is a direct consequence of Remark~\ref{re:restr} and \eqref{eq:pointwisewug}.
\end{proof}

The notion of weak upper gradient enjoys natural invariance properties with respect to
$\mm$-negligible sets:

\begin{proposition}[Invariance under modifications in $\mm$-negligible sets]\label{prop:invarianza}
Let
 $\calT$ be a\\
stretchable collection of test plans, let $f,\,\tilde f:X\to\R$ and
$G,\,\tilde G:X\to[0,\infty]$ be such that both $\{f\neq \tilde f\}$
and $\{G\neq \tilde G\}$ are $\mm$-negligible. Assume that $G$ is a
$\calT$-weak upper gradient of $f$. Then $\tilde G$ is a $\calT$-weak upper
gradient of $\tilde f$.
\end{proposition}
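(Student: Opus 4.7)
The plan is to exploit the fact that, by Definition~\ref{def:testplan}, every test plan $\ppi\in\calT$ satisfies $(\e_t)_\sharp\ppi\ll\mm$ for \emph{every} $t\in[0,1]$, including the endpoints $t=0,1$. This condition will be applied twice: once to the set $N_f:=\{f\neq\tilde f\}$ and once to $N_G:=\{G\neq\tilde G\}$, both of which are $\mm$-negligible by hypothesis.

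First I would handle the Sobolev regularity of $\tilde f$ along $\calT$-almost all curves. Fix a test plan $\ppi\in\calT$. The integrability condition $(\e_t)_\sharp\ppi\ll\mm$ gives $\ppi(\{\gamma:\gamma_t\in N_f\})=0$ for each $t\in[0,1]$. In particular, taking $t=0$ and $t=1$, we see that $f(\gamma_0)=\tilde f(\gamma_0)$ and $f(\gamma_1)=\tilde f(\gamma_1)$ for $\ppi$-a.e.\ $\gamma$. For the interior, I would apply Fubini's theorem:
\begin{equation*}
\int_{\CC{[0,1]}X\tau}\Leb{1}\bigl(\{t\in[0,1]:\gamma_t\in N_f\}\bigr)\,\d\ppi(\gamma)
=\int_0^1 (\e_t)_\sharp\ppi(N_f)\,\d t=0,
\end{equation*}
which shows that for $\ppi$-a.e.\ $\gamma$, the set $\{t:\gamma_t\in N_f\}$ is Lebesgue negligible, so $f\circ\gamma=\tilde f\circ\gamma$ a.e.\ in $[0,1]$. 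Combining with the endpoint agreement, for $\ppi$-a.e.\ $\gamma$ the function $\tilde f\circ\gamma$ coincides, a.e.\ in $[0,1]$ and on $\{0,1\}$, with the same absolutely continuous representative $f_\gamma$ already provided by the Sobolev regularity of $f$. Hence $\tilde f$ is Sobolev along $\calT$-almost all curves with the same representatives, and in particular $\int_{\partial\gamma}\tilde f=\int_{\partial\gamma}f$ for $\ppi$-a.e.\ $\gamma$.

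Next, applying the same Fubini argument to $N_G$ in place of $N_f$, I would conclude that for $\ppi$-a.e.\ $\gamma$ one has $G(\gamma_s)=\tilde G(\gamma_s)$ for $\Leb{1}$-a.e.\ $s\in[0,1]$; since $|\dot\gamma_s|\in L^1(0,1)$ and the null set depends only on $\gamma$, not on the (bounded) density, this gives
\begin{equation*}
\int_\gamma G=\int_0^1 G(\gamma_s)|\dot\gamma_s|\,\d s=\int_0^1 \tilde G(\gamma_s)|\dot\gamma_s|\,\d s=\int_\gamma\tilde G\quad\text{for $\ppi$-a.e. }\gamma.
\end{equation*}
Combining this identity with the weak upper gradient inequality for $f$ and $G$ (which holds $\ppi$-a.e.) and with the endpoint identity $\int_{\partial\gamma}\tilde f=\int_{\partial\gamma}f$, I obtain $\bigl|\int_{\partial\gamma}\tilde f\bigr|\le\int_\gamma\tilde G$ for $\ppi$-a.e.\ $\gamma$. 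Since $\ppi\in\calT$ was arbitrary, this proves that $\tilde G$ is a $\calT$-weak upper gradient of $\tilde f$.

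There is essentially no obstacle here: the argument is a two-line Fubini computation, and the stretchability assumption in Definition~\ref{def:testplan} plays no role, only the bounded-compression-type absolute continuity $(\e_t)_\sharp\ppi\ll\mm$ at every $t\in[0,1]$. The only point worth emphasizing in the write-up is that $\mm$-measurability of $\tilde G$ holds automatically (since $\tilde G$ differs from the $\mm$-measurable function $G$ on an $\mm$-negligible set), and that the endpoints $t=0,1$ are explicitly included in \eqref{eq:96}, which is precisely what allows the modification of $f$ and $\tilde f$ on an $\mm$-null set to be harmless in the definition of the weak upper gradient.
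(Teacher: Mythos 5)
Your proof is correct and follows essentially the same route as the paper's: the paper likewise reduces everything to showing that $\{\gamma:\gamma_t\in A\}$ ($t=0,1$) and the sets where $\Leb{1}(\{s:\gamma_s\in A\})>0$ are $\ppi$-negligible for an $\mm$-negligible set $A$, via the same Fubini identity that is stated just after Definition~\ref{def:weak_upper_gradient}. The only cosmetic difference is that the paper works throughout with a Borel superset $A\supset\{f\neq\tilde f\}$ (resp.\ $\supset\{G\neq\tilde G\}$) of $\mm$-measure zero, so that $\{\gamma:\gamma_t\in A\}$ is Borel and the Fubini computation is unambiguous; you should make this replacement explicit, since $N_f$ and $N_G$ themselves need not be Borel.
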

\begin{proof}
Fix a test plan $\ppi$:
it is sufficient to prove that the sets
$\bigl\{\gamma:\ f(\gamma_t)\neq \tilde f(\gamma_t)\bigr\}$,
$t=0,\,1$ and the set $\bigl\{\gamma:\ \int_\gamma G\neq\int_\gamma
\tilde G\bigr\}$
are contained in $\ppi$-negligible Borel sets.

For the first two sets the proof is obvious, because
$(\e_t)_\sharp\ppi\ll\mm$, which implies that if $A$ is a
$\mm$-negligible Borel set containing $\{f\neq\tilde{f}\}$ we have
$\ppi(\{\gamma:\ \gamma_t\in A\})=({\e_t})_\sharp\ppi(A)=0$. For the
third one we choose as $A$ a $\mm$-negligible Borel set containing
$\{G\neq\tilde{G}\}$ and we use the argument described immediately
after Definition~\ref{def:weak_upper_gradient}. 
\end{proof}

Thanks to the previous proposition we can also consider extended
real valued $f$ (as Kantorovich potentials), provided the set
$N=\{|f|=\infty\}$ is $\mm$-negligible: as a matter of fact the
curves $\gamma$ which intersect $N$ at $t=0$ or $t=1$ are
negligible, hence $\int_{\partial\gamma}f$ is defined for almost
every $\gamma$.

\begin{definition}[Minimal weak upper gradient]
  Let $\calT$ be a stretchable collection of test plans
  and let {\nc $f:X\to\R$ be a $\mm$-measurable function with a weak $\calT$-upper gradient.}
  The $\calT$-minimal weak upper gradient $\weakgradA f$ of $f$
  is the $\calT$-weak upper gradient characterized, up to
$\mm$-negligible sets, by the property
\begin{equation}\label{eq:defweakgrad}
  \weakgradA f\leq G\qquad\text{$\mm$-a.e. in $X$, for every $\calT$-weak upper gradient $G$ of $f$.}
\end{equation}  
 \end{definition}

Uniqueness of the minimal weak upper gradient is obvious. For
existence, let us consider a minimizing sequence $(G_n)$ for the problem
{\nc $$
\inf\left\{\int_X {\rm tan}^{-1}(G)\vartheta\,\d\mm:\ \text{$G$ weak $\calT$-upper gradient of $f$}\right\}
$$
with $\vartheta$ is as in \eqref{eq:17}.}
We immediately see, thanks to Proposition~\ref{prop:locweak}, that
we can assume with no loss of generality that $G_{n+1}\leq G_n$.
Hence, by monotone convergence, the function $\weakgradA f:=\inf_n G_n$ is a
{\nc $\calT$-weak upper gradient} of $f$ and $\int_X {\rm tan}^{-1}(G)\vartheta\,\d\mm$ is minimal at $G=\weakgradA f$. This
minimality, in conjunction with Proposition~\ref{prop:locweak},
gives \eqref{eq:defweakgrad}.

\begin{remark} [Comparison with Newtonian
spaces]\label{rem:compachsh2} \upshape Shanmugalingam introduced in
\cite{Shanmugalingam00} the Newtonian space $N^{1,2}(X,\sfd,\mm)$ of
all functions $f:X\to\R$ such that $\int f^2\,\d\mm<\infty$ and the
inequality
\begin{equation}\label{eq:june6}
|f(\gamma_1)-f(\gamma_0)|\leq\int_\gamma G
\end{equation}
holds out of a ${\rm Mod}_2$-null set of curves, for some $G\in
L^2(X,\mm)$. Then, she defined $|\rmD  f|_S$ as the function $G$ in
\eqref{eq:june6} with smallest $L^2$ norm and proved
\cite[Proposition~3.1]{Shanmugalingam00} that functions in
$N^{1,2}(X,\sfd,\mm)$ are absolutely continuous along ${\rm
Mod}_2$-almost every curve.

Remarkably, Shanmugalingam proved (the proofs in
\cite{Shanmugalingam00} work, with no change, even in the case of
extended metric measure spaces) this connection between Newtonian
spaces and Cheeger's functional $\underline{\sf Ch}_*$ described in
Remark~\ref{rem:compachsh}: $f\in D(\underline{\sf Ch}_*)$ if and
only if there is $\tilde{f}\in N^{1,2}(X,\sfd,\mm)$ in the Lebesgue
equivalence class of $f$, and the two notions of gradient $|\rmD 
f|_S$ and $|\rmD  f|_C$ coincide $\mm$-a.e. in $X$.

If $\calT$ denotes the class of plans with bounded compression
defined by \eqref{eq:incompre2}, the inclusion between null sets provided by
Remark~\ref{rem:comparenullsets} shows that the situation described
in Remark~\ref{rem:compachsh} is reversed. Indeed, while $|\rmD 
f|_C\leq\relgrad{f}$, the gradient $|\rmD  f|_S$ is larger
$\mm$-a.e. than $|\rmD  f|_{w,\calT}$, so that
\begin{equation}\label{eq:quantigradienti}
|\rmD  f|_{w,\calT}\leq |\rmD  f|_S=|\rmD  f|_C\leq\relgrad{f}
\qquad\text{$\mm$-a.e. in $X$.}
\end{equation}
{\nc With this choice of $\calT$, we can define the Sobolev space
$W^{1,2}_w(X,\sfd,\mm)$ with the same idea presented in
Remarks~\ref{re:sobolev}, \ref{rem:compachsh}: $f\in
W^{1,2}_w(X,\sfd,\mm)$ provided $f$ is in  $L^2(X,\mm)$, Sobolev along
$\calT$-a.e. curve and  $|\rmD  f|_{w,\calT}\in L^2(X,\mm)$. The
inequalities in \eqref{eq:quantigradienti} then yield
\begin{equation}
\label{eq:quantisobolev}
W_*^{1,2}(X,\sfd,\mm)\subset W^{1,2}(X,\sfd,\mm)=N^{1,2}(X,\sfd,\mm)\subset W^{1,2}_w(X,\sfd,\mm),
\end{equation}
(where some care should be used relating the Newtonian space with the Sobolev spaces 
because in the former the choice of $\mm$-a.e. representative of a function matters).}

Although we are not presently able to reverse the inclusion between
null sets, a nontrivial consequence of our identification of
$|\rmD  f|_{w,\cal T}$ and $\relgrad{f}$, proved in the next
section, is that all these gradients coincide $\mm$-a.e. in $X$, and hence all Sobolev/Newtonian spaces coincide as well.

Since $D(\C)\subset D(\underline{\sf Ch}_*)$, a byproduct of the
absolute continuity of functions in Newtonian spaces, that however
will not play a role in our paper, is that functions in $D(\C)$ have
a version which is absolutely continuous along ${\rm Mod}_2$-a.e.
curve.\fr
\end{remark}

\begin{remark}
  \label{rem:monotone-weakT}
  \upshape
  Notice that the notion of weak gradient do depend on the class
  $\calT$ of test plans
  (which, in turn, might depend on $\Wgh$).

  If $\calT_1\subset \calT_2$ are stretchable collections
  of test plans and a function $f:X\to\R$ is Sobolev along
  $\calT_2$-almost every absolutely continuous curve, then
  $f$ is Sobolev along $\calT_1$-almost every absolutely continuous
  curve and
  \begin{equation}
    \label{eq:98}
    |\rmD  f|_{w,\calT_1}\le |\rmD  f|_{w,\calT_2}.
  \end{equation}
  Thus, larger classes
  of test plans induce smaller classes of weak upper
  gradients, hence larger minimal weak upper gradients.\fr
\end{remark}

Another important property of weak upper gradients is their
stability w.r.t. $L^p$ convergence: we state it for all the
stretchable classes of test plans satisfying a condition weaker than
bounded compression, inspired to the ``democratic'' condition
introduced by \cite{Lott-Villani-Poincare}.
\begin{theorem}[Stability w.r.t. $\mm$-a.e. convergence]\label{thm:stabweak}
Let us suppose that $\calT$ is a stretchable collection of test
plans concentrated on $\AC p{(0,1)}X\sfd$ for some $p\in (1,\infty]$
such that for all $\ppi\in \calT$ and all $M\geq 0$ there exists
$C=C(\ppi,M)\in [0,\infty)$ satisfying
\begin{equation}\label{eq:incompre}
   \int_0^1(\e_t)_\sharp\ppi(B\cap\{\Wgh\leq M\})\,\dt\leq
   C(\ppi,M)\,\mm(B)\qquad\forall B\in\BorelSets{X}.
\end{equation}
Assume that $f_n$ are $\mm$-measurable and that $G_n$ are $\calT$-weak upper gradients of $f_n$.
Assume furthermore that $f_n(x)\to f(x)\in\R$ for $\mm$-a.e. $x\in
X$ and that $(G_n)$ weakly converges to $G$ in $L^q(\{\Wgh\leq
M\},\mm)$ for all $M\geq 0$, where $q\in [1,\infty)$ is the
conjugate exponent of $p$. Then $G$ is a $\calT$-weak upper gradient
of $f$.
\end{theorem}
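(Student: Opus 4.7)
The plan is, for each fixed test plan $\ppi\in\calT$, to upgrade the weak $L^q$-convergence of $G_n$ to strong convergence on sublevels of $\Wgh$ via Mazur's lemma, then restrict attention to curves on which $\Wgh\circ\gamma$ and the $p$-energy of $\gamma$ are bounded, and finally pass to the limit in the upper gradient inequality using the democratic hypothesis \eqref{eq:incompre}. The Sobolev regularity of $f\circ\gamma$ will be extracted from the same argument by stretchability.

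First, applying Mazur's lemma in $L^q(\{\Wgh\le M\},\mm)$ for $M=1,2,\ldots$ combined with a diagonal extraction, I construct convex combinations
\begin{displaymath}
\tilde f_n=\sum_{k\ge n}\alpha_{n,k}f_k,\qquad
\tilde G_n=\sum_{k\ge n}\alpha_{n,k}G_k
\end{displaymath}
such that $\tilde G_n\to G$ strongly in $L^q(\{\Wgh\le M\},\mm)$ for every $M\ge 1$, while $\tilde f_n\to f$ $\mm$-almost everywhere. Convexity of the upper gradient inequality ensures that $\tilde G_n$ is a $\calT$-weak upper gradient of $\tilde f_n$; hence we may assume from the start that the $G_n$ themselves converge strongly to $G$ in $L^q(\{\Wgh\le M\},\mm)$ for every $M$.

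Fix now $\ppi\in\calT$. Since $\Wgh$ is $\sfd$-Lipschitz and every $\gamma\in\AC{}{(0,1)}X\sfd$ is $\sfd$-continuous on $[0,1]$, the function $\Wgh\circ\gamma$ is bounded on $[0,1]$, so the sets
\begin{displaymath}
A_{M,K}:=\Big\{\gamma\in\AC p{(0,1)}X\sfd:\ \Wgh\circ\gamma\le M\text{ on }[0,1],\ \int_0^1|\dot\gamma_t|^p\,\dt\le K\Big\}
\end{displaymath}
(with the obvious modification for $p=\infty$) satisfy $\ppi(A_{M,K})\uparrow 1$ as $M,K\to\infty$, and it is enough to prove the upper gradient inequality for $\ppi$-a.e.\ $\gamma\in A_{M,K}$, for every $M,K$. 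On $A_{M,K}$, H\"older's inequality in $t$ yields
\begin{displaymath}
\left|\int_\gamma G_n-\int_\gamma G\right|\le K^{1/p}\left(\int_0^1|G_n-G|^q(\gamma_t)\,\dt\right)^{1/q};
\end{displaymath}
integrating the $q$-th power against $\ppi$, applying Fubini and using \eqref{eq:incompre} (exploiting that $\Wgh(\gamma_t)\le M$ for all $t$ when $\gamma\in A_{M,K}$), one obtains
\begin{displaymath}
\int_{A_{M,K}}\int_0^1|G_n-G|^q(\gamma_t)\,\dt\,\d\ppi\le C(\ppi,M)\int_{\{\Wgh\le M\}}|G_n-G|^q\,\d\mm\longrightarrow 0.
\end{displaymath}
Up to extracting a subsequence, therefore, $\int_\gamma G_n\to\int_\gamma G$ for $\ppi$-a.e.\ $\gamma\in A_{M,K}$.

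The test plan property $(\e_j)_\sharp\ppi\ll\mm$ for $j\in\{0,1\}$ combined with $f_n\to f$ $\mm$-a.e.\ gives $f_n(\gamma_j)\to f(\gamma_j)$ for $\ppi$-a.e.\ $\gamma$, so passing to the limit in $|f_n(\gamma_1)-f_n(\gamma_0)|\le\int_\gamma G_n$ along the subsequence produces $|f(\gamma_1)-f(\gamma_0)|\le\int_\gamma G$ for $\ppi$-a.e.\ $\gamma\in A_{M,K}$, and hence for $\ppi$-a.e.\ $\gamma$. To upgrade this into Sobolev regularity of $f\circ\gamma$, I repeat the whole argument with $\ppi$ replaced by $(\mathrm{restr}_t^s)_\sharp\ppi\in\calT$ for every pair of rationals $0\le t<s\le 1$ (which belong to $\calT$ by stretchability); intersecting over this countable family gives, for $\ppi$-a.e.\ $\gamma$,
\begin{displaymath}
|f(\gamma_s)-f(\gamma_t)|\le\int_t^s G(\gamma_r)|\dot\gamma_r|\,\d r\qquad\forall\,t,s\in\Q\cap[0,1].
\end{displaymath}
A further application of H\"older and \eqref{eq:incompre} shows $G\circ\gamma\,|\dot\gamma|\in L^1(0,1)$ for $\ppi$-a.e.\ $\gamma\in A_{M,K}$, so the above extends to all $t,s\in[0,1]$ and $f\circ\gamma$ admits an absolutely continuous representative, as required. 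The main difficulty is precisely bridging the weak $L^q$-convergence of $G_n$ (which carries no pointwise information) with the pointwise convergence of $f_n$: Mazur's lemma upgrades the former to strong $L^q_{\mathrm{loc}}$-convergence, and the democratic hypothesis \eqref{eq:incompre}---rather than the weaker pointwise bound $(\e_t)_\sharp\ppi\ll\mm$---is exactly what is needed to transfer this strong convergence to convergence of line integrals along $\ppi$-a.e.\ curve.
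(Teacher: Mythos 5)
Your overall strategy (Mazur's lemma, H\"older plus the democratic bound \eqref{eq:incompre}, endpoint passage to the limit) matches the paper's up to a point, but the final step---deriving Sobolev regularity of $f$ along $\calT$-a.e.\ curve from the restriction trick---has a genuine gap. What the restriction argument gives you, after intersecting over rational pairs $(t,s)$, is
\begin{displaymath}
|f(\gamma_s)-f(\gamma_t)|\le\int_t^s G(\gamma_r)|\dot\gamma_r|\,\d r\qquad\forall\,t,s\in\Q\cap[0,1],
\end{displaymath}
together with $G\circ\gamma\,|\dot\gamma|\in L^1(0,1)$, for $\ppi$-a.e.\ $\gamma$. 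This controls $f\circ\gamma$ only on the countable (hence $\Leb1$-negligible) set of rational times. The unique continuous extension $f_\gamma$ from $\Q\cap[0,1]$ is indeed absolutely continuous, but nothing in what you have written forces $f_\gamma$ to coincide with $f\circ\gamma$ \emph{a.e.\ in} $[0,1]$, which is exactly what the definition of ``Sobolev along $\calT$-a.e.\ curve'' requires. A measurable function well-behaved on the rationals can differ from its continuous extension on a full-measure set of irrationals, and the rational inequality by itself does not exclude this for $f\circ\gamma$.

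The missing ingredient is an a.e.-in-$t$ control of $f_n\circ\gamma$ in the interior of $[0,1]$. Precisely, the democratic condition \eqref{eq:incompre} gives much more than the endpoint information you used: if $B:=\{f_n\not\to f\}$, then $\mm(B)=0$ forces $\int_0^1(\e_t)_\sharp\ppi(B\cap\{\Wgh\le M\})\,\dt=0$, and Fubini then yields that for $\ppi$-a.e.\ $\gamma$ (contained in $\{\Wgh\le M\}$) one has $f_n(\gamma_t)\to f(\gamma_t)$ for $\Leb1$-a.e.\ $t\in[0,1]$. Combining this with the $L^1$-convergence $\int_\gamma|G_n-G|\to0$ that you already established (after Mazur and a subsequence extraction), the absolutely continuous representatives $(f_n)_\gamma$ are equi-absolutely continuous and converge at $t=0$, hence converge uniformly to an absolutely continuous limit $f_\gamma$; since $(f_n)_\gamma=f_n\circ\gamma$ a.e.\ and $f_n\circ\gamma\to f\circ\gamma$ a.e., this limit satisfies $f_\gamma=f\circ\gamma$ a.e.\ in $[0,1]$ and at $\{0,1\}$, with $|f_\gamma'|\le G\circ\gamma\,|\dot\gamma|$. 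This is precisely how the paper closes the argument (through the quantity $\int_\gamma\bigl(|H_{n(k)}-G|+\min\{|\tilde f_{n(k)}-f|,1\}\bigr)$, whose second summand controls the interior values of $\tilde f_{n(k)}\circ\gamma$). Once you add this equi-absolute-continuity step, the restriction construction becomes unnecessary.
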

\begin{proof}
{\nc Fix a test plan $\ppi$ and assume first that, for some constants $L$ and $M$,
$\Energy p\gamma\leq L<\infty$ $\ppi$-a.e. 
and that $\ppi$-a.e. $\gamma$ is contained in $\{\Wgh\leq M\}$.}
By Mazur's theorem we can find convex combinations
$$
H_n:=\sum_{i=N_h+1}^{N_{h+1}}\alpha_iG_i\qquad\text{with
$\alpha_i\geq 0$, $\sum_{i=N_h+1}^{N_{h+1}}\alpha_i=1$,
$N_h\to\infty$}
$$
converging strongly to $G$ in $L^q(\{\Wgh\leq M\},\mm)$. Denoting by
$\tilde f_n$ the corresponding convex combinations of $f_n$, $H_n$
are weak upper gradients of $\tilde f_n$ and still $\tilde f_n\to f$
$\mm$-a.e. in $\{\Wgh\leq M\}$.

Since for every nonnegative Borel function $\varphi:X\to [0,\infty]$
and any $M$ it holds (with $C=C(\ppi,M)$)
\begin{align}
  \notag\int\Big(\int_{\gamma\cap \{V\le M\}}\varphi\Big)\,\d\ppi&=
  \int\Big(\int_0^1 \nchi_{\{V\le M\}}(\gamma_t)\varphi(\gamma_t)|\dot
  \gamma_t|\,\d t\Big)\,\d\ppi
  \\&\notag
  \le
  \int\Big(\int_0^1 \nchi_{\{V\le M\}}(\gamma_t)\varphi^q(\gamma_t)\,\d
  t\Big)^{1/q}
  \Big(\int_0^1 |\dot
  \gamma_t|^p\,\d t\Big)^{1/p}\,\d\ppi
  \\&\notag
  \le
  \Big(\int_0^1 \int_{\{V\le M\}} \varphi^q\,\d(\rme_t)_\sharp\ppi\,\d
  t\Big)^{1/q}
  \Big(\int \Energy p\gamma\,\d\ppi\Big)^{1/p}
  \\&\le
  \Big(C\int_{\{V\le M\}} \varphi^q\,\d\mm\Big)^{1/q}\Big(\int \Energy p\gamma\,\d\ppi\Big)^{1/p},
\label{eq:21}
 \end{align}
we obtain, for $\bar C:=C^{1/q}L^{1/p}$,
$$
\int\biggl(\int_{\gamma\cap\{\Wgh\leq
  M\}}|H_n-G|\biggr)\,\d\ppi\leq
\bar C\|H_n-G\|_{L^q(\{V\le M\},\mm)} \to 0.
$$

By a diagonal argument we can find a subsequence $n(k)$ independent
of $M\in\N$ such that
$\int_\gamma|H_{n(k)}-G|\to 0$ as
$k\to\infty$ for $\ppi$-a.e. $\gamma$ contained in
$\{\Wgh\leq M\}$, and thus for $\ppi$-a.e. $\gamma$. Since
$\tilde{f}_n$ converge $\mm$-a.e. to $f$ and the marginals of $\ppi$
are absolutely continuous w.r.t. $\mm$ we have also that for
$\ppi$-a.e. $\gamma$ it holds $\tilde{f}_n(\gamma_0)\to f(\gamma_0)$
and $\tilde{f}_n(\gamma_1)\to f(\gamma_1)$. {\nc Still using \eqref{eq:21}, we have
$\int_\gamma G<\infty$ for $\ppi$-a.e. $\gamma$.}

{\nc
If we fix a curve $\gamma$ satisfying the above properties, we can pass to the limit in the weak
upper gradient property written for $\tilde f_{n(k)}$ to obtain
that $G$ is a $\calT$-weak upper gradient of $f$. Finally we remove the assumptions initially
made on $\ppi$ using \eqref{eq:13} and the fact that any curve
$\gamma$ is contained in $\{\Wgh\leq M\}$ for sufficiently large $M$.}
\end{proof}

\begin{corollary}\label{cor:weakrel}
Let $\mathcal T$ be a stretchable collection of test plans
satisfying \eqref{eq:incompre} and concentrated on $\AC
2{(0,1)}X\sfd$, and let $\tilde{\sf Ch}$ be defined as in
\eqref{eq:extchee}. If $f\in D(\tilde{\sf Ch})$ then $f$ is Sobolev
along $\calT$-almost every curve and $\weakgradA f\leq\relgrad f$
$\mm$-a.e. in $X$.
\end{corollary}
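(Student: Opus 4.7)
The plan is to first establish the result for the truncates $f_N := \min\{N,\max\{f,-N\}\}$, which belong to $D(\C)$ by hypothesis, and then patch together their absolutely continuous representatives to handle general $f$. For each $N$, Lemma~\ref{lem:strongchee}(c) yields Borel $\sfd$-Lipschitz functions $f_{N,n}\in L^2(X,\mm)$ with $f_{N,n}\to f_N$ and $|\nabla f_{N,n}|\to \relgrad{f_N}$ strongly in $L^2(X,\mm)$; each $|\nabla f_{N,n}|$ is a $\calT$-weak upper gradient of $f_{N,n}$ by Remark~\ref{rem:lipweak}, and strong $L^2(X,\mm)$ convergence trivially implies weak $L^2(\{\Wgh\le M\},\mm)$ convergence for every $M$. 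Hence Theorem~\ref{thm:stabweak} with $p=q=2$ gives that $f_N$ is Sobolev along $\calT$-a.e.\ curve and $\weakgradA{f_N}\le \relgrad{f_N}$ $\mm$-a.e. In particular, for $\calT$-a.e.\ $\gamma$ there is an absolutely continuous representative $(f_N)_\gamma$ of $f_N\circ \gamma$ with $(f_N)_\gamma(i)=f_N(\gamma_i)$, $i=0,1$, and
\[
|(f_N)_\gamma(t)-(f_N)_\gamma(s)|\le \int_s^t \relgrad{f_N}(\gamma_r)|\dot\gamma_r|\,\d r.
\]

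By Proposition~\ref{prop:chainrule} applied to $\phi_N(r)=\min\{N,\max\{r,-N\}\}$, one has $\relgrad{f_N}=0$ $\mm$-a.e.\ on $\{|f|\ge N\}$ and $\relgrad{f_N}=\relgrad f$ on $\{|f|<N\}$, so $\relgrad{f_N}\uparrow \relgrad f$ monotonically. To control $\int_0^1 \relgrad f(\gamma_r)|\dot\gamma_r|\,\d r$ for $\ppi\in\calT$, I would use \eqref{eq:incompre} applied to $B=\{\relgrad f^2>\lambda\}$ (or, equivalently, to Borel functions supported in $\{\Wgh\le M\}$): for every integer $M$,
\[
\int\!\!\Big(\int_0^1 \nchi_{\{\Wgh\le M\}}(\gamma_r)\relgrad f^2(\gamma_r)\,\d r\Big)\,\d\ppi(\gamma)\le C(\ppi,M)\int_X \relgrad f^2\,\d\mm<\infty,
\]
so by countable union this integral is finite for every $M\in\N$, for $\ppi$-a.e.\ $\gamma$. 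Because $\gamma([0,1])$ is $\tau$-compact as the continuous image of $[0,1]$, and $\Wgh$ is bounded on compact sets by \eqref{eq:75}, there exists $M(\gamma)$ with $\gamma([0,1])\subseteq\{\Wgh\le M(\gamma)\}$, whence $\int_0^1\relgrad f^2(\gamma_r)\,\d r<\infty$. Combined with $\int_0^1|\dot\gamma_r|^2\,\d r<\infty$ (as $\calT$ is concentrated on $\AC 2{(0,1)}X\sfd$), Cauchy-Schwarz yields the required finiteness $I(\gamma):=\int_0^1 \relgrad f(\gamma_r)|\dot\gamma_r|\,\d r<\infty$.

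To glue, I fix a $\calT$-a.e.\ $\gamma$ for which all $(f_N)_\gamma$ exist, $I(\gamma)<\infty$, and $(f_N)_\gamma(0)=f(\gamma_0)$ for $N>|f(\gamma_0)|$ (recall $f$ is real-valued). Choosing $N_0>|f(\gamma_0)|+I(\gamma)$, the upper gradient estimate forces $|(f_{N_0})_\gamma(t)|<N_0$ for every $t\in[0,1]$. For $M\ge N_0$, the pointwise identity $\phi_{N_0}\circ f_M=f_{N_0}$ gives $\phi_{N_0}\circ (f_M)_\gamma=(f_{N_0})_\gamma$ a.e., and continuity of both sides upgrades this to a pointwise identity; the strict bound $|(f_{N_0})_\gamma|<N_0$ then yields $(f_M)_\gamma=(f_{N_0})_\gamma$ for every $M\ge N_0$. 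Thus $f_\gamma:=(f_{N_0})_\gamma$ is absolutely continuous and coincides a.e.\ on $[0,1]$ and at $\{0,1\}$ with $f\circ\gamma$, and monotone convergence in the AC estimate produces $|f_\gamma(t)-f_\gamma(s)|\le\int_s^t\relgrad f(\gamma_r)|\dot\gamma_r|\,\d r$. This shows that $f$ is Sobolev along $\calT$-a.e.\ curve and that $\relgrad f$ is a $\calT$-weak upper gradient, whence $\weakgradA f\le\relgrad f$ $\mm$-a.e.\ by minimality.

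The main obstacle is precisely this gluing: the AC representatives $(f_N)_\gamma$ are only defined up to modification on $\Leb 1$-null subsets of $[0,1]$, so their compatibility across different truncation levels is not automatic. It is forced by combining \emph{continuity} with the \emph{strict} truncation bound $|(f_{N_0})_\gamma|<N_0$; this strictness in turn requires the a priori finiteness of $I(\gamma)$, which relies on the weighted integrability condition \eqref{eq:75} together with the $\tau$-compactness of $\gamma([0,1])$ used to discard the $M$-dependence of the constants in \eqref{eq:incompre}.
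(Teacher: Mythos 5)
Your argument is correct and, for the bounded truncates $f_N$, follows the paper's route exactly: approximate $f_N$ by $\sfd$-Lipschitz functions via Lemma~\ref{lem:strongchee}, observe via Remark~\ref{rem:lipweak} that $|\nabla f_{N,n}|$ are weak upper gradients, and invoke Theorem~\ref{thm:stabweak} to conclude $\weakgradA{f_N}\le\relgrad{f_N}$. What you add is an explicit treatment of the passage from the truncates $f_N$ to $f\in D(\tilde{\sf Ch})$, which the paper dispatches with the single line ``by the very definition of $\tilde{\sf Ch}$ and the chain rule for relaxed gradients it is sufficient to consider the case when $f$ is bounded.'' You are right that this reduction is not entirely a formality: the conclusion requires a \emph{single} absolutely continuous representative $f_\gamma$ along each admissible curve, and compatibility of the $(f_N)_\gamma$ across truncation levels must be justified. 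Your mechanism --- deriving the strict bound $|(f_{N_0})_\gamma|<N_0$ from $I(\gamma)<\infty$, then using continuity and $\phi_{N_0}\circ f_M=f_{N_0}$ to force $(f_M)_\gamma=(f_{N_0})_\gamma$ for $M\ge N_0$ --- is sound, as is the finiteness of $I(\gamma)$ for $\ppi$-a.e.\ $\gamma$, obtained from \eqref{eq:incompre}, the $\tau$-compactness of $\gamma([0,1])$, and the boundedness of $\Wgh$ on compacts guaranteed by \eqref{eq:75}.

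A shorter route to the same end is to apply Theorem~\ref{thm:stabweak} a \emph{second} time, now with $f_n:=f_N$ and $G_n:=\relgrad{f_N}$: by locality (Proposition~\ref{prop:chainrule}(a),(b)) and the definition \eqref{eq:extrelgrad} one has $\relgrad{f_N}\uparrow\relgrad f$ $\mm$-a.e., so monotone convergence and $\relgrad f\in L^2(X,\mm)$ give $\relgrad{f_N}\to\relgrad f$ strongly in $L^2(X,\mm)$ (hence weakly in each $L^2(\{\Wgh\le M\},\mm)$), while $f_N\to f$ everywhere. The stability theorem then returns directly that $f$ is Sobolev along $\calT$-a.e.\ curve with $\relgrad f$ as a weak upper gradient. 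This is the same gluing, but already packaged inside the proof of Theorem~\ref{thm:stabweak} (via equi-absolute continuity and a diagonal argument); your version makes the mechanism visible and identifies precisely which estimate makes the truncation levels agree, which is illuminating but not strictly necessary once the stability theorem is available.
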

\begin{proof}
{\nc We consider first the case when $f$ is bounded.}
We already observed in Remark~\ref{rem:lipweak} that, for a
Borel $\sfd$-Lipschitz function $f$, the local Lipschitz constant is
a $\calT$-weak upper gradient. Now, pick a sequence $(f_n)$ of Borel
$\sfd$-Lipschitz functions converging to $f$ in $L^2(X,\mm)$ such
that $|\rmD  f_n|$ converge weakly in $L^2(X,\mm)$ to $\relgrad f$,
thus in particular weakly in $L^2(\{\Wgh\leq M\},\mm)$ to $\relgrad
f$ for all $M\geq 0$. Then, Theorem~\ref{thm:stabweak} ensures that
$\relgrad f$ is a $\calT$-weak upper gradient for $f$. 

{\nc In the general case, if $f_N$ are the standard truncations of $f$, we can pass
to the limit in the inequality $\weakgradA {f_N}\leq\relgrad {f_N}$ using the chain rule for relaxed
gradients and the stability of weak gradients.}
\end{proof}

We shall also need chain rules for minimal weak upper gradients.

\begin{proposition}[Chain rule for minimal weak upper gradients]\label{prop:chainweak}
Let $\calT$ be as in Theorem~\ref{thm:stabweak}. If a $\mm$-measurable function $f:X\to\R$ 
{\nc has a $\calT$-weak upper gradient}, the following properties hold:
\begin{itemize}
\item[(a)] for any $\Leb{1}$-negligible Borel set $N\subset\R$ it holds
$\weakgradA f=0$ $\mm$-a.e. on $f^{-1}(N)$;
\item[(b)] $\weakgradA{\phi(f)}=\phi'(f)\weakgradA f$ $\mm$-a.e. in $X$, with the convention $0\cdot\infty=0$,
for any nondecreasing function $\phi$, locally Lipschitz on an
interval containing the image of $f$.
\end{itemize}
\end{proposition}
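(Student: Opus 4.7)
The plan is to mimic the strategy of Proposition~\ref{prop:chainrule}, replacing the $L^2$-weak relaxation machinery by the pointwise formulation \eqref{eq:pointwisewug} of weak upper gradients together with the minimality of $\weakgradA\cdot$. For (a), fix an $\Leb 1$-negligible Borel set $N\subset\R$ and define $\tilde G:=\nchi_{X\setminus f^{-1}(N)}\weakgradA f$. It suffices to show that $\tilde G$ is a $\calT$-weak upper gradient of $f$, for then minimality forces $\weakgradA f\le \tilde G$ $\mm$-a.e., whence $\weakgradA f=0$ $\mm$-a.e.\ on $f^{-1}(N)$. I would invoke the classical real-analysis fact (a consequence of the area formula for AC maps) that any absolutely continuous $u:[0,1]\to\R$ satisfies $u'(s)=0$ for $\Leb 1$-a.e.\ $s\in u^{-1}(N)$. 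Applied to $u=f_\gamma$ for $\calT$-almost every $\gamma$, and combined with \eqref{eq:pointwisewug}, this gives
\begin{displaymath}
  \Big|\tfrac{\d}{\dt}f_\gamma(s)\Big|\le \weakgradA f(\gamma_s)\,|\dot\gamma_s|\,\nchi_{X\setminus f^{-1}(N)}(\gamma_s)=\tilde G(\gamma_s)\,|\dot\gamma_s|\quad\text{for a.e. } s\in [0,1],
\end{displaymath}
where $f_\gamma(s)=f(\gamma_s)$ $\Leb 1$-a.e.\ follows from $(\e_s)_\sharp\ppi\ll\mm$ and Fubini.

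For the upper bound in (b), if $\phi\in C^1$ is Lipschitz on $J$, the chain rule for AC compositions gives $(\phi\circ f_\gamma)'(s)=\phi'(f_\gamma(s))\,f_\gamma'(s)$ a.e., so \eqref{eq:pointwisewug} shows that $\phi'(f)\weakgradA f$ is a $\calT$-weak upper gradient of $\phi(f)$. For a general locally Lipschitz nondecreasing $\phi$, I would approximate $\phi$ uniformly on the image of $f$ by $C^1$, equi-Lipschitz, nondecreasing functions $\phi_n$ with $\phi_n'\to \phi'$ $\Leb 1$-a.e.\ on $J$; part (a) applied to the $\Leb 1$-negligible set $E\subset J$ of non-differentiability points of $\phi$ yields $\weakgradA f=0$ $\mm$-a.e.\ on $f^{-1}(E)$, so that $\phi_n'(f)\weakgradA f\to \phi'(f)\weakgradA f$ $\mm$-a.e.\ unambiguously. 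A dominated-convergence argument along $\calT$-a.e.\ curve then passes the inequality to the limit, producing $\weakgradA{\phi(f)}\le \phi'(f)\weakgradA f$ $\mm$-a.e.

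For the reverse inequality, subadditivity $\weakgradA{(f_1+f_2)}\le \weakgradA{f_1}+\weakgradA{f_2}$ $\mm$-a.e.\ is immediate from the triangle inequality in \eqref{eq:pointwisewug} together with minimality, and $1$-homogeneity $\weakgradA{(\lambda g)}=|\lambda|\weakgradA g$ is equally immediate. We may thus rescale $\phi$ to assume $0\le\phi'\le 1$ on $J$; setting $\psi(t):=t-\phi(t)$, which is nondecreasing Lipschitz with $\psi'=1-\phi'$, the upper bound applied to $\phi$ and $\psi$ yields
\begin{displaymath}
  \weakgradA f=\weakgradA{(\phi(f)+\psi(f))}\le \weakgradA{\phi(f)}+\weakgradA{\psi(f)}\le \big(\phi'(f)+\psi'(f)\big)\weakgradA f=\weakgradA f,
\end{displaymath}
forcing equality $\mm$-a.e.\ throughout, hence $\weakgradA{\phi(f)}=\phi'(f)\weakgradA f$. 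The principal technical subtlety throughout is precisely the ambiguity of $\phi'(f)$ on the $\mm$-measurable set $f^{-1}(E)$, which part (a) resolves: since $\weakgradA f$ vanishes $\mm$-a.e.\ on $f^{-1}(E)$, the product $\phi'(f)\weakgradA f$ is well defined $\mm$-a.e.\ independently of any convention for $\phi'$ on $E$, and the approximation step is legitimate.
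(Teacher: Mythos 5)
Your proof is correct, and the interesting divergence from the paper is in part (a). The paper proves (b) first for differentiable $\phi$ via the pointwise formulation, then derives (a) by running the $C^1$ approximation $\phi_n$ (with $\phi_n'=1-\psi_n$, $\nchi_N\le\psi_n\le\nchi_{A_n}$) of Proposition~\ref{prop:chainrule}(a) and invoking the stability Theorem~\ref{thm:stabweak} to pass to the limit; finally it extends (b) by a further approximation. You instead prove (a) first, directly, by observing that $\nchi_{X\setminus f^{-1}(N)}\weakgradA f$ is itself a weak upper gradient thanks to the classical fact that an absolutely continuous $u$ satisfies $u'=0$ $\Leb 1$-a.e.\ on $u^{-1}(N)$ when $\Leb 1(N)=0$, then invoke minimality. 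This is cleaner and more self-contained: it bypasses Theorem~\ref{thm:stabweak} entirely (and hence does not actually need the ``democratic'' condition \eqref{eq:incompre} on $\calT$ for this part), needing only the pointwise formulation \eqref{eq:pointwisewug} plus a piece of one-dimensional real analysis. Your treatment of (b) then follows the paper's scheme (upper bound for $C^1$ via the classical chain rule along $f_\gamma$, approximation, and the $\psi=\mathrm{id}-\phi$ decomposition to upgrade to an equality), with the pleasant twist that having (a) already available makes the approximation step unambiguous from the start.

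One small point, common to your write-up and to the paper's short proof: the reduction ``we may rescale so that $0\le\phi'\le 1$'' uses global boundedness of $\phi'$, whereas the statement allows $\phi$ to be merely \emph{locally} Lipschitz on an unbounded interval $J$. To cover that case one should first truncate $\phi$ to $\phi_N$ (globally Lipschitz, $\phi_N\equiv\phi$ on $[-N,N]$), prove the identity for $\phi_N$, and then recover the identity for $\phi$ on $\{|f|<N\}$ using the locality of $\weakgradA{\cdot}$, which in turn is a corollary of (a) and the subadditivity you already have (if $g-h$ is constant on a Borel set $A$, then $\weakgradA{(g-h)}=0$ $\mm$-a.e.\ on $A$, hence $\weakgradA g=\weakgradA h$ there). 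Worth making explicit, but it does not alter the substance of the argument.
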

\begin{proof} {\nc We use the equivalent formulation of Remark~\ref{re:restr}
and the well-known fact that both (a) and (b) are true when $X=\R$
endowed with Euclidean distance and Lebesgue measure
and $f$ is absolutely continuous.
We can prove (a) setting 
$$
G(x):=
\begin{cases}
\weakgradA f(x) &\text{if $f(x)\in\R\setminus N$};\\
0 &\text{if $f(x)\in N$}
\end{cases}
$$
and noticing the validity of (a) for real-valued absolutely continuous maps gives that $G$ is $\calT$-weak upper gradient of $f$.
Then, the minimality of $\weakgradA f$ gives $\weakgradA f\leq G$ $\mm$-a.e. in $X$.

By a similar argument based on \eqref{eq:pointwisewug} we can prove that $\weakgradA{\phi(f)}\leq\phi'(f)\weakgradA f$
$\mm$-a.e. in $X$. Then, the same subadditivity argument of Proposition~\ref{prop:chainrule}(d)
provides the equality $\mm$-a.e. in $X$.
}
%
\end{proof}

\begin{lemma}
  \label{le:conditioned_UG}
  Let $\calT$ be the collection of all the test plans
  concentrated on $\AC2{(0,1)}X\sfd$
  with bounded compression on the sublevels of $\Wgh$
  (i.e.\ satisfying \eqref{eq:incompre2}).\\
  Let $\mu\in\AC2{(0,T)}{\prob X}{W_2}$ be an absolutely continuous
  curve with uniformly bounded densities $f_t=\d\mu_t/\d\mm.$
  Let $\phi:[0,\infty)\to\R$ be a convex
  function with $\phi(0)=0$ and $\phi'$ locally Lipschitz in $(0,\infty)$.
  We suppose that for a.e.\ $t\in (0,T)$ $f_t$ is Sobolev along $\calT$-almost all
  curves and that
  \begin{gather}
    \label{eq:40}
    H^2_t:=\int_{X}\weakgradA {f_t}^2\,\d\mm<\infty,\quad
    G^2_t:=\int_{\{f_t>0\}} \Big(\phi''(f_t)\weakgradA
    {f_t}\Big)^2f_t\,\d\mm<\infty,
 \end{gather}
 for a.e. $t\in (0,T)$. Assume in addition that
 $G,\,H\in L^2(0,T)$ and that $\int_X |\phi(f_0)|\,\d\mm<\infty$. Then
 $t\mapsto\int_X|\phi(f_t)|\,\d\mm$ is bounded in $[0,T]$,
  \begin{equation}
    \label{eq:81}
    \Phi_t:=\int_X \phi(f_t)\,\d\mm
    \quad\text{is absolutely continuous in $[0,T]$ and}\quad
    \Big|\frac \d{\d t}\Phi_t\Big|\le  G_t\,|\dot\mu_t|
    \quad\text{a.e. in $(0,T)$.}
  \end{equation}
  If moreover $\phi'$ is Lipschitz on  an interval containing the image of $f_t$, $t\in
  [0,T]$, then the pointwise
  estimates hold
  \begin{equation}
    \label{eq:92}
    \limsup_{s\downarrow t}\frac{
      \Phi_t-\Phi_s}{s-t}\le  G_t \limsup_{s\downarrow t}\media_t^s
    |\dot\mu_r|\,\d r,\quad
    \liminf_{s\downarrow t}\frac{
      \Phi_t-\Phi_s}{s-t}\le  G_t \liminf_{s\downarrow t}\media_t^s
    |\dot\mu_r|\,\d r.
  \end{equation}
\end{lemma}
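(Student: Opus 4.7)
The plan is to lift the Wasserstein-absolutely continuous curve $t\mapsto\mu_t$ to a measure on paths via Lisini's theorem and then transfer the regularity of $\Phi$ in time to the Sobolev regularity of $\phi'(f_b)$ along $\ppi$-a.e.~curve. By Lisini's representation theorem there exists $\ppi\in\prob{\CC{[0,T]}X\tau}$ concentrated on $\AC 2{(0,T)}X\sfd$ with $(\e_r)_\sharp\ppi=\mu_r$ for every $r$ and $|\dot\mu_r|^2=\int|\dot\gamma_r|^2\,\d\ppi(\gamma)$ for a.e.~$r$. The uniform bound $f_r\le M$ ensures that $\ppi$ has bounded compression on the sublevels of $\Wgh$ and, after the usual rescaling of time to $[0,1]$, belongs to $\calT$.

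For $0\le t<s\le T$, the convexity of $\phi$ yields the tangent inequalities $\int\phi'(f_s)(f_t-f_s)\,\d\mm\le \Phi_t-\Phi_s\le \int\phi'(f_t)(f_t-f_s)\,\d\mm$, both sides of which rewrite through $(\e_r)_\sharp\ppi=f_r\,\mm$ as $\int[\phi'(f_b)\circ\e_t-\phi'(f_b)\circ\e_s]\,\d\ppi$ with $b\in\{t,s\}$. By the chain rule for weak upper gradients (Proposition~\ref{prop:chainweak}), $\phi'(f_b)$ is Sobolev along $\calT$-almost all curves with weak upper gradient $\phi''(f_b)\weakgradA{f_b}$; combining Fubini, Cauchy--Schwarz at fixed $r$, and the Lisini identity yields, for $b\in\{t,s\}$,
\[
|\Phi_s-\Phi_t|\le \int_t^s\Big(\int[\phi''(f_b)\weakgradA{f_b}]^2 f_r\,\d\mm\Big)^{1/2}|\dot\mu_r|\,\d r.
\]

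To upgrade this to the sharp bound $|\Phi_s-\Phi_t|\le \int_t^s G_r|\dot\mu_r|\,\d r$ required in \eqref{eq:81}, I would apply the above on each subinterval of a partition $t=t_0<\cdots<t_N=s$ with $b=t_{i+1}$ and pass to the limit as the mesh vanishes so that $b$ approaches $r$ from the right; the resulting Riemann-like sums converge to $\int_t^s G_r|\dot\mu_r|\,\d r$ by a Fatou-type argument, and absolute continuity then follows from $G\cdot|\dot\mu|\in L^1(0,T)$ by Cauchy--Schwarz. The boundedness of $\int|\phi(f_t)|\,\d\mm$ is then deduced from the boundedness of $\Phi_t$ (ensured by $\int|\phi(f_0)|\,\d\mm<\infty$ together with the derivative bound) by splitting $\phi$ at a threshold $\eps>0$: since $\phi$ is convex with $\phi(0)=0$, it has constant sign on $(0,\eps)$ for $\eps$ small; on $\{f_t\ge\eps\}$ the function $|\phi|$ is bounded while $\mm(\{f_t\ge\eps\})\le 1/\eps$, and on $\{0<f_t<\eps\}$ the integral $\int\phi(f_t)\,\d\mm$ is controlled by $|\Phi_t|$ plus the bounded contribution from $\{f_t\ge\eps\}$.

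For the pointwise estimates \eqref{eq:92}, when $\phi'$ is Lipschitz on an interval containing the image of every $f_t$ the subdivision is unnecessary: taking $b=t$ in the basic inequality gives $\Phi_t-\Phi_s\le \int_t^s A(r)|\dot\mu_r|\,\d r$ with $A(r)^2:=\int[\phi''(f_t)\weakgradA{f_t}]^2 f_r\,\d\mm$, which is uniformly bounded by $L\sqrt{M}\,H_t$ and satisfies $A(t)=G_t$. Splitting $A(r)=G_t+(A(r)-G_t)$ in the integral and showing that $\sup_{r\in[t,t+\delta]}|A(r)-G_t|\to 0$ as $\delta\downarrow 0$ (by approximating $[\phi''(f_t)\weakgradA{f_t}]^2$ in $L^1(\mm)$ by bounded continuous functions and invoking the weak convergence $\mu_r\weakto\mu_t$) then yields both the $\limsup$ and the $\liminf$ estimates in \eqref{eq:92}. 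The main obstacle will be justifying the subdivision limit in the derivation of \eqref{eq:81}, since the integrand depends on both $r$ and the moving partition point $b(r)\downarrow r$: a clean passage to the limit requires a measurable selection together with the $L^1$-approximation of the integrands uniformly in $r$. A secondary technical difficulty is the case where $\phi'$ blows up at $0$, handled by approximating $\phi$ with $\phi_\eps(r):=\phi(r+\eps)-\phi(\eps)$ (whose derivative is Lipschitz on $[0,M]$) and passing to the limit $\eps\downarrow 0$ via monotone/dominated convergence combined with the estimates obtained in the regularized case.
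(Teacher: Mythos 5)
Your preliminary setup (Lisini's representation, the tangent inequality from convexity, the chain rule for weak upper gradients, the regularization of $\phi$ near the origin) matches the paper's. The genuine gap is in how you propose to pass from the intermediate bound to absolute continuity of $\Phi$. Your plan is to upgrade $|\Phi_s-\Phi_t|\le \int_t^s\bigl(\int[\phi''(f_b)\weakgradA{f_b}]^2 f_r\,\d\mm\bigr)^{1/2}|\dot\mu_r|\,\d r$ with a fixed endpoint $b$ to the sharp integrated estimate $|\Phi_s-\Phi_t|\le\int_t^s G_r|\dot\mu_r|\,\d r$ by refining a partition and letting $b$ track $r$. This step, which you yourself flag as the main obstacle, is not merely technically awkward — it is unlikely to work with the proposed tools. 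The map $b\mapsto \weakgradA{f_b}$ is only defined for a.e.\ $b$, carries no a priori continuity or even jointly measurable structure in $(b,x)$, and $G$ is only known to be in $L^2(0,T)$; there is no mechanism making Riemann-type sums of $G_{t_{i+1}}$-evaluated integrands converge to $\int G_r|\dot\mu_r|\,\d r$.

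The paper never needs that integrated estimate. It keeps $b=t$ fixed throughout, bounding $\Phi_t-\Phi_s\le g(t)\int_s^t|\dot\mu_r|\,\d r$ with $g(t)$ a constant multiple of $H_t$ (using that $\phi''$ is bounded and $f_r\le C$), and then feeds this one-sided, fixed-$t$ inequality into the absolute continuity criterion Lemma~\ref{lem:realanalysis} to conclude that $\Phi$ is absolutely continuous. The a.e.\ bound $|\Phi_t'|\le G_t|\dot\mu_t|$ in \eqref{eq:81} is then a consequence of the pointwise estimates \eqref{eq:92}, applied at points $t$ that are simultaneously differentiability points of $\Phi$ and Lebesgue points of $|\dot\mu|$; you have the right idea for \eqref{eq:92} (pass $r\downarrow t$ in $\int|g_t|^2 f_r\,\d\mm$ using the $w^*$-continuity of $r\mapsto f_r$ in $L^\infty$, which you phrase via $L^1$-approximation of $|g_t|^2$ by bounded continuous functions). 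So the fix is: drop the partition argument entirely, invoke Lemma~\ref{lem:realanalysis} for absolute continuity, and recover \eqref{eq:81} from \eqref{eq:92} rather than the other way round.

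A smaller remark: your argument for the boundedness of $\int_X|\phi(f_t)|\,\d\mm$ by splitting at a threshold $\eps$ is more complicated than needed; in the paper it drops out of the uniform-in-$\eps$ integral estimate \eqref{eq:83} for the regularized entropies $\Phi^\eps_t$, together with monotone convergence as $\eps\downarrow 0$.
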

\begin{proof}
  It is not restrictive to assume $T=1$.
  Let $C$ be a constant satisfying $\mu_t\leq C\mm$ for
  all $t\in [0,1]$ and notice that, by interpolation, $f_t$ are
  uniformly bounded in all spaces $L^p(X,\mm)$. {\nc In addition, 
  by Kantorovich duality with $c(x,y)=\sfd(x,y)$,
  $f_s$ weakly converge to $f_t$ as $s\to t$ in duality with the class ${\cal Y}$ of Borel, bounded and
  $\sfd$-Lipschitz functions. Since ${\cal Y}\cap L^p(X,\mm)$} 
  is dense in $L^p(X,\mm)$ for {\nc $1\leq p<\infty$} (thanks to the
  existence of the $\sfd$-Lipschitz weight function $\Wgh$ whose
  sublevels have finite $\mm$-measure), we obtain that $t\mapsto f_t$
  is continuous in the weak topology of $L^q(X,\mm)$ (weak$^*$ if $q=\infty$),
  with $q$ dual exponent of $p$.
  Arguing as in \cite{Lisini07,Ambrosio-Gigli11}, see also the work in progress
  \cite{Lisini11} for the case of extended metric spaces, we can find
  $\ppi\in\prob{\CC{[0,1]}{X}{\tau}}$ concentrated in $\AC2{(0,1)}X\sfd$ and satisfying
  \begin{equation}
   \label{eq:14}
     \mu_t=(\rme_t)_\sharp\ppi\quad\text{for every $t\in
     [0,1]$},\qquad
     |\dot \mu_t|^2 =\int |\dot
     \gamma_t|^2\,\d\ppi(\gamma)\quad \text{for a.e.\ }t\in (0,1),
 \end{equation}
  so that $\ppi\in \calT$.
  Let us first suppose that $\phi'$ is locally Lipschitz continuous in
  $[0,\infty)$, so that $\Phi_t$ is everywhere finite. Possibly replacing $\phi(z)$ by
  $\phi(z)-\phi'(0)z$ we can assume that $\phi$
  is nonnegative and nondecreasing.
  {\GGG It follows
  that $\Phi_t$ is lower semicontinuous.}

  We pick a point $t$ such that $f_t$ is Sobolev along $\ppi$-almost all
  curves and $H_t<\infty$ and we set
  $h_t:=\phi'(f_t)$,
  $g_t:=\weakgradA{h_t}=\phi''(f_t)\weakgradA{f_t}$.
  Then for every $s\in (0,t)$ we have
  \begin{align}
    \notag
    \Phi_t&-\Phi_s \le \int_X
    \phi'(f_t)(f_t-f_s)\,\d\mm=
    \int \big(h_t(\gamma_t))-h_t(\gamma_s))\,\d\ppi(\gamma)
    \le
    \int \int_s^t g_t(\gamma_r)|\dot\gamma_r|\,\d r\d\ppi(\gamma)
    \\&
    \label{eq:76}
    \le
    \int_s^t \Big(\int_{X} |g_t|^2\,f_r\,\d\mm\Big)^{1/2} \Big(\int
    |\dot\gamma_r|^2\,\d\ppi(\gamma)\Big)^{1/2} \,\d r
    =\int_s^t \Big(\int_{X} |g_t|^2\,f_r\,\d\mm\Big)^{1/2} |\dot\mu_r| \,\d
    r.
  \end{align}
  Since $H\in L^2(0,1)$ we deduce from Lemma~\ref{lem:realanalysis} (with $w=-\Phi$,
  $L=|\dot\mu|$, $g=C^{1/2} H$ at all points $t$ such that $f_t$ is Sobolev along $\ppi$-almost all
  curves, $g=+\infty$ elsewhere) that $\Phi$ is absolutely continuous.

  Writing the inequalities analogous to \eqref{eq:76} for $s>t$,
  dividing by $s-t$, and passing to the limit as $s\downarrow t$,
  thanks to the $w^*$ continuity of $r\mapsto f_r$
  in $L^\infty(X,\mm)$ we get the bound \eqref{eq:92} (and thus
  \eqref{eq:81} when $t$ is also a differentiability point for $\Phi$
  and a Lebesgue point for $|\dot\mu|$).

  When $\phi$ is an arbitrary convex function, for $\eps\in (0,1]$ we set
  \begin{displaymath}
    \phi_\eps(r):=
    \begin{cases}
      r\phi'(\eps)&\text{if }0\le r\le \eps,\\
      \phi(z)-\phi(\eps)+\eps\phi'(\eps)&\text{if }r\ge \eps;
    \end{cases}
  \end{displaymath}
  it is easy to check that $\phi_\eps$ is convex, with locally
  Lipschitz derivative in
  $[0,\infty)$ and that $\phi_\eps\downarrow \phi$ as $\eps\downarrow 0$,
  since $\eps\mapsto\eps\phi'(\eps)-\phi(\eps)$ is increasing and
  converges to $0$ as $\eps\downarrow 0$.
  Notice moreover that $(\phi_\eps)''\le \phi''$.
  Applying the integral form of \eqref{eq:81} to
  $\Phi^\eps_t:=\int_X \phi_\eps(f_t)\,\d\mm$ we get
  \begin{equation}
    \label{eq:83}
   \big|\Phi^\eps_t-\Phi^\eps_s\big|\le \int_s^t  G_r\,|\dot\mu_r|\,\d
   r\quad\text{for every }0\le s<t\le 1.
  \end{equation}
  Since $\Phi^\eps_0\to\Phi_0$, it follows that all the functions $\Phi^\eps_t$ are
  uniformly bounded. In addition, \eqref{eq:76} with $\phi=\phi_\eps$ gives that
  $$\int_X(\phi_\eps)^-(f_s)\,\d\mm\leq\int_X(\phi_\eps)^+(f_s)\,\d\mm+R\leq
  \int_X(\phi_1)^+(f_s)\,\d\mm+R
  $$
  with $R$ uniformly bounded in $s$ and
  $\eps$ (notice that $t$ can be chosen independently of $s$ and $\eps$).
  Hence, applying the monotone convergence theorem we obtain the
  uniform bound on $\|\phi(f_t)\|_{L^1(X,\mm)}$ and
  pass to the limit in \eqref{eq:83} as $\eps\down0$, obtaining \eqref{eq:81}.
\end{proof}

\begin{remark}  [Invariance properties] \label{rem:conformalwug} {\rm
If $\calT$ is the collection of all test plans concentrated on
$\AC2{(0,1)}X\sfd$ with bounded compression on the sublevels of
$\Wgh$ (according to \eqref{eq:incompre2}) all the concepts
introduced so far (test plans, negligible sets of curves, weak upper
gradient and minimal weak upper gradient) are immediately seen to be
invariant if one replaces $\mm$ with the finite measure
$\tmm:=\rme^{-\Wgh^2}\mm$ (recall \eqref{eq:75}): indeed, any test
plan with bounded compression relative to $\tmm$ is a test plan with
bounded compression relative to $\mm$ and any test plan bounded
compression relative to $\mm$ can be monotonically approximated by
analogous test plans relative to $\tmm$. A similar argument holds
for plans satisfying \eqref{eq:incompre}. \fr}
\end{remark}

\begin{remark}{\rm As for Cheeger's energy and the relaxed gradient, if no additional
assumption on $(X,\tau,\sfd,\mm)$ is made, it is well possible that
the weak upper gradient is trivial.

This is the case of the second example considered in
Remark~\ref{re:puoesserebanale}, where it is easy to check that the
class of absolutely continuous curves contains just the constants,
so that $\weakgradA f\equiv0$ for every $f\in L^2([0,1];\mm)$
independently from the choice of $\calT$. In order to exclude such
situations, we are going to make additional assumptions on
$(X,\tau,\sfd,\mm)$ in the next sections, as the lower
semicontinuity of $|\rmD^-\entv|^2(f\mm)$: this ensures, as we
will see in Theorem~\ref{thm:slopefisherconv}, its agreement with
$8\C(\sqrt{f})$. Since $\entv$ is not trivial, the same is true for
$|\rmD^-\entv|$ and for $\C$. In turn, we will see that lower
semicontinuity of $|\rmD^-\entv|^2$ is implied by
$CD(K,\infty)$.}\fr
\end{remark}

\section{Identification between relaxed gradient and weak upper gradient}
\label{sec:identification1}

The key statement that will enable us to prove
 the main identification result of this section is provided in the following lemma. It
corresponds precisely to \cite[Proposition~3.7]{GigliKuwadaOhta10}:
the main improvement here is the use of the refined analysis of the
Hamilton-Jacobi equation semigroup  we did in
Section~\ref{sec:hopflax}, together with the use of relaxed
gradients, in place of the standard Sobolev spaces in Alexandrov
spaces. In this way we can also avoid any lower curvature bound on
$(X,\sfd)$ and we do not even require that $(X,\sfd)$ is a length
space.

\begin{lemma}[A key estimate for the Wasserstein velocity]
\label{le:key} Let $(X,\tau,\sfd,\mm)$ be a Polish extended measure
space satisfying
\begin{equation}
  \label{eq:61}
  \mm\big(\bigl\{x\in X:\sfd(x,K)\le r\bigr\}\big)<\infty\quad
  \text{for every compact $K\subset X$ and $r>0$.}
\end{equation}
Let $(f_t)$ be the gradient flow of $\C$ in $L^2(X,\mm)$ starting
from a nonnegative $f_0\in L^2(X,\mm)$ and let us assume that
\begin{equation}
  \label{eq:62}
  \int_X f_t\,\d\mm=1,\quad
  \int_0^t\int_{\{f_s>0\}} \frac{\relgrad {f_s}^2}{f_s}\,\d\mm\,\d
  s<\infty\quad\text{for every }t\ge0.
\end{equation}
Then, setting
$\mu_t:=f_t\mm\in\Probabilities{X}$,
the curve $t\mapsto \mu_t:=f_t\mm$ is locally
 absolutely continuous from $(0,\infty)$ to $(\pro{\mu_0}(X),W_2)$ and its metric speed
$|\dot\mu_t|$ satisfies
\begin{equation}\label{eq:lekey}
|\dot\mu_t|^2\leq\int_{\{f_t>0\}}\frac{\relgrad{f_t}^2}{f_t}\,
\d\mm\qquad\text{for a.e. $t\in (0,\infty)$}.
\end{equation}
\end{lemma}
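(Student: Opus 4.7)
The plan is to prove the stronger inequality
\begin{equation}\label{eq:plan_key}
W_2^2(\mu_s,\mu_t)\le (t-s)\int_s^t\!\!\int_{\{f_u>0\}}\frac{\relgrad{f_u}^2}{f_u}\,\d\mm\,\d u
\quad\text{for every }0<s<t,
\end{equation}
from which local absolute continuity of $t\mapsto \mu_t$ in $(\pro{\mu_0}(X),W_2)$ and the pointwise bound \eqref{eq:lekey} on the metric speed follow by a standard localization/Lebesgue-differentiation argument (after dividing by $(t-s)^2$ and letting $t\downarrow s$); the fact that $\mu_t\in\pro{\mu_0}(X)$ for $t>0$ is obtained from \eqref{eq:plan_key} by sending $s\downarrow 0$ together with the $\tau$-lower semicontinuity of $W_2$ and the weak convergence $\mu_s\weakto\mu_0$ implied by $L^2$-convergence and the local finiteness \eqref{eq:61}.

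The estimate \eqref{eq:plan_key} will be derived from Kantorovich duality. Writing the cost as $\sfd^2/(2\tau)$ with $\tau:=t-s$, the rescaled duality \eqref{eq:dualitabasebis} (with the corresponding $c$-transform being precisely $Q_\tau$) reads
\begin{equation}\label{eq:plan_dual}
\frac{W_2^2(\mu_s,\mu_t)}{2\tau}=\sup_{h}\Big(\int_X Q_\tau h\,\d\mu_t-\int_X h\,\d\mu_s\Big),
\end{equation}
where the supremum may be restricted to bounded, $\sfd$-Lipschitz functions $h:X\to\R$ (approximation in $C_b$ suffices since $Q_\tau$ is stable under uniform limits). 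Fixing such an $h$, I would study the map $F:[0,\tau]\to\R$ given by $F(r):=\int_X Q_r h\,f_{s+r}\,\d\mm$, with $F(0)=\int h\,\d\mu_s$ and $F(\tau)=\int Q_\tau h\,\d\mu_t$. Proposition~\ref{prop:timederivative} gives that $r\mapsto Q_r h$ is locally Lipschitz and locally semiconcave, with uniform $L^\infty$ bound on $(x,r)\mapsto Q_r h(x)$ by \eqref{eq:108} and uniform Lipschitz bound on $Q_r h$ via \eqref{eq:lipquantitative} (combined with the initial Lipschitz character of $h$ for small $r$). Combining this regularity with $f_\cdot\in L^2_{\mathrm{loc}}((0,\infty);W^{1,2})$ and $\partial_r f_{s+r}=\Deltam f_{s+r}$, a routine justification (via a difference quotient argument exploiting the semiconcavity of $r\mapsto Q_r h$, Fubini, and \eqref{eq:partialtquantitative}) shows that $F$ is locally absolutely continuous on $(0,\tau)$ with
\begin{equation}\label{eq:plan_diff}
\frac{\d}{\d r}F(r)=\int_X (\partial_r Q_r h)\,f_{s+r}\,\d\mm+\int_X Q_r h\,\Deltam f_{s+r}\,\d\mm
\quad\text{for a.e.\ }r.
\end{equation}

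The two terms in \eqref{eq:plan_diff} are now estimated using the key tools of the previous sections. The Hamilton--Jacobi subsolution property of Theorem~\ref{thm:subsol} yields $\partial_r Q_r h\le -\tfrac12|\nabla Q_r h|^2$ a.e.\ in $r$, while Proposition~\ref{prop:deltaineq} (applied to $-Q_r h$ as well, to obtain the two-sided bound) gives $\int Q_r h\,\Deltam f_{s+r}\,\d\mm\le \int |\nabla Q_r h|_*\,\relgrad{f_{s+r}}\,\d\mm$. Since $\relgrad{f_{s+r}}=0$ $\mm$-a.e.\ on $\{f_{s+r}=0\}$ by Proposition~\ref{prop:chainrule}(a), Young's inequality gives
\begin{equation}\label{eq:plan_young}
|\nabla Q_r h|_*\,\relgrad{f_{s+r}}\le \tfrac12|\nabla Q_r h|_*^2\,f_{s+r}+\tfrac12\nchi_{\{f_{s+r}>0\}}\frac{\relgrad{f_{s+r}}^2}{f_{s+r}}.
\end{equation}
Combining \eqref{eq:plan_young} with \eqref{eq:facile} (which gives $|\nabla Q_r h|_*\le |\nabla Q_r h|$ $\mm$-a.e.), the two $|\nabla Q_r h|^2 f_{s+r}$ contributions cancel and
\[
\frac{\d}{\d r}F(r)\le \tfrac12\int_{\{f_{s+r}>0\}}\frac{\relgrad{f_{s+r}}^2}{f_{s+r}}\,\d\mm\quad\text{a.e.\ in }r.
\]
Integrating from $0$ to $\tau$, taking the supremum over $h$, and plugging into \eqref{eq:plan_dual} yields \eqref{eq:plan_key}.

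The main technical obstacle I anticipate is the rigorous derivation of \eqref{eq:plan_diff}: neither factor in $F(r)$ has $C^1$ dependence, so the differentiation under the integral must be justified via the locally Lipschitz/semiconcave behavior of $r\mapsto Q_r h$ (Proposition~\ref{prop:timederivative}) and the Bochner-type regularity $r\mapsto f_{s+r}$ in $L^2$ together with $r\mapsto \tfrac{\d^+}{\d t}f_{s+r}=\Deltam f_{s+r}$ pointwise in $t>0$. A safe alternative is to avoid pointwise differentiation altogether and work with the discrete telescoping sum $F(\tau)-F(0)=\sum_i\big[F(r_{i+1})-F(r_{i})\big]$ on a partition, estimating each increment by splitting $F(r_{i+1})-F(r_{i})=\int(Q_{r_{i+1}}h-Q_{r_i}h)\,f_{s+r_{i+1}}\,\d\mm+\int Q_{r_i}h\,(f_{s+r_{i+1}}-f_{s+r_i})\,\d\mm$ and passing to the limit via the $L^1$-integrated version of the subsolution bound (Theorem~\ref{thm:subsol}) and the integrated Laplacian estimate (Proposition~\ref{prop:deltaineq}); this bypasses any delicate pointwise differentiability issue and leads directly to the same integrated inequality.
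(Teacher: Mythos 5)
Your proposal follows essentially the same route as the paper's proof: Kantorovich duality, the Hamilton--Jacobi subsolution property of the Hopf--Lax semigroup, the Laplacian estimate \eqref{eq:delta1}, and Young's inequality, combining into the integral bound $W_2^2(\mu_s,\mu_t)\le(t-s)\int_s^t\int_{\{f_u>0\}}\relgrad{f_u}^2/f_u\,\d\mm\,\d u$, from which \eqref{eq:lekey} follows by Lebesgue differentiation.

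The one step you flag but leave unresolved is exactly where the paper does the real work, and it is not only the pointwise differentiation of $r\mapsto\int_X Q_r h\,f_{s+r}\,\d\mm$. Restricting the supremum in the rescaled duality to ``bounded $\sfd$-Lipschitz $h$'' is not innocuous in a Polish extended space: when $\tau$ is strictly weaker than the $\sfd$-topology (as in the Wiener space example), a bounded $\sfd$-Lipschitz function need not be $\tau$-Borel (it is only universally measurable, via Suslin's theory), so neither is $Q_r h$; and when $\mm(X)=\infty$, a merely bounded $h$ need not lie in $L^2(X,\mm)$, so the product rule for Lipschitz $L^2$-valued curves fails a priori and \eqref{eq:delta1} cannot even be invoked, since it requires $Q_r h\in D(\C)$. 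The paper closes both gaps at once: exploiting $\mu_s\ll\mm$, the supremum in \eqref{eq:dualityQ} is first reduced to $\tau$-lower semicontinuous $\varphi$ of the form \eqref{eq:compactreduction} (compactly supported, up to an additive constant), and $\varphi$ is then replaced by $Q_\eps\varphi$ for small $\eps>0$. By \eqref{eq:63} together with hypothesis \eqref{eq:61} these vanish outside a set of finite $\mm$-measure and hence lie in every $L^p(X,\mm)$; by Proposition~\ref{prop:goodBorel} they are $\tau$-lower semicontinuous (hence Borel, with Borel $Q_t$-evolution and universally measurable slopes); and by Remark~\ref{rem:goodBorel} $Q_\eps\varphi\uparrow\varphi$, so the supremum is not lost. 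Your telescoping alternative is a perfectly reasonable way to bypass pointwise differentiation, but without this test-function reduction it faces the same measurability and integrability obstructions.
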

\begin{proof}
We start from the duality formula \eqref{eq:dualitabasebis}: it is
easy to check that it can be written as
\begin{equation}\label{eq:dualityQ}
\frac{W_2^2(\mu,\nu)}2=\sup_{\nc\phi}\int_X Q_1\phi\,\d\nu-\int_X\phi\,\d\mu\\
\end{equation}
where the supremum runs in $C_b(X)$. Now, if $\mu\ll\mm$, we may
equivalently consider $\tau$-lower semicontinuous functions
{\nc $\phi$} of the form \eqref{eq:compactreduction}; indeed, given
{\nc $\phi\in C_b(X)$}, considering a sequence of compact sets
$K_n\subset X$ whose union is of full $\mm$-measure and setting
$$
{\nc \phi_n(x)}:=
\begin{cases}
\phi(x) &\text{if $x\in K_n$;}\\ \sup\phi &\text{if $x\in
X\setminus K_n$}
\end{cases}
$$
we obtain $\phi_n\downarrow\phi$ $\mm$-a.e. and
$Q_1\phi_n\geq Q_1\phi$.

Moreover, since \eqref{eq:dualityQ} is invariant by adding constants
to $\phi$, we can always assume that $M=0$ in
\eqref{eq:compactreduction}, so that $\phi$ vanishes outside a
compact set $K$.

Now, if $\phi$ is of the form \eqref{eq:compactreduction} with
$M=0$, we notice that for all $t>0$ the map $Q_t\phi$ is
$\sfd$-Lipschitz,
bounded and lower semicontinuous (the latter
property follows by Proposition~\ref{prop:goodBorel}), and
\begin{equation}
  \label{eq:63}
  Q_t\phi(x)=0\quad\text{if}\quad\sfd(x,K)\ge 2\sqrt {-\min_K \phi}
  \text{ and }t\le 2,
\end{equation}
so that, by \eqref{eq:61}, $(Q_t(\phi))_{t\in [0,2]}$ is
uniformly bounded in each $L^p(X,\mm)$.

{\nc Fix now a function $\phi$ of the form 
\eqref{eq:dualityQ} and set $\varphi:=Q_\eps\phi$, for some $\eps\in (0,1)$.}
Observe that, thanks to the pointwise estimates \eqref{eq:partialtquantitative} and
\eqref{eq:lipquantitative1}, the map $t\mapsto Q_t\varphi$ is
Lipschitz from $[0,1]$ with values in $L^\infty(X,\mm)$, and a fortiori in
$L^2(X,\mm)$ by \eqref{eq:63}. In addition, the ``functional''
derivative (i.e. the strong limit in $L^2$ of the difference
quotients) $\partial_t Q_t\varphi$ of this $L^2(X,\mm)$-valued map
is easily seen to coincide, for a.e. $t$, with the map
$\frac{\d^+}{\d t }Q_t\varphi(x)$. Recall also that, still thanks to
Proposition~\ref{prop:goodBorel}, the latter map is Borel and
$|\rmD  Q_t\varphi|$ is $\BorelSetsStar{X\times
(0,\infty)}$-measurable.

Fix also $0\leq t<s\leq 1$, set $\ell=(s-t)$ and recall that since $(f_t)$ is
the gradient flow of $\C$ in $L^2(X,\mm)$, the map $[0,\ell]\ni
r\mapsto f_{t+r}$ is Lipschitz with values in $L^2(X,\mm)$.

Now, for $a,\,b:[0,\ell]\to L^2(X,\mm)$ Lipschitz, it is well known
that $t\mapsto \int_X a_tb_t\,\d\mm$ is Lipschitz in $[0,\ell]$ and
that $\bigl(\int_X a_tb_t\,\d\mm)'=\int_X
b_t\partial_ta_t\,\d\mm+\int_Xa_t\partial_t b_t\,\d\mm$ for a.e.
$t\in [0,\ell]$. Therefore we get
$$
\frac{\d}{\d r}\int_X Q_{r/\ell}\varphi \, f_{t+r}\,\d\mm= \int_X
\frac{1}{\ell}
\xi_{t/\ell}
 \,f_{t+r}+ Q_{r/\ell}\varphi \,\Deltam
f_{t+r}\,\d\mm\qquad\text{for a.e. $r>0$,}
$$
 where $\xi_s(x):=\frac{\d^+}{\d t}Q_t\varphi\bigr\vert_{t=s}(x)$; we
have then:
\begin{equation}
\label{eq:step1}
\begin{split}
\int_X Q_1\varphi\,\d\mu_s-\int_X\varphi\,\d\mu_t&=\int_X Q_1\varphi
f_{t+\ell}\,\d\mm-\int_X\varphi f_t\,\d\mm\\
&=\int_0^\ell\int_X \frac{1}{\ell}\xi_{r/\ell}\,f_{t+r}+
Q_{r/\ell}\varphi
\,\Deltam f_{t+r}\,\d\mm \d r\\
&=\int_X\int_0^\ell\ \frac{1}{\ell}\xi_{r/\ell}\,f_{t+r}+
Q_{r/\ell}\varphi
\,\Deltam f_{t+r}\,\d r \d\mm\\
&\le
\int_X\int_0^\ell\ -\frac{|\rmD  Q_{r/\ell}\varphi|^2}{2\ell}+
Q_{r/\ell}\varphi\, \Deltam f_{t+r}\,\d r\, \d\mm.
\end{split}
\end{equation}
In the last two steps we used first Fubini's theorem and then
Theorem~\ref{thm:subsol}. Observe that by inequalities
\eqref{eq:delta1} and \eqref{eq:facile} we have (using also that $
\relgrad{f_s}=0$ $\mm$-a.e. on $\{f_s=0\}$)
\[
\begin{split}
\int_X Q_{r/\ell}\varphi \,\Deltam f_{t+r}\,\d\mm&\leq
\int_X\relgrad{Q_{r/\ell}\varphi}\,\relgrad{f_{t+r}}\,\d\mm\\
&\leq\frac1{2\ell}\int_X|\rmD  Q_{r/\ell}\varphi
|^2f_{t+r}\,\d\mm+\frac\ell2\int_{\{f_{t+r}>0\}}\frac{\relgrad{f_{t+r}}^2}{f_{t+r}}\,\d\mm.
\end{split}
\]
{\nc Plugging this inequality in \eqref{eq:step1}, using once more
Fubini's theorem and recalling that} 
{\GGG by the definition of $\varphi,$ $Q_1\varphi=Q_1(Q_\eps \phi)\ge
  Q_{1+\eps}\phi,$} we obtain
\[
\int_X Q_{1+\eps}\phi\,\d\mu_s-\int_XQ_\eps\phi\,\d\mu_t\leq
\frac\ell2\int_0^\ell\int_{\{f_{t+r}>0\}}\frac{\relgrad{f_{t+r}}^2}{f_{t+r}}\,\d
r\d\mm.
\]
Since $\phi$ is $\tau$-lower semicontinuous, by
Remark~\ref{rem:goodBorel} we have $Q_\eps\phi\uparrow\phi$ as
$\eps\downarrow 0$. Hence, taking also into account that
$Q_{1+\eps}\phi\to Q_1\phi$ as
$\eps\downarrow 0$ (recall the continuity property
\eqref{eq:continuityQtf} and that $t_*\equiv\infty$ in this case),
we obtain
\[
\int_X Q_1\phi\,\d\mu_s-\int_X\phi\,\d\mu_t\leq
\frac\ell2\int_0^\ell\int_{\{f_{t+r}>0\}}\frac{\relgrad{f_{t+r}}^2}{f_{t+r}}\,\d
r\d\mm.
\]
{\nc This latter bound holds for all functions $\phi$ of the form \eqref{eq:dualityQ}, so that
the remarks made at the beginning of the proof yield}
\[
W_2^2(\mu_t,\mu_s)\leq\ell\int_0^\ell\int_{\{f_{t+r}>0\}}\frac{\relgrad{f_{t+r}}^2}{f_{t+r}}\d
r\d\mm,\qquad
\ell=s-t.
\]
By \eqref{eq:62} we immediately get that $\mu_t\in\pro{\mu_0}(X)$
and \eqref{eq:lekey} holds.
\end{proof}

In the next two results we will consider the class of (measurable) functions
\begin{equation}
  \label{eq:101}
  f:X\to \R\quad\text{such that }
  f_N:=\min\{N,\max\{f,-N\}\}\in L^2(X,\mm)\quad\text{for every $N>0$.}
\end{equation}

\begin{theorem}[Relaxed and weak upper gradients coincide]\label{thm:rel=weak}
  Let $(X,\tau,\sfd,\mm)$ be a Polish extended measure space with
$\mm$ satisfying \eqref{eq:75}, let $\calT$ be the collection of
all test plans concentrated on $\AC 2{(0,1)}X\sfd$ with bounded compression on the 
sublevels of $\Wgh$ according to \eqref{eq:incompre2}, and let $f:X\to\R$
be a $\mm$-measurable function satisfying \eqref{eq:101}.
\\
Then $f$ has relaxed gradient
$\relgrad f$ (according to \eqref{eq:extrelgrad}) in $L^2(X,\mm)$
iff $f$ is Sobolev on $\calT$-almost all curves and
$\weakgradA f\in L^2(X,\mm)$. In this case
\begin{equation}
  \label{eq:22}
  \relgrad f=\weakgradA f\quad \mm\text{-a.e.\ in }X.
\end{equation}
{\nc Finally, it holds
\[
W_*^{1,2}(X,\sfd,\mm)=W^{1,2}(X,\sfd,\mm)=W_w^{1,2}(X,\sfd,\mm),
\]
and these spaces also coincide with $N^{1,2}(X,\sfd,\mm)$ provided we think this latter space as a space of $\mm$-a.e. 
equivalence classes of functions. For a function $f$ belonging to these spaces, 
the relaxed gradient $\relgrad f$ coincides $\mm$-a.e. in $X$ with
the Newtonian and Cheeger gradients $|\rmD f|_S$ and $|\rmD f|_C$ of Remark~\ref{rem:compachsh2} and with
 $\weakgradA f$.} 
\end{theorem}
\begin{proof}
Taking into account Remark~\ref{rem:conformalwug} and
Lemma~\ref{le:invariance} it is not restrictive to assume that
$\mm\in\prob X$, so that we can choose $V\equiv1$. Moreover, we can
assume that
$0<M^{-1}\le f\le M<\infty$ $\mm$-almost everywhere in
$X$ with $\int_X f^2\,\d\mm=1$. By Corollary~\ref{cor:weakrel} we
have to prove that if $f$ is Sobolev on $\calT$-almost every curve
with $\weakgradA f\in L^2(X,\mm)$ then
\begin{equation}
  \label{eq:32}
  \C( f)\le \frac12\int_X \weakgradA f^2\,\d\mm.
\end{equation}
We consider the gradient flow $(h_t)$ of the Cheeger's energy with
initial datum $h:=f^2$,
setting $\mu_t=h_t\mm$, and we apply
Lemma~\ref{le:key}. If $g=h^{-1}\weakgradA h$, we easily get arguing
as in \eqref{eq:76} and using inequality \eqref{eq:lekey}
\begin{align*}
  \int_X &\big(h\log h-h_t \log h_t\big)\,\d\mm\le
  \Big(\int_0^t\int_X  g^2 h_s\,\d\mm\,\d s \Big)^{1/2}\Big(
  \int_0^t|\dot \mu_s|^2\,\d s\Big)^{1/2}
  \\&\le
  \frac 12 \int_0^t\int_X  g^2 h_s\,\d\mm\,\d s+
  \frac 12 \int_0^t |\dot \mu_s|^2\,\d s
  \le
  \frac 12 \int_0^t\int_X  g^2 h_s\,\d\mm\,\d s+
  \frac 12 \int_0^t \int_{\{h_s>0\}}\frac{\relgrad{h_s}^2}{h_s}\,\d\mm\,\d s.
\end{align*}
Recalling the entropy dissipation formula \eqref{eq:eqdissrate} we
obtain
$$
  \int_0^t \int_{\{h_s>0\}}\frac {\relgrad{h_s}^2}{h_s}\,\d\mm\,\d s
  \le \int_0^t\int_X g^2h_s\,\d\mm\,\d s.
$$
Now, \eqref{eq:91} and the identity $g=2f^{-1}\weakgradA f$ give
$\int_0^t\C(\sqrt{h_s})\,\d s\leq\int_0^t\int_X\weakgradA f^2f^{-2}
h_s\,\d\mm\,\d s$, so that dividing by $t$ and passing to the limit
as $t\downarrow0$ we get \eqref{eq:32}, since $\sqrt{h_s}$ are
equibounded and converge strongly to $f$ in $L^2(X,\mm)$ as
$s\downarrow0$.

{\nc Finally we prove the last statement. Thanks to \eqref{eq:quantigradienti} and \eqref{eq:quantisobolev},  
it is sufficient to prove that any $f\in W_w^{1,2}(X,\sfd,\mm)$ belongs to $W_*^{1,2}(X,\sfd,\mm)$ with $\relgrad {f}\leq\weakgradA {f}$ 
$\mm$-a.e. in $X$. Fix such $f$: it follows by \eqref{eq:22} that 
$\relgrad {f_N}\leq\weakgradA {f_N}$ $\mm$-a.e. in $X$ for all truncates of 
$f_N$ of $f$, so that we can pass to the limit as $N\to\infty$ and use the chain rule to
obtain that $f\in D(\C)$ and $\relgrad {f}\leq\weakgradA {f}$ $\mm$-a.e. in $X$.
The statement now follows immediately from \eqref{eq:quantigradienti} of Remark~\ref{rem:compachsh} and
\eqref{eq:22}.}
\end{proof}

{\nc An immediate byproduct of the identification of the different notions of gradients previous result is the density in energy of Lipschitz
functions in $W_w^{1,2}(X,\sfd,\mm)$.}

\begin{theorem}\label{thm:density_energy}
{\nc Let $(X,\tau,\sfd,\mm)$ be a Polish extended measure space with
$\mm$ satisfying \eqref{eq:75}
and let $\calT$ as in Theorem~\ref{thm:rel=weak}.
Let $f\in W_w^{1,2}(X,\sfd,\mm)$. Then there exist Lipschitz functions
$f_n$ such that $f_n\to f$ in $L^2(X,\mm)$ and $|\rmD f_n|\to \weakgradA f$ in $L^2(X,\mm)$.}
\end{theorem}
\begin{proof}
{\nc It is a direct consequence of the previous identification theorem combined with Lemma~\ref{lem:strongchee}(c).}
\end{proof}

\begin{corollary}
  \label{cor:spacca_il_capello_in_quattro}
  Let $\calT_1$ be the collection of all test plans concentrated on
  $\AC2{(0,1)}X\sfd$ with bounded
  compression on the sublevels of $\Wgh$ as in \eqref{eq:incompre2},
  and let $\calT_2$ be the collection of all test plans concentrated
  on $\AC2{(0,1)}X\sfd$ satisfying
  \eqref{eq:incompre}.
  Let us suppose that a measurable function $f:X\to\R$ satisfying \eqref{eq:101}
  is Sobolev on $\calT_1$-almost all
  curves with $|\rmD  f|_{w,\calT_1}\in L^2(X,\mm)$.
  Then $f$ is Sobolev on $\calT_2$-almost all curves and
  \begin{equation}
    \label{eq:99}
    |\rmD  f|_{w,\calT_1}=|\rmD  f|_{w,\calT_2}=\relgrad f\quad
    \mm\text{-a.e.\ in }X.
  \end{equation}
\end{corollary}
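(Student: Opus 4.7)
The plan is to deduce this corollary immediately from Theorem \ref{thm:rel=weak}, Corollary \ref{cor:weakrel}, and the monotonicity of weak upper gradients from Remark \ref{rem:monotone-weakT}. A direct inspection of \eqref{eq:incompre2} and \eqref{eq:incompre} shows that $\calT_1\subset \calT_2$, since pointwise bounded compression on the sublevels of $\Wgh$ trivially implies the time-integrated version. Moreover, $\calT_2$ is stretchable: for $\ppi\in \calT_2$, the substitution $u=(1-r)t+rs$ yields
\[
\int_0^1 (\e_r)_\sharp({\rm restr}_t^s)_\sharp\ppi\,\d r \;=\; \frac{1}{s-t}\int_t^s (\e_u)_\sharp\ppi\,\d u,
\]
which still satisfies \eqref{eq:incompre} with constant $C(\ppi,M)/(s-t)$.

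First, I would apply Theorem \ref{thm:rel=weak} directly to $f$. The hypothesis that $f$ is Sobolev on $\calT_1$-almost all curves with $|\nabla f|_{w,\calT_1}\in L^2(X,\mm)$ is precisely one of the two equivalent conditions of that theorem, so it yields that $f$ admits a relaxed gradient $\relgrad f\in L^2(X,\mm)$ (in the extended sense of \eqref{eq:extrelgrad}) together with the identity
\[
\relgrad f = |\nabla f|_{w,\calT_1}\qquad \mm\text{-a.e.\ in }X.
\]
In particular, $f\in D(\tilde{\sf Ch})$ with $\tilde{\sf Ch}$ as in \eqref{eq:extchee}.

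Next, since $\calT_2$ is a stretchable collection of test plans concentrated on $\AC2{(0,1)}X\sfd$ and satisfying \eqref{eq:incompre}, Corollary \ref{cor:weakrel} applies with $\calT=\calT_2$ and gives that $f$ is Sobolev along $\calT_2$-almost all curves with
\[
|\nabla f|_{w,\calT_2} \leq \relgrad f\qquad \mm\text{-a.e.\ in }X.
\]
On the other hand, the monotonicity inequality \eqref{eq:98} applied to the inclusion $\calT_1\subset \calT_2$ gives $|\nabla f|_{w,\calT_1}\le |\nabla f|_{w,\calT_2}$ $\mm$-a.e. Chaining the three estimates,
\[
\relgrad f \;=\; |\nabla f|_{w,\calT_1} \;\leq\; |\nabla f|_{w,\calT_2} \;\leq\; \relgrad f\qquad \mm\text{-a.e.},
\]
all inequalities become equalities, proving \eqref{eq:99}.

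There is no substantive obstacle in this argument: the real work sits inside Theorem \ref{thm:rel=weak}, whose proof relies on the key Wasserstein velocity estimate of Lemma \ref{le:key} to bound Cheeger's energy from above by the $L^2$ norm of the minimal weak upper gradient w.r.t.\ $\calT_1$. Once this identification is available for the smaller class $\calT_1$, extending it to the larger class $\calT_2$ is a purely formal sandwich, combining the $\calT_2$-side of Corollary \ref{cor:weakrel} (upper bound by $\relgrad f$) with the $\calT$-monotonicity of the minimal weak upper gradient (lower bound $|\nabla f|_{w,\calT_1}\leq |\nabla f|_{w,\calT_2}$).
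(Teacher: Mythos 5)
Your proof is correct and follows essentially the same route as the paper: apply Theorem~\ref{thm:rel=weak} to get $\relgrad f=|\nabla f|_{w,\calT_1}$, then Corollary~\ref{cor:weakrel} with $\calT=\calT_2$ for the upper bound $|\nabla f|_{w,\calT_2}\le\relgrad f$, and finally Remark~\ref{rem:monotone-weakT} via $\calT_1\subset\calT_2$ for the reverse inequality. The only difference is that you spell out the stretchability of $\calT_2$ and the inclusion $\calT_1\subset\calT_2$, which the paper treats as immediate.
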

\begin{proof}
  Applying Theorem \ref{thm:rel=weak} and Corollary \ref{cor:weakrel}
  we prove that $f$ is Sobolev on $\calT_2$-almost all curves with
  $|\rmD  f|_{w,\calT_1}\ge|\rmD  f|_{w,\calT_2}$.
  The converse inequality follows by Remark \ref{rem:monotone-weakT}.
\end{proof}

\begin{remark} [One-sided relaxed gradients] \label{rem:morecheeger} {\rm
Theorem~\ref{thm:rel=weak} shows that the one-sided Cheeger's
functionals $\C^\pm(f)$ (and the corresponding relaxed gradients
$|\rmD^\pm f|_*$) introduced in Remark~\ref{rem:onesidedcheeger}
coincide with $\C(f)$ (resp.\ with $\relgrad f$). In fact, we
already observed that $\C(f)\ge\C^\pm(f)$. On the other hand, since
$|\rmD^\pm f|$ are weak upper gradients for Borel $\sfd$-Lipschitz
functions, arguing as in Corollary~\ref{cor:weakrel} we get
$\weakgradA f\le |\rmD^\pm f|_*$, if $f\in D(\C^\pm)$ and $\calT$
is the class of all test plans concentrated on $\AC2{(0,1)}X\sfd$
with bounded compression on the sublevels of $\Wgh$.
 Corollary~\ref{cor:spacca_il_capello_in_quattro}
yields
\begin{displaymath}
  \C(f)=\C^\pm(f),\qquad\relgrad f=|\rmD^\pm f|_*\quad\text{$\mm$-a.e. in
  $X$.}
\end{displaymath}
}\end{remark}

\section{Relative entropy, Wasserstein slope, and Fisher information}
\label{sec:gflowrele}

In this section we assume that $(X,\tau,\sfd)$ is a Polish extended
space equipped with a $\sigma$-finite Borel reference measure $\mm$
such that $\tmm:=e^{-\Wgh^2}\mm$ has total mass less than 1 for
some Borel and $\sfd$-Lipschitz $\Wgh:X\to [0,\infty)$. We shall
work in the subspace
\begin{equation}\label{eq:probabilitiesWgh}
{\mathscr P}_\Wgh(X):=\left\{\mu\in\Probabilities{X}:\ \int_X
\Wgh^2\,\d\mu<\infty\right\}.
\end{equation}
We say that $(\mu_n)\subset{\mathscr P}_\Wgh(X)$ \emph{weakly
converges with moments} to $\mu\in{\mathscr P}_\Wgh(X)$ if
$\mu_n\to\mu$ weakly in $\Probabilities{X}$ and
$\int_X\Wgh^2\,\d\mu_n\to\int_X\Wgh^2\,\d\mu$. Analogously we define
strong convergence with moments in $\Probabilities{X}$, by requiring
that $|\mu_n-\mu|(X)\to 0$, instead of the weak convergence.

Since for every $\mu\in {\mathscr P}_\Wgh(X)$, $\nu\in\prob X$ with
$W_2(\mu,\nu)<\infty$ we have $\nu\in {\mathscr P}_\Wgh(X)$ and
\begin{equation}
  \label{eq:86}
  \Big(\int_X\Wgh^2\,\d\nu\Big)^{1/2}\leq
  {\rm
  Lip}(\Wgh)W_2(\nu,\mu)+\Big(\int_X\Wgh^2\,\d\mu\Big)^{1/2},
\end{equation}
we obtain that weak convergence with moments is implied by $W_2$
convergence. When the topology $\tau$ is induced by the distance
$\sfd$ and $V(x):= A\,\sfd(x,x_0)$ for some $A>0$ and $x_0\in X$,
then weak convergence with moments is in fact equivalent to $W_2$
convergence. When $\mm(X)<\infty$ we may take $\Wgh$ equal to a
constant, so that $\Probabilities{X}={\mathscr P}_\Wgh(X)$ and weak
convergence with moments reduces to weak convergence.

\subsection{Relative entropy}

\begin{definition}[Relative entropy]\label{def:relentr}
The relative entropy functional $\entv:\prob X\to (-\infty,+\infty]$
is defined as
\[
\ent\mu:=\left\{
\begin{array}{ll}
\displaystyle{\int_X\rho\log\rho\,\d\mm}&\textrm{ if }\mu=\rho\mm\in
{\mathscr P}_\Wgh(X),\\
+\infty&\textrm{ otherwise.}
\end{array}
\right.
\]
\end{definition}

Notice that, according to our definition, $\mu\in D(\entv)$ implies
$\int_X\Wgh^2\,\d\mu<\infty$, and that $D(\entv)$ is convex.
Strictly speaking the notation $\entv$ is a slight abuse, since the
functional depends also on the choice of $\Wgh$, which is not
canonically induced by $\mm$ (not even in Euclidean spaces endowed
with the Lebesgue measure). It is tacitly understood that we take
$\Wgh$ equal to a constant whenever $\mm(X)<\infty$ and in this case
$\entv$ is independent of the chosen constant.

When $\mm\in\prob X$ the functional $\entv$ is sequentially lower
semicontinuous w.r.t. weak convergence in $\Probabilities{X}$. In
addition, it is nonnegative, thanks to Jensen's inequality. More
generally, if $\mm$ is a finite measure and
$\bar\mm:=\mm(X)^{-1}\mm$,
\begin{equation}
  \label{eq:50}
  \ent\mu=\RE\mu{\bar \mm}-\log(\mm(X))\ge -\log(\mm(X))\quad\text{for every }\mu\in \prob X,
\end{equation}
and we have the general inequality (see for instance
\cite[Lemma~9.4.5]{Ambrosio-Gigli-Savare08})
\begin{equation}
  \label{eq:73}
  \RE{\pi_\sharp\mu}{\pi_\sharp\mm}\le \ent\mu\quad
  \text{for every }\mu\in \prob X\quad
  \text{and }\pi:X\to Y \text{ Borel map},
\end{equation}
which turns out to be an equality if $\pi$ is injective.
When $\mm(X)=\infty$, since the density $\tilde\rho$ of $\mu$
w.r.t.~$\tmm$ equals $\rho\,\rme^{\Wgh^2}$ we obtain that the negative part
of $\rho\log\rho$ is $L^1(\mm)$-integrable for $\mu\in {\mathscr
P}_V(X)$, so that Definition~\ref{def:relentr} is well posed and
$\entv$ does not attain the value $-\infty$. We also obtain the
useful formula
\begin{equation}
  \label{eq:60}
  \ent\mu=\RE{\mu}{\tmm}-\int_X \Wgh^2\,\d\mu
  \qquad\forall\mu\in {\mathscr P}_\Wgh(X).
\end{equation}
The same formula shows $\entv$ is sequentially lower semicontinuous
in ${\mathscr P}_\Wgh(X)$ w.r.t. convergence with moments, i.e.
\begin{equation}
  \label{eq:3}
  \mu_n\weakto\mu\text{ in }\prob X,\,\,
  \int_X\Wgh^2\,\d\mu_n\to\int_X\Wgh^2\,\d\mu<\infty\quad
  \Longrightarrow\quad
  \liminf_{n\to\infty}\ent{\mu_n}\ge \ent\mu.
\end{equation}
From \eqref{eq:86} we also get
\begin{equation}
  \label{eq:79}
  \mu\in{\mathscr P}_\Wgh(X),\quad
  W_2(\mu_n,\mu)\to0\quad\Longrightarrow\quad
  \liminf_{n\to\infty}\ent{\mu_n}\ge \ent\mu.
\end{equation}
The following lemma for the change of reference measure in the
entropy, related to \eqref{eq:60}, will be useful.

\begin{lemma}[Change of reference measure in the
entropy]\label{lem:changeref} Let $\nu\in \prob X$ and the positive
finite measure $\nn$ be satisfying $\RE\nu\nn<\infty$. If $\nu=g\mm$
for some $\sigma$-finite Borel measure $\mm$, then $g\log g\in
L^1(X,\mm)$ if and only if $\log(\d\nn/\d\mm)\in L^1(X,\nu)$ and
\begin{equation}
  \label{eq:69}
  \RE\nu\mm=\RE{\nu}\nn+\int_X
  \log\Big(\frac{\d\nn}{\d\mm}\Big)\,\d\nu.
\end{equation}
\end{lemma}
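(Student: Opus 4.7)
The plan is to express the density $g=\d\nu/\d\mm$ as a product of two Radon–Nikodym derivatives and reduce the statement to the elementary identity
\[
r\log r = (s\log s)\,t + r\log t,\qquad r=st,\ r,s,t\ge 0,
\]
applied pointwise to $r=g$, $s=\rho:=\d\nu/\d\nn$, $t=h:=\d\nn/\d\mm$. First I would reduce to the case $\nn\ll\mm$: writing the Lebesgue decomposition $\nn=\nn^a+\nn^s$ w.r.t.\ $\mm$, the singular part $\nn^s$ is concentrated on an $\mm$-null set $N$, which is also $\nu$-null because $\nu\ll\mm$; hence $\nu\ll\nn^a$ and $\RE\nu\nn=\RE\nu{\nn^a}$, so one may replace $\nn$ by $\nn^a$ without affecting either side of \eqref{eq:69} and assume $\nn\ll\mm$ with density $h$. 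The condition $\RE\nu\nn<\infty$ ensures $\nu\ll\nn$, so $\rho=\d\nu/\d\nn$ exists; by the chain rule for Radon–Nikodym derivatives $g=\rho\,h$ $\mm$-a.e., and the same argument as above shows $h>0$ $\nu$-a.e.

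The next step is the pointwise identity, which on $A:=\{h>0,g>0\}$ reads
\begin{equation*}
  g\log g = (\rho\log\rho)\,h + g\log h,
\end{equation*}
while off $A$ both $g\log g$ and $g\log h$ vanish $\mm$-a.e.\ with the standard conventions ($0\log 0=0$ and $0\cdot\log h=0$, even when $h=0$). The crucial observation is that the term $(\rho\log\rho)\,h$ is always in $L^1(\mm)$ under our hypotheses: using the elementary bound $r\log r\ge -1/\rme$ we obtain $(\rho\log\rho)^-\,h\le h/\rme$, whose $\mm$-integral is $\nn(X)/\rme<\infty$, so that $\int(\rho\log\rho)\,h\,\d\mm=\int\rho\log\rho\,\d\nn=\RE\nu\nn$ is finite and both its positive and negative parts are integrable.

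Subtracting this $L^1(\mm)$ function from both sides of the pointwise identity gives
\[
g\log g - (\rho\log\rho)\,h = g\log h\quad\text{$\mm$-a.e.,}
\]
so $g\log g\in L^1(\mm)$ if and only if $g\log h\in L^1(\mm)$; and since $\nu=g\,\mm$, the latter is equivalent to $\log h\in L^1(X,\nu)$, which is the stated equivalence. Finally, under these equivalent conditions, integrating the pointwise identity w.r.t.\ $\mm$ yields
\[
\RE\nu\mm = \int_X g\log g\,\d\mm = \RE\nu\nn + \int_X \log h\,\d\nu,
\]
which is \eqref{eq:69}.

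The only real obstacle is bookkeeping on the exceptional sets $\{h=0\}$ and $\{g=0\}$ and checking that the Lebesgue decomposition reduction is legitimate; once one observes that the singular part of $\nn$ lives on an $\mm$-null (hence $\nu$-null) set, everything else is a clean application of the identity $r\log r=(s\log s)t+r\log t$ together with the lower bound $r\log r\ge -1/\rme$ and the finiteness of $\nn$.
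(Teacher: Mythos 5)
Your proof is correct and takes essentially the same approach as the paper: both start from the Lebesgue decomposition of $\nn$ w.r.t.\ $\mm$, use the chain rule to write $g=\rho h$ with $\rho=\d\nu/\d\nn$ and $h=\d\nn/\d\mm$, and exploit the pointwise identity $g\log g=(\rho\log\rho)h+g\log h$ together with the integrability of $(\rho\log\rho)h$ (which the paper records as $\nchi_{\{h>0\}}g\log(g/h)\in L^1(X,\mm)$). The only difference is presentational: you pass to $\nn^a$ first, whereas the paper keeps the singular part and observes $\rho=0$ $\nn^s$-a.e.; and you spell out the elementary bound $r\log r\ge -1/\rme$ plus finiteness of $\nn$ to justify $(\rho\log\rho)h\in L^1$, a step the paper leaves implicit.
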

\begin{proof} Write $\nu=f\nn$ and let
$\nn=h\mm+\nn^s$ be the Radon-Nikod\'ym decomposition of
$\nn$ w.r.t. $\mm$. Since $g\mm=\nu=fh\mm+f\nn^s$ we obtain
that $g=fh$ $\mm$-a.e. in $X$ and $f=0$ $\nn^s$-a.e. in $X$.
Since $f\log f\in L^1(X,\nn)$ we obtain that
$\nchi_{\{h>0\}}g\log(g/h)$ belongs to $L^1(X,\mm)$, so that (taking
into account that $\{g>0\}\subset \{h>0\}$ up to $\mm$-negligible
sets) $g\log g\in L^1(X,\mm)$ if and only if $g\log h\in
L^1(X,\mm)$. The latter property is equivalent to $\log h\in
L^1(X,\nu)$.
\end{proof}

\begin{remark}[Tightness of sublevels of $\ent\mu$ and setwise
convergence]\label{re:tightsub}
  {\upshape
 We remark that the sublevels of the relative entropy functional are
tight if $\mm(X)<\infty$. Indeed, by Ulam's theorem $\mm$ is tight.
Then, using first the inequality $z\log(z)\geq -e^{-1}$ and then
Jensen's inequality, for $\mu=\rho\mm$ we get
\begin{equation}\label{eq:Jensenloca}
  \frac{\mm(X)}e+C\geq \frac{\mm(X\setminus E)}e+\ent{\mu}\geq
\int_E\rho\log\rho\,\d\mm\geq\mu(E)
\log\left(\frac{\mu(E)}{\mm(E)}\right)
\end{equation}
whenever $E\in\BorelSets{X}$ and $\ent{\mu}\leq C$. This shows that
$\mu(E)\to 0$ as $\mm(E)\to 0$ uniformly in the set $\{\entv\leq
C\}$.

In general, when $\int_X \rme^{-\Wgh^2}\,\d\mm\le 1$, we see that
\eqref{eq:60} yields
\begin{equation}
  \label{eq:59}
  \Big\{\mu\in \prob X:\int_X\Wgh^2\,\d\mu+\ent\mu\le
  C\Big\}\quad\text{is tight in $\prob X$ for every $C\in\R$.}
\end{equation}
Moreover, if a sequence $(\mu_n)$ belongs to a sublevel
\eqref{eq:59} and weakly converges to $\mu$, then the sequence of
the corresponding densities $\rho_n=\frac{\d\mu_n}{\d\mm}$ converges
to $\rho=\frac{\d\mu}{\d\mm}$ weakly in $L^1(X,\mm)$: 
it is sufficient to recall
\eqref{eq:50} and to apply de la Vall\'ee Puissen's criterion for
uniform integrability \cite[\S 4.5.10]{Bogachev07} to the densities
{\GGG $\tilde\rho_n$ (resp.~$\tilde\rho$)}
of $\mu_n$ (resp.~$\mu$) w.r.t.\ the finite measure $\tmm=\rme^{-\Wgh^2}\mm$,
{\GGG since for every $\varphi\in 
L^\infty(X,\tmm)=L^\infty(X,\mm)$
  \begin{displaymath}
    \int_X \rho_n\varphi\,\d\mm=
    \int_X \tilde\rho_n\varphi\,\d\tmm\to
    \int_X \tilde\rho\varphi\,\d\tmm=
    \int_X \rho\varphi\,\d\mm\quad\text{as }n\to\infty.
  \end{displaymath}
}
In particular the sequence $(\mu_n)$ setwise converges to $\mu$, i.e.
$\mu_n(B)\to\mu(B)$ for every $B\in\BorelSets{X}$.}\fr
\end{remark}

\subsection{Entropy dissipation, slope and Fisher information}

In this subsection we collect some general properties of the
relative entropy, its Wasserstein slope and the Fisher information
functional defined via the relaxed gradient that we introduced in the
previous section.

We will always assume that $\mm$ satisfies condition \eqref{eq:75},
so that Theorem~\ref{thm:infinitemass} will be applicable.

\begin{theorem}\label{thm:lowerboudslope} Let
$\mu=\rho\mm\in D(\entv)$ with $|\rmD^-\entv|(\mu)<\infty$. Then
$\sqrt\rho\in D(\C)$ and
\begin{equation}
  \label{eq:44pre}
  4\int_X \relgrad {\sqrt \rho}^2\,\d\mm\le |\rmD^-\entv|^2(\mu).
\end{equation}
\end{theorem}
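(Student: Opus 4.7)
The plan is to test the descending slope of the entropy against the Wasserstein curve produced by the $L^2$-gradient flow of Cheeger's energy. First I would reduce to the case $\rho\in L^2(X,\mm)$ (a truncation argument, discussed at the end), and then run $f_t:=\sfH_t(\rho)$, setting $\mu_t:=f_t\mm$. By Theorem~\ref{thm:infinitemass} the mass of $\mu_t$ is preserved, so $\mu_t\in\Probabilities X$; and by Lemma~\ref{le:key} the curve $t\mapsto\mu_t$ lies in $\AC{}{(0,\infty)}{\pro\mu(X)}{W_2}$, with metric speed controlled by the Fisher information integrand
\begin{equation*}
  F(s):=\int_{\{f_s>0\}}\frac{\relgrad{f_s}^2}{f_s}\,\d\mm,\qquad
  |\dot\mu_s|^2\le F(s)\quad\text{a.e.\ in }(0,\infty).
\end{equation*}

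Combining the entropy dissipation identity of Proposition~\ref{le:diss} with Cauchy--Schwarz I get the two companion estimates
\begin{equation*}
  \ent\mu-\ent{\mu_t}=\int_0^t F(s)\,\d s,\qquad
  W_2(\mu,\mu_t)\le\int_0^t\sqrt{F(s)}\,\d s\le\sqrt{t\int_0^t F(s)\,\d s},
\end{equation*}
which in particular show $W_2(\mu,\mu_t)\to 0$ as $t\downarrow 0$ and $\ent{\mu_t}\le\ent\mu$, so the positive part in the slope definition is attained along $\mu_t$. Given $\varepsilon>0$, the definition of $|\nabla^-\entv|$ yields $\ent\mu-\ent{\mu_t}\le(|\nabla^-\entv|(\mu)+\varepsilon)\,W_2(\mu,\mu_t)$ for $t$ small enough; inserting the two displayed bounds, squaring, and dividing by $t$,
\begin{equation*}
  \frac1t\int_0^t F(s)\,\d s\le\bigl(|\nabla^-\entv|(\mu)+\varepsilon\bigr)^2.
\end{equation*}

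To conclude I pass to the $\liminf$ as $t\downarrow 0$. Mass conservation gives $\|\sqrt{f_s}\|_{L^2}=1=\|\sqrt\rho\|_{L^2}$, while the elementary inequality $|\sqrt a-\sqrt b|^2\le|a-b|$ together with the $L^1$-convergence $f_s\to\rho$ (consequence of a.e.\ convergence and Scheff\'e's lemma, mass being preserved) forces $\sqrt{f_s}\to\sqrt\rho$ strongly in $L^2(X,\mm)$. By the lower semicontinuity of $\C$ (equivalently, by Lemma~\ref{le:Fisher} for $\mathsf F$) this yields $\liminf_{s\downarrow 0}F(s)\ge 8\,\C(\sqrt\rho)=4\int\relgrad{\sqrt\rho}^2\,\d\mm$. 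Combined with the trivial bound $\liminf_{t\downarrow 0}\tfrac1t\int_0^t F(s)\,\d s\ge\liminf_{s\downarrow 0}F(s)$ and the arbitrariness of $\varepsilon$, this gives \eqref{eq:44pre}.

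The main obstacle is the initial regularity $\rho\in L^2(X,\mm)$, which is required in order to invoke the whole heat-flow machinery (Theorem~\ref{thm:infinitemass}, Lemma~\ref{le:key}, Proposition~\ref{le:diss}). For a general $\mu\in D(\entv)$ I would approximate $\rho$ by the truncated densities $\rho_n:=(\rho\wedge n)/Z_n$ (with $Z_n:=\int\rho\wedge n\,\d\mm\uparrow 1$), set $\mu_n:=\rho_n\mm$, and pass to the limit using $\sqrt{\rho_n}\to\sqrt\rho$ in $L^2(X,\mm)$, the lower semicontinuity of $\C(\sqrt{\cdot})$, the convergence $W_2(\mu,\mu_n)\to 0$ (from total variation convergence together with the moment control $\int \Wgh^2\,\d\mu<\infty$), and a direct comparison $|\nabla^-\entv|(\mu_n)\le|\nabla^-\entv|(\mu)+o(1)$ obtained by plugging $\mu_n$ into the slope of the full $\mu$. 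This reduction to bounded densities is the delicate technical step, while the core estimate is the one outlined above.
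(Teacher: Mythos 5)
Your $L^2$ argument is correct and is essentially the argument in the paper: run $\sfH_t$ from $\rho$, combine the entropy dissipation of Proposition~\ref{le:diss} with the Wasserstein speed bound of Lemma~\ref{le:key}, note $W_2(\mu,\mu_t)\to0$ and $\ent{\mu_t}\le\ent\mu$, insert the definition of the descending slope, and conclude by $L^1$-lower semicontinuity of $\mathsf F$. A minor remark: Proposition~\ref{le:diss} yields local absolute continuity of $t\mapsto\ent{\mu_t}$ only on $(0,\infty)$, so what one actually gets by integrating on $(\epsilon,t)$ and letting $\epsilon\downarrow0$ (using monotonicity of $\ent{\mu_s}$ and $\ent{\mu_s}\le\ent{\mu}$) is the one-sided bound $\int_0^t\mathsf F(\rho_s)\,\d s\le\ent\mu-\ent{\mu_t}$ rather than an equality; fortunately that is exactly the direction your chain of inequalities needs, so nothing breaks.

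The reduction to $\rho\in L^2(X,\mm)$ has a genuine gap. You propose to apply the $L^2$ estimate to the truncations $\mu_n=\rho_n\mm$ and then pass to the limit using a comparison $|\nabla^-\entv|(\mu_n)\le|\nabla^-\entv|(\mu)+o(1)$ ``obtained by plugging $\mu_n$ into the slope of $\mu$''. No such comparison follows: testing the difference quotient in the definition of $|\nabla^-\entv|(\mu)$ at $\nu=\mu_n$ controls $(\ent\mu-\ent{\mu_n})^+/W_2(\mu,\mu_n)$ and says nothing about the slope \emph{at the point} $\mu_n$. What you would need is upper semicontinuity of the descending slope along $\mu_n\to\mu$, and the slope is (at best, under extra hypotheses such as $CD(K,\infty)$ that are not assumed here) only lower semicontinuous. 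Convexity of $|\nabla^-\entv|^2$ (Theorem~\ref{thm:slope_convex}) does not help either, since $\mu$ is not a convex combination of the $\mu_n$.

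The paper avoids computing slopes at the approximants altogether. It truncates the \emph{initial datum}, runs the heat flows $\rho^n_t$ from $\rho^n=\min\{\rho,n\}$, uses the comparison principle (Theorem~\ref{prop:maxprin}(b)) to obtain $\rho^n_t\le\rho^m_t$ for $n\le m$, sets $\rho_t:=\sup_n\rho^n_t$, checks $\mu_t:=\rho_t\mm\in\proV(X)$ via the a priori moment estimate \eqref{eq:34}, and then passes to the limit directly in the two-sided estimate \eqref{eq:38}
\begin{equation*}
  \ent\mu-\ent{\mu_t}\ge \frac 1t\Big(\int_0^t\sqrt{\mathsf F(\rho_s)}\,\d s\Big)W_2(\mu,\mu_t)
\end{equation*}
using monotone convergence and the lower semicontinuity of $\entv$ with respect to convergence with moments. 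Once \eqref{eq:38} is available for the original $\mu$, the conclusion involves only $|\nabla^-\entv|(\mu)$, exactly as in your $L^2$ step. This is the repair your outline needs.
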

\begin{proof}
  Let us first assume that $\rho\in L^2(X,\mm)$ and let $(\rho_t)$ be the
  gradient flow of the Cheeger's functional starting from $\rho$; we set
  $\mu_t:=\rho_t\mm$ and recall the definition \ref{def:Fisher}  of
  Fisher information functional $\mathsf F$.
  Applying Proposition~\ref{le:diss} and Lemma~\ref{le:key} we get
  \begin{align}
    \label{eq:38}
    \REG\mu&-\REG{\mu_t}\ge \frac 12 \int_0^t \mathsf F(\rho_s)\,\d s+\frac 12
    \int_0^t |\dot\mu_s|^2\,\d s
    \\
    \notag&\ge \frac 12 \Big(\frac 1{\sqrt t}\int_0^t
    \sqrt{\mathsf F(\rho_s)}\,\d s\Big)^{2}+\frac 1{2}\Big(\frac 1{\sqrt t}\int_0^t |\dot
    \mu_s|\,\d s\Big)^2
    \ge \frac 1t\Big(\int_0^t
    \sqrt{\mathsf F(\rho_s)}\,\d s\Big)W_2(\mu,\mu_t).
  \end{align}
  Dividing by $W_2(\mu,\mu_t)$ and passing to the limit as
  $t\downarrow0$ we get
  \eqref{eq:44pre}, since the lower semicontinuity of Cheeger's
  functional yields
  \begin{displaymath}
    \sqrt{\mathsf F(\rho)}\le\liminf_{t\downarrow0}\frac 1t\int_0^t
    \sqrt{\mathsf F(\rho_s)}\,\d s.
  \end{displaymath}
In the general case when only the integrability conditions
$\int_X\rho\log\rho\,\d\mm<\infty$ and
$\int_X\Wgh^2\rho\,\d\mm<\infty$ are available, we can still prove
\eqref{eq:38} by approximation. We set {\nc $\rho^n:=\min\{\rho,n\}$,
$z_n:=\int_X\rho^n\,\d\mm\uparrow 1$}, and denote by $\rho^n_t$
the gradient flows of Cheeger's energy starting from $\rho^n$. Since
Theorem~\ref{prop:maxprin}(\ref{maxprin:c}) provides the monotonicity property
$\rho^n_t\leq \rho^m_t$ $\mm$-a.e. in $X$ for $n\leq m$, we can
define $\rho_t:=\sup_n \rho^n_t$. Since $\int_X \rho^n_t\,\d\mm=z_n$
it is immediate to check that $\rho_t\mm\in\Probabilities{X}$ and a
simple monotonicity argument based on the apriori estimate
\eqref{eq:34} guaranteed by Theorem~\ref{thm:infinitemass} also
gives that $\mu_t:=\rho_t\mm\in {\mathscr P}_\Wgh(X)$ and that
$z_n^{-1}\rho^n_t\mm$ converge with moments to $\mu_t$. It is then
easy to pass to the limit in \eqref{eq:38}, using the sequential
lower semicontinuity of entropy with respect to convergence with
moments, to get
$$
\REG\mu-\REG{\mu_t}\geq\frac 1t\Big(\int_0^t
    \sqrt{\mathsf F(\rho_s)}\,\d s\Big)W_2(\mu,\mu_t)
$$
and then conclude as before.
\end{proof}
\
\begin{theorem}\label{thm:slopefishersempre} Let
$\mu=\rho\mm\in D(\entv)$. Assume that
$\rho=\max\{\rho_0,ce^{-2\Wgh^2}\}$, where $c>0$ and $\rho_0$ is a
$\sfd$-Lipschitz and bounded map identically 0 for $\Wgh$
sufficiently large.\\ Then
\begin{equation}
\label{eq:44} |\rmD^-\entv|^2(\mu)\le \int_X \frac{|\rmD^-
\rho|^2}{\rho}\,\d\mm=4\int_X |\rmD^-\sqrt \rho|^2\,\d\mm.
\end{equation}
\end{theorem}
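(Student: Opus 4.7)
The plan is to estimate $\entv(\mu) - \entv(\nu)$ for $\nu = \sigma\mm \in D(\entv)$ with $W_2(\mu,\nu)$ small, and then divide by $W_2(\mu,\nu)$ to extract the descending slope. The starting point is the convexity inequality $\sigma\log\sigma \geq \rho\log\rho + (1+\log\rho)(\sigma-\rho)$, which after integration and use of $\int(\sigma-\rho)\,\d\mm = 0$ gives
\[
\entv(\mu) - \entv(\nu) \leq \int_{X\times X}\bigl(\log\rho(x) - \log\rho(y)\bigr)\,\d\ggamma(x,y)
\]
for any optimal plan $\ggamma$ between $\mu$ and $\nu$. The problem is thus reduced to estimating the transport cost of the potential $\log\rho$.

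The structural hypothesis on $\rho$ is precisely tailored to make this tractable. Since $\rho \geq ce^{-2V^2} > 0$ everywhere, $\log\rho$ is finite, and because $\rho_0$ is $\sfd$-Lipschitz and bounded and $V$ is $\sfd$-Lipschitz, $\log\rho$ is locally $\sfd$-Lipschitz with $|\nabla^-\log\rho|(x) = |\nabla^-\rho|(x)/\rho(x)$ by the chain rule for one-sided slopes applied to the monotone $C^1$ map $z\mapsto\log z$ on $(0,\infty)$. Invoking Villani's Theorem 20.1 (which yields the slope bound in regular settings via displacement interpolation), combined with an approximation argument that lifts $\ggamma$ to a dynamical test plan $\ppi$ concentrated on curves along which one-sided slopes act as upper gradients in the sense of \eqref{eq:pointwisewug}, and then applying Cauchy--Schwarz, I expect to obtain
\[
\int_{X\times X}\bigl(\log\rho(x) - \log\rho(y)\bigr)^+\,\d\ggamma(x,y) \leq W_2(\mu,\nu)\left(\int_X \frac{|\nabla^-\rho|^2}{\rho}\,\d\mm\right)^{1/2} + o\bigl(W_2(\mu,\nu)\bigr)
\]
as $\nu \to \mu$ in $W_2$. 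Combined with the convexity inequality above, this produces the first inequality of the theorem. The identity $\int_X |\nabla^-\rho|^2/\rho\,\d\mm = 4\int_X|\nabla^-\sqrt\rho|^2\,\d\mm$ is then immediate from the same chain rule applied to $z\mapsto\sqrt z$: since $\rho > 0$ throughout $X$, $|\nabla^-\sqrt\rho| = |\nabla^-\rho|/(2\sqrt\rho)$ pointwise.

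The main obstacle is the adaptation of Villani's Theorem 20.1 to the extended metric measure setting of the paper, where $(X,\sfd)$ need not be a length space, $\sfd$ may attain the value $+\infty$, and $\mm$ is only $\sigma$-finite under the weight assumption \eqref{eq:75}. The approximation has to bridge the gap between a pointwise one-sided slope bound along geodesics and an integral estimate along optimal couplings; here the specific form $\rho = \max\{\rho_0, ce^{-2V^2}\}$ pays off twice, with the floor ensuring strict positivity of $\rho$ and controlling the decay of $\log\rho$ at infinity through $V$, and the bounded $\sfd$-Lipschitz structure of $\rho_0$ controlling the oscillation of $\log\rho$ on its support, so that the dominated convergence arguments needed to pass from the approximate plans to the limiting estimate remain under control.
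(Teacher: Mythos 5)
Your reduction via the convexity inequality $\ent\mu - \ent\nu \le \int_{X\times X}(\log\rho(x)-\log\rho(y))\,\d\ggamma$ is exactly the paper's starting point, and the computation of $|\nabla^-\log\rho| = |\nabla^-\rho|/\rho$ is correct. But the central step is missing: the sentence beginning ``Invoking Villani's Theorem 20.1 \ldots I expect to obtain'' is precisely where all the work lies, and your proposed route through a dynamical/displacement lifting of $\ggamma$ would not close the argument under the stated hypotheses. The theorem is asserted for a Polish extended space with no geodesic or length assumption, so one cannot lift $\ggamma$ to a plan on $\geo(X)$ or to curves along which \eqref{eq:pointwisewug} applies, and there is no mechanism to control the intermediate-time densities so as to obtain a test plan. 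You flag this yourself as ``the main obstacle,'' but the proposal stops short of overcoming it.

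The paper's actual argument is purely static and bypasses displacement interpolation altogether. It introduces the quantity
\[
L(x,y):=\frac{(\log\rho(x)-\log\rho(y))^+}{\sfd(x,y)}\quad (x\ne y),\qquad L(x,x):=\frac{|\nabla^-\rho(x)|}{\rho(x)},
\]
observes that $y\mapsto L(x,y)$ is $\sfd$-upper semicontinuous, and — this is where the structural form $\rho=\max\{\rho_0,c\,\rme^{-2V^2}\}$ really enters — proves the \emph{linear growth bound} $L(x,y)\le C'+C''(\sfd(x,y)+V(x))$ by a case analysis on $A=\{\rho_0>c\,\rme^{-2V^2}\}$. Taking a minimizing sequence $\nu_n=\rho_n\mm$ for the slope, Cauchy--Schwarz gives $\ent\mu-\ent{\nu_n}\le W_2(\mu,\nu_n)\,(\int_X\int_X L^2\,\d\ggamma_{n,x}\,\d\mu)^{1/2}$; then $\int\sfd^2(x,y)\,\d\ggamma_{n,x}(y)\to 0$ for $\mu$-a.e.\ $x$ (along a subsequence), the upper semicontinuity of $L(x,\cdot)$, and the domination furnished by the linear growth bound allow Fatou's lemma to yield $\limsup_n\int L^2(x,y)\,\d\ggamma_{n,x}\le L^2(x,x)$ $\mu$-a.e. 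This domination estimate \eqref{eq:Llineare} is the genuine content of the proof; without it (or a substitute), one cannot pass from the approximate plans to the pointwise slope, and your proposal does not supply any replacement for it.
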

\begin{proof} We set $L={\rm Lip}(V)$, $M:=\sup\rho_0$ and choose $C\geq 0$ in such a way that $\rho_0=0$ on
$\{2\Wgh^2>C\}$. Possibly multiplying $\rho$ and $\mm$ by constants we assume ${\rm Lip}(\rho_0)=1$.
Let us introduce the nonnegative $\BorelSetsStar{X\times
X}$-measurable function
\begin{equation}
\label{eq:43}
L(x,y):=
\begin{cases}
\dfrac{\big(\log\rho(x)-\log\rho(y)\big)^+}{\sfd(x,y)}&\text{if }x\neq y,\\
&\\
\dfrac{|\rmD^- \rho(x)|}{\rho(x)}&\text{if }x=y,
\end{cases}
\end{equation}
and notice that for every $x\in X$, the map $y\mapsto L(x,y)$ is $\sfd$-upper semicontinuous.

We claim that for some constants $C',\,C''$ depending only on $M$,
$c$ and $C$ it holds
\begin{equation}
\label{eq:Llineare} L(x,y)\leq C'+
C''\big(\sfd(x,y)+\Wgh(x)\big),\qquad\forall x,\,y\in X.
\end{equation}
To prove this, let $A:=\{\rho_0>ce^{-2\Wgh^2}\}$ and notice that
$\log\rho\geq\log c-2\Wgh^2$ gives
\begin{equation}
\label{eq:perllin1} \big(\log\rho(x)-\log\rho(y)\big)^+\leq \left\{
\begin{array}{ll}
|\log(\rho_0(x))-\log(\rho_0(y))|,&\qquad\textrm{ if }x,\,y\in A,\\
2|\Wgh^2(x)-\Wgh^2(y)|,&\qquad\textrm{ if }x\notin A,\\
\big(\log(\rho_0(x))+2\Wgh^2(y)-\log c\big)^+,&\qquad\textrm{ if
}x\in A,\ y\notin A.
\end{array}
\right.
\end{equation}
Since $2\Wgh^2\le C$ on $A$, the function $\rho_0$ is
$\sfd$-Lipschitz and bounded from below by $c\,\rme^{-C}$ on $A$, so
that
\begin{equation}
\label{eq:perllin2} |\log(\rho_0(x))-\log(\rho_0(y))|\leq
\frac{\rme^{C}}{c}|\rho_0(x)-\rho_0(y)|\leq\frac{\rme^{C}}c\sfd(x,y)\qquad \forall x,\,y\in A.
\end{equation}
Also, for all $x,\,y\in X$ it holds
\begin{equation}
\label{eq:perllin3}
|\Wgh^2(x)-\Wgh^2(y)|=|\Wgh(x)-\Wgh(y)||\Wgh(x)+\Wgh(y)|\leq
L\sfd(x,y)\big(L\sfd(x,y)+2\Wgh(x)\big).
\end{equation}
Finally, let us consider the case $x\in A$, $y\notin A$; since
$2\Wgh^2(y)-\log c\leq -\log\rho_0(y)$ and $\rho_0(y)\geq
c\,\rme^{-C}/2$ if $\sfd(x,y)\leq\bar a:= c\,\rme^{-C}/2$, we get
\begin{equation}
\label{eq:perllin4} \log(\rho_0(x))+2\Wgh^2(y)-\log c\leq\log(\rho_0(x))-\log(\rho_0(y))\leq
\frac{2\rme^C}{c}\sfd(x,y)
\end{equation}
for $\sfd(x,y)\leq\bar a$. If, instead, $\sfd(x,y)>\bar a$ we use
the fact that $\rho$ is bounded from above, the bound $2\Wgh^2(x)\le
C$ for $x\in A$, and \eqref{eq:perllin3} to get
\begin{eqnarray}
\label{eq:perllin5}
\log(\rho_0(x))+2\Wgh^2(y)-\log c&=&\log(\rho_0(x))+2\Wgh^2(x)-\log c+2\big(\Wgh^2(y)-\Wgh^2(x)\big)\\
&\leq&\frac{\sfd(x,y)}{\bar a}\bigl(\log
(M/c)+C\bigr)+2L\sfd(x,y)\big(L\sfd(x,y)+2\Wgh(x)\big).\nonumber
\end{eqnarray}
Inequalities \eqref{eq:perllin1}, \eqref{eq:perllin2}, \eqref{eq:perllin3},
\eqref{eq:perllin4}, \eqref{eq:perllin5} give the claim \eqref{eq:Llineare}.

Let us now consider a sequence $(\rho_n\mm)$ such that $W_2(\rho_n\mm,\mu)\to 0$ and
\begin{displaymath}
|\rmD^-\entv|(\mu)=\lim_{n\to\infty}\frac{\ent{\mu}-\ent{\mu_n}}{W_2(\mu,\mu_n)}.
\end{displaymath}
From the convexity of the function $r\mapsto r\log r$ we have
\begin{align*}
\ent{\mu}-\ent{\mu_n}&=
\int_X\Big(\rho\log\rho-\rho_n\log\rho_n\Big)\,\d\mm\le
\int_X\log\rho\Big(\rho-\rho_n\Big)\,\d\mm
\\&=\int_X\log\rho\,\d\mu-\int_X\log\rho\,\d\mu_n=
\int_{X\times X}\Big(\log\rho(x)-\log\rho(y)\Big)\,\d\ggamma_n
\\&\leq\int_{X\times X}
L(x,y)\sfd(x,y)\,\d\ggamma_n\le W_2(\mu,\mu_n) \Big(\int_{X\times X}
L^2(x,y)\,\d\ggamma_n\Big)^{1/2}
\\&= W_2(\mu,\mu_n)
\Big(\int_{X}\Big(\int_{X}
L^2(x,y)\,\d\ggamma_{n,x}\Big)\,\d\mu(x)\Big)^{1/2},
\end{align*}
where $\ggamma_n$ is any optimal plan between $\mu$ and $\mu_n$ and
$\ggamma_{n,x}$ is its disintegration w.r.t.\ its first marginal
$\mu$. Since $\int_X\bigl(\int_X
\sfd^2(x,y)\,\d\ggamma_{n,x}(y)\bigr)\,\d\mu(x)\to 0$ as
$n\to\infty$ we can assume with no loss of generality that
\begin{displaymath}
\lim_{n\to\infty}\int_X \sfd^2(x,y)\,\d\ggamma_{n,x}(y)=0\quad\text{for
$\mu$-a.e.\ $x\in X$},
\end{displaymath}
thus taking into account \eqref{eq:Llineare} we get
\[
\int_{X\setminus B_r(x)}L^2(x,y)\,\d\ggamma_{n,x}(y)\to 0\quad\text{for
$\mu$-a.e.\ $x\in X$},
\]
for all $r>0$. Taking
an arbitrary radius $r>0$ we get
\begin{align*}
\limsup_{n\to\infty}\int_{X} L^2(x,y)\,\d\ggamma_{n,x} &\le
\limsup_{n\to\infty}\int_{B_r(x)}L^2(x,y)\,\d\ggamma_{n,x}+
\limsup_{n\to\infty}\int_{X\setminus B_r(x)} L^2(x,y)\,\d\ggamma_{n,x}
\\&\le
\limsup_{n\to\infty}\int_{B_r(x)}L^2(x,y)\,\d\ggamma_{n,x}\le \sup_{y\in
B_r(x)}L^2(x,y).
\end{align*}
Since $L(x,\cdot)$ is $\sfd$-upper semicontinuous, taking the limit
as $r\downarrow0$ in the previous estimate we get $\limsup_n\int_X
L^2(x,y)\,\d\ggamma_{n,x}\le L^2(x,x)$ for $\mu$-a.e. $x\in X$.
Using again \eqref{eq:Llineare}, which provides a domination from
above with a strongly convergent sequence, we are entitled to use
Fatou's lemma to obtain
\begin{align*}
|\rmD^-\entv|(\mu)&=\lim_{n\to\infty}\frac{\ent{\mu}-\ent{\mu_n}}{W_2(\mu,\mu_n)}
\le\int_X \limsup_{n\to\infty}\Big(\int_X
L^2(x,y)\,\d\ggamma_{n,x}(y) \Big)^{1/2}\,\d\mu(x)
\\&\le \Big(\int_X L^2(x,x)\,\d\mu(x)\Big)^{1/2}.
\end{align*}
\end{proof}

\begin{theorem}\label{thm:slopefisherconv}
Let $(X,\tau,d,\mm)$ be a Polish extended space with $\mm$
satisfying \eqref{eq:75}. Then $|\rmD^-\entv|$ is sequentially
lower semicontinuous w.r.t.~strong convergence with moments in
$\Probabilities{X}$ on sublevels of $\entv$ if and only if
\begin{equation}\label{eq:48}
|\rmD^-\entv|^2(\mu)= 4\int_X\relgrad{\sqrt\rho}^2\,\d\mm\qquad
\forall\mu=\rho\mm\in D(\entv).
\end{equation}
In this case $|\rmD^-\entv|$ satisfies the following stronger
lower semicontinuity property:
\begin{equation}
  \label{eq:94}
  \mu_n(B)\to \mu(B)\ \text{for every $B\in \BorelSets X$}\quad
  \Longrightarrow\quad
  \liminf_{n\to\infty}|\rmD^-\entv|(\mu_n)\ge |\rmD^-\entv|(\mu).
\end{equation}
\end{theorem}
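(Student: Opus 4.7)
Since the lower bound $|\nabla^-\entv|^2(\mu)\ge 4\int_X\relgrad{\sqrt\rho}^2\,\d\mm$ of Theorem~\ref{thm:lowerboudslope} holds unconditionally, \eqref{eq:48} is equivalent to the matching upper bound $|\nabla^-\entv|^2(\mu)\le 4\int_X\relgrad{\sqrt\rho}^2\,\d\mm$, and the task is to show that this upper bound is in turn equivalent to the assumed lower semicontinuity of $|\nabla^-\entv|$. The plan is to organize the proof around the two implications, plus a separate argument for the refinement \eqref{eq:94}.

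Assume first that \eqref{eq:48} holds and let $\mu_n=\rho_n\mm\to\mu=\rho\mm$ strongly with moments, in a common sublevel of $\entv$. The identity $\|\rho_n-\rho\|_{L^1(X,\mm)}=|\mu_n-\mu|(X)\to 0$, combined with the elementary inequality $(\sqrt a-\sqrt b)^2\le |a-b|$, yields $\sqrt{\rho_n}\to\sqrt\rho$ strongly in $L^2(X,\mm)$. The $L^2$-lower semicontinuity of Cheeger's energy (Theorem~\ref{thm:cheeger}) gives $\int_X\relgrad{\sqrt\rho}^2\,\d\mm\le\liminf_n\int_X\relgrad{\sqrt{\rho_n}}^2\,\d\mm$, and \eqref{eq:48} transfers this to the lower semicontinuity of $|\nabla^-\entv|^2$.

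For the reverse implication I would fix $\mu=\rho\mm\in D(\entv)$ with $\sqrt\rho\in D(\C)$ (otherwise the target bound is trivially $+\infty$ by \eqref{eq:44pre}) and construct approximations of the form required by Theorem~\ref{thm:slopefishersempre}. Starting from Borel $\sfd$-Lipschitz $g_n\ge 0$ with $g_n\to\sqrt\rho$ and $|\nabla g_n|\to\relgrad{\sqrt\rho}$ strongly in $L^2(X,\mm)$, as provided by Lemma~\ref{lem:strongchee}(c), I truncate at a slowly growing level $N_n\to\infty$ and multiply by the Lipschitz cut-off $\psi_n(x):=\phi_n(V(x))$, where $\phi_n:\R\to[0,1]$ is $1$-Lipschitz, equal to $1$ on $(-\infty,n]$ and to $0$ on $[n+1,\infty)$. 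Uniform control $\Lip(\psi_n)\le\Lip(V)$ (from the $\sfd$-Lipschitz continuity of $V$) combined with the tail bound $\int_{\{V\ge n\}}\rho\,\d\mm\le n^{-2}\int_X V^2\rho\,\d\mm\to 0$ ensures that these modifications preserve the strong $L^2$-convergence of both $g_n$ and $|\nabla g_n|$ while making each $g_n$ bounded and supported in $\{V\le n+1\}$. I then pick $c_n\downarrow 0$ with $c_n\int_X V^2 e^{-2V^2}\,\d\mm\to 0$ (the integral being finite thanks to \eqref{eq:75}) and choose $\alpha_n>0$ by the intermediate value theorem so that $\rho_n:=\max\{\alpha_n g_n^2,\,c_n e^{-2V^2}\}$ integrates to $1$; since $\int g_n^2\,\d\mm\to 1$, necessarily $\alpha_n\to 1$. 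With $\rho_{0,n}:=\alpha_n g_n^2$ the densities $\rho_n$ fit the hypotheses of Theorem~\ref{thm:slopefishersempre}, and the pointwise inequality $|\rho_n-\rho|\le|\alpha_n g_n^2-\rho|+c_n e^{-2V^2}$ gives both the strong convergence with moments of $\mu_n:=\rho_n\mm$ to $\mu$ and the uniform boundedness of $\entv(\mu_n)$. The crux is then to estimate the slope of $\sqrt{\rho_n}=\max\{\sqrt{\alpha_n}g_n,\sqrt{c_n}e^{-V^2}\}$: using that the descending slope of the pointwise maximum of two continuous functions coincides on the open set where one branch strictly dominates with the slope of that branch, and is at most the minimum of the two slopes on the coincidence set, together with $|\nabla e^{-V^2}|\le 2V\Lip(V)e^{-V^2}$, I arrive at
\begin{equation*}
  \int_X|\nabla^-\sqrt{\rho_n}|^2\,\d\mm\le \alpha_n\int_X|\nabla g_n|^2\,\d\mm+4c_n\Lip(V)^2\int_X V^2 e^{-2V^2}\,\d\mm,
\end{equation*}
whose right-hand side tends to $\int_X\relgrad{\sqrt\rho}^2\,\d\mm$. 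Applying Theorem~\ref{thm:slopefishersempre} to each $\mu_n$ and invoking the assumed lower semicontinuity of $|\nabla^-\entv|$ along $\mu_n\to\mu$ closes the argument. The main technical obstacle is precisely this slope estimate: keeping track of the two branches of the maximum and of the cut-offs $\psi_n$ while ensuring that the corresponding $L^2$ norms actually converge to the relaxed gradient of $\sqrt\rho$ is where care is required.

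For the stronger statement \eqref{eq:94}, I would note that on a common sublevel of $\int_XV^2\,\d\mu+\entv(\mu)$ the densities $\tilde\rho_n:=\rho_n e^{V^2}$ are uniformly integrable with respect to the finite measure $\tmm=e^{-V^2}\mm$ (Remark~\ref{re:tightsub}), so by Dunford--Pettis combined with the setwise identification of the limit one gets $\rho_n\weakto\rho$ weakly in $L^1(X,\mm)$, i.e.~$\int_X\rho_n\phi\,\d\mm\to\int_X\rho\phi\,\d\mm$ for every $\phi\in L^\infty(X,\mm)$. The weak $L^1$-lower semicontinuity of the Fisher information stated in Lemma~\ref{le:Fisher} then yields $\int_X\relgrad{\sqrt\rho}^2\,\d\mm\le\liminf_n\int_X\relgrad{\sqrt{\rho_n}}^2\,\d\mm$, and applying \eqref{eq:48} to both $\mu$ and the $\mu_n$ concludes.
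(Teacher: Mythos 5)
Your first implication (\eqref{eq:48} $\Rightarrow$ lower semicontinuity) and the argument for \eqref{eq:94} are correct and essentially follow the paper's line: the paper goes through the convexity and weak-$L^1$ lower semicontinuity of $\mathsf F$ stated in Lemma~\ref{le:Fisher}, getting \eqref{eq:94} directly; your version, through strong $L^2$ convergence of $\sqrt{\rho_n}$ and Theorem~\ref{thm:cheeger}, is equally valid and a bit more elementary.

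The converse implication has a genuine gap in the construction of the approximating sequence $\mu_n=\rho_n\mm$. The lower semicontinuity hypothesis can only be invoked along sequences that converge \emph{with moments} and have \emph{uniformly bounded entropy}, and your construction does not secure either of these. Concerning moments: you cut off by $\psi_n(x)=\phi_n(V(x))$ supported in $\{V\le n+1\}$, but $V$ is unbounded on that set; the only information Lemma~\ref{lem:strongchee}(c) gives about $g_n$ is $L^2$ convergence, with no control on \emph{where} the error $g_n-\sqrt\rho$ lives. It is entirely possible that $g_n-\sqrt\rho$ carries an $L^2$-small bump located near $\{V\approx n\}$, in which case $\int V^2 g_n^2\psi_n^2\,\d\mm$ does not converge to $\int V^2\rho\,\d\mm$ (and may even diverge). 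The tail bound $\int_{\{V\ge n\}}\rho\,\d\mm\le n^{-2}\int V^2\rho\,\d\mm$ controls $\rho$, not $g_n^2$, so it does not close this gap. The paper avoids this by cutting off with a function $\phi$ supported in the $1$-neighbourhood $\tilde K$ of a compact set $K$ of nearly full $\mu$-measure: there $V$ is \emph{bounded} (being $\sfd$-Lipschitz and bounded on the compact $K$), so $V^2 g_n^2\phi^2\to V^2\rho\phi^2$ in $L^1$ is immediate, and a diagonal argument over $m$ (the index of $K_m$) then gives moment convergence. Concerning entropy: since you truncate at a growing level $N_n\to\infty$, the positive part of $\int\rho_n\log\rho_n\,\d\mm$ is only controlled by something of order $\log N_n^2\to\infty$ unless $\rho$ is bounded. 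The paper first treats $\rho\le M^2$, in which case the $g_n$ can be truncated at the \emph{fixed} level $M$ and the entropy stays bounded, and then handles general $\rho$ by truncating $\rho$ itself (this does not increase Cheeger's energy) and using the lower semicontinuity of the slope once more. Your proposal would need both of these devices — reduction to bounded $\rho$, and compact-set (not sublevel-set) cut-offs — to go through; as written, the claim that $|\rho_n-\rho|\le|\alpha_n g_n^2-\rho|+c_ne^{-2V^2}$ "gives both the strong convergence with moments \ldots and the uniform boundedness of $\entv(\mu_n)$" is unjustified.
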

\begin{proof}
If \eqref{eq:48} holds then $|\rmD^-\entv|$ coincides on its
domain with a convex functional (by Lemma \ref{le:Fisher}) which is
lower semicontinuous with respect to strong convergence in
$L^1(X,\mm)$: therefore it is also $L^1$-weakly lower semicontinuous and
\eqref{eq:94} holds \cite[\S 4.7(v)]{Bogachev07}. In particular
$|\rmD^-\entv|$ is sequentially lower semicontinuous with respect
to strong convergence with moments.

To prove the converse implication, by Theorem~\ref{thm:lowerboudslope} it is sufficient to prove the inequality
$|\rmD^-\entv|^2(\mu)\le 4\int_X\relgrad{\sqrt\rho}^2\,\d\mm $.
Assume first that $\rho\leq M^2$ $\mm$-a.e. in $X$ for some $M\in
[0,\infty)$. Taking Theorem~\ref{thm:slopefishersempre} into
account, it suffices to find a sequence of functions
$\rho_m=\max\{f_m^2,c_m\rme^{-2\Wgh^2}\}$ convergent to $\rho$ in
$L^1(X,\mm)$ and satisfying:
\begin{itemize}
\item[(a)] $f_m$ is $\sfd$-Lipschitz, nonnegative, bounded from above
by $M$ and null for $\Wgh$ sufficiently large;
\item[(b)]
$\limsup_{n\to\infty}\tfrac12\int_X|\rmD \sqrt{\rho_n}|^2\,\d\mm\to\C(\sqrt{\rho})$;
\item[(c)] $\int_X\Wgh^2\rho_n\,\d\mm\to\int_X\Wgh^2\rho\,\d\mm$.
\end{itemize}
Since the $\sfd$-Lipschitz property of the weight implies
$\rme^{-\Wgh^2}\in W^{1,2}(X,\sfd,\mm)$ and
$\int\Wgh^2\rme^{-2\Wgh^2}\,\d\mm<\infty$, if we choose $c_m>0$
infinitesimal it suffices to find $f_m$ satisfying (a),
$\tfrac12\int_X|\rmD  f_m|^2\,\d\mm\to\C(\sqrt{\rho})$ and
$\int_X\Wgh^2f_m^2\,\d\mm\to\int_X\Wgh^2\rho\,\d\mm$.

To this aim, given $m>0$, we fix a compact set $K\subset X$ such
that $\int_{X\setminus K}\rho\,\d\mm<(1+m)^{-2}$ and a $1$-Lipschitz
function $\phi:X\to [0,1]$ equal to 1 on $K$ and equal to $0$ out of
the $1$-neighbourhood of $K$, denoted by $\tilde{K}$. Notice that
$\mm(\tilde{K})<\infty$, since $\Wgh$ is bounded from above in
$\tilde{K}$.

Let now $(g_n)$ be a sequence of $\sfd$-Lipschitz functions
convergent to $\sqrt{\rho}$ in $L^2(X,\mm)$ and satisfying
$\tfrac12\int_X|\rmD  g_n|^2\,\d\mm\to\C(\sqrt{\rho})$; by a simple
truncation argument we can assume that all $g_n$ satisfy $0\leq
g_n\leq M$. The bounded, nonnegative, $\sfd$-Lipschitz functions
$g_n\phi$ converge in $L^2(X,\mm)$ to $\sqrt{\rho}\phi$ and, thanks
to the inequality $\relgrad\phi\leq\nchi_{X\setminus K}$ $\mm$-a.e.
in $X$ {\GGG and to \eqref{eq:weak-leibn}, they} satisfy
$$
\limsup_{n\to\infty}\C(g_n\phi)\leq
(1+\frac{1}{m})\C(\sqrt{\rho})+(1+m)\int_{X\setminus K}\rho\,\d\mm
\leq (1+\frac{1}{m})\C(\sqrt{\rho})+\frac{1}{1+m}.
$$
In addition, $g_n^2\phi^2\to\rho\phi^2$ in $L^1(X,\mm)$ because the
functions vanish out of $\tilde{K}$. We conclude that we have also
$$
\lim_{n\to\infty}\int_X
\Wgh^2g_n^2\phi^2\,\d\mm=\int_X\Wgh^2\rho\phi^2\,\d\mm\leq
\int_X\Wgh^2\rho\,\d\mm.
$$
By a diagonal argument, choosing $f_m=g_n$ with $n=n(m)$
sufficiently large, the existence of a sequence $f_m$ with the
stated properties is proved.

In the case when $\rho$ is not bounded we truncate $\rho$, without
increasing its Cheeger's energy, and use once more the lower
semicontinuity of $|\rmD^-\entv|$.
\end{proof}

\subsection{Convexity of the squared slope}\label{sec:steepest}

This subsection adapts and extends some ideas extracted from
\cite{Gigli10} to the more general framework considered in this
paper. The main result of the section shows that the squared
Wasserstein slope of the entropy is always convex (with respect to
the linear structure in the space of measures), independently from
the identification with the Fisher information considered in
Theorem~\ref{thm:slopefisherconv} (the identification therein relies
on the assumption that $|\rmD^-\entv|$ is sequentially lower
semicontinuous with respect to strong convergence with moments).

Let us first introduce the notion of \emph{push forward of a measure
through a transport plan}: given $\ggamma\in \prob{X\times X}$ with
marginals $\gamma^i=\pi^i_\sharp \ggamma$ and given
$\mu\in\Probabilities{X}$ we set
\begin{equation}
  \label{eq:64}
  \ggamma_\mu:=(\rho\circ\pi^1)\ggamma\quad\text{with}
  \quad\mu=\rho{\GGG\gamma^1}+\mu^s,\,\,\mu^s\perp{\GGG\gamma^1},\qquad
  \PushPlan\ggamma\mu:=\pi^2_\sharp\ggamma_\mu.
\end{equation}
We recall that this construction first appeared, with a different
notation, in Sturm's paper \cite{Sturm06I}. Notice that
$\ggamma_\mu$ is a probability measure and
$\pi^1_\sharp\ggamma_\mu=\mu$ if $\mu\ll\gamma^1$; in this case, if
$(\ggamma_x)_{x\in X}$ is the disintegration of $\ggamma$ with
respect to its first marginal $\gamma^1$, we have
\begin{equation}
  \label{eq:65}
  \PushPlan\ggamma\mu(B)=\int_X \ggamma_x(B)\,\d\mu(x)\quad
  \text{for every }B\in\BorelSets X.
\end{equation}
Since moreover $\ggamma_\mu\ll\ggamma$ we also have that
$\PushPlan\ggamma\mu\ll\gamma^2$.

Notice that
\begin{equation}
  \label{eq:66}
  \ggamma=\int_X
  \delta_{\srr(x)}\,\d\nu(x),\quad\mu\ll\nu\qquad\Longrightarrow\qquad
  \ggamma_\sharp \mu=\rr_\sharp\mu.
\end{equation}
In the next lemma we consider the real-valued map
\begin{equation}
 \label{eq:67}
  \mu\mapsto G_\sggamma(\mu):=\ent\mu-\ent{\PushPlan\ggamma\mu},
\end{equation}
  defined in the convex set
  \begin{equation}
    \label{eq:74}
    R_\sggamma:=\big\{\mu\in \prob X:\mu\ll\pi^1_\sharp\ggamma,\
    \mu,\PushPlan\ggamma\mu\in D(\entv)\big\}.
  \end{equation}
In the simple case when $\ggamma=\int_X
  \delta_{\srr(x)}\,\d\mm(x)$ with $\rr:X\to X$ Borel bijection
  we may use first the representation formula \eqref{eq:66} for
  $\PushPlan\ggamma\mu$ and then \eqref{eq:73} to obtain
  $$\RE{\PushPlan\ggamma\mu}{\mm}=\RE{\rr_\sharp\mu}{\mm}=\RE{\mu}{\mm'}$$
  with $\mm':=(\rr^{-1})_\sharp\mm$.
  Since $\mu\in R_\sggamma$ we have $\RE{\mu}{\mm'}<\infty$ and
  we can use \eqref{eq:69} for
  the change of reference measure in the relative entropy to get
  $$
  \ent\mu-\ent{\PushPlan\ggamma\mu}=\int_X\log\bigl(\frac{\d\mm'}{\d\mm}\bigr)\,\d\mu,
  $$
  so that $G_\sggamma$ is linear w.r.t. $\mu$. In general, when
  $\rr$ is not injective or $\rr$ is multivalued, convexity persists:
\begin{lemma}
  \label{le:pre} For every $\ggamma\in \prob{X\times X}$ the map
$G_\sggamma$ in \eqref{eq:67} is convex in $R_\sggamma$.
\end{lemma}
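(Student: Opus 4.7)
My plan is to split $G_\sggamma$ as the sum of a convex functional and an affine functional of $\mu$. For $\mu\in R_\sggamma$, the assumptions $\mu\ll\mm$ and $\mu\ll\gamma^1$ force $\mu$ to be concentrated on the set $\{\alpha>0\}$, where $\alpha:=d\gamma^1_{\rm ac}/d\mm$ is the density of the absolutely continuous part of $\gamma^1$ with respect to $\mm$; the analogous statement holds for $\nu:=\PushPlan\ggamma\mu$ with $\beta:=d\gamma^2_{\rm ac}/d\mm$. Applying the change-of-reference-measure identity \eqref{eq:69} to both entropies yields
\begin{equation*}
G_\sggamma(\mu)=\Big(\RE{\mu}{\gamma^1}-\RE{\PushPlan\ggamma\mu}{\gamma^2}\Big)+\int\log\alpha\,\d\mu-\int\log\beta\,\d\PushPlan\ggamma\mu.
\end{equation*}
Since $\mu\mapsto\PushPlan\ggamma\mu$ is linear by the representation \eqref{eq:65}, the last two integrals depend affinely on $\mu$, and it remains only to prove convexity of the first bracket.

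For this, I would write $\mu=\rho\,\gamma^1$ and disintegrate $\ggamma$ with respect to its second marginal as $\ggamma(\d x,\d y)=\tilde\eta^y(\d x)\,\gamma^2(\d y)$, so that by \eqref{eq:65} one obtains $\PushPlan\ggamma\mu=h\,\gamma^2$ with $h(y)=\int_X\rho(x)\,\d\tilde\eta^y(x)$. A direct Fubini computation then identifies
\begin{equation*}
\RE{\mu}{\gamma^1}-\RE{\PushPlan\ggamma\mu}{\gamma^2}=\int_{X\times X}\Psi\bigl(\rho(x),h(y)\bigr)\,\d\ggamma(x,y),\qquad\Psi(a,b):=a\log\frac{a}{b},
\end{equation*}
with the conventions $\Psi(0,b):=0$ and $\Psi(a,0):=+\infty$ for $a>0$. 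The function $\Psi$ is jointly convex on $[0,\infty)^2$: at interior points its Hessian has trace $\tfrac{1}{a}+\tfrac{a}{b^2}>0$ and determinant $0$, hence is positive semidefinite. Since the densities $\rho=d\mu/d\gamma^1$ and $h=d\PushPlan\ggamma\mu/d\gamma^2$ are both linear functions of $\mu$, integrating $\Psi(\rho,h)$ against the \emph{fixed} measure $\ggamma$ produces a convex functional of $\mu$, which is exactly what is needed.

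The main technical obstacle is integrability: a priori the three quantities $\RE{\mu}{\gamma^1}$, $\int\log\alpha\,\d\mu$ and $\int\Psi(\rho,h)\,\d\ggamma$ may fail to be individually finite, even though $\ent\mu$ and $\ent{\PushPlan\ggamma\mu}$ are by the definition of $R_\sggamma$. To avoid $\infty-\infty$ ambiguities, I would first establish the convexity inequality for truncated densities $\rho_N:=c_N\min\{\rho,N\}$ (with $c_N\uparrow 1$ a normalizing factor), for which $\RE{\mu_N}{\gamma^1}\le\log N+|\log c_N|$ is finite and all the identities above are legitimate, and then pass to the limit $N\uparrow\infty$ using monotone convergence applied to $\Psi(\rho_N,h_N)+(h_N-\rho_N)\ge 0$ (Young's inequality gives this nonnegative lower bound) together with the lower semicontinuity of $\entv$. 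A related subtlety is that the change-of-reference formula above requires $\alpha,\beta$ to be interpreted as the densities of the absolutely continuous parts of $\gamma^i$, but since $\mu$ and $\PushPlan\ggamma\mu$ charge only $\{\alpha>0\}$ and $\{\beta>0\}$ respectively, this causes no substantive difficulty.
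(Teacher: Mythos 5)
Your route is genuinely different from the paper's and is conceptually on the right track: the joint convexity of $\Psi(a,b)=a\log(a/b)$ is indeed the analytic core of the statement, and your integral representation $\RE\mu{\gamma^1}-\RE{\PushPlan\ggamma\mu}{\gamma^2}=\int\Psi(\rho(x),h(y))\,\d\ggamma$ is correct when the individual pieces are finite. The paper, by contrast, never passes through $\gamma^1,\gamma^2$: it applies the change-of-reference identity \eqref{eq:69} with $\nn=\mu=\alpha_1\mu_1+\alpha_2\mu_2$ itself (and $\nn=\nu=\PushPlan\ggamma\mu$), obtains the exact identity $\alpha_1 G_\sggamma(\mu_1)+\alpha_2 G_\sggamma(\mu_2)=G_\sggamma(\mu)+\sum_i\alpha_i\bigl(\RE{\mu_i}\mu-\RE{\nu_i}\nu\bigr)$, and concludes by a single use of the data-processing inequality \eqref{eq:73}. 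The decisive advantage of that choice of reference is that $\d\mu_i/\d\mu=\theta_i\le 1/\alpha_i$ is bounded, so $\RE{\mu_i}\mu\le\log(1/\alpha_i)$ and $\RE{\nu_i}\nu$ are automatically finite; no truncation is needed at all.

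This is where your argument has a genuine gap. The decomposition $G_\sggamma=J+A$ with $J(\mu):=\RE\mu{\gamma^1}-\RE{\PushPlan\ggamma\mu}{\gamma^2}$ and $A$ the two $\log$-integrals is only legitimate when $\RE\mu{\gamma^1}<\infty$, and that is \emph{not} implied by $\mu\in R_\sggamma$: one can have $\mu=\rho\gamma^1\in D(\entv)$ with $\PushPlan\ggamma\mu\in D(\entv)$ while $\int\rho\log\rho\,\d\gamma^1=+\infty$ and $\int\log\alpha\,\d\mu=-\infty$ (e.g.\ when $\alpha=\d\gamma^1_{\rm ac}/\d\mm$ has very thin tails on a set charged by $\mu$), so $J$ and $A$ are each ill-defined even though $G_\sggamma(\mu)$ is a finite number. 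Your truncation $\rho_N:=c_N\min\{\rho,N\}$ does make $\RE{\mu_N}{\gamma^1}$ finite, but it does not repair the argument: if you truncate $\rho_1$ and $\rho_2$ separately, the normalizing constants $c_N^{(1)},c_N^{(2)}$ differ, so $\alpha_1\mu_1^N+\alpha_2\mu_2^N$ is no longer the truncation of $\mu$ and the convex-combination structure you need is destroyed; if instead you truncate only $\rho=\d\mu/\d\gamma^1$, you lose the representation of $\mu^N$ as $\alpha_1\mu_1^N+\alpha_2\mu_2^N$ altogether. Closing this requires a more careful simultaneous-truncation scheme together with control of both $\ent{\mu_i^N}\to\ent{\mu_i}$ \emph{and} $\ent{\PushPlan\ggamma{\mu_i^N}}\to\ent{\PushPlan\ggamma{\mu_i}}$, which is considerably more delicate than the monotone-convergence appeal you sketch. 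If you want to salvage your computational approach, the clean fix is precisely the paper's: replace the fixed reference $\gamma^1$ by the $\mu$-dependent one, i.e.\ compare $\ggamma_{\mu_i}=\theta_i\ggamma_\mu$ to $\ggamma_\mu$ rather than $\rho_i\ggamma$ to $h_i(y)\ggamma$, which makes the densities bounded and every integral finite from the start.
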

\begin{proof} Let $\mu_1=\rho_1\mm,\,\mu_2=\rho_2\mm\in R_\sggamma$, set $\mu=\alpha_1\mu_1+\alpha_2\mu_2$ with
$\alpha_1+\alpha_2=1$, $\alpha_1,\,\alpha_2\in (0,1)$ and denote by $\theta_i\leq 1/\alpha_i$
the densities of $\mu_i$ w.r.t. $\mu$.

We apply \eqref{eq:69} of Lemma~\ref{lem:changeref} with $\nu:=\mu_i$
and $\nn:=\mu$ to get
$$
  \ent{\mu_i}=\RE{\mu_i}\mu+\int_X \log \rho\,\d\mu_i,
$$
where $\rho=\alpha_1\rho_1+\alpha_2\rho_2$ is the density of $\mu$
w.r.t. $\mm$. Taking a convex combination of the previous equalities
for $i=1,\,2$, we obtain
\begin{equation}
  \label{eq:70}
  \alpha_1\ent{\mu_1}+\alpha_2\ent{\mu_2}=
  \alpha_1\RE{\mu_1}{\mu}+\alpha_2\RE{\mu_2}{\mu}+\ent{\mu}.
\end{equation}
Analogously, setting $\nu_i:=\PushPlan\ggamma{\mu_i}$ and
$\nu:=\PushPlan\ggamma\mu=\alpha_1\nu_1+\alpha_2\nu_2$, we have
\begin{equation}
\label{eq:71}
  \alpha_1\REG{\nu_1}+\alpha_2\REG{\nu_2}=
  \alpha_1\RE{\nu_1}\nu+\alpha_2\RE{\nu_2}\nu+\REG\nu.
\end{equation}
Combining \eqref{eq:70} and \eqref{eq:71} we obtain
\begin{equation}
\label{eq:72}
\alpha_1G_\sggamma(\mu_1)+\alpha_2 G_\sggamma(\mu_2)=
G_\sggamma(\mu)+\sum_{i=1,2} \alpha_i\Big(\RE{\mu_i}\mu-\RE{\nu_i}{\nu}\Big).
\end{equation}
Since $\nu_i=\pi^2_\sharp (\ggamma_{\mu_i})$ and
$\nu=\pi^2_\sharp(\ggamma_\mu)$, \eqref{eq:73} yields
$$\RE{\nu_i}\nu\le \RE{\ggamma_{\mu_i}}{\sggamma_\mu}=\RE{\theta_i\ggamma_\mu}{\sggamma_\mu}=
\RE{\mu_i}{\mu},$$ where in the last equality we used that the first
marginal of $\ggamma_\mu$ is $\mu$. Therefore \eqref{eq:72}  yields
$\alpha_1G_\sggamma(\mu_1)+\alpha_2 G_\sggamma(\mu_2)\ge
G_\sggamma(\mu)$.
\end{proof}

\begin{theorem}
  \label{thm:slope_convex}
  The squared descending slope $|\rmD^- \entv|^2$ of the
  relative entropy is convex.
\end{theorem}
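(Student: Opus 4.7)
The plan is to prove that for $\mu_0,\mu_1\in D(\entv)$ and $t\in [0,1]$, the convex combination $\mu_t:=(1-t)\mu_0+t\mu_1$ satisfies
\[
|\nabla^-\entv|^2(\mu_t)\le (1-t)|\nabla^-\entv|^2(\mu_0)+t|\nabla^-\entv|^2(\mu_1),
\]
which we may assume to have a finite right-hand side. The central tool is Lemma~\ref{le:pre}, which gives the convexity of $G_\sggamma(\mu):=\ent\mu-\ent{\PushPlan\ggamma\mu}$ on $R_\sggamma$ for any $\ggamma\in \prob{X\times X}$. The strategy is to use $G_\sggamma$ to \emph{split} an arbitrary competitor $\nu$ for $\mu_t$ in the definition of the slope into competitors $\nu_0,\nu_1$ for $\mu_0,\mu_1$, with sharp control of the transport cost.

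Fix $\nu\in D(\entv)$ with $W_2(\mu_t,\nu)<\infty$, let $\ggamma$ be an optimal coupling between $\mu_t$ and $\nu$, and set $\alpha_0:=1-t$, $\alpha_1:=t$, $\nu_i:=\PushPlan\ggamma{\mu_i}=\pi^2_\sharp\ggamma_{\mu_i}$. Since the density of $\mu_i$ with respect to $\mu_t$ is bounded by $1/\alpha_i$, formula \eqref{eq:64} immediately gives
\[
\ggamma=\alpha_0\,\ggamma_{\mu_0}+\alpha_1\,\ggamma_{\mu_1},\qquad\nu=\alpha_0\,\nu_0+\alpha_1\,\nu_1,
\]
and each $\ggamma_{\mu_i}$ is an admissible coupling for $(\mu_i,\nu_i)$. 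The bound $\nu_i\le \nu/\alpha_i$, combined with \eqref{eq:60} and elementary convexity estimates for $r\mapsto r\log r$, yields $\nu_i\in D(\entv)$, so that $\mu_i\in R_\sggamma$ and Lemma~\ref{le:pre} applies:
\[
\ent{\mu_t}-\ent\nu\le \sum_{i=0,1}\alpha_i\bigl(\ent{\mu_i}-\ent{\nu_i}\bigr).
\]
Summing the transport costs of $\alpha_0\ggamma_{\mu_0}$ and $\alpha_1\ggamma_{\mu_1}$ and using the optimality of $\ggamma$ gives
\[
(1-t)W_2^2(\mu_0,\nu_0)+t\,W_2^2(\mu_1,\nu_1)\le W_2^2(\mu_t,\nu),
\]
so in particular $\nu_i\to \mu_i$ in $W_2$ whenever $\nu\to\mu_t$ in $W_2$.

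Set $s_i:=|\nabla^-\entv|(\mu_i)$ and fix $\eps>0$. By definition of the slope there is a $W_2$-neighbourhood $U$ of $\mu_t$ such that for every $\nu\in U\setminus\{\mu_t\}$ one has $[\ent{\mu_i}-\ent{\nu_i}]^+\le (s_i+\eps)\,W_2(\mu_i,\nu_i)$ for $i=0,1$ (the inequality being trivial when $\nu_i=\mu_i$). Taking positive parts in the entropy inequality and applying the weighted Cauchy--Schwarz inequality,
\[
\bigl([\ent{\mu_t}-\ent\nu]^+\bigr)^2\le \bigl((1-t)(s_0+\eps)^2+t(s_1+\eps)^2\bigr)\bigl((1-t)W_2^2(\mu_0,\nu_0)+t\,W_2^2(\mu_1,\nu_1)\bigr),
\]
and the last factor is bounded by $W_2^2(\mu_t,\nu)$. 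Dividing through, taking $\limsup$ as $\nu\to\mu_t$, and then letting $\eps\downarrow 0$ gives the required inequality; competitors with $\ent\nu=+\infty$ contribute $0$ to the slope and can be ignored.

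The main obstacle will be the verification of $\nu_i\in D(\entv)$ when $\mm(X)=\infty$, which is needed to invoke Lemma~\ref{le:pre}: this uses the representation \eqref{eq:60} of $\entv$ through the finite auxiliary measure $\tmm=\rme^{-V^2}\mm$, together with the density bound $\nu_i\le \nu/\alpha_i$ to dominate the $V^2$-moment and the positive part of $r\mapsto r\log r$. Everything else follows cleanly from Lemma~\ref{le:pre} and the fact that the optimality of $\ggamma$ forces the transport costs of $\ggamma_{\mu_0}$ and $\ggamma_{\mu_1}$ to add up in the right way.
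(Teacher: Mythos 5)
Your proof is correct and follows the same strategy as the paper's: decompose via the push-forward $\PushPlan\ggamma{\mu_i}$, invoke the convexity of $G_\sggamma$ from Lemma~\ref{le:pre}, control transport costs, and finish with Cauchy--Schwarz. The only notable (and harmless) deviation is that you use the inequality $\sum_i\alpha_i W_2^2(\mu_i,\nu_i)\le W_2^2(\mu_t,\nu)$, which needs only admissibility of $\ggamma_{\mu_i}$, whereas the paper asserts the corresponding equality by observing that $\ggamma_{\mu_i}$ inherits optimality (via cyclical monotonicity) from $\ggamma$; both suffice for the final estimate.
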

\begin{proof}
  Let $\mu_1,\,\mu_2\in D(\entv)$ be measures with finite descending slope
  and let $\mu=\alpha_1\mu_1+\alpha_2\mu_2$ with $\alpha_1,\alpha_2\in
  (0,1),\ \alpha_1+\alpha_2=1$. Obviously $\mu\in D(\entv)$ and
  since it is not restrictive to assume $|\rmD^-\entv|(\mu)>0$,
  by definition of descending slope we can find a sequence
  $(\nu^n)\subset D(\entv)$ with $\ent{\nu^n}\le \ent\mu$ such that
  $$
    W_2(\nu^n,\mu)\to 0,\qquad
    \frac{\ent\mu-\ent{\nu^n}}{W_2(\mu,\nu^n)}\to |\rmD^-\entv|(\mu).
  $$
  Let $\ggamma^n$ be optimal plans with marginals $\mu$ and $\nu^n$
  respectively and let $\nu_i^n:=\PushPlan{\ggamma^n}{\mu_i}$.
  Since $\ggamma^n_{\mu_i}$ are {\nc admissible plans from
  $\mu_i$ to $\nu^n_i$}, we have
  \begin{displaymath}
    W^2_2(\mu_i,\nu_i^n)\leq\int_{X\times
      X}\sfd^2(x,y)\theta_i(x)\,\d\ggamma^n(x,y)\to0\quad\text{as }n\to\infty,
  \end{displaymath}
  where $\theta_i\leq\alpha^{-1}_i$ are the densities of $\mu_i$ w.r.t. $\mu$. {\nc Since
  $\alpha_1\theta_1+\alpha_2\theta_2=1$, multiplying by
  $\alpha_i$ and adding the two inequalities, the joint convexity of $W_2^2$ yields
  \begin{equation}
    W^2_2(\mu,\nu^n)\leq \alpha_1W_2^2(\mu_1,\nu_1^n)+\alpha_2 W_2^2(\mu_2,\nu_2^n)\leq
    \int_{X\times X}\sfd^2\,\d\ggamma^n=W_2^2(\mu,\nu^n),
  \end{equation} 
  so that}
  \begin{equation}
    W^2_2(\mu,\nu^n)=\alpha_1W_2^2(\mu_1,\nu_1^n)+\alpha_2 W_2^2(\mu_2,\nu_2^n).\label{eq:80}
  \end{equation}
  Since $|\rmD^-\entv|(\mu_i)<\infty$,
  by the very definition of descending slope for every
$S_i>|\rmD^-\entv|(\mu_i)$
  there exists $\bar n\in\N$ satisfying
  $$
    \ent{\mu_i}-\ent{\nu_i^n}\le
    S_iW_2(\mu_i,\nu_i^n)\quad\text{for every }n\ge\bar n.
  $$
  By Lemma~\ref{le:pre} and \eqref{eq:80} we get, for $n\ge \bar n$
  $$
    \ent{\mu}-\ent{\nu^n}\le
    \alpha_1 S_1W_2(\mu_1,\nu_1^n)+\alpha_2 S_2 W_2(\mu_2,\nu_2^n)
    \le
    \Big(\alpha_1 S_1^2+\alpha_2 S_2^2\Big)^{1/2}W_2(\mu,\nu^n),
  $$
  so that, passing to the limit as $n\to\infty$, our choice of
  $(\nu^n)$ yields that
  $|\rmD^-\entv|(\mu)$ does not exceed $\bigl(\alpha_1 S_1^2+\alpha_2
  S_2^2\bigr)^{1/2}$. Taking the infimum with respect to $S_i$ we conclude.
\end{proof}

\section{The Wasserstein gradient flow of the entropy and
  its identification with the $L^2$ gradient flow of Cheeger's energy}
\label{sec:identification_flows}

\subsection{Gradient flow of $\entv$: the case of bounded densities.}
\label{subs:bounded} 

In the next result we show that any Wasserstein
gradient flow (recall Definition~\ref{def:dissKconv}) of the entropy
functional with uniformly bounded densities coincides with the
$L^2$-gradient flow of the Cheeger's functional. We prove in fact a
slightly stronger result, starting from the energy dissipation
inequality \eqref{eq:edi} instead of the identity \eqref{eq:ede},
where we use the Fisher information functional $\mathsf F$ defined
by Definition~\ref{def:Fisher} instead of the squared slope of
$\entv$. Recall that $\mathsf F(f)\le |\rmD^-\entv|^2(f\mm)$  by
Theorem~\ref{thm:lowerboudslope}.

\begin{theorem}
  \label{thm:coincidence_infty}
  Let $(X,\tau,\sfd,\mm)$ be an Polish extended space satisfying
  \eqref{eq:75} and let $\mu_t=f_t\mm\in D(\entv)$, $t\in [0,T]$, be a curve in
  $\AC2{(0,T)}{\prob X}{W_2}$ satisfying the
   Entropy-Fisher dissipation inequality
  \begin{equation}
    \label{eq:95}
    \ent{\mu_0}\ge \ent{\mu_T}+\frac12\int_0^T|\dot{\mu}_t|^2\,\d t
    +\frac12\int_0^T \mathsf F(f_t)\,\d t.
  \end{equation}
  If $\sup_{t\in [0,T]}\|f_t\|_{L^\infty(X,\mm)}<\infty$
  then $f_t$ coincides in $[0,T]$ with the
  gradient flow $\sfH_t(f_0)$ of Cheeger's energy starting from
  $f_0$.\\
  In particular, for all $f_0\in L^\infty(X,\mm)$ there exists at
  most one Wasserstein gradient flow $\mu_t=f_t\mm$ of
  $\entv$ in $(\pro\mu (X),W_2)$  starting from $\mu_0=f_0\mm$
  with uniformly bounded densities $f_t$.
\end{theorem}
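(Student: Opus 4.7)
The plan is to proceed in two stages: first, upgrade the Entropy–Fisher dissipation inequality \eqref{eq:95} into a chain of equalities that identifies the Wasserstein metric derivative with the Fisher information; second, use this sharp identity, together with the bounded-densities hypothesis, to force $f_t$ to satisfy an Evolution Variational Inequality in $L^2(X,\mm)$ that characterizes $\sfH_t(f_0)$.

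For the first stage, I will apply Lemma~\ref{le:conditioned_UG} with $\phi(r):=r\log r$ to the curve $\mu_t=f_t\mm$. Under \eqref{eq:75}, Theorem~\ref{thm:rel=weak} identifies weak and relaxed gradients, so the quantity $G_t^2$ appearing in that lemma equals $\int_{\{f_t>0\}}\relgrad{f_t}^2/f_t\,\d\mm=\mathsf F(f_t)$ by Lemma~\ref{le:Fisher}. The uniform bound $\sup_t\|f_t\|_\infty<\infty$ and the integrability $\int_0^T\mathsf F(f_t)\,\d t<\infty$ (forced by \eqref{eq:95}) deliver the $L^2(0,T)$ hypotheses on $G$ and $H$; note also that $|f_0\log f_0|\le f_0+(f_0\log f_0)^+$ is $\mm$-integrable since $\mu_0\in D(\entv)$. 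The lemma then yields absolute continuity of $t\mapsto\ent{\mu_t}$ on $[0,T]$ together with $|\tfrac{\d}{\dt}\ent{\mu_t}|\le\sqrt{\mathsf F(f_t)}\,|\dot\mu_t|$ a.e. Integrating this bound on $[0,T]$ and applying Young's inequality produces the exact reverse of \eqref{eq:95}, so every inequality in the chain must be an equality, giving
\[
|\dot\mu_t|^2=\mathsf F(f_t)=-\frac{\d}{\dt}\ent{\mu_t}\qquad\text{a.e. in }(0,T),
\]
and the same identities on every subinterval $[s,t]\subset[0,T]$.

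For the second stage, observe first that $g_t:=\sfH_t(f_0)$ is bounded (by Theorem~\ref{prop:maxprin}), that $g_t\mm$ is a Wasserstein absolutely continuous curve (by Lemma~\ref{le:key}), and that it satisfies \eqref{eq:95} as an equality via Proposition~\ref{le:diss}; so $g_t$ is itself a candidate for $f_t$, and it suffices to prove uniqueness. I will adapt the Hopf–Lax/Kantorovich-duality computation of Lemma~\ref{le:key} in a sharp two-sided form, exploiting the equality $|\dot\mu_r|^2=\mathsf F(f_r)$ and the bounded-densities hypothesis. Concretely, for every bounded $\sfd$-Lipschitz $\varphi\in D(\C)$, testing with $Q_\ell\varphi$ and using the Hamilton–Jacobi subsolution bound of Theorem~\ref{thm:subsol} should give, for $0<s<t<T$,
\[
\Bigl|\int_X \varphi\,(f_t-f_s)\,\d\mm\Bigr|\le\int_s^t\int_X\relgrad\varphi\,\relgrad{f_r}\,\d\mm\,\d r.
\]
Combined with the density of bounded $\sfd$-Lipschitz functions in $D(\C)\cap L^\infty(X,\mm)$ and with the $L^\infty$-control on $f_t$ (to convert the above weak parabolic bound into a bound on $\int(f_t-h)\,g\,\d\mm$ for general $h\in D(\C)\cap L^\infty$), this should lead to the Evolution Variational Inequality
\[
\tfrac12\tfrac{\d^+}{\dt}\|f_t-h\|_{L^2(X,\mm)}^2\le\C(h)-\C(f_t)\qquad\forall h\in D(\C)\cap L^\infty(X,\mm).
\]
Since $\C$ is convex and lower semicontinuous in $L^2(X,\mm)$ by Theorem~\ref{thm:cheeger}, the classical Hilbert-space theory of \cite{Brezis73} then forces $f_t=\sfH_t(f_0)$.

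The main obstacle will be the second stage: Lemma~\ref{le:conditioned_UG} handles only univariate $\phi$, so it does not give direct access to the quadratic functional $h\mapsto\tfrac12\|f_t-h\|_{L^2(\mm)}^2$ with $x$-dependent $h$. The crucial point is that the \emph{equality} $|\dot\mu_r|^2=\mathsf F(f_r)$ obtained in Stage~1 is much stronger than the inequality provided by Lemma~\ref{le:key}, and it is exactly what one needs to run the Hopf–Lax/duality argument in the sharp two-sided form required for the EVI; the bounded-densities hypothesis is essential to make $L^2(X,\mm)$-norms and Wasserstein distances quantitatively comparable on test functions. This is the strategy pursued in the more restricted settings of \cite{Gigli10,GigliKuwadaOhta10}, adapted here to the general extended Polish framework of the paper.
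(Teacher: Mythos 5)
Your Stage~1 is exactly the paper's first step: applying Lemma~\ref{le:conditioned_UG} with $\phi(r)=r\log r$, together with the identification $\weakgradA{f}=\relgrad{f}$ from Theorem~\ref{thm:rel=weak}, to upgrade \eqref{eq:95} to the sharp dissipation identity on every $[0,t]$. That part is correct.

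Stage~2 is where you depart from the paper, and where a genuine gap remains. You propose to derive an Evolution Variational Inequality for $f_t$ in $L^2(X,\mm)$ by re-running the Hopf--Lax/duality argument, and you yourself flag that Lemma~\ref{le:conditioned_UG} will not deliver the quadratic test functional $h\mapsto\tfrac12\|f_t-h\|_{L^2}^2$. The deeper difficulty you do not fully confront is that the sharp identity $|\dot\mu_t|^2=\mathsf F(f_t)$ is a statement about \emph{magnitudes} (metric speed and Fisher information); to obtain the weak parabolic inequality $\int_X\varphi(f_t-f_s)\,\d\mm\le\int_s^t\int_X\relgrad\varphi\relgrad{f_r}\,\d\mm\,\d r$ with the correct sign, and from it an EVI, you would essentially need to identify the Wasserstein velocity field with $-\relgrad{f_t}/f_t$ (up to sign and direction), which is not available in this generality. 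The Kantorovich duality used in Lemma~\ref{le:key} gives only a one-sided bound on $W_2$ in terms of the Fisher information, not a two-sided testing identity against arbitrary $\varphi\in D(\C)$. Also, the claim that this EVI route is ``the strategy pursued in \cite{Gigli10,GigliKuwadaOhta10}'' is not accurate: \cite{Gigli10} uses the same convexity/midpoint trick as the present paper.

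The paper's second stage sidesteps all of this by a purely convexity-theoretic argument: let $\mu^1_t=f_t\mm$ (satisfying the sharp identity from Stage~1), let $\mu^2_t=\sfH_t(f_0)\mm$ (whose densities remain uniformly bounded by Theorem~\ref{prop:maxprin}, and which satisfies the Entropy--Fisher dissipation inequality by Lemma~\ref{le:key} and Proposition~\ref{le:diss}), and consider the midpoint $\mu_t:=\tfrac12(\mu^1_t+\mu^2_t)$. Adding the two energy relations and using \eqref{eq:w2conv} (convexity of the squared metric speed along linear interpolation), the convexity of $\mathsf F$ (Lemma~\ref{le:Fisher}), and the \emph{strict} convexity of $\entv$, one sees that if $\mu^1_t\neq\mu^2_t$ for some $t$, then $\mu_t$ strictly violates the Entropy--Fisher dissipation inequality. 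But Lemma~\ref{le:conditioned_UG} applied to $\mu_t$ (which still has uniformly bounded densities) forces the opposite inequality — a contradiction. This avoids any need to identify a velocity field or to convert Wasserstein information into $L^2$ information, and it is exactly where the bounded-density hypothesis earns its keep (both to keep $\mu^2_t$ bounded via the comparison principle and to re-apply Lemma~\ref{le:conditioned_UG} to the midpoint). You should replace your Stage~2 with this argument.
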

\begin{proof} Let us set $\mu^1_t=\mu_t$, $f^1_t=f_t$ and
  let us first observe that by
Lemma~\ref{le:conditioned_UG} and \eqref{eq:22} the curve $\mu^1_t$
satisfies
\begin{equation}
    \label{eq:95bis}
    \ent{\mu^1_0}= \ent{\mu^1_t}+\frac12\int_0^t|\dot{\mu}^1_s|^2\,\d s
    +\frac 12\int_0^t\mathsf F(f^1_s)\,\d s
    \quad
    \text{for every }t\in [0,T].
  \end{equation}
  Indeed, \eqref{eq:81} and \eqref{eq:22} show that the function defined by the
  right-hand side of \eqref{eq:95bis} is nondecreasing with respect to
  $t$ and coincides with $\ent{\mu^1_0}$ at $t=0$ and at $t=T$ by
  \eqref{eq:95}.

 Let $\mu^2_t=f^2_t\mm$, with $f^2_t:=\sfH_t(f_0)$, be the solution of
  the $L^2$-gradient flow of the Cheeger's energy. Theorem~\ref{prop:maxprin}
  shows that $\|f^2_t\|_{L^\infty(X,\mm)}\le
  \|f_0\|_{L^\infty(X,\mm)}$; by Lemma \ref{le:key} and
  Proposition~\ref{le:diss} we get
  \begin{equation}
    \label{eq:95tris}
    \ent{\mu^2_0}\ge \ent{\mu^2_t}+\frac12\int_0^t|\dot{\mu}^2_s|^2\,\d s
    +\frac 12\int_0^t\mathsf F(f^2_s)\,\d s\quad
    \text{for every }t\in [0,T].
  \end{equation}
  We recall that the squared Wasserstein distance is convex w.r.t.
linear interpolation of measures. Therefore, given two absolutely
continuous curves $(\mu^1_t)$ and $(\mu^2_t)$, the curve $t\mapsto
\mu_t:=(\mu^1_t+\mu^2_t)/2$ is absolutely continuous as well and its
metric speed can be bounded by
\begin{equation}
\label{eq:w2conv}
|\dot{\mu_t}|^2\leq\frac{|\dot{\mu}^1_t|^2+|\dot{\mu}^2_t|^2}{2}\qquad\text{for
a.e. $t\in (0,T)$.}
\end{equation}
Adding up \eqref{eq:95bis} and \eqref{eq:95tris} and using the
convexity of the Fisher information functional (see
Lemma~\ref{le:Fisher}), the convexity of the squared metric speed
guaranteed by \eqref{eq:w2conv} and taking into account the
\emph{strict} convexity of $\entv$ we deduce that for the curve
$\mu_t$ it holds
\[
\ent{\mu_0}>\ent{\mu_t}+\frac12\int_0^t|\dot{\mu_s}|^2\,\d s
+\frac12\int_0^t\mathsf F(f_s)\,\d s
\]
for every $t$ such that $\mu^1_t\neq\mu^2_t$, where $f_t:=\tfrac
12(f^1_t+f^2_t)$ is the density of $\mu_t$. This contradicts
Lemma~\ref{le:conditioned_UG}, which yields the opposite inequality.
\end{proof}
Although the result will not play a role in the paper, let's see
that we can apply the previous theorem to characterize all limits of
the JKO \cite{JordanKinderlehrerOtto98} -- Minimizing Movement
Scheme (see \cite[Definition 2.0.6]{Ambrosio-Gigli-Savare08})
generated by the entropy functional in $\proV (X)$. The result shows
that starting from an initial datum with bounded density, the JKO
scheme \emph{always converges} to the $L^2$-gradient flow of
Cheeger's energy, without any extra assumption on the space, except
for the integrability condition \eqref{eq:75}.

For a given initial datum $\mu_0=f_0\mm\in D(\entv)$ and a time step
$h>0$ we consider the
sequence $\mu^h_n=f^h_n\mm$ defined by the recursive variational problem
\begin{displaymath}
  \mu^h_n\in \argmin_{\mu\in \proV (X)}\Big\{\frac
  1{2h}W^2_2(\mu,\mu^h_{n-1})+\ent{\mu}\Big\},
\end{displaymath}
and we set $\mu^h(t)=f^h(t)\mm:=\mu^h_n$ if $t\in ((n-1)h,nh]$.

\begin{corollary}[Convergence of the minimizing movement scheme]
  \label{cor:conv}
  Let $(X,\tau,\sfd,\mm)$ be an Polish extended space satisfying
  \eqref{eq:75} and let $\mu_0=f_0\mm\in D(\entv)$ with $f_0\in L^\infty(X,\mm)$.
  Then for every $t\ge 0$ the family $\mu^h(t)$ weakly converges to $\mu_t=f_t\mm$
  as $h\downarrow0$, where $f_t=\sfH_t(f_0)$ is the $L^2$-gradient flow of Cheeger's energy.
\end{corollary}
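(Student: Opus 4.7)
The plan is to reduce the convergence of the JKO scheme to an application of Theorem~\ref{thm:coincidence_infty}: I will extract a $W_2$-convergent subsequence of the discrete trajectories, check that any cluster point is an absolutely continuous curve with uniformly $L^\infty$-bounded densities that satisfies the Entropy-Fisher dissipation inequality \eqref{eq:95}, and then use the uniqueness granted by Theorem~\ref{thm:coincidence_infty} to upgrade subsequential convergence to convergence of the full family.

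The first step, essentially classical, is compactness. Each step of the scheme yields
\begin{equation*}
  \ent{\mu^h_n}+\frac{1}{2h}W_2^2(\mu^h_n,\mu^h_{n-1})\le \ent{\mu^h_{n-1}},
\end{equation*}
which, after summation and the moment/entropy bounds of Theorem~\ref{thm:infinitemass} (in particular the estimate \eqref{eq:34} propagated through \eqref{eq:86}), gives uniform control of $\ent{\mu^h(t)}$, of $\int_X\Wgh^2\,\d\mu^h(t)$ and of the $W_2$-action of the piecewise-geodesic interpolation $\hat\mu^h$. Together with the tightness criterion \eqref{eq:59} and the standard $\sqrt h$-H\"older estimate of the scheme, a diagonal/Ascoli argument extracts $h_k\downarrow 0$ and a limit curve $\mu_t=f_t\mm\in \AC2{(0,T)}{\proV(X)}{W_2}$ such that $\mu^{h_k}(t)\weakto \mu_t$ for every $t\in [0,T]$, with $\int_0^T|\dot \mu_s|^2\,\d s\le \liminf_k\int_0^T|\dot{\hat\mu}^{h_k}_s|^2\,\d s$. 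To recover the Fisher information term in the limit, I will use the De~Giorgi variational interpolation $\tilde\mu^h$: the first-order optimality condition at each step produces the discrete bound $|\nabla^-\entv|(\tilde\mu^h(s))\le |\dot{\hat\mu}^h(s)|$, and summing gives a discrete Entropy-Fisher inequality. Passing to the liminf along $h_k$, using the weak lower semicontinuity of $\mathsf F$ supplied by Lemma~\ref{le:Fisher} (convexity plus strong $L^1$ lower semicontinuity) and Theorem~\ref{thm:lowerboudslope} (which allows to dominate $\mathsf F(f_t)$ by $|\nabla^-\entv|^2(\mu_t)$), yields \eqref{eq:95} for the limit curve.

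The main obstacle is the uniform $L^\infty$ bound on the discrete densities, required to invoke Theorem~\ref{thm:coincidence_infty}. The plan is to prove, inductively and at the discrete level, the monotonicity $\|f^h_n\|_{L^\infty(X,\mm)}\le \|f^h_{n-1}\|_{L^\infty(X,\mm)}$ by a truncation-and-rearrangement competitor. Assuming towards a contradiction that $A:=\{f^h_n>M\}$ had positive $\mm$-measure with $M:=\|f^h_{n-1}\|_\infty$, one picks an optimal plan between $\mu^h_{n-1}$ and $\mu^h_n$, redistributes the excess mass lying above level $M$ back along the plan to regions where the target density remains below $M$, and exploits the strict convexity of $r\mapsto r\log r$ together with the $c$-cyclical monotonicity of the plan: the resulting competitor $\tilde\mu$ strictly decreases the JKO functional, contradicting the minimality of $\mu^h_n$. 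Once the bound is established at the discrete level, weak-$*$ lower semicontinuity of the $L^\infty$ norm transfers it to the limit, giving $\sup_{t\in [0,T]}\|f_t\|_{L^\infty(X,\mm)}\le \|f_0\|_{L^\infty(X,\mm)}$.

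With the three hypotheses of Theorem~\ref{thm:coincidence_infty} verified for the limit curve, that theorem identifies $f_t$ with $\sfH_t(f_0)$. Since this identification does not depend on the extracted subsequence, a standard Urysohn-type argument promotes subsequential weak convergence to weak convergence of the entire family $\mu^h(t)\weakto \sfH_t(f_0)\mm$ for every $t\ge 0$.
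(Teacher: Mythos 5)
Your proof follows the same strategy as the paper's: extract a weakly convergent subsequence via the minimizing-movements compactness theory, establish the Entropy--Fisher dissipation inequality \eqref{eq:95} in the limit through the De~Giorgi variational interpolation, exploit the lower bound $\mathsf F\le |\nabla^-\entv|^2$ of Theorem~\ref{thm:lowerboudslope} and the weak $L^1$ lower semicontinuity of $\mathsf F$ from Lemma~\ref{le:Fisher}, transfer the discrete $L^\infty$ bound to the limit, invoke Theorem~\ref{thm:coincidence_infty}, and upgrade to full-family convergence by uniqueness of the limit.

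The only real departure concerns the discrete maximum principle $\|f^h_n\|_{L^\infty}\le\|f^h_{n-1}\|_{L^\infty}$. The paper simply cites \cite[\S2.1]{Agueh02} and \cite[Prop.~2]{Otto96} (both Euclidean references), while you sketch a self-contained metric argument based on retarding the optimal plan. The idea is sound, and arguably more satisfying in this non-Euclidean setting: if $A:=\{f^h_n>M\}$ has positive measure, returning an $\eps$-fraction of the mass that $\ggamma$ sends into $A$ back to its source produces a competitor $\tilde\mu_\eps$ whose transport cost to $\mu^h_{n-1}$ does not increase and whose density has the form $\tilde f_\eps = f^h_n-\eps f^h_n\nchi_A+\eps g$, with $g\le f^h_{n-1}\le M$ the density of the first marginal of $\ggamma\restr{X\times A}$; computing the first variation of the entropy then gives $\int_A \log f^h_n\,(g-f^h_n)\,\d\mm+\int_{A^c}g\log f^h_n\,\d\mm < \log M\cdot 0=0$. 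Two remarks for you: first, the step you describe does not in fact rely on $c$-cyclical monotonicity but only on the plan structure (namely $g\le f^h_{n-1}$); second, the cancellation in the first variation is delicate and requires using $A=\{f^h_n>M\}$ exactly (replacing $M$ by $M+\delta$ spoils the sign), and the integrability of $g\log f^h_n$ near $\{f^h_n\approx 0\}$ must be handled by a suitable truncation before differentiating in $\eps$. So flesh this out carefully, or defer (as the paper does) to the cited references; either route is acceptable, and the rest of your argument is a faithful and correct rendering of the paper's proof.
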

\begin{proof}
  Arguing exactly as in \cite[\S2.1]{Agueh02},
  \cite[Proposition~2]{Otto96} it is not hard to show that
  $\|f^h_n\|_\infty\leq\|f_0\|_\infty$.

  We want to apply the theory developed in
  \cite[Chap.\ 2-3]{Ambrosio-Gigli-Savare08}: according to the
  notation therein $\mathscr S$ is the metric space $\pro {\mu_0}(X)$
  endowed with the Wasserstein distance $W_2$, $\sigma$ is the weak
  topology in $\prob X$,
  and $\phi$ is the Entropy functional $\entv$.
  Since by \eqref{eq:86} and \eqref{eq:60} the negative
  part of $\entv$ has at most quadratic growth in ${\mathscr
    P}_{\mu_0}(X)$, the basic assumptions
  \cite[2.1(a,b,c)]{Ambrosio-Gigli-Savare08} are satisfied and
  we can apply the compactness result \cite[Corollary
  3.3.4]{Ambrosio-Gigli-Savare08}: from any vanishing sequence
  of time steps $h_m\downarrow0$ we can extract a subsequence
  (still denoted by $h_m$) such that $\mu^{h_m}(t)\to \mu_t=f_t\mm$ weakly in $\prob X$, with
  $f^{h_m}(t)\weakto f_t$ weakly in any $L^p(X,\mm)$, $p\in
  [1,\infty)$,
  and $\|f_t\|_\infty\le \|f_0\|_\infty$.
 Since the relaxed slope of the entropy functional, defined as
  \begin{displaymath}
    |\partial^-\entv|(\mu):=\inf\Big\{\liminf_{n\to\infty}|\rmD^-\entv|(\mu_n):\mu_n\weakto
    \mu,\quad
    \sup_n W_2(\mu_n,\mu),\ent{\mu_n}<\infty\Big\}
  \end{displaymath}
  still satisfies
  the lower bound \eqref{eq:44pre} $|\partial^- \entv|(\rho\mm)\ge \mathsf F(\rho)$
  thanks to the lower semicontinuity of the Fisher information with
  respect to the weak $L^1(X,\mm)$-topology,
  the energy inequality \cite[(3.4.1)]{Ambrosio-Gigli-Savare08}
  based on De Giorgi's variational interpolation
  yields
  \begin{displaymath}
    \ent{\mu_0}\ge \ent{\mu_T}+\frac12\int_0^T|\dot{\mu}_t|^2\,\d t
    +\frac12\int_0^T \mathsf F(f_t) \,\d t.
  \end{displaymath}
  Applying the previous theorem we conclude that $f_t=\sfH_t(f_0)$.
  Since the limit is uniquely characterized, all the family $\mu^h(t)$
  converges to $\mu_t$ as $h\downarrow0$.
\end{proof}

\subsection{Uniqueness of the Wasserstein gradient flow if $|\rmD^-
  \entv|$ is an upper gradient}
In the next theorem we prove uniqueness of the gradient flow of
$\entv$, a result that will play a key role in the equivalence
results of the next section. Here we can avoid the uniform
$L^\infty$ bound assumed in Theorem~\ref{thm:coincidence_infty}, but
we need to suppose that $|\rmD^-\entv|$ is an upper gradient for
the entropy functional
 (a condition which is ensured by its geodesically $K$-convexity,
see the next section).

\begin{theorem}[Uniqueness of the gradient flow of $\entv$]\label{thm:gfent}
Let $(X,\tau,\sfd,\mm)$ be a Polish extended space be such that
$|\rmD^-\entv|$ is an upper gradient of $\entv$ and let $\mu\in
D(\entv)$. Then there exists at most one gradient flow of $\entv$
starting from $\mu$ in $(\pro\mu (X),W_2)$.
\end{theorem}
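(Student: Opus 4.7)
\smallskip\noindent\textbf{Proof plan.} The strategy is the ``averaging + strict convexity'' argument already used to prove Theorem~\ref{thm:coincidence_infty}, but now with $|\nabla^-\entv|$ playing the role of $\mathsf F^{1/2}$ and the sharp $E$-dissipation equality \eqref{eq:ede} replacing the $L^2$-gradient flow inequality. Let $\mu^1_t$ and $\mu^2_t$ be two gradient flows of $\entv$ starting from $\mu$, and set $\mu_t:=\tfrac12(\mu^1_t+\mu^2_t)$. I would organize the proof in three steps.

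First, since by assumption $|\nabla^-\entv|$ is an upper gradient of $\entv$, the $E$-dissipation inequality \eqref{eq:edi} and the dissipation identity \eqref{eq:ede} are equivalent, so each $\mu^i$ satisfies
\begin{equation*}
\ent{\mu}=\ent{\mu^i_t}+\frac12\int_0^t |\dot\mu^i_s|^2\,\d s+\frac12\int_0^t |\nabla^-\entv|^2(\mu^i_s)\,\d s\qquad\forall t\ge 0.
\end{equation*}
Averaging these two identities gives an expression for $\ent\mu$ in terms of arithmetic means of entropies, squared metric derivatives, and squared slopes at time $t$.

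Second, I would estimate the averaged quantities in terms of the midpoint curve $\mu_t$. By convexity of $W_2^2$ with respect to linear interpolation, $(\mu_t)$ is absolutely continuous with $|\dot\mu_t|^2\le\tfrac12(|\dot\mu^1_t|^2+|\dot\mu^2_t|^2)$ (as used in \eqref{eq:w2conv}). By the convexity of $|\nabla^-\entv|^2$ established in Theorem~\ref{thm:slope_convex}, we get $|\nabla^-\entv|^2(\mu_t)\le\tfrac12(|\nabla^-\entv|^2(\mu^1_t)+|\nabla^-\entv|^2(\mu^2_t))$. Finally, by the \emph{strict} convexity of $r\mapsto r\log r$, the inequality $\ent{\mu_t}\le\tfrac12(\ent{\mu^1_t}+\ent{\mu^2_t})$ is strict as soon as $\mu^1_t\neq \mu^2_t$. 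Combining these three estimates with the averaged identity above yields, for every $t>0$ at which $\mu^1_t\neq\mu^2_t$,
\begin{equation*}
\ent{\mu}>\ent{\mu_t}+\frac12\int_0^t |\dot\mu_s|^2\,\d s+\frac12\int_0^t |\nabla^-\entv|^2(\mu_s)\,\d s.
\end{equation*}

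Third, this strict inequality contradicts the upper gradient property applied along $(\mu_t)$: since $|\nabla^-\entv|$ is an upper gradient and $(\mu_t)$ is absolutely continuous with $\mu_0=\mu$, Young's inequality gives
\begin{equation*}
\ent{\mu}-\ent{\mu_t}\le \int_0^t |\nabla^-\entv|(\mu_s)\,|\dot\mu_s|\,\d s\le \frac12\int_0^t |\dot\mu_s|^2\,\d s+\frac12\int_0^t |\nabla^-\entv|^2(\mu_s)\,\d s,
\end{equation*}
so that we must have $\mu^1_t=\mu^2_t$ for every $t\ge 0$. The main conceptual step is the strict-convexity comparison in the second step; the only small technical point to check is that $\mu_t\in D(\entv)$ and $(\mu_t)$ remains in $\pro\mu(X)$, both of which follow by convexity from the corresponding properties of $\mu^1_t,\mu^2_t$. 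No substantial obstacle is expected, since all the heavy lifting (convexity of $|\nabla^-\entv|^2$, upper gradient property, and the compatibility with Young's inequality) has already been prepared.
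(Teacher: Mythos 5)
Your proposal is correct and reproduces essentially the paper's own argument: average the sharp dissipation identity \eqref{eq:ede} for the two flows, use the convexity of the squared metric speed \eqref{eq:w2conv}, the convexity of $|\nabla^-\entv|^2$ from Theorem~\ref{thm:slope_convex}, and the strict convexity of $\entv$ to get a strict inequality for the midpoint curve, contradicting \eqref{eq:boundtuttecurve} via Young's inequality. The only cosmetic difference is that you phrase the starting point as ``\eqref{eq:edi} and \eqref{eq:ede} are equivalent'' rather than simply invoking the definition of gradient flow, but the content is identical.
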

\begin{proof} As in \cite{Gigli10} and in the proof of
Theorem~\ref{thm:coincidence_infty}, assume that starting from some
$\mu\in D(\entv)$ we can find two different gradient flows
$(\mu^1_t)$ and $(\mu^2_t)$. Then we have
\[
\begin{split}
\ent{\mu}&=\ent{\mu^1_T}+\frac12\int_0^T|\dot{\mu}^1_t|^2\,\d t
+\frac12\int_0^T|\rmD^-\entv|^2(\mu^1_t)\,\d t \qquad\forall T\geq 0,\\
\ent{\mu}&=\ent{\mu^2_T}+\frac12\int_0^T|\dot{\mu}^2_t|^2\,\d t
+\frac12\int_0^T|\rmD^-\entv|^2(\mu^2_t)\,\d t \qquad\forall T\geq
0.
\end{split}
\]
Adding up these two equalities and using the convexity of the
squared slope guaranteed by Theorem~\ref{thm:slope_convex}, the
convexity of the squared metric speed guaranteed by
\eqref{eq:w2conv} and taking into account the \emph{strict}
convexity of $\entv$ we deduce that for the curve
$t\mapsto\mu_t:=(\mu^1_t+\mu^2_t)/2$ it holds
\[
\ent{\mu}>\ent{\mu_T}+\frac12\int_0^T|\dot{\mu_t}|^2\,\d t
+\frac12\int_0^T|\rmD^-\entv|^2(\mu_t)\,\d t ,
\]
for every $T$ such that $\mu^1_T\neq\mu^2_T$. Taking the upper
gradient property into account, this contradicts
\eqref{eq:boundtuttecurve}.
\end{proof}

\begin{remark}
  \label{rem:altra_stima}
  \upshape
The proofs of Theorem~\ref{thm:gfent} and
  Theorem~\ref{thm:coincidence_infty} do not rely on contractivity of
  the Wasserstein distance. Actually, as proved by Ohta and Sturm in
  \cite{Ohta-Sturm10}, the property
$$
W_2(\mu_t,\nu_t)\leq e^{Kt}W_2(\mu_0,\nu_0)
$$
for gradient flows of $\entv$ in Minkowski spaces
$(\R^n,\|\cdot\|,\Leb{n})$ whose norm is not induced by an inner
product fails for any $K\in\R$.
\fr
\end{remark}

\subsection{Identification of the two gradient flows}\label{sec:duality}

Here we prove one of the main results of this paper, namely the
identification of the gradient flow of $\C$ in $L^2(X,\mm)$ and the
gradient flow of $\entv$ in $(\prob X,W_2)$. The strategy consists
in considering a gradient flow $(f_t)$ of $\C$ with nonnegative
initial data and in proving that the curve $t\mapsto \mu_t:=f_t\mm$
is a gradient flow of $\entv$ in $(\prob X,W_2)$. All these results
will be applied to the case of metric spaces satisfying a
$CD(K,\infty)$ condition in the next section.

\begin{theorem}[Identification of the two gradient flows]\label{thm:main}
Let $(X,\tau,\sfd,\mm)$ be a Polish extended space such that
\eqref{eq:75} holds and let us assume that $|\rmD^-\entv|$ is
lower semicontinuous with respect to strong convergence with moments
in $\Probabilities{X}$ on sublevels of $\entv$. For all $f_0\in
L^2(X,\mm)$ such that $\mu_0=f_0\mm\in {\mathscr P}_\Wgh(X)$ the
following equivalence holds:
\begin{itemize}
\item[(i)] If $f_t$ is the gradient flow of $\C$ in
$L^2(X,\mm)$ starting from $f_0$, then $\mu_t:=f_t\mm$ is the
gradient flow of $\entv$ in $(\pro{\mu_0}(X),W_2)$ starting from
$\mu_0$, $t\mapsto\entv(\mu_t)$ is locally absolutely continuous in
$(0,\infty)$ and
\begin{equation}\label{eq:allequalities}
-\frac{\d}{\d t}\entv(\mu_t)= |\dot\mu_t|^2=|\rmD^-\entv(\mu_t)|^2
\qquad\text{for a.e. $t\in (0,\infty)$.}
\end{equation}
\item[(ii)] Conversely, if $|\rmD^-\entv|$ is an upper gradient of
$\entv$, and $\mu_t$ is the gradient flow of $\entv$ in
$(\pro{\mu_0}(X),W_2)$ starting from $f_0\mm$, then $\mu_t=f_t\mm$
and $f_t$ is the gradient flow of $\C$ in $L^2(X,\mm)$ starting from
$f_0$.
\end{itemize}
\end{theorem}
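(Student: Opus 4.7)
The plan is to combine three analytic tools built up in earlier sections: the entropy dissipation identity for the $L^2$-gradient flow (Proposition~\ref{le:diss}), the metric derivative bound of Lemma~\ref{le:key}, and the identification of the Wasserstein slope with the Fisher information from Theorem~\ref{thm:slopefisherconv}. Together they will pin down every ingredient appearing in \eqref{eq:allequalities} and hence in the EDE formulation of gradient flow.

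For part (i), I would first reduce to the case of a bounded nonnegative initial datum $f_0\in L^1\cap L^\infty(X,\mm)$ with $\int_X V^2 f_0\,\d\mm<\infty$, using the truncations $f_0^n:=\min\{f_0,n\}$ (suitably renormalized) and the $L^p$-contractivity and maximum principles of Theorem~\ref{prop:maxprin}, the moment-entropy estimates of Theorem~\ref{thm:infinitemass}, and the hypothesized strong lower semicontinuity of $|\nabla^-\entv|$ (which makes the right-hand side of \eqref{eq:ede} stable under approximation). In the bounded case I would then assemble the following facts in order: (a) the curve $t\mapsto\mu_t=f_t\mm$ lies in $\AC2{(0,\infty)}{\prob X}{W_2}$ and satisfies $|\dot\mu_t|^2\le \mathsf F(f_t)$ a.e., by Lemma~\ref{le:key}; (b) $t\mapsto\entv(\mu_t)$ is locally absolutely continuous on $(0,\infty)$ with $\frac{\d}{\dt}\entv(\mu_t)=-\mathsf F(f_t)$ a.e., by Proposition~\ref{le:diss}; (c) $|\nabla^-\entv|^2(\mu_t)=\mathsf F(f_t)$, by Theorem~\ref{thm:slopefisherconv}. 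Young's inequality combining (a), (b), (c) already gives the EDI half of \eqref{eq:ede}, so the remaining task is to upgrade to equality. For this I would apply Lemma~\ref{le:conditioned_UG} to the curve $\mu_t$ with $\phi(r)=r\log r$ (the uniform $L^\infty$ bound on $f_t$ in the reduced case and the Sobolev regularity of $f_t$ along almost every test plan, available thanks to Theorem~\ref{thm:rel=weak}, make the lemma applicable) to obtain the pointwise estimate $\bigl|\tfrac{\d}{\dt}\entv(\mu_t)\bigr|\le \sqrt{\mathsf F(f_t)}\,|\dot\mu_t|$. Combined with (b) this forces $\sqrt{\mathsf F(f_t)}\le|\dot\mu_t|$, and together with (a) gives the chain of equalities $|\dot\mu_t|^2=\mathsf F(f_t)=|\nabla^-\entv|^2(\mu_t)$ a.e.\ in $(0,\infty)$, proving \eqref{eq:allequalities} and hence \eqref{eq:ede}.

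Part (ii) will then follow by a uniqueness argument. Given any Wasserstein gradient flow $\mu_t$ of $\entv$ starting from $\mu_0=f_0\mm$, part (i) produces a second Wasserstein gradient flow, namely $(\Heat t f_0)\mm$, starting from the same datum. Theorem~\ref{thm:gfent} applies, because the additional assumption of (ii) is precisely that $|\nabla^-\entv|$ is an upper gradient of $\entv$; therefore the two flows coincide, so the densities of $\mu_t$ coincide with $\Heat t f_0$ and in particular solve the $L^2$-gradient flow of~$\C$.

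The main obstacle is the closure of the gap between EDI and EDE in part (i): without assuming the upper-gradient property of $|\nabla^-\entv|$ (as one does in part (ii)), one cannot control $-\tfrac{\d}{\dt}\entv(\mu_t)$ by $|\nabla^-\entv|(\mu_t)\cdot|\dot\mu_t|$ through the standard metric gradient flow machinery. The substitute is exactly Lemma~\ref{le:conditioned_UG}, whose validity rests on the whole identification program of Section~\ref{sec:identification1} (so that $\weakgradA{f_t}$ controls the Wasserstein chain rule through the lifting of $\mu_t$ to a plan concentrated on absolutely continuous curves with bounded compression). Carrying the approximation argument through the $\mm(X)=\infty$ regime, in combination with the moment estimates of Theorem~\ref{thm:infinitemass} and the sequential lower semicontinuity property \eqref{eq:94}, is the secondary technical point that has to be handled with care.
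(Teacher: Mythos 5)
You have the right skeleton: (a) Lemma~\ref{le:key} gives $|\dot\mu_t|^2\le\mathsf F(f_t)$, (b) Proposition~\ref{le:diss} gives $-\tfrac{\d}{\dt}\entv(\mu_t)=\mathsf F(f_t)$ with local absolute continuity, (c) Theorem~\ref{thm:slopefisherconv} gives $\mathsf F(f_t)=|\nabla^-\entv|^2(\mu_t)$, and part (ii) reduces to uniqueness via Theorem~\ref{thm:gfent}. This matches the paper's structure. But the mechanism you propose to close the gap from EDI to EDE does not carry through for general initial data, and here you diverge from the paper in a way that creates a genuine gap.

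You obtain the chain-rule bound $\bigl|\tfrac{\d}{\dt}\entv(\mu_t)\bigr|\le\sqrt{\mathsf F(f_t)}\,|\dot\mu_t|$ from Lemma~\ref{le:conditioned_UG}, which requires \emph{uniformly bounded} densities, and you therefore truncate $f_0$ and hope to recover the general case by the lower semicontinuity of $|\nabla^-\entv|$. But that approximation transmits at most the energy dissipation \emph{inequality}: $\liminf$ of the right-hand sides is $\ge$, so the inequality $\ge$ survives but equality does not. Moreover, once you observe (which you do, via (b) and (c)) that $\entv(\mu_0)-\entv(\mu_T)=\int_0^T|\nabla^-\entv|^2(\mu_t)\,\d t$, plugging this into the EDI only gives back $\int_0^T|\nabla^-\entv|^2\ge\int_0^T|\dot\mu_t|^2$, which is already known pointwise from (a). It does \emph{not} give the reverse inequality that the EDE requires. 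That reverse inequality comes precisely from the chain-rule bound $\bigl|\tfrac{\d}{\dt}\entv(\mu_t)\bigr|\le|\nabla^-\entv|(\mu_t)|\dot\mu_t|$ in the \emph{unbounded} case, which Lemma~\ref{le:conditioned_UG} does not supply.

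The paper avoids this entirely by invoking Remark~\ref{rem:whenslopesare}: as soon as one knows \emph{a priori} that $t\mapsto\entv(\mu_t)$ is absolutely continuous along the Wasserstein-AC curve $\mu_t$ — and this is furnished unconditionally by Proposition~\ref{le:diss}, with no boundedness assumption — the slopes $|\nabla^\pm\entv|$ are automatically upper gradients of $\entv$ along that curve, so $\bigl|\tfrac{\d}{\dt}\entv(\mu_t)\bigr|\le|\nabla^-\entv|(\mu_t)|\dot\mu_t|$ a.e.\ with no further hypotheses. Combining this with (a), (b), (c) and Young's inequality yields \eqref{eq:allequalities} at once, with no truncation, no Lemma~\ref{le:conditioned_UG}, and no appeal to the identification $\weakgradA{f}=\relgrad f$ from Theorem~\ref{thm:rel=weak}. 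Lemma~\ref{le:conditioned_UG} is the right tool for Theorem~\ref{thm:coincidence_infty}, which is stated for bounded densities and starts from an \emph{arbitrary} Wasserstein-AC curve (where the absolute continuity of $\entv\circ\mu$ is not given \emph{a priori}); but for Theorem~\ref{thm:main}(i), where $\mu_t$ is the $L^2$-gradient flow and Proposition~\ref{le:diss} already provides the absolute continuity, Remark~\ref{rem:whenslopesare} is both lighter and sufficient.
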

\begin{proof} (i) First of all, we remark that assumption
\eqref{eq:62} of Lemma~\ref{le:key} is satisfied, thanks to
Theorem~\ref{thm:infinitemass}; in addition, the same theorem
ensures that $\int_X\Wgh^2f_t^2\,\d\mm<\infty$ for all $t\geq 0$.
Defining $\mu_t:=f_t\mm$, we know by Proposition~\ref{le:diss} that
the map $t\mapsto\entv(\mu_t)$ is locally absolutely continuous in
$(0,\infty)$ and that \eqref{eq:eqdissrate} holds.

On the other hand, since we assumed the lower semicontinuity of
$|\rmD^-\entv|$, we can prove that $\entv(\mu_t)$ satisfies the
energy dissipation inequality \eqref{eq:edi}. Indeed, by
Lemma~\ref{le:key} and Theorem~\ref{thm:slopefisherconv} it holds:
\[
\int_{\{f_t>0\}}\frac{\relgrad{f_t}^2}{f_t}\,\d\mm\geq
\frac12|\dot\mu_t|^2+\frac12|\rmD^-\entv|^2(\mu_t) \qquad\text{for
a.e. $t\in (0,\infty)$.}
\]
This proves that $\entv(\mu_t)$ satisfies the energy dissipation
inequality. But, since we know that $t\mapsto \entv(\mu_t)$ is
locally absolutely continuous we can apply
Remark~\ref{rem:whenslopesare} to obtain that $|\tfrac{\d}{\d
t}\entv(\mu_t)|\leq|\rmD^-\entv|(\mu_t)|\dot\mu_t|$ for a.e. $t\in
(0,\infty)$. Hence, as explained in Section~\ref{se:prelGF},
\eqref{eq:edi} in combination with Young inequality and the previous
inequality yield that all the inequalities turn a.e. into
equalities, so that \eqref{eq:allequalities} holds.

\noindent (ii) We know that a gradient flow $\tilde f_t$ of $\C$
starting from $f_0$ exists, and part (i) gives that
$\tilde\mu_t:=\tilde f_t\mm$ is a gradient flow of $\entv$. By
Theorem~\ref{thm:gfent}, there is at most one gradient flow starting
from $\mu_0$, hence $\mu_t=\tilde\mu_t$ for all $t\geq 0$.
\end{proof}

As a consequence of the identification result, we present a general
existence result of the Wasserstein gradient flow of $\entv$ which
includes also the case of $\sigma$-finite measures and requires no curvature assumption. 
When the initial probability density is not in $L^2(X,\mm)$ we are only able to obtain a
gradient flow in the weaker sense of the maximal energy dissipation inequality \eqref{eq:edi}.
In the case when $|\rmD^- \entv|$ is also an upper gradient for the entropy functional
we can of course recover a gradient flow,
namely the energy dissipation identity \eqref{eq:ede}.

\begin{theorem}[Existence of the gradient flow of $\entv$]\label{thm:gfent2}
Let $(X,\tau,\sfd,\mm)$ be a Polish extended space satisfying
assumption \eqref{eq:75} and such that
$|\rmD^-\entv|$ is lower semicontinuous with respect to strong
convergence with moments in $\Probabilities{X}$ on sublevels of
$\entv$.\\
Then for all $\mu=\rho\mm\in D(\entv)$ there exists a locally absolutely continuous
curve $\mu_t:[0,\infty)\to\pro\mu (X)$ starting from $\mu$ and satisfying \eqref{eq:edi}.
\end{theorem}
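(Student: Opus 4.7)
The plan is to approximate $\mu_0=\rho\mm$ by initial data $\mu_0^n=\rho_n\mm$ to which Theorem~\ref{thm:main}(i) directly applies, and then pass to the limit in the resulting Wasserstein gradient flows. As a first step I would construct $\rho_n\in L^\infty(X,\mm)\cap L^2(X,\mm)$ with $\mu_0^n\in D(\entv)\cap\proV$ such that $\rho_n\to\rho$ in $L^1(X,\mm)$, $\entv(\mu_0^n)\to\entv(\mu_0)$, $\int V^2\,\d\mu_0^n\to\int V^2\,\d\mu_0$, and consequently $W_2(\mu_0^n,\mu_0)\to 0$. The concrete choice is $\rho_n:=c_n\min\{\rho,n\}$ with $c_n\uparrow 1$ a normalizing constant: the truncation gives $\rho_n\le nc_n$, so $\rho_n\in L^2(X,\mm)$ thanks to $\rho_n\in L^1(X,\mm)$, and dominated convergence (using $r\log r\ge -\rme^{-1}$, and $\rho_n\log\rho_n\le 2\rho\log\rho$ on $\{\rho>n\}$ for $n$ large) yields entropy and moment convergence, from which $W_2$-convergence follows. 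For each $n$ Theorem~\ref{thm:main}(i) produces the gradient flow $\mu_t^n=f_t^n\mm$ of $\entv$ starting from $\mu_0^n$, with $f_t^n=\Heat t(\rho_n)$, satisfying the EDE
\begin{equation}
\entv(\mu_0^n)=\entv(\mu_t^n)+\frac12\int_0^t|\dot\mu_s^n|^2\,\d s+\frac12\int_0^t|\nabla^-\entv|^2(\mu_s^n)\,\d s\quad\forall t\ge 0.\label{eq:planEDE}
\end{equation}

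Next I would extract a $W_2$-limit curve $(\mu_t)$ by an Ascoli-type argument in $(\proV,W_2)$. The moment estimate of Lemma~\ref{le:moment}, extended to $\sigma$-finite measures by Theorem~\ref{thm:infinitemass}, bounds $\int V^2\,\d\mu_t^n$ uniformly in $(t,n)$ in terms of $\entv(\mu_0^n)$ and $\int V^2\,\d\mu_0^n$; together with $\entv(\mu_t^n)\le\entv(\mu_0^n)$ this yields uniform tightness via Remark~\ref{re:tightsub}, while \eqref{eq:planEDE} provides $W_2(\mu_s^n,\mu_t^n)\le C\sqrt{t-s}$ uniformly. A diagonal/Ascoli extraction gives a subsequence with $\mu_t^n\to\mu_t$ in $W_2$ for every $t\ge 0$ and $\mu_0=\rho\mm$. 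Passing to the limit in \eqref{eq:planEDE} using the lower semicontinuity of $\entv$ with respect to $W_2$-convergence \eqref{eq:79}, the lsc of the integrated metric derivative, and the standing hypothesis on the lsc of $|\nabla^-\entv|$ with respect to strong convergence with moments (applicable here because uniform entropy bounds combined with $W_2$-convergence give setwise convergence of the densities, by Remark~\ref{re:tightsub}), together with Fatou's lemma and the convergence of initial entropies, yields the energy dissipation inequality
\begin{equation}
\entv(\mu_0)\ge\entv(\mu_t)+\frac12\int_0^t|\dot\mu_s|^2\,\d s+\frac12\int_0^t|\nabla^-\entv|^2(\mu_s)\,\d s.\label{eq:planEDI}
\end{equation}

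The main obstacle will be upgrading the EDI \eqref{eq:planEDI} to the EDE \eqref{eq:ede} required by Definition~\ref{def:dissKconv}. Here I would exploit the identification $|\nabla^-\entv|^2(\mu)=\mathsf F(f)$ from Theorem~\ref{thm:slopefisherconv}, combined with the fact that the limit density $f_t$ is the $L^1$-limit of $(f_t^n)$ through the $L^1$-contraction estimate of Theorem~\ref{thm:infinitemass}. Applying the pointwise entropy dissipation identity $-\tfrac{\d}{\d t}\entv(\mu_t^n)=\mathsf F(f_t^n)$ of Proposition~\ref{le:diss} together with Lemma~\ref{le:key} and passing to the limit (using Fatou and the lsc of $\mathsf F$ under weak $L^1$-convergence provided by Lemma~\ref{le:Fisher}), one obtains that the three integrals $\int_0^t|\dot\mu_s|^2\,\d s$, $\int_0^t|\nabla^-\entv|^2(\mu_s)\,\d s$, and $\int_0^t\mathsf F(f_s)\,\d s$ all coincide with $\entv(\mu_0)-\entv(\mu_t)$. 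To pass from this integrated identity to the pointwise EDE one needs to show that $t\mapsto\entv(\mu_t)$ is absolutely continuous: this can be achieved by an extension of Lemma~\ref{le:conditioned_UG} via truncation $f_t\wedge N$ and monotone convergence in $N$, using the uniform moment bounds to control the tails, or equivalently by observing that the nonincreasing lsc function $t\mapsto\entv(\mu_t)$ is right-continuous and the integrated identity forces the singular part of its distributional derivative to vanish. Once AC is established, Remark~\ref{rem:whenslopesare} makes $|\nabla^-\entv|$ an upper gradient along $(\mu_t)$, and the standard chain of Young inequalities forces all intermediate inequalities to be equalities, producing $-\tfrac{\d}{\d t}\entv(\mu_t)=|\dot\mu_t|^2=|\nabla^-\entv|^2(\mu_t)$ for a.e.~$t$ and thereby the EDE.
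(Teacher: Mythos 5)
Your overall strategy — truncate the initial datum, apply Theorem~\ref{thm:main}(i) to get flows $\mu^n_t=f^n_t\mm$, pass to the limit in the energy dissipation identity, then upgrade EDI to EDE — matches the paper's route for the case $\rho\in L^2(X,\mm)$ and its extension. However, your proof has a genuine gap in the passage from EDI to EDE, which is also where the paper does its real work. You only obtain one-sided bounds: lower semicontinuity of $\entv$ gives $\liminf_n\entv(\mu_t^n)\ge\entv(\mu_t)$, and Fatou together with lower semicontinuity of $\mathsf F$ gives $\int_0^t\mathsf F(f_s)\,\d s\le\liminf_n\int_0^t\mathsf F(f^n_s)\,\d s$; these do not produce the claimed equality of $\int_0^t|\dot\mu_s|^2\,\d s$, $\int_0^t|\nabla^-\entv|^2(\mu_s)\,\d s$, $\int_0^t\mathsf F(f_s)\,\d s$ with $\entv(\mu_0)-\entv(\mu_t)$. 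What is actually missing is the \emph{full} convergence $\entv(\mu^n_t)\to\entv(\mu_t)$ at each positive time. The paper obtains this by exploiting the monotone structure afforded by the comparison principle (Theorem~\ref{prop:maxprin}(b)): $\rho^n_t$ is increasing in $n$, so one can write $r\log r = h_+(r)+h_-(r)$ with $h_+$ nondecreasing and $h_-$ nonincreasing and pass to the limit by monotone convergence in each piece. Your Ascoli-type extraction does not give any pointwise monotone convergence of densities, so this device is unavailable; the $L^1$-contraction you invoke only gives $f^n_t\to f_t$ in $L^1$, from which convergence of $\int f^n_t\log f^n_t\,\d\mm$ does not follow without additional uniform integrability of the log terms.

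Your two proposed routes for absolute continuity of $t\mapsto\entv(\mu_t)$ do not repair this: (a) Lemma~\ref{le:conditioned_UG} requires \emph{uniformly bounded} densities $f_t$, and the truncations $f_t\wedge N$ are not probability densities nor obviously gradient flows, so the ``extension by monotone convergence in $N$'' is not justified; (b) the argument that the singular part of the distributional derivative of $\entv(\mu_t)$ must vanish presupposes the integrated identity $\entv(\mu_0)-\entv(\mu_t)=\int_0^t\mathsf F(f_s)\,\d s$, which is precisely the unestablished claim. In short, the paper's essential trick — a monotone, subsequence-free approximation combined with the $h_\pm$ decomposition of the entropy integrand — is the step you have omitted, and both of your suggested substitutes either rely on a lemma that does not apply or argue in a circle. (The paper also notes a simpler alternative proof via the minimizing movements scheme of \cite[Thm.~2.3.3]{Ambrosio-Gigli-Savare08} in the case when $|\nabla^-\entv|$ is additionally an upper gradient, which you do not mention.)
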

\begin{proof} We can take advantage of the identification of
gradient flows and immediately obtain existence, even in the stronger sense
\eqref{eq:ede}, when $\rho\in L^2(X,\mm)$. If only the integrability conditions
$\int_X\rho\log\rho\,\d\mm<\infty$ and
$\int_X\Wgh^2\rho\,\d\mm<\infty$ are available, we can 
set $\rho^n:=\min\{\rho,n\}$ and use the same
monotone approximation argument as in the proof of
Theorem~\ref{thm:lowerboudslope}: keeping that notation, 
we set $\rho_t:=\sup_n\rho^n_t$, $\mu_t=\rho_t\mm$, $\mu^n_t:=z_n^{-1}\rho^n_t\mm$
and we decompose the function $s \log s$ into the sum $h_-(s)+h_+(s)$ where
$h_-(s)=\min(s,\rme^{-1})\log(\min(s,\rme^{-1}))$ and
$h_+(s)=\max(s,\rme^{-1})\log(\max(s,\rme^{-1}))+\rme^{-1}$
are decreasing and increasing functions respectively. Applying the
monotone convergence theorem to $h_{\pm}(\rho^n_t)$ we easily get
$\int_X \rho^n_t\log\rho^n_t\,\d\mm\to\int_X\rho_t\log\rho_t\,\d\mm$, so that
$\ent{\mu^n_t}\to \ent{\mu_t}$ as $n\to\infty$ because $z_n\uparrow 1$. 
%
 %
 We can now pass to the limit in \eqref{eq:edi} written for $\mu^n_t$
 by using the lower semicontinuity of $|\rmD^-\entv|$ and of the
 $2$-energy to obtain that $\mu_t$ still satisfies \eqref{eq:edi}. 
 \end{proof}

 \begin{remark}  
 {\rm  For completeness, we can
provide a proof that does not use the
identification of gradient flows: indeed, we can
 apply the existence result \cite[Prop. 2.2.3, Thm.
2.3.3]{Ambrosio-Gigli-Savare08}, achieved via the so-called
minimizing movements technique, with the topology of weak
convergence in duality with $C_b(X)$. Remark~\ref{re:tightsub},
\eqref{eq:86},
 and the lower semicontinuity part of
Theorem~\ref{thm:slopefisherconv} give that the assumptions are
satisfied, and we get measures $\mu_t$ satisfying
\begin{equation}\label{eq:nougg}
\entv(\rho\mm) \geq \entv(\mu_t)+\int_0^t\frac{1}{2}|\dot\mu_s|^2+
\frac{1}{2}|\rmD^-\entv|^2(\mu_s)\,\d s\qquad\forall t\geq 0.
\end{equation}} \fr
\end{remark}

%
%

\section{Metric measure spaces satisfying
$CD(K,\infty)$}\label{sec:LSV}

In this section we present the applications of the previous theory
in the case when the Polish extended space $(X,\tau,d,\mm)$ has
Ricci curvature bounded from below, according to
\cite{Lott-Villani07} and \cite{Sturm06I}. Under this condition the
Wasserstein slope $|\rmD^-\entv|$ turns out to be a lower
semicontinuous upper gradient of the entropy, so that all the
assumptions of Theorems \ref{thm:gfent}, \ref{thm:main}, and
\ref{thm:gfent2} are satisfied.

\begin{definition}[$CD(K,\infty)$] \label{def:cdkinfty}
We say that $(X,\tau,\sfd,\mm)$ has Ricci curvature bounded from below
by $K\in\R$ if $\entv$ is $K$-convex along geodesics in $(\prob
X,W_2)$. More precisely, this means that for any $\mu_0,\,\mu_1\in
D(\entv)\subset\Probabilities{X}$ with $W_2(\mu_0,\mu_1)<\infty$
there exists a constant speed geodesic
$\mu_t:[0,1]\to\Probabilities{X}$ between $\mu_0$ and $\mu_1$
satisfying
\begin{equation}\label{eq:Kdisplconv}
\entv(\mu_t)\leq (1-t)\entv(\mu_0)+ t\entv(\mu_1)-\frac{K}{2}t(1-t)W_2^2(\mu_0,\mu_1)\qquad \forall
t\in [0,1].
\end{equation}
\end{definition}
Notice that unlike the definitions given in \cite{Lott-Villani07}
and \cite{Sturm06I}, here we are allowing the distance $\sfd$ to
attain the value $+\infty$. Also, even if $\sfd$ were finite, this
definition slightly differs from the standard one, as typically
geodesic convexity is required only in the space $(\probt X,W_2)$,
while here we are assuming it to hold for any couple of probability
measures with finite entropy and distance. Actually, the two are
equivalent, as a simple approximation argument based on the
tightness given by Remark~\ref{re:tightsub} shows.

\begin{remark}[The integrability condition \eqref{eq:75}]
  \label{rem:integrability}
  \upshape {\rm 
  If $(X,\tau,\sfd,\mm)$ satisfies a $CD(K,\infty)$ condition and
  $\tau$ is the topology induced by the finite distance $\sfd$,
  then \eqref{eq:75} is equivalent {\nc (see \cite[Theorem 4.24]{Sturm06I}) to assume the existence of $x\in X$  and $r>0$ such that
  $\mm(B_r(x))<\infty$, and also equivalent to the fact that all bounded sets have finite measure.} In this case one can choose
  $V(x):=A\sfd(x,x_0)$ for a suitable constant $A\ge 0$ and $x_0\in
  X$.} \fr
\end{remark}

\begin{theorem}[Slope, Fisher, and gradient flows]\label{thm:mainCDK}
Let $(X,\tau,\sfd,\mm)$ be a Polish extended space satisfying
$CD(K,\infty)$ and \eqref{eq:75}.
\begin{itemize}
\item[(i)]
  For every $\mu=f\mm\in \proV (X)$ the Wasserstein slope
  $|\rmD^-\entv|^2(\mu)$ coincides with the Fisher information of
  $f$, it is lower semicontinuous under setwise convergence, according to
  \eqref{eq:94}, and it is an upper gradient for $\entv$.
\item[(ii)]
  For every $\mu_0=f_0\mm\in D(\entv)$ there exists a unique gradient flow
  $\mu_t=f_t\mm$ of $\entv$ starting from $\mu_0$ in
  $(\pro \mu(X),W_2)$.
\item[(iii)]
  If moreover $f_0\in L^2(X,\mm)$, the
  gradient flow $f_t=\mathsf H(f_0)$ of $\C$ in $L^2(X,\mm)$ starting from $f_0$
  and the gradient flow $\mu_t$ of $\entv$ in
  $(\pro{\mu_0}(X),W_2)$ starting from $\mu_0$ coincide, i.e.\
  $\mu_t=f_t\mm$ for every $t>0$.
\end{itemize}
\end{theorem}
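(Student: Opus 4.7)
The plan is to deduce each of the three statements from the $K$-geodesic convexity of $\entv$ granted by the $CD(K,\infty)$ hypothesis, combined with the abstract machinery already assembled in Sections~\ref{se:prelGF}, \ref{sec:gflowrele}, and \ref{sec:identification_flows}.

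For (i), since by definition the $CD(K,\infty)$ condition is precisely the $K$-geodesic convexity of $\entv$ on $(\pro\mu(X),W_2)$, two general consequences from Section~\ref{se:prelGF} become available at once: the descending slope admits the variational representation
\[
|\nabla^-\entv|(\mu) = \sup_{\nu \in \pro\mu(X)\setminus\{\mu\}}\left(\frac{\entv(\mu) - \entv(\nu)}{W_2(\mu,\nu)} + \frac{K}{2}W_2(\mu,\nu)\right)^+,
\]
and it is an upper gradient of $\entv$ along locally absolutely continuous curves in $(\pro\mu(X),W_2)$ (see \eqref{eq:boundtuttecurve}). Using this representation, the lower semicontinuity of $\entv$ under strong convergence with moments (Remark~\ref{re:tightsub}) and an appropriate continuity/semicontinuity estimate for $W_2(\cdot,\nu)$ along such sequences, one obtains the sequential lower semicontinuity of $|\nabla^-\entv|$ on sublevels of $\entv$. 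Once this is in hand, Theorem~\ref{thm:slopefisherconv} simultaneously upgrades it to the setwise lower semicontinuity \eqref{eq:94} and yields the identification $|\nabla^-\entv|^2(\mu) = 4\int_X \relgrad{\sqrt{\rho}}^2\,\d\mm$ with the Fisher information.

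With (i) at our disposal, parts (ii) and (iii) reduce to direct applications of the abstract identification and uniqueness results proved earlier. The existence of a Wasserstein gradient flow of $\entv$ starting from every $\mu_0 = f_0\mm \in D(\entv)$ is provided by Theorem~\ref{thm:gfent2}, whose sole hypotheses beyond \eqref{eq:75} are the lower semicontinuity just established in (i). Uniqueness follows from Theorem~\ref{thm:gfent}, which requires only that $|\nabla^-\entv|$ be an upper gradient of $\entv$. Finally, when additionally $f_0 \in L^2(X,\mm)$, both implications of Theorem~\ref{thm:main} apply, yielding the coincidence $\mu_t = \sfH_t(f_0)\mm$ between the Cheeger $L^2$-flow and the Wasserstein entropy flow.

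The main obstacle is the lower semicontinuity step in (i). The variational formula reduces it to understanding the joint behaviour of $\entv(\mu_n)$ and $W_2(\mu_n,\nu)$ when $\mu_n \to \mu$ strongly with $V^2$-moments: the entropy is only lower semicontinuous, and in the extended metric setting, where the available moment weight is $V^2$ rather than $\sfd^2$, one cannot directly conclude $W_2(\mu_n,\nu) \to W_2(\mu,\nu)$. The delicate point is therefore a tightness argument, exploiting the uniform bound on $\int_X V^2\,\d\mu_n$ along sublevels of the entropy (cf.~\eqref{eq:59}) to reduce the transport problem to sublevels $\{V \le M\}$ up to a controlled error, and then using the strong convergence of the restricted densities together with the Kantorovich duality \eqref{eq:dualitabase} to pass to the limit. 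This is the only genuinely non-routine step; the remaining implications are direct citations of results already established.
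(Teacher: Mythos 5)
Your overall architecture matches the paper exactly: the theorem is indeed reduced to showing that $|\nabla^-\entv|$ is sequentially lower semicontinuous with respect to strong convergence with moments on sublevels of $\entv$, and once that is established, Theorem~\ref{thm:slopefisherconv} gives the Fisher identification and \eqref{eq:94}, Theorem~\ref{thm:gfent2} gives existence, Theorem~\ref{thm:gfent} gives uniqueness (the slope being an upper gradient since $\entv$ is $K$-geodesically convex), and Theorem~\ref{thm:main} gives the identification of the two flows. All of these citations are correct, and the paper's own proof begins with precisely this reduction.

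The gap is in the lower semicontinuity step, which is the real content of the proof (Proposition~\ref{prop:convlscslope}). You propose to use the representation \eqref{eq:slopesup} directly,
\[
|\nabla^-\entv|(\mu)=\sup_{\nu}\Big(\frac{\ent\mu-\ent\nu}{W_2(\mu,\nu)}+\frac K2 W_2(\mu,\nu)\Big)^+,
\]
and to control the behaviour of $W_2(\mu_n,\nu)$ along strongly moment-convergent sequences by a localization to $\{V\le M\}$ combined with Kantorovich duality. This is where the argument breaks down. In the extended metric setting the only available moment weight is $\Wgh^2$, which is merely $\sfd$-Lipschitz and hence can grow arbitrarily slower than $\sfd(\cdot,x_0)$; a bound on $\int \Wgh^2\,\d\mu_n$ therefore gives no control on the $\sfd^2$-mass that $\mu_n$ places outside $\{V\le M\}$, so the error from truncating the transport problem to $\{V\le M\}$ cannot be made small. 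Your fixed-$\nu$ term is a ratio whose denominator $W_2(\mu_n,\nu)$ is neither known to converge nor to be upper semicontinuous along these sequences, and no amount of duality by itself rescues the quotient.

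The paper sidesteps this by first proving a different variational identity:
\[
|\nabla^-\entv|(\mu) =\sup_{\sggamma}
\frac{\bigl(G_\sggamma(\mu)-\tfrac{K^-}2 C(\ggamma_\mu)\bigr)^+}{\sqrt{C(\ggamma_\mu)}},
\]
where the supremum runs over plans $\ggamma$ with \emph{bounded deformation} (meaning $\sfd\in L^\infty(\ggamma)$ and both marginals comparable to $\tmm$), $G_\sggamma(\mu)=\ent\mu-\ent{\PushPlan\ggamma\mu}$, and $C(\ggamma_\mu)=\int \sfd^2\,\d\ggamma_\mu$. The crucial gain is that for a fixed $\ggamma$ of bounded deformation, the cost $C(\ggamma_\mu)$ is a linear functional of the density $\d\mu/\d\tmm$ with bounded kernel, and is therefore \emph{continuous} (not just l.s.c.) along the relevant sequences, because bounded entropy gives equiintegrability of the densities. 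Similarly $G_\sggamma$ is shown to be l.s.c.\ (Proposition~\ref{prop:baseconv}), and the supremum of l.s.c.\ functions is l.s.c. The nontrivial bridge between general competitors $\nu$ and plans with bounded deformation is Lemma~\ref{le:approximation}, which truncates an optimal plan between $\mu$ and $\nu$ and mixes in the diagonal plan to enforce bounded marginals. This reformulation—replacing a variable optimal plan by a fixed reference plan $\ggamma$ and working with $\PushPlan\ggamma\mu$—is the missing idea in your sketch, and without it I do not see how to make the lower semicontinuity argument close in the extended metric setting.
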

Thanks to this theorem, under the $CD(K,\infty)$ assumption we can
unambiguously say that a \emph{Heat Flow} on $(X,\tau,\sfd,\mm)$ is
either a gradient flow of Cheeger's energy in $L^2(X,\mm)$ or a
gradient flow of the relative entropy in $(\prob X,W_2)$, at least
for square integrable initial conditions with finite moment.

Concerning the \emph{proof} of Theorem~\ref{thm:mainCDK}, we observe
that applying the results of the previous section it is sufficient
to show that the Wasserstein slope $|\rmD^- \entv|$ is lower
semicontinuous w.r.t.\ strong convergence with moments in $\prob X$
on the sublevel of $\entv$. In fact, if this property holds,
\eqref{eq:48} of Theorem \ref{thm:slopefisherconv} shows that
$|\rmD^-\entv|$ coincides with the Fisher functional  and thus
satisfies the lower semicontinuity property \eqref{eq:94}: in
particular it is lower semicontinuous w.r.t.\ weak convergence with
moments in $\prob X$. Applying Theorem \ref{thm:gfent2} we prove the
existence of the Wasserstein gradient flow starting from $\mu_0$;
its uniqueness follows from Theorem \ref{thm:gfent}, since the slope
is always an upper gradient of $\entv$ under $CD(K,\infty)$.
Applying Theorem \ref{thm:main} we can thus obtain the
identification of the two gradient flows.

In order to prove the lower semicontinuity of the slope
$|\rmD^-\entv|$ w.r.t.\ strong convergence with moments in $\prob
X$ (Proposition~\ref{prop:convlscslope}) we proceed in various
steps, adapting the arguments of \cite{Gigli10}.
\begin{definition}[Plans with bounded deformation]
  Let us set $\tmm:={\GGG 
    \rme^{-a^2\Wgh^2}\mm}$, 
  where
  $V$ satisfies
  \eqref{eq:75} {\GGG and $a>1.$}
  We say that $\ggamma\in\prob{X^2}$ has bounded deformation
  if
\begin{equation}
  \sfd\in L^\infty(X\times X,\ggamma)\quad
  \text{and}\quad
  \text{$c\tmm\leq\pi^i_\sharp\ggamma\leq \frac{1}{c}\tmm$,
  $i=1,\,2$, for some $c>0$.}
  \label{eq:87}
\end{equation}
\end{definition}
{\GGG Notice that if $V$ satisfies \eqref{eq:75} then 
  $aV$, $a>1$, 
  satisfies \eqref{eq:75} as well and $\proV(X)=\mathscr P_{aV}(X)$.

Let $\ggamma$ be a plan with bounded deformation; 
in the next proofs we will use the following simple properties:
\begin{enumerate}[(i)]
  \item If a sequence of probability densities $(\eta_n)$ converges
    weakly to $\eta$ in $L^1(X,\tmm)$ as $n\to\infty$ then
    $\eta_n\circ\pi^1$ converges weakly to $\eta\circ\pi^1$ in
    $L^1(X\times X,\ggamma)$.
  \item
    If a sequence of probability
    densities $(g_n)$  weakly converges to $g$ in $L^1(X\times
    X;\ggamma)$ 
    then 
    $\tilde g_n:=\d(\pi^2_\sharp (g_n\ggamma))/\d\tmm$ weakly converges
    to the corresponding $\tilde g:=\d(\pi^2_\sharp
    (g\ggamma))/\d\tmm$ in $L^1(X,\tmm)$.
  \item
    If 
    $W:X\to[0,\infty]$ is a Borel function, and 
    $(h_n)$ is a sequence of probability densities
    weakly converging to $h$ in $L^1(X,\tmm)$, then
    \begin{equation}
      \label{eq:20}
      \liminf_{n\to\infty}\int_Y Wh_n\,\d\tmm\ge 
      \int_Y Wh\,\d\tmm.
    \end{equation}
  \end{enumerate}
  (i) follows easily by the
  disintegration theorem, since
  denoting by $\rho\in L^\infty(X,\tmm)$ 
  the density of $\gamma^1=\pi^1_\sharp\ggamma$ w.r.t.~$\tmm$ and
  by $(\ggamma_x)_{x\in X}$ the disintegration of $\ggamma$
  w.r.t.~$\gamma^1$, for every $\varphi\in L^\infty(X\times X;\ggamma)$
  we have
  \begin{displaymath}
    \int_X \varphi(x,y)\eta_n(x)\,\d\ggamma(x,y)=
    \int_X \eta_n(x)\Big(\int_X \varphi(x,y)\,\d\ggamma_x(y)\Big)
    \rho(x)\,\d\tmm(x),
  \end{displaymath}
  and $\rho\int_X \varphi(\cdot,y)\,\d\ggamma_\cdot(y)$ belongs to
  $L^\infty(X,\tmm)$. 
  An even easier argument yields (ii): with obvious notation, for any
  $\varphi\in L^\infty(X,\tmm)$ we have
  \begin{displaymath}
    \int_{X}\tilde g_n(y)\varphi(y)\,\d\tmm(y)=
    \int_{X\times X}g_n(x,y)\varphi(y)\,\d\ggamma(x,y).
  \end{displaymath}
  (iii) follows by a standard approximation by truncation,
  since for every $N>0$
  \begin{displaymath}
    \liminf_{n\to\infty}\int_{X}Wg_n\,d\sigma\ge
    \liminf_{n\to\infty}\int_{X}\max\{W,N\}g_n\,d\sigma=
    \int_{X}\max\{W,N\}g\,d\sigma.
  \end{displaymath}
}
\begin{proposition}[Sequential lower semicontinuity of $G_\sggamma$]\label{prop:baseconv}
  For any plan $\ggamma$ with bounded deformation the map
  $\mu\mapsto G_\sggamma(\mu)=\ent{\mu}-\ent{\PushPlan\ggamma\mu}$
  (recall Section~\ref{sec:steepest})
  is sequentially lower semicontinuous with respect to weak convergence with moments,
  on sequences with $\entv$ uniformly bounded from above.
\end{proposition}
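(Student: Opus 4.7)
The plan is to split $G_\sggamma(\mu)$ into an ``information part'' that is lower semicontinuous via the joint lsc of the relative entropy, and an ``explicit part'' that is continuous, using crucially the bounded deformation \eqref{eq:87} and the compactness granted by the uniform entropy bound. Set $\eta_i:=\d\pi^i_\sharp\ggamma/\d\tmm\in[c,1/c]$, $\nu:=\PushPlan\ggamma\mu$, $\tilde\rho:=\d\mu/\d\pi^1_\sharp\ggamma$, and denote by $(\ggamma_y)$ the disintegration of $\ggamma$ with respect to the second coordinate, so that $T\tilde\rho(y):=\int\tilde\rho\,\d\ggamma_y$ is the density of $\nu$ with respect to $\pi^2_\sharp\ggamma$. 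By \eqref{eq:60} and the change-of-reference formula \eqref{eq:69} applied to $\mu\ll\pi^1_\sharp\ggamma$ and $\nu\ll\pi^2_\sharp\ggamma$,
\begin{equation*}
G_\sggamma(\mu)=F(\mu)+C(\mu),
\end{equation*}
with $F(\mu):=\RE\mu{\pi^1_\sharp\ggamma}-\RE\nu{\pi^2_\sharp\ggamma}$ and $C(\mu):=\int\log\eta_1\,\d\mu-\int\log\eta_2\,\d\nu+\int V^2\,\d\nu-\int V^2\,\d\mu$.

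For the lsc of $F$ I would introduce the auxiliary probability $\beta_\mu:=T\tilde\rho(\pi^2)\ggamma\in\prob{X\times X}$, whose second marginal is $\nu$ and whose disintegration in the second variable coincides with $(\ggamma_y)$. A direct computation yields $F(\mu)=\RE{\ggamma_\mu}{\beta_\mu}$. Now let $\mu_n\to\mu$ weakly with moments and $\sup_n\ent{\mu_n}<\infty$; by Remark~\ref{re:tightsub} the corresponding densities converge weakly in $L^1(X,\tmm)$ and, since $\eta_i\in[c,1/c]$, $\tilde\rho_n\to\tilde\rho$ weakly in $L^1(\pi^1_\sharp\ggamma)$ and $T\tilde\rho_n\to T\tilde\rho$ weakly in $L^1(\pi^2_\sharp\ggamma)$ (the latter by a Fubini argument based on $\|\psi\|_\infty\le\|\chi\|_\infty$ for $\psi(x):=\int\chi(y)\,\d\ggamma_x(y)$). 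Testing $\phi\in C_b(X\times X)$ then yields $\ggamma_{\mu_n}\weakto\ggamma_\mu$ and $\beta_{\mu_n}\weakto\beta_\mu$ weakly in the Polish space $\prob{X\times X}$, and the classical joint lsc of the relative entropy under weak convergence delivers $F(\mu)\le\liminf_n F(\mu_n)$.

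The continuity of $C$ along $(\mu_n)$ is established term by term: $\int V^2\,\d\mu_n\to\int V^2\,\d\mu$ by hypothesis; the integrals $\int\log\eta_i\,\d\cdot$ are continuous under the setwise convergence $\mu_n\to\mu$, $\nu_n\to\nu$ (a byproduct of the weak $L^1$-convergence above) thanks to $\log\eta_i\in L^\infty$; for the last piece I rewrite $\int V^2\,\d\nu_n-\int V^2\,\d\mu_n=\int h\,\d\mu_n$ with $h(x):=\int(V^2(y)-V^2(x))\,\d\ggamma_x(y)$. The bounded deformation $\sfd\le M$ $\ggamma$-a.e.\ together with the $L$-Lipschitz property of $V$ gives $|h(x)|\le (LM)^2+2LM\,V(x)$, so that a standard truncation using the uniform $V^2$-moment yields the desired convergence. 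The main obstacle I anticipate is precisely the justification of $\beta_{\mu_n}\weakto\beta_\mu$ and the ensuing application of joint lsc of $\RE{\cdot}{\cdot}$, since the reference measure $\beta_\mu$ itself varies with $\mu$; here the bounded deformation \eqref{eq:87} is indispensable both for the weak convergence of the marginal densities (via $\eta_i\in L^\infty$) and for taming the unbounded weight $V^2$ integrated against $\nu_n$ (via $\sfd\in L^\infty(\ggamma)$).
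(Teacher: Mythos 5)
Your proposal is correct, and in fact it fills in more detail than the paper does. The paper's proof uses \eqref{eq:60} to peel off the moment terms, shows via the bounded-deformation condition \eqref{eq:87} that $\PushPlan\ggamma{\mu_n}\in\mathscr{P}_\Wgh(X)$ and converges with moments to $\PushPlan\ggamma\mu$, reduces to a finite reference measure, and then defers the core lower semicontinuity of $\RE{\mu_n}{\tmm}-\RE{\PushPlan\ggamma\mu_n}{\tmm}$ to \cite[Proposition~11]{Gigli10}. Your argument supplies that outsourced step, and does so through an identity not written in the paper: applying \eqref{eq:69} a second time to trade $\tmm$ for the marginals $\pi^i_\sharp\ggamma$ (which is where $\eta_i\in[c,1/c]$ enters), and then observing that
$\RE\mu{\pi^1_\sharp\ggamma}-\RE{\PushPlan\ggamma\mu}{\pi^2_\sharp\ggamma}=\RE{\ggamma_\mu}{\beta_\mu}$,
a single relative entropy on $X\times X$. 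Joint sequential lower semicontinuity of $\RE{\cdot}{\cdot}$ under weak convergence of both arguments then does the work, and this is precisely what the uniform entropy bound delivers through Remark~\ref{re:tightsub} and the $L^\infty$ bounds on $\eta_i$ (weak $L^1$ convergence of $\tilde\rho_n$ and $T\tilde\rho_n$, hence weak convergence of $\ggamma_{\mu_n}$ and $\beta_{\mu_n}$ in $\prob{X\times X}$). The explicit remainder $C$ is handled correctly term by term, using $\log\eta_i\in L^\infty$ for the density corrections and the pointwise bound $|h(x)|\le (LM)^2+2LM\,\Wgh(x)$ (from $\sfd\in L^\infty(\ggamma)$ and the Lipschitz weight), followed by truncation against the uniform $\Wgh^2$-moment. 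So your route recovers the paper's reduction and then gives a self-contained proof of the cited lemma; the price is introducing the auxiliary plan $\beta_\mu$ and verifying $\ggamma_\mu\ll\beta_\mu$ (which does hold: $T\tilde\rho(y)=0$ forces $\tilde\rho(x)=0$ for $\ggamma$-a.e.\ such $(x,y)$, since $\ggamma_\mu(X\times\{T\tilde\rho=0\})=\int_{\{T\tilde\rho=0\}}T\tilde\rho\,\d\pi^2_\sharp\ggamma=0$), a point worth spelling out since it underlies both the finiteness of $\RE{\ggamma_\mu}{\beta_\mu}$ and the splitting of $\log(\tilde\rho/T\tilde\rho)$ into two integrable pieces.
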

\begin{proof} 
 Let $\mu_n=\eta_n\tmm\in {\mathscr P}_\Wgh(X)$ be weakly convergent with
moments to $\mu=\eta\tmm$, with $\ent{\mu_n}$ uniformly bounded. 
{\GGG The formula \eqref{eq:60} for the change of reference measure in the entropy
and Remark~\ref{re:tightsub} show that $\eta_n$ weakly
  converge to $\eta$ in $L^1(X,\tmm)$.} If
$\rho$ denotes the density of $\pi^1_\sharp\ggamma$ w.r.t. $\tmm$,
we have that $(\eta_n/\rho)\circ\pi^1\ggamma$ is an admissible plan
between $\mu_n$ and $\PushPlan{\ggamma}{\mu_n}$; hence $\rho^{-1}\in
L^\infty(X,\tmm)$ and $\sfd\in L^\infty(X\times X,\ggamma)$ ensure
that $\PushPlan{\ggamma}{\mu_n}$ belong to ${\mathscr P}_\Wgh(X)$ as
well. 
{\GGG Combining the previous properties (i) and (ii) 
  (with $g_n:=(\eta_n/\rho)\circ\pi^1$) we can then 
  show that the densities $h_n$ of 
  $\PushPlan\ggamma{\mu_n}$ w.r.t.\ $\tmm$ weakly converge to
  the corresponding density $h$ of $\PushPlan\ggamma\mu$ 
  in $L^1(X,\tmm)$, and \eqref{eq:20} yields 
  \begin{displaymath}
    \liminf_{n\up\infty}\int_X \Wgh^2\,\d(\PushPlan\ggamma{\mu_n})=
    \liminf_{n\up\infty}\int_X \Wgh^2\,h_n\,\d\tmm\ge
    \int_X \Wgh^2\,h\,\d\tmm
    =\int_X \Wgh^2\,\d(\PushPlan\ggamma{\mu}).
  \end{displaymath}
}
From
\eqref{eq:60} we obtain that
$$
\ent{\mu_n}-\ent{\PushPlan\ggamma\mu_n}=\RE{\mu_n}{\tmm}-\RE{\PushPlan\ggamma\mu_n}{\tmm}-
\int_X\Wgh^2\,\d\mu_n+\int_X\Wgh^2\,\d\PushPlan\ggamma\mu_n
$$
{\GGG with $\RE{\mu_n}{\tmm}$} uniformly bounded. So, we are
basically led, after a normalization, to the case of a probability
reference measure $\tmm$. In this case the proof uses the
equiintegrability in $L^1(\tmm)$ of $\eta_n$, ensured by the upper
bound on entropy, see \cite[Proposition~11]{Gigli10} for details.
\end{proof}

\begin{lemma}[Approximation]\label{le:approximation}
If $\mu,\,\nu\in D(\entv)$ satisfy $W_2(\mu,\nu)<\infty$ then there
exist plans $\ggamma_n$ with bounded deformation satisfying
\[
\begin{split}
  \int_{X\times X}\sfd^2\,\d(\ggamma_n)_\mu\to W_2^2(\mu,\nu)
  \quad\text{and}\quad
  \ent{(\ggamma_n)_\sharp\mu}\to\ent{\nu}.
\end{split}
\]
\end{lemma}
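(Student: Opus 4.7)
The plan is to define $\ggamma_n$ by restricting an optimal plan $\ggamma^{\rm opt}\in\OptimalPlanII\mu\nu$ to a set where distance, weight and marginal densities are bounded, and then correcting with a small multiple of the ``diagonal'' plan $(\Identity,\Identity)_\sharp\tmm$, $\tmm:=\rme^{-\Wgh^2}\mm$, in order to guarantee that the marginals dominate a positive multiple of $\tmm$. Writing $\mu=\rho\tmm$, $\nu=\sigma\tmm$, I would consider for every $n\in\N$
\[
B_n:=\bigl\{(x,y)\in X^2:\sfd(x,y)\le n,\ \Wgh(x),\Wgh(y)\le n,\ \rho(x),\sigma(y)\le n\bigr\},\quad \hat\ggamma_n:=\ggamma^{\rm opt}\vert_{B_n},
\]
$m_n:=\hat\ggamma_n(X^2)$, $t:=\tmm(X)>0$; since $\sfd^2\in L^1(\ggamma^{\rm opt})$ and $\mu,\nu\in\proV(X)$, monotone convergence gives $m_n\uparrow 1$ and $\int_{B_n}\sfd^2\,\d\ggamma^{\rm opt}\uparrow W_2^2(\mu,\nu)$. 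Choosing $\eps_n\downarrow 0$ with $\eps_n\ge 1-m_n$ (the precise rate of decay being tuned later), I set
\[
\ggamma_n:=\tfrac{1-\eps_n}{m_n}\hat\ggamma_n+\tfrac{\eps_n}{t}(\Identity,\Identity)_\sharp\tmm\in\prob{X^2}.
\]
Bounded deformation of $\ggamma_n$ is then immediate, since $\sfd\le n$ on $\supp\ggamma_n$ and $\tfrac{\eps_n}{t}\tmm\le\pi^i_\sharp\ggamma_n\le\bigl(\tfrac{n(1-\eps_n)}{m_n}+\tfrac{\eps_n}{t}\bigr)\tmm$.

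Let $q_n$ and $p_n$ be the densities with respect to $\tmm$ of $\pi^1_\sharp\hat\ggamma_n$ and $\pi^1_\sharp\ggamma_n$, so that $p_n=\tfrac{1-\eps_n}{m_n}q_n+\tfrac{\eps_n}{t}$, and let $\eta_n:=\rho/p_n$; by construction $q_n\uparrow\rho$ $\tmm$-a.e.\ on $\{\rho>0\}$, so $\eta_n\to 1$ $\mu$-a.e. Since $(\ggamma_n)_\mu=(\eta_n\circ\pi^1)\ggamma_n$ and the diagonal component contributes $0$ to the cost,
\[
\int_{X^2}\sfd^2\,\d(\ggamma_n)_\mu=\tfrac{1-\eps_n}{m_n}\int_{B_n}\sfd^2\,\eta_n\circ\pi^1\,\d\ggamma^{\rm opt}.
\]
With $\eps_n$ chosen to decay slowly enough (for example $\eps_n:=\max\{n^{-1},1-m_n\}$), the pointwise bound $\rho\le n$ on $\pi^1(B_n)$ makes $\eta_n$ uniformly dominated there, and dominated convergence together with $\int_{B_n}\sfd^2\,\d\ggamma^{\rm opt}\uparrow W_2^2(\mu,\nu)$ yields the first convergence claim.

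The main obstacle is the entropy convergence $\ent{(\ggamma_n)_\sharp\mu}\to\ent\nu$. The decomposition
\[
(\ggamma_n)_\sharp\mu=\tfrac{1-\eps_n}{m_n}\pi^2_\sharp\bigl((\eta_n\circ\pi^1)\hat\ggamma_n\bigr)+\tfrac{\eps_n}{t}\eta_n\tmm
\]
shows that $(\ggamma_n)_\sharp\mu$ converges to $\nu$ strongly with moments (the first summand tends to $\nu$ since $\eta_n\to 1$ and $\hat\ggamma_n\uparrow\ggamma^{\rm opt}$, while the second has total mass $\eps_n$), so \eqref{eq:79} gives the lower bound $\liminf\ent{(\ggamma_n)_\sharp\mu}\ge\ent\nu$. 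For the matching $\limsup\le\ent\nu$, I would use the data-processing inequality \eqref{eq:73} applied to $\pi^2$, which yields $\RE{(\ggamma_n)_\sharp\mu}{\pi^2_\sharp\ggamma_n}\le\RE{(\ggamma_n)_\mu}{\ggamma_n}=\int\log\eta_n\,\d\mu$, then convert both entropies to the reference $\mm$ via \eqref{eq:60}/\eqref{eq:69}, reducing the problem to the two facts (a) $\ent{\pi^2_\sharp\ggamma_n}\to\ent\nu$, which follows from a direct convexity computation on the explicit convex-combination structure of $\pi^2_\sharp\ggamma_n$, and (b) $\int\log\eta_n\,\d\mu\to 0$. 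Fact (b) is the subtlest point: $\log\eta_n\to 0$ $\mu$-a.e., but the negative part (which can be as large as $\log(\rho t/\eps_n)$ where $\eta_n$ is small) and the positive part (bounded by $\log(m_n/(1-\eps_n)\cdot\rho/q_n)$) must be uniformly integrable against $\mu$, and this determines the precise required rate for $\eps_n$---this is the main technical care that the argument demands.
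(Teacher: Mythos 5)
Your construction is the same one the paper uses (restrict an optimal plan, normalize, and mix in a small diagonal $(\Identity,\Identity)_\sharp\tmm$ to enforce the two-sided marginal bounds), with a slightly different choice of restriction set; so the skeleton is right. However, there are two genuine gaps, one of which you flag yourself.

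First, the cost convergence. You write that the bound $\rho\le n$ on $\pi^1(B_n)$ ``makes $\eta_n$ uniformly dominated there''; it does not. What the lower bound $p_n\ge \eps_n/t$ gives is $\eta_n\le n t/\eps_n$ on $\pi^1(B_n)$, and with $\eps_n\approx 1/n$ this grows like $n^2$, so there is no $n$-independent dominating function and the dominated convergence invocation is unjustified as written. The claim should still be true, but it needs an actual estimate (for instance $\eta_n(x)\,\ggamma_x(B_{n,x})\le m_n/(1-\eps_n)$, which is bounded), not a slogan.

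Second, and more seriously, the entropy convergence. The reduction you propose does not quite reduce to your facts (a) and (b): applying \eqref{eq:69} with $\nu:=(\ggamma_n)_\sharp\mu$ and $\nn:=\pi^2_\sharp\ggamma_n$ yields
\[
\ent{(\ggamma_n)_\sharp\mu}=\RE{(\ggamma_n)_\sharp\mu}{\pi^2_\sharp\ggamma_n}
+\int_X\log\Big(\frac{\d\pi^2_\sharp\ggamma_n}{\d\mm}\Big)\,\d(\ggamma_n)_\sharp\mu,
\]
and the second integral is taken against $(\ggamma_n)_\sharp\mu$, not against $\pi^2_\sharp\ggamma_n$; so it is not $\ent{\pi^2_\sharp\ggamma_n}$ and your fact (a) does not control it without further work. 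As for fact (b) --- that $\int\log\eta_n\,\d\mu=\RE{\mu}{\pi^1_\sharp\ggamma_n}\to 0$ --- you explicitly declare it ``the main technical care that the argument demands'' and leave it open. That is exactly where the difficulty lies, because the $\eps_n$-correction you build in from the start destroys any monotone structure: $p_n$ is neither increasing nor decreasing, and the negative part of $\rho\log(\rho/p_n)$ requires a uniform integrability estimate that depends delicately on the rate of $\eps_n$.

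The paper avoids both issues by separating the two corrections. It first restricts the optimal plan (no $\eps_n$, no diagonal), and the key observation is that the un-normalized push-forward $z_n(\ggamma_n)_\sharp\mu$ has density $\eta_n$ with $\eta_n\uparrow\eta$ $\mm$-a.e., where $\eta$ is the density of $\nu$; monotone convergence, applied separately to the positive and negative parts of $r\log r$ as in the proof of Theorem~\ref{thm:gfent2}, then gives the entropy convergence for this piece directly, with no uniform integrability argument to set up. Only afterwards does the paper take the convex combination with the diagonal plan $\ggamma^0=(\Identity,\Identity)_\sharp\tmm$ to get bounded deformation, using that $\PushPlan{\ggamma^0}{\mu}=\mu$ together with convexity and lower semicontinuity of $\entv$. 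If you restructure your argument in this order --- establish $\eta_n\uparrow\eta$ before you introduce $\eps_n$ --- the delicate rate-tuning you worry about disappears and the entropy limit follows cleanly.
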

\begin{proof} 
{\GGG 
Recalling \eqref{eq:50}, the proof of \cite[Lemma 10]{Gigli10} 
provides a sequence $(\ggamma_n)$ of 
plans with bounded deformation satisfying 
\[
\begin{split}
  \int_{X\times X}\sfd^2\,\d(\ggamma_n)_\mu\to W_2^2(\mu,\nu),
  \quad
  \nu_n:=(\ggamma_n)_\sharp\mu\to \nu\quad\text{in }\prob X,
  \quad
  \RE{(\ggamma_n)_\sharp\mu}{\tmm}\to\RE{\nu}{\tmm}.
\end{split}
\]
We want to show that the convergence of the relative entropy w.r.t.\
$\tmm$ yields the same property for $\entv$. 
By \eqref{eq:60} (with $aV$ instead of $V$)
this is equivalent to prove the convergence of the moments.

Denoting by $h_n $ the density of 
$\nu_n$ w.r.t.\ $\tmm$, 
\eqref{eq:60} and Remark \ref{re:tightsub}
show that $h_n$ weakly converge to the corresponding density
$h$ of $\nu$ in $L^1(X,\tmm)$, so that \eqref{eq:20} yields
\begin{equation}
  \label{eq:84}
  \liminf_{n\to\infty}\int_X V^2\,\d\nu_n=
  \liminf_{n\to\infty}\int_X V^2h_n\,\d\tmm=
  \int_X V^2h\,\d\tmm=
  \int_X V^2\,\d\nu.
\end{equation}
On the other hand,
if $\bar\mm:=\rme^{-V^2}\,\d\mm$, since
$\tmm=\rme^{-(a^2-1)V^2}\,\d\bar\mm$, \eqref{eq:60} 
and the lower semicontinuity of $\RE\cdot{\bar\mm}$ in $\prob X$ yield
\begin{align*}
  \limsup_{n\to\infty}\,(a^2-1)\int_X
  V^2h_n\,\d\tmm&=
  \limsup_{n\to\infty}\Big(\RE{\nu_n}\tmm-\RE{\nu_n}{\bar\mm}\Big)
  =
  \RE{\nu}\tmm-
  \liminf_{n\to\infty}\RE{\nu_n}{\bar\mm}
  \\&\le
  \RE{\nu}{\bar\mm}-\RE{\nu}{\bar\mm}=
  (a^2-1)\int_X V^2 h\,\d\tmm.
\end{align*}
Since $a^2-1>0$, combining with \eqref{eq:84} we conclude.}
\end{proof}
%
%
%

%
%
\begin{proposition}[$|\rmD^-\entv|$ is a l.s.c. slope in $CD(K,\infty)$ spaces]
\label{prop:convlscslope} Assume that $(X,\tau,d,\mm)$ is a Polish
extended space satisfying $CD(K,\infty)$ and \eqref{eq:75} holds.
Then $D(\entv)\ni\mu\mapsto |\rmD^-\entv|^2(\mu)$ is sequentially
lower semicontinuous w.r.t. weak convergence with moments on the
sublevels of $\entv$. In particular \eqref{eq:48} holds.
\end{proposition}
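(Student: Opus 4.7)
The plan is to represent $|\nabla^-\entv|$ pointwise as a supremum of sequentially lower semicontinuous functionals indexed by plans with bounded deformation, following the strategy of \cite{Gigli10}. Since squaring preserves lower semicontinuity on $[0,\infty)$, it suffices to prove the sequential l.s.c.\ of $|\nabla^-\entv|$ itself; then \eqref{eq:48} follows from Theorem~\ref{thm:slopefisherconv}.

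Starting from the $K$-geodesic convexity of $\entv$ supplied by $CD(K,\infty)$, formula \eqref{eq:slopesup} gives
\begin{equation*}
|\nabla^-\entv|(\mu)=\sup_{\nu\in D(\entv),\ 0<W_2(\mu,\nu)<\infty}\left(\frac{\entv(\mu)-\entv(\nu)}{W_2(\mu,\nu)}+\frac K2 W_2(\mu,\nu)\right)^+.
\end{equation*}
Using Lemma~\ref{le:approximation}, each $\nu$ in this supremum can be approached by measures of the form $\ggamma_{n,\sharp}\mu$ with $\ggamma_n$ of bounded deformation, in such a way that $\ent{\ggamma_{n,\sharp}\mu}\to\ent\nu$, $\int\sfd^2\,\d(\ggamma_n)_\mu\to W_2^2(\mu,\nu)$, and consequently $W_2(\mu,\ggamma_{n,\sharp}\mu)\to W_2(\mu,\nu)$ (using $W_2^2(\mu,\ggamma_{n,\sharp}\mu)\le\int\sfd^2\,\d(\ggamma_n)_\mu$ together with the lower semicontinuity of $W_2$). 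Setting, for each bounded deformation $\ggamma$ with $\mu\ll\pi^1_\sharp\ggamma$,
\begin{equation*}
F_\sggamma(\mu):=\left(\frac{G_\sggamma(\mu)}{W_2(\mu,\ggamma_\sharp\mu)}+\frac K2 W_2(\mu,\ggamma_\sharp\mu)\right)^+
\end{equation*}
(with the convention $F_\sggamma(\mu):=0$ whenever $\ggamma_\sharp\mu=\mu$), the identity $|\nabla^-\entv|(\mu)=\sup_\sggamma F_\sggamma(\mu)$ then follows: the bound $\ge$ is a direct application of \eqref{eq:slopesup} to $\nu:=\ggamma_\sharp\mu$ (using the admissible coupling $\ggamma_\mu$), while $\le$ is exactly the content of the approximation argument above.

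The sequential lower semicontinuity of each $F_\sggamma$ on sublevels of $\entv$ (for weak convergence with moments) then rests on two facts. Firstly, Proposition~\ref{prop:baseconv} yields the sequential l.s.c.\ of $\mu\mapsto G_\sggamma(\mu)$. Secondly, bounded deformation of $\ggamma$ forces $\mu\mapsto\ggamma_\sharp\mu$ to be continuous for weak convergence with moments on the relevant entropy sublevel (and hence for $W_2$): the weak $L^1(\mm)$ convergence of the densities guaranteed by Remark~\ref{re:tightsub} tested against the bounded map $x\mapsto\int f(y)\,\d\ggamma_x(y)$ yields weak convergence of $\ggamma_\sharp\mu_n$, while the $\sfd$-Lipschitz character of $V$ combined with $\sfd\in L^\infty(X\times X,\ggamma)$ provides uniform integrability of $V^2$ along $\ggamma_\sharp\mu_n$. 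The triangle inequality in $(\prob X,W_2)$ then yields the continuity of $\mu\mapsto W_2(\mu,\ggamma_\sharp\mu)$; thus $G_\sggamma(\mu)+\tfrac K2 W_2^2(\mu,\ggamma_\sharp\mu)$ is l.s.c., its positive part is l.s.c., and dividing by the continuous positive quantity $W_2(\mu,\ggamma_\sharp\mu)$ preserves l.s.c. A supremum of l.s.c.\ functions is l.s.c., which concludes the argument.

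The main obstacle is the continuity of the push-forward map $\mu\mapsto\ggamma_\sharp\mu$ with respect to convergence with moments on entropy sublevels, because the disintegration $x\mapsto\ggamma_x$ is not continuous in general: one genuinely needs the combination of bounded deformation of $\ggamma$ (to upgrade weak-$*$ convergence of $\mu_n$ to weak $L^1(\mm)$ convergence of the densities, so that integration against bounded non-continuous test functions still passes to the limit) and of the moment control provided by $\sfd\in L^\infty(X\times X,\ggamma)$ and the $\sfd$-Lipschitz character of $V$ to propagate second moment convergence through the disintegration.
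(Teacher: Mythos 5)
You take the paper's overall approach---representing $|\nabla^-\entv|(\mu)$ as a supremum, indexed by plans $\ggamma$ with bounded deformation, of functionals built from $G_\sggamma(\mu)=\ent\mu-\ent{\ggamma_\sharp\mu}$ (Lemma~\ref{le:pre}), invoking Proposition~\ref{prop:baseconv} for the lower semicontinuity of $G_\sggamma$ and Lemma~\ref{le:approximation} to recover the full supremum---but there is a genuine gap in the step where you claim continuity of $\mu\mapsto W_2(\mu,\ggamma_\sharp\mu)$ under weak convergence with moments. Your triangle-inequality argument needs $W_2(\mu_n,\mu)\to 0$ and $W_2(\ggamma_\sharp\mu_n,\ggamma_\sharp\mu)\to 0$, but weak convergence with moments does \emph{not} imply $W_2$-convergence in the Polish extended setting: as the paper notes around \eqref{eq:86}, the two notions coincide only under extra hypotheses (roughly $\tau=\tau_\sfd$ and $V\sim\sfd(\cdot,x_0)$). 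Bounded entropy does not fix this: the bound $W_2^2(\mu_n,\ggamma_\sharp\mu_n)\le C(\ggamma_{\mu_n}):=\int\sfd^2\,\d(\ggamma_n)_{\mu_n}$, combined with l.s.c.\ of $W_2$, only sandwiches $W_2^2(\mu,\ggamma_\sharp\mu)\le\liminf_n W_2^2(\mu_n,\ggamma_\sharp\mu_n)\le\limsup_n W_2^2(\mu_n,\ggamma_\sharp\mu_n)\le C(\ggamma_\mu)$, which is not enough because in general $W_2^2(\mu,\ggamma_\sharp\mu)<C(\ggamma_\mu)$.

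The paper's remedy is precisely to abandon $W_2^2(\mu,\ggamma_\sharp\mu)$ as the normalizing quantity and use instead the coupling cost $C(\ggamma_\mu)=\int\sfd^2\,\d\ggamma_\mu=\int_X\bigl(\int_X\sfd^2(x,y)\,\d\ggamma_x(y)\bigr)\,\d\mu(x)$. Since $\sfd\in L^\infty(X\times X,\ggamma)$ and the first marginal of $\ggamma$ is comparable to $\tmm$, this is the integral of an \emph{essentially bounded} kernel against the density $\d\mu/\d\tmm$, and is therefore genuinely continuous along sequences $\mu_n\to\mu$ with bounded entropy, because those densities converge weakly in $L^1(\tmm)$ (Remark~\ref{re:tightsub}). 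The price is that each term of the supremum decreases when $W_2^2$ is replaced by the larger $C(\ggamma_\mu)$; the paper compensates with the elementary monotonicity $\frac{(a-b)^+}{\sqrt b}\ge\frac{(a-c)^+}{\sqrt c}$ for $c\ge b>0$ (applied with $a=G_\sggamma(\mu)$, $b=W_2^2(\mu,\ggamma_\sharp\mu)$, $c=C(\ggamma_\mu)$) together with Lemma~\ref{le:approximation} to show the supremum is unchanged, arriving at $|\nabla^-\entv|(\mu)=\sup_\sggamma\bigl(G_\sggamma(\mu)-\tfrac{K^-}2C(\ggamma_\mu)\bigr)^+/\sqrt{C(\ggamma_\mu)}$. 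If you replace your $F_\sggamma$ by this expression and insert the monotonicity step to verify the identity, your argument is repaired and coincides with the paper's.
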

\begin{proof}
In this proof we denote by $C(\ggamma)$ the cost of $\ggamma$, i.e.
$C(\ggamma):=\int \sfd^2\,\d\ggamma$. We closely follow \cite[Theorem 12
and Corollary 13]{Gigli10}.

Let $\mu=\rho\mm$ in the domain of the entropy. Taking
\eqref{eq:slopesup} and the $K$-geodesic convexity of $\entv$ into account, we first prove that it holds
\begin{equation}
\label{eq:representation} |\rmD^-\entv|(\mu) =\sup_{\sggamma}
\frac{
  \bigl(G_\sggamma(\mu)
  -\frac{K^-}2C(\ggamma_\mu)\bigr)^+}{\sqrt{C(\ggamma_\mu)}},
\end{equation}
where the supremum runs in the class of plans $\ggamma$ with bounded
deformation. Indeed, inequality $\geq$ follows
choosing $\nu=\PushPlan\ggamma\mu$ in \eqref{eq:slopesup} and using
the trivial inequality
\[
a\in\R,\,\,c\geq b>0\qquad\Longrightarrow\qquad
\frac{(a-b)^+}{\sqrt{b}}\geq\frac{(a-c)^+}{\sqrt{c}},
 \]
 with $a=G_\sggamma(\mu)$, $b=W_2^2(\mu,\nu)$ and
 $c=C(\ggamma_\mu)$,
together with the fact that $C(\ggamma_\mu)\geq W_2^2(\mu,\nu)$. The
other inequality is a consequence of the approximation
Lemma~\ref{le:approximation}.

To conclude, it is sufficient to prove that for all $\ggamma$ with
bounded deformation the map
$\mu\mapsto\bigl(G_\sggamma(\mu) -\tfrac{K^-}2C(\ggamma_\mu)\bigr)^+
/C(\ggamma_\mu)^{1/2}$ is sequentially lower semicontinuous with
respect to weak convergence with moments on the sublevels of the
entropy. This follow by Proposition~\ref{prop:baseconv} and the fact
that $\mu\mapsto C(\ggamma_\mu)$ is continuous along these
sequences. In turn, the continuity property along these sequences
follows by the representation
$$
C(\ggamma_\mu)=\int_{X\times X}\frac{\d\mu}{\d\tmm}(x)
\bigl(\frac{\d\pi^1_\sharp\ggamma}{\d\tmm}(x)\bigr)^{-1}\sfd^2(x,y)\,\d\ggamma(x,y).
$$
Indeed, both $\bigl(\d\pi^1_\sharp\ggamma/\d\tmm\bigr)^{-1}$ and
$\sfd$ are essentially bounded, while the densities $\d\mu/\d\tmm$
are equiintegrable, as we saw in the proof of
Proposition~\ref{prop:baseconv}.
\end{proof}

\section{A metric Brenier theorem and gradients of Kantorovich
potentials}\label{sec:weakbrenier}

In this section we provide a ``metric'' version of Brenier's theorem
and we identify ascending slope and minimal weak upper gradient of
Kantorovich potentials. These results depend on $L^\infty$ upper
bound on interpolations, a property that holds in spaces with
Riemannian lower bounds on Ricci curvature, see
\cite{Ambrosio-Gigli-Savare11bis}, or in non-branching
$CD(K,\infty)$ metric spaces (because the non-branching property is
inherited by $(\Probabilities{X}, W_2)$, see
\cite[Corollary~7.32]{Villani09},
\cite[Proposition~2.16]{Ambrosio-Gigli11}, and all $p$-entropies are
convex). {\nc See also \cite{R2011b} for more recent results in this direction,
independent of the non-branching assumption.}

 If $\sfd$ is bounded, the $L^\infty$ bound can be relaxed
to an easier bound on entropy, but modifying the class of test
plans, see Remark~\ref{rem:alternoption}. We assume throughout this
section that
\begin{displaymath}
  \text{$\sfd$ is a finite distance and $\mm$ satisfies \eqref{eq:75}}.
\end{displaymath}
However, we keep the
possibility of considering the case when $\tau$ is not induced by
$\sfd$.

In this section we denote by $\calT$ the class of test plans
concentrated on $\AC 2{[0,1]}X\sfd$ with bounded compression on the
sublevels of $\Wgh$ and by $\mathcal G\subset \calT$ the subclass of
test plans concentrated on $\geo(X)$. By
Remark~\ref{rem:monotone-weakT} we have the obvious relation
\begin{equation}\label{eq:nizza}
\weakgradG{f}\leq\weakgradA{f}.
\end{equation}

 In the next lemma we prove that, for Kantorovich potentials
$\varphi$, $t\mapsto\varphi(\gamma_t)$ is not only Sobolev but also
absolutely continuous along $\calT$-almost every curve in $\AC
{2}{[0,1]}X\sfd$. This holds even though in our general framework no
Lipschitz continuity property (not even a local one) of $\varphi$
can be hoped for; in particular, by Remark~\ref{rem:whenslopesare}
we obtain that $|\rmD^+\varphi|$ is a $\calT$-weak upper gradient
of $\varphi$. 
\begin{lemma} [Slope is a weak upper gradient for Kantorovich
potentials] \label{le:slopekanto} Let
$\mu=\rho\mm\in\Probabilities{X},\,\nu\in\Probabilities{X}$ with
$W_2(\mu,\nu)<\infty$ and let $\varphi:X\to\R\cup\{-\infty\}$ be a
Kantorovich potential relative to some optimal plan $\ggamma$
 between $\mu$ and $\nu$.
If
$\rho$ satisfies
\begin{equation}\label{eq:wbre22}
\rho\geq c_M>0\quad\text{$\mm$-a.e. in $\{\Wgh\leq M\}$, for all
$M\geq 0$}
\end{equation}
then $\varphi$ is absolutely continuous along $\calT$-almost every
curve of $\AC 2{[0,1]}X\sfd$ and the slope $|\rmD^+\varphi|$ is a
$\calT$-weak upper gradient of $\varphi$.
\end{lemma}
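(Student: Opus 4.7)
The plan is to exploit that $\varphi=Q_1 f$ with $f:=-\varphi^c$ (so $\varphi$ is $\sfd$-upper semicontinuous, as a $c$-concave function) in order to apply the absolute continuity criterion Corollary~\ref{cor:real_curves} along curves, and then to invoke Remark~\ref{rem:whenslopesare} to pass from absolute continuity to the weak upper gradient property for $|\nabla^+\varphi|$. The main input from Section~\ref{sec:hopflax} is the estimate from the proof of Proposition~\ref{prop:slopesqt}, namely
\begin{equation*}
\varphi(\gamma_s)-\varphi(\gamma_t)\le \sfd(\gamma_s,\gamma_t)\,D^-(\gamma_t,1)+\tfrac12\sfd^2(\gamma_s,\gamma_t),
\end{equation*}
valid whenever $\varphi(\gamma_t)>-\infty$, together with the bound $D^-(x,1)\le \sfd(x,y)$ for $\ggamma$-a.e.\ $(x,y)$ recalled in \eqref{eq:100}, which yields $\int_X D^-(x,1)^2\,\d\mu\le W_2^2(\mu,\nu)<\infty$.

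Fix $\ppi\in\calT$. First, $\varphi$ is finite $\mu$-a.e.\ by condition (ii) in the definition of Kantorovich potential, and assumption \eqref{eq:wbre22} implies $\rho>0$ $\mm$-a.e., hence $\varphi>-\infty$ $\mm$-a.e.\ on $X$. Since $(\e_s)_\sharp\ppi\ll\mm$ for every $s\in [0,1]$, Fubini gives $\varphi(\gamma_0),\varphi(\gamma_1)>-\infty$ and $\varphi(\gamma_s)>-\infty$ for a.e.\ $s\in[0,1]$, for $\ppi$-a.e.\ $\gamma$. Next, decompose $\ppi$ via the sets $A_M:=\{\gamma:\max_{t\in[0,1]}\Wgh(\gamma_t)\le M\}$ (which are Borel since $\Wgh\circ\gamma$ is continuous, and satisfy $\ppi(\cup_M A_M)=1$). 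On each $A_M$ the bounded compression condition \eqref{eq:incompre2} and \eqref{eq:wbre22} give $(\e_s)_\sharp(\ppi\restr{A_M})\le \frac{C(\ppi,M)}{c_M}\,\mu$ for every $s\in[0,1]$, whence
\begin{equation*}
\int_{A_M}\int_0^1 D^-(\gamma_s,1)^2\,\d s\,\d\ppi(\gamma)\le \frac{C(\ppi,M)}{c_M}\int_X D^-(x,1)^2\,\d\mu\le \frac{C(\ppi,M)}{c_M}\,W_2^2(\mu,\nu)<\infty.
\end{equation*}
Letting $M\uparrow\infty$ we conclude that $s\mapsto D^-(\gamma_s,1)$ belongs to $L^2(0,1)$ for $\ppi$-a.e.\ $\gamma$; in particular $D^-(\cdot,1)\circ\gamma\,|\dot\gamma|\in L^1(0,1)$ by Cauchy--Schwarz, since $\gamma\in \AC2{(0,1)}X\sfd$.

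Now fix such a good curve $\gamma$ and set $L_\gamma:=\int_0^1|\dot\gamma|\,\d r<\infty$. Bounding $\sfd(\gamma_s,\gamma_t)\le \big|\int_s^t |\dot\gamma|\,\d r\big|$ and $\sfd(\gamma_s,\gamma_t)\le L_\gamma$ in the displayed inequality above, we obtain
\begin{equation*}
\varphi(\gamma_s)-\varphi(\gamma_t)\le \bigl(D^-(\gamma_t,1)+\tfrac12 L_\gamma\bigr)\Big|\int_s^t|\dot\gamma|(r)\,\d r\Big|\quad\text{whenever }\varphi(\gamma_t)>-\infty,
\end{equation*}
which is precisely \eqref{eq:105} with $g(\gamma_t):=D^-(\gamma_t,1)+\tfrac12 L_\gamma$ (a function with $g\circ\gamma\,|\dot\gamma|\in L^1(0,1)$) and with $\varphi$ playing the role of the $\sfd$-upper semicontinuous function. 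Corollary~\ref{cor:real_curves} then yields that $s\mapsto\varphi(\gamma_s)$ is real-valued and absolutely continuous on $[0,1]$. This holds for $\ppi$-a.e.\ $\gamma$ and for every $\ppi\in\calT$, so by Remark~\ref{rem:whenslopesare} the slope $|\nabla^+\varphi|$ satisfies $|\frac{\d}{\d s}(\varphi\circ\gamma)|\le |\nabla^+\varphi|(\gamma_s)|\dot\gamma_s|$ a.e.\ in $[0,1]$ for $\calT$-a.e.\ curve, which is the pointwise characterization \eqref{eq:pointwisewug} of a $\calT$-weak upper gradient. The only mildly delicate point is the quadratic term $\tfrac12\sfd^2(\gamma_s,\gamma_t)$ in the Hopf--Lax estimate, handled by the crude per-curve bound $\sfd(\gamma_s,\gamma_t)\le L_\gamma$ which suffices since we only need the integrability of $g\circ\gamma\,|\dot\gamma|$.
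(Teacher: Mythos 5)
Your proof is correct and follows essentially the same route as the paper's: express $\varphi=Q_1 f$, use the Hopf--Lax estimate \eqref{eq:9} at $t=1$ together with the bound $D^-(\cdot,1)\le\sfd$ of \eqref{eq:100} (from Proposition~\ref{prop:slopeKA}), combine $\rho\ge c_M$ on sublevels of $\Wgh$ with bounded compression to get the required integrability along $\calT$-a.e.\ curve, and close via Corollary~\ref{cor:real_curves} and Remark~\ref{rem:whenslopesare}. The only (harmless) variations are that you work directly with $D^-(\cdot,1)$ and obtain $s\mapsto D^-(\gamma_s,1)\in L^2(0,1)$ by a Fubini argument, whereas the paper passes to an everywhere-defined majorant $\tilde D$ of $D^-(\cdot,1)$ (set to $+\infty$ on $\{\varphi=-\infty\}$) and estimates the line integral $\int_\gamma\tilde D$ via the H\"older bound underlying \eqref{eq:21}.
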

\begin{proof} Set $f=-\varphi^c$, so that $\varphi=Q_1f$
(here we adopt the notation of \S \ref{sec:hopflax})
and the set $\mathcal
D(f)$ in \eqref{eq:defdf} coincides with $X$. By
Proposition~\ref{prop:slopeKA} we know that the function
$$
D^*(x):=\int_X \sfd(x,y)\,\d\ggamma_x(y)
$$
(where $\{\ggamma_x\}_{x\in X}$ is the disintegration of $\ggamma$
w.r.t.~$\mu$)
belongs to $L^2(X,\mu)$ and bounds $\mu$-a.e.~from above $\rmD^-(x,1)$
by \eqref{eq:100},
and then $\mm$-a.e.; we know also from \eqref{eq:hjups} that
$|\rmD^+\varphi|\in L^2(X,\mu)$ and that
$\rmD^-(x,1)\geq|\rmD^+\varphi|(x)$ wherever $\varphi(x)>-\infty$. We
modify $D^*$ in a $\mm$-negligible set, getting a function
$\tilde{D}\in L^2(X,\mu)$ larger than $\rmD^-(x,1)$ everywhere and
equal to $+\infty$ on the $\mm$-negligible set
$\{\varphi=-\infty\}$.

We claim now that the condition $\int_\gamma\tilde{D}<\infty$ is
fulfilled for $\mathcal T$-almost every $\gamma$ in $\AC
2{[0,1]}X\sfd$. Indeed, arguing as in \eqref{eq:21}, for any test
plan $\ppi\in \calT$ with $\Energy 2\gamma\leq N^2<\infty$
$\ppi$-a.e. we have
$$
\int\int_{\gamma\cap\{\Wgh\leq M\}} \tilde{D}\,\d\ppi\leq N
\Big(C(\ppi,M) \int_{\{\Wgh\leq M\}}\tilde{D}^2\,\d\mm\Big)^{1/2}
\leq N \Big(c_M^{-1} C(\ppi,M)\int_X
\tilde{D}^2\,\d\mu\Big)^{1/2}<\infty,
$$
thanks to the fact that $\rho\geq c_M$ on $\{\Wgh\leq M\}$. Since
$M$ is arbitrary and since $\ppi$-a.e. curve $\gamma$ is contained
in $\{\Wgh\leq M\}$ for sufficiently large $M$, the claim follows.

Now, let $\gamma\in \AC2{[0,1]}X\sfd$ with
$\int_\gamma\tilde{D}<\infty$, and $\lambda=|\dot\gamma|\Leb
1\restr{[0,1]}$. $\varphi$ is $\sfd$-upper semicontinuous and
$\varphi\circ\gamma$ is finite $\lambda$-a.e. (since $\tilde D\circ
\gamma$ is finite $\lambda$-a.e.). By \eqref{eq:9} with
$x=\gamma_{s}$ and $y=\gamma_{t}$, taking also the inequality
$\rmD^-(x,1)\leq\tilde{D}(x)$ into account, we get
$$
\varphi(\gamma_s)-\varphi(\gamma_t)\leq
\sfd(\gamma_{s},\gamma_{t})\Big(\tilde D(\gamma_{t})+ \frac
{\sfd(\gamma_{s},\gamma_{t})}2\Big)\le \left|\int_s^t
|\dot\gamma_r|\,\d r\right|\Big(\tilde
D(\gamma_{t})+\diam(\gamma)\Big)
$$
for all $t$ such that $\varphi(\gamma_t)>-\infty$. Hence we can
apply Corollary~\ref{cor:real_curves} to conclude that
$\varphi\circ\gamma$ is absolutely continuous in $[0,1]$. Recalling
Remark~\ref{rem:whenslopesare} we get
$$
\biggl|\int_{\partial\gamma}\varphi\biggr|\leq
\int_\gamma|\rmD^+\varphi|.
$$
\end{proof}
Using \eqref{eq:nizza}, the previous lemma and
Proposition~\ref{prop:slopeKA}, we have the chain of inequalities
\begin{equation}\label{nizza1}
\weakgradG\varphi (x)\leq\weakgradA\varphi(x)\leq
|\rmD^+\varphi|(x)\leq \sfd(x,y)\qquad\text{$\ggamma$-a.e. in
$X\times X$}
\end{equation}
for any optimal plan $\ggamma$. In the next theorem we show that an
$L^\infty$ bound on geodesic interpolation ensures that the
inequalities are actually equalities.

{In order to present the notion of geodesic plan we assume for a moment that $(X,\sfd)$ is
a geodesic space.} In such spaces, the optimal transport problem can
be ``lifted'' to $\geo(X)$ considering all $\ppi\in\Probabilities{\geo(X)}$ (called geodesic transport plans) 
whose marginals at time $0$ and at time $1$ are respectively $\mu$ and $\nu$ and minimizing
$$
\int \int_0^1|\dot\gamma_s|^2\,\d s\,\d\ppi(\gamma)=\int
\sfd^2(\gamma_0,\gamma_1)\,\d\ppi(\gamma)
$$
in this class. Since
$(e_0,e_1)_\sharp\ppi$ is an admissible plan between $\mu$ and
$\nu$, it turns out that the infimum is larger than
$W_2^2(\mu,\nu)$. But
a simple measurable geodesic selection
argument provides equivalence of the problems and existence of
optimal $\ppi$.

This motivates the next definition.
\begin{definition}[Optimal geodesic plans]\label{def:optgeo}
Let $\mu,\,\nu\in\prob X$ be such that $W_2(\mu,\nu)<\infty$. A plan
$\ppi\in\Probabilities{\geo(X)}$ is an optimal geodesic plan between
$\mu$ and $\nu$ if
$$
(\rme_0)_\sharp\ppi=\mu,\quad (\rme_1)_\sharp\ppi=\nu,\quad \int
\sfd^2(\gamma_0,\gamma_1)\,\d\ppi(\gamma)=\int
\int_0^1|\dot\gamma_s|^2\,\d s\,\d\ppi(\gamma)=W_2^2(\mu,\nu).
$$
\end{definition}

It is easy to check that
\begin{equation}\label{eq:geoinduced}
t\quad\mapsto\quad(\e_t)_\sharp\ppi,
\end{equation}
is a constant speed geodesic in $\prob X$ from $\mu$ to $\nu$ for
all optimal geodesic plans between $\mu$ and $\nu$. In particular, $(\prob X,W_2)$ is
geodesic as well. Also, $(e_0,e_1)_\sharp\ppi$ is an optimal coupling
whenever $\ppi$ is an optimal geodesic plan.

Adapting the arguments in \cite[Theorem~7.21,
Corollary~7.22]{Villani09} for the locally compact case and
\cite{Lisini07,Ambrosio-Gigli11} for the complete case, it can be
shown that in any geodesic Polish extended space $(X,\tau,\sfd)$
\eqref{eq:geoinduced} provides a description of \emph{all} constant
speed geodesics, see \cite{Lisini11}. {\nc In the next theorem
we don't assume really that $(X,\sfd)$ is geodesic, but rather the existence
of an optimal geodesic plan according to Definition~\ref{def:optgeo}.}

\begin{theorem}[A metric Brenier's theorem]\label{thm:brweak}
Let $\mu=\rho\mm\in\Probabilities{X}$ be satisfying
\eqref{eq:wbre22}, let $\nu\in\Probabilities{X}$ with
$W_2(\mu,\nu)<\infty$, let $\ppi$ be an optimal geodesic plan
between $\mu$ and $\nu$ and let $\varphi:X\to\R\cup\{-\infty\}$ be a
Kantorovich potential relative to $(\e_0,\e_1)_\sharp\ppi$. Assume
that $(\e_s)_\sharp\ppi=\mu_s=\rho_s\mm$ for all $s>0$ sufficiently
small and that
\begin{equation}\label{eq:veryweakco}
  \limsup_{s\downarrow 0}\Vert\rho_s\Vert_{L^\infty(\{V\le M\},\mm)}<\infty
  \qquad\forall M>0.
\end{equation}
Then
\begin{equation}\label{eq:wbre2}
\sfd(\gamma_1,\gamma_0)=|\rmD^+\varphi|(\gamma_0)=\weakgradA\varphi(\gamma_0)=
\weakgradG\varphi(\gamma_0)\qquad\text{for $\ppi$-a.e.
$\gamma\in\geo(X)$}.
\end{equation}
As a consequence, $W_2^2(\mu,\nu)=\int_X|\rmD^+\varphi|^2\,\d\mu$
and $|\rmD^+\varphi|=\weakgradA\varphi=\weakgradG\varphi$
 $\mm$-a.e. in $X$.
\end{theorem}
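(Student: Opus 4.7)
The plan is to combine the already available upper bound $\weakgradG\varphi\le\weakgradA\varphi\le|\nabla^+\varphi|\le\sfd(\cdot,y)$, valid $\ggamma$-a.e.\ (with $\ggamma:=(\e_0,\e_1)_\sharp\ppi$) thanks to \eqref{eq:nizza}, \eqref{nizza1}, and Lemma~\ref{le:slopekanto}, with a matching lower bound obtained from the $c$-concavity of $\varphi$ along the geodesic. The key pointwise estimate is: for $\ppi$-a.e.\ $\gamma$, the optimality identity $\varphi(\gamma_0)+\varphi^c(\gamma_1)=\tfrac12\sfd^2(\gamma_0,\gamma_1)$ and the general $c$-concavity inequality $\varphi(\gamma_s)+\varphi^c(\gamma_1)\le \tfrac12\sfd^2(\gamma_s,\gamma_1)=\tfrac12(1-s)^2\sfd^2(\gamma_0,\gamma_1)$ (using constant speed of $\gamma$) yield, upon subtraction,
\[
\varphi(\gamma_0)-\varphi(\gamma_s)\ge s(1-s/2)\,\sfd^2(\gamma_0,\gamma_1)\qquad\text{for every }s\in[0,1].
\]
Dividing by $\sfd(\gamma_0,\gamma_s)=s\sfd(\gamma_0,\gamma_1)$ and letting $s\downarrow 0$ already gives $|\nabla^+\varphi|(\gamma_0)\ge \sfd(\gamma_0,\gamma_1)$ for $\ppi$-a.e.\ $\gamma$, closing the chain pointwise for the ascending slope.

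To promote this equality to the weak upper gradients, the hypothesis \eqref{eq:veryweakco} is used to ensure that, for all sufficiently small $s>0$, the rescaled restriction $({\rm restr}_0^s)_\sharp\ppi$ belongs to $\mathcal G$: it has bounded compression on sublevels of $V$ (since $\rho_{sr}\le C_M$ on $\{V\le M\}$ uniformly for $r\in[0,1]$) and it is concentrated on constant-speed geodesics. The $\mathcal G$-weak upper gradient inequality for $G:=\weakgradG\varphi$ then gives $\varphi(\gamma_0)-\varphi(\gamma_s)\le \sfd(\gamma_0,\gamma_1)\int_0^s G(\gamma_t)\,\d t$; combined with the pointwise lower bound above and the Cauchy-Schwarz inequality on $[0,s]$,
\[
s(1-s/2)^2\,\sfd^2(\gamma_0,\gamma_1)\le \int_0^s G^2(\gamma_t)\,\d t\qquad\text{for $\ppi$-a.e.\ $\gamma$.}
\]
Integrating over $\ppi$, dividing by $s$, and invoking Fatou's lemma on the liminf yields
\[
W_2^2(\mu,\nu)=\int \sfd^2(\gamma_0,\gamma_1)\,\d\ppi\le \liminf_{s\downarrow 0}\frac{1}{s}\int_0^s\int G^2\,\d\mu_t\,\d t.
\]

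The final and most delicate step is the upper-semicontinuity
\[
\limsup_{s\downarrow 0}\int G^2\,\d\mu_s\le \int G^2\,\d\mu,
\]
since combined with the previous display this gives $\int G^2\,\d\mu\ge W_2^2(\mu,\nu)$; together with the reverse inequality $\int G^2\,\d\mu\le \int\sfd^2\,\d\ggamma=W_2^2(\mu,\nu)$ coming from $G\le\sfd(\cdot,y)$ $\ggamma$-a.e., this forces the pointwise identity $G(\gamma_0)=\sfd(\gamma_0,\gamma_1)$ for $\ppi$-a.e.\ $\gamma$. The upper-semicontinuity is the main obstacle: one splits $X=\{V\le M\}\cup\{V>M\}$ and uses \eqref{eq:veryweakco} to pass to the $L^\infty$-weak-$*$ limit of $\rho_s$ on the sublevel (the limit being identified with $\rho$ via the weak convergence $\mu_s\to\mu$ in $\prob X$), together with the integrability $G^2\in L^1(\{V\le M\},\mm)$ provided by \eqref{eq:wbre22} and $G\in L^2(\mu)$; the tail $\{V>M\}$ is controlled by a truncation of $G$ using the uniform second-moment bound $\sup_s\int V^2\,\d\mu_s<\infty$ following from \eqref{eq:86}, together with the pointwise domination $G\le\sfd(\cdot,y)$. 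Once the pointwise identity for $\weakgradG\varphi$ is established, the one for $\weakgradA\varphi$ follows from the squeezing $\weakgradG\varphi\le\weakgradA\varphi\le|\nabla^+\varphi|$, and the concluding $\mm$-a.e.\ statements are a consequence of the mutual absolute continuity of $\mu$ and $\mm$ forced by \eqref{eq:wbre22}.
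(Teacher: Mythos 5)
Your overall route is the same as the paper's — the subtraction of the $c$-concavity inequality from the optimality identity to get \eqref{eq:simple_ineq}, the observation that $({\rm restr}_0^s)_\sharp\ppi\in\mathcal G$ thanks to \eqref{eq:veryweakco}, the weak upper gradient bound, Cauchy--Schwarz on $[0,s]$, and the limit $s\downarrow 0$. Two points need fixing, one minor and one substantive.

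The minor one: the inequality $\varphi(\gamma_0)-\varphi(\gamma_s)\ge s(1-s/2)\,\sfd^2(\gamma_0,\gamma_1)$ shows that $\varphi$ \emph{decreases} along the geodesic, so dividing by $\sfd(\gamma_0,\gamma_s)$ and taking $\limsup_{s\downarrow 0}$ lower-bounds the \emph{descending} slope $|\nabla^-\varphi|(\gamma_0)$, not the ascending slope $|\nabla^+\varphi|(\gamma_0)$ as you claim. This sidebar remark is therefore wrong as stated; the chain \eqref{nizza1} only gives $|\nabla^+\varphi|\le\sfd(\cdot,y)$ $\ggamma$-a.e.\ and the matching lower bound for $|\nabla^+\varphi|$ can only come out at the end, by squeezing, once the \emph{integral} identity for $\weakgradG\varphi$ is established.

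The substantive gap is the alleged upper-semicontinuity $\limsup_{s\downarrow 0}\int_X G^2\,\d\mu_s\le\int_X G^2\,\d\mu$. On the sublevel $\{V\le M\}$ your argument works (the $L^\infty$ bound \eqref{eq:veryweakco} plus a density argument give \eqref{eq:gtest}, i.e.\ convergence against $L^1(\{V\le M\},\mm)$ test functions, and $G^2\nchi_{\{V\le M\}}\in L^1(X,\mm)$ by \eqref{eq:wbre22}). But the tail is not controlled: the domination $G\le\sfd(x,y)$ is only a $\ggamma$-a.e.\ statement relating $G(\gamma_0)$ to $\sfd(\gamma_0,\gamma_1)$, whereas $\int_{\{V>M\}}G^2\,\d\mu_s=\int_{\{V(\gamma_s)>M\}}G^2(\gamma_s)\,\d\ppi(\gamma)$ involves $G$ at intermediate times $\gamma_s$, on which there is no pointwise bound. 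Combining the uniform second-moment estimate $\sup_s\int V^2\,\d\mu_s<\infty$ with a truncation $G_N=\min\{G,N\}$ gives $\int_{\{V>M\}}G_N^2\,\d\mu_s\le N^2M^{-2}\sup_s\int V^2\,\d\mu_s$, but then the order of the limits $N\to\infty$ and $s\downarrow 0$ cannot be interchanged, because the pointwise inequality $s(1-s/2)^2\sfd^2(\gamma_0,\gamma_1)\le\int_0^s G^2(\gamma_t)\,\d t$ involves $G$, not $G_N$, and $\int_0^sG_N^2\le\int_0^sG^2$ points the wrong way. In short, the full upper-semicontinuity of $\int G^2\,\d\mu_s$ is not established and may in fact fail. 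The paper avoids the problem entirely by never passing to all of $\geo(X)$: it integrates the pointwise inequality only over the truncated set of curves $Z_M:=\{\gamma:V(\gamma_0)\le M,\ \sfd(\gamma_0,\gamma_1)\le M\}$, observes that for $\gamma\in Z_M$ and small $t$ one has $\gamma_t\in\{V\le M+\delta\}$, and hence only needs \eqref{eq:gtest} on a fixed sublevel; the restriction is then removed by letting $M\to\infty$ on the lower bound $\int_{Z_M}\sfd^2\,\d\ppi$, which converges by monotone convergence. You should adopt this $Z_M$ truncation: it replaces the tail estimate you cannot prove by an elementary monotone convergence at the very end.
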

\begin{proof} Set $g:=\weakgradG\varphi$, which belongs to $L^2(X,\mu)$ by \eqref{nizza1}, and $L={\rm Lip}(\Wgh)$.
Still taking \eqref{nizza1}
into account, \eqref{eq:wbre2} can be achieved if we show that $\int
\sfd^2(\gamma_1,\gamma_0)\,\d\ppi\leq\int g^2(\gamma_0)\,\d\ppi$.
Setting $f=-\varphi^c$ so that $\varphi=Q_1 f$, for $\ppi$-a.e.
$\gamma\in\geo(X)$ we have
\begin{align}
 \varphi(\gamma_0)-\varphi(\gamma_t)&\geq
 \Big(f(\gamma_1)+\frac{\sfd^2(\gamma_0,\gamma_1)}2\Big)-\Big(f(\gamma_1)+\frac{\sfd^2(\gamma_t,\gamma_1)}2\Big)
 \nonumber \\&=
 \frac {1-(1-t)^2}2 \sfd^2(\gamma_0,\gamma_1)=\left(\frac{2t-t^2}2\right) \sfd^2(\gamma_0,\gamma_1).
 \label{eq:simple_ineq}
\end{align}
Since the speed of $\gamma$ is $\sfd(\gamma_0,\gamma_1)$ we have
\begin{displaymath}
 \Big(\varphi(\gamma_0)-\varphi(\gamma_t)\Big)^2\le
 \Big(\int_0^t \weakgradG\varphi(\gamma_s) \sfd(\gamma_1,\gamma_0)\,\d s\Big)^2\le
 t \sfd^2(\gamma_1,\gamma_0)\int_0^t g^2(\gamma_s)\,\d s.
\end{displaymath}
Set now $Z_M:=\left\{\gamma\in\geo(X):\ \Wgh(\gamma_0)\leq M,\,\,
\sfd(\gamma_0,\gamma_1)\leq M\right\}$ and notice that
the curves in $Z_M$ are contained in $\{\Wgh\leq M+\delta\}$
for all {\nc $\delta>LMt$}.
Dividing by $t^2
\sfd^2(\gamma_1,\gamma_0)= \sfd^2(\gamma_t,\gamma_0)$ and
integrating on $Z_M$ with respect to $\ppi$, {\nc we can 
use the fact that $\chi_{\{\Wgh\leq M+\delta\}}\ppi$ when rescaled
on a sufficiently small interval $[0,t]$ is a $\mathcal G$-test plan to
obtain}
$$
 \frac1t\int_0^t\int_{Z_M}g^2(\gamma_s)\,\d\ppi(\gamma) \d s\ge
 \int_{Z_M}
 \Big(\frac{\varphi(\gamma_0)-\varphi(\gamma_t)}{\sfd(\gamma_0,\gamma_t)}\Big)^2
 \,\d\ppi\ge\frac{(2-t)^2}{4}\int_{Z_M}\sfd^2\,\d\ppi
$$
for $t$ sufficiently small. Setting $\mu_s=(\e_s)_\sharp\ppi$ we get
\begin{equation}
 \label{eq:uppeboucong}
 \frac1t\int_0^t\int_{\{\Wgh\leq M+\delta\}}g^2\,\d\mu_s
 \d s\ge
 \int_{Z_M}
 \Big(\frac{\varphi(\gamma_0)-\varphi(\gamma_t)}{\sfd(\gamma_0,\gamma_t)}\Big)^2
 \,\d\ppi\ge\frac{(2-t)^2}{4}\int_{Z_M}\sfd^2\,\d\ppi.
\end{equation}
In order to pass to the limit as $t\downarrow 0$, we observe that
\eqref{eq:veryweakco} gives
\begin{equation}\label{eq:gtest}
  \forall\, N>0:\quad
  \int_{\{V\le N\}} f\,\d \mu_s
  \to\int_{\{V\le N\}}  f\,\d\mu\quad\text{as $s\downarrow 0$ for all
    $f\nchi_{\{V\le N\}}\in
   L^1(X,\mm)$.}
\end{equation}
Indeed for every bounded, Borel, and $\sfd$-Lipschitz function
$h:X\to\R$ we have
\begin{equation}
  \label{eq:106}
  \left|\int_X h\,\d \mu_s -\int_X h\,\d\mu\right|\le
  \int |h(\gamma_s)-h(\gamma_0)|\,\d\ppi(\gamma)\le s\,{\rm Lip}(h)\,W_2(\mu,\nu).
\end{equation}
On the other hand, arguing exactly as in the proof of
Proposition~\ref{prop:densitydlip}, if $f\nchi_{\{V\le N\}}\in
L^1(X,\mm)$ we can find a sequence $(h_n)\subset L^1(X,\mm)$ of
bounded, Borel, $\sfd$-Lipschitz functions strongly converging to
$f\nchi_{\{V\le N\}}$ in $L^1(X,\mm)$. Upon multiplying $h_n$ by the
$\sfd$-Lipschitz function $k_N(x):=\min\{1,(N+1-V(x))^+\}$, it is
not restrictive to assume that $h_n$ identically vanishes on $\{V>
N+1\}$. If $\|\rho\|_{L^\infty(\{V\le N+1\},\mm)}\le C$ and
$\|\rho_s\|_{L^\infty(\{V\le N+1\},\mm)}\le C$ for
sufficiently small $s$ according to \eqref{eq:veryweakco}, we thus
have
\begin{displaymath}
  \bigg|\int_{\{V\le N\}} f\,\d\mu_s-\int_{\{V\le N\}} f\,\d\mu\bigg|
  \le 2C \|f\nchi_{\{V\le N\}} -h_n\|_{L^1(X,\mm)}+  \left|\int_X h_n\,\d \mu_s -\int_X h_n\,\d\mu\right|.
\end{displaymath}
Taking first the $\limsup$ as $s\down0$ thanks to \eqref{eq:106} and
then the limit as $n\to\infty$ we obtain \eqref{eq:gtest}.

By \eqref{eq:wbre22} the functions $g^2\nchi_{\{\Wgh\leq
M+\delta\}}$ belong to $L^1(X,\mm)$. Therefore, using
\eqref{eq:gtest} with $f:=g$ and $N:=M+\delta$, passing to the limit
in \eqref{eq:uppeboucong} first as $t\downarrow 0$ and then as
$\delta\downarrow 0$ gives
\begin{equation}\label{eq:11}
 \int_{\{\Wgh\leq M\}}g^2\,\d\mu\geq
 \limsup_{t\downarrow 0}
 \int_{Z_M}\Big(\frac{\varphi(\gamma_0)-\varphi(\gamma_t)}{\sfd(\gamma_0,\gamma_t)}\Big)^2
 \,\d\ppi(\gamma)\geq\int_{Z_M} \sfd^2(\gamma_1,\gamma_0)\,\d\ppi(\gamma).
\end{equation}
Letting $M\to\infty$ this completes the proof of \eqref{eq:wbre2}.
\end{proof}

The identification \eqref{eq:wbre2} could be compared to Theorem~6.1
of \cite{Cheeger00}, where Cheeger identified the relaxed gradient
of Lipschitz functions with the local Lipschitz constant, assuming
that the metric measure space $(X,\sfd,\mm)$ is doubling and
satisfies the Poincar\'e inequality. Without doubling conditions,
but assuming the validity of good interpolation properties, we are
able to obtain an analogous identification at least in a suitable
class of $c$-concave functions.

For finite reference measures $\mm$ and densities $\rho$ uniformly
bounded from below, we can also prove a more precise convergence
result for the difference quotients of $\varphi$.

\begin{theorem}\label{thm:brweak1}
Let $\mu=\rho\mm\in\Probabilities{X}$ be satisfying $\rho\geq c>0$
$\mm$-a.e. in $X$ and let $\varphi$, $\ppi$ as in
Theorem~\ref{thm:brweak}. Then
\begin{equation}\label{eq:goodslopepi}
\lim_{t\downarrow
0}\frac{\varphi(\gamma_0)-\varphi(\gamma_t)}{\sfd(\gamma_0,\gamma_t)}=|\rmD^+\varphi|(\gamma_0)
\qquad\text{in $L^2(\geo(X),\ppi)$.}
\end{equation}
\end{theorem}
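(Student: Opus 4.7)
The plan is to combine a pointwise lower bound on
$h_t(\gamma):=[\varphi(\gamma_0)-\varphi(\gamma_t)]/\sfd(\gamma_0,\gamma_t)$
with a matching $L^2(\ppi)$ upper bound, and then close the gap by a polarization argument. Set $h(\gamma):=|\nabla^+\varphi|(\gamma_0)$; by Theorem~\ref{thm:brweak} we have $h(\gamma)=\sfd(\gamma_0,\gamma_1)$ for $\ppi$-a.e.\ $\gamma$, so $\|h\|_{L^2(\ppi)}^2=W_2^2(\mu,\nu)$. The assumption $\rho\ge c>0$ forces $\mm(X)\le 1/c<\infty$, so we may take $\Wgh$ constant and work with the class $\calT$ of test plans of bounded compression; moreover $\rho\ge c$ together with $|\nabla^+\varphi|\in L^2(X,\mu)$ (from Proposition~\ref{prop:slopeKA}) upgrades to $|\nabla^+\varphi|^2\in L^1(X,\mm)$, which will be used below.

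First, I would derive the pointwise lower bound. Inequality \eqref{eq:simple_ineq}, which uses only $\varphi=Q_1 f$ with $f=-\varphi^c$ and the geodesic identity $\sfd(\gamma_t,\gamma_1)=(1-t)\sfd(\gamma_0,\gamma_1)$, gives
\[
h_t(\gamma)\ \ge\ \frac{2-t}{2}\,\sfd(\gamma_0,\gamma_1)\ =\ \frac{2-t}{2}\,h(\gamma)\ =:\ k_t(\gamma)
\]
for $\ppi$-a.e.\ $\gamma$. Since $0\le k_t\le h\in L^2(\ppi)$ and $k_t\to h$ pointwise as $t\downarrow 0$, dominated convergence yields $k_t\to h$ strongly in $L^2(\geo(X),\ppi)$; in particular $\liminf_{t\downarrow 0}\int h_t^2\,\d\ppi\ge \int h^2\,\d\ppi$ by Fatou.

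Next, I would prove the matching $L^2(\ppi)$ upper bound. By Lemma~\ref{le:slopekanto}, $|\nabla^+\varphi|$ is a $\calT$-weak upper gradient of $\varphi$, so it acts as a weak upper gradient along $\ppi$-a.e.\ geodesic $\gamma$ (via Remark~\ref{re:restr} applied to $({\rm restr}_0^t)_\sharp\ppi\in \calT$). Using $|\dot\gamma_s|\equiv \sfd(\gamma_0,\gamma_1)$, the identity $\sfd(\gamma_0,\gamma_t)=t\,\sfd(\gamma_0,\gamma_1)$, and Jensen's inequality, we get
\[
h_t^2(\gamma)\ \le\ \frac{1}{t}\int_0^t |\nabla^+\varphi|^2(\gamma_s)\,\d s,
\]
hence, integrating against $\ppi$ and using Fubini,
\[
\int h_t^2\,\d\ppi\ \le\ \frac{1}{t}\int_0^t \!\!\int_X |\nabla^+\varphi|^2\,\d\mu_s\,\d s,
\qquad \mu_s:=(\e_s)_\sharp\ppi.
\]
The $\mm$-integrability of $|\nabla^+\varphi|^2$ together with \eqref{eq:veryweakco} (inherited from Theorem~\ref{thm:brweak}) allows us to apply the convergence property \eqref{eq:gtest} established in the proof of Theorem~\ref{thm:brweak}, giving $\int_X|\nabla^+\varphi|^2\,\d\mu_s\to\int_X|\nabla^+\varphi|^2\,\d\mu=\int h^2\,\d\ppi$ as $s\downarrow 0$. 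Averaging over $s\in(0,t)$ and letting $t\downarrow 0$ yields $\limsup_{t\downarrow 0}\int h_t^2\,\d\ppi\le \int h^2\,\d\ppi$.

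Finally, the two bounds force $\|h_t\|_{L^2(\ppi)}\to\|h\|_{L^2(\ppi)}$. Expanding
\[
\int(h_t-k_t)^2\,\d\ppi\ =\ \int h_t^2\,\d\ppi\ -\ 2\int h_t k_t\,\d\ppi\ +\ \int k_t^2\,\d\ppi,
\]
Cauchy--Schwarz gives $\int h_t k_t\,\d\ppi\le \|h_t\|_{L^2(\ppi)}\|k_t\|_{L^2(\ppi)}\to\|h\|_{L^2(\ppi)}^2$, while the pointwise inequality $h_t\ge k_t\ge 0$ gives $\int h_t k_t\,\d\ppi\ge \int k_t^2\,\d\ppi\to\|h\|_{L^2(\ppi)}^2$; hence $\int h_t k_t\,\d\ppi\to\|h\|_{L^2(\ppi)}^2$ and consequently $\int(h_t-k_t)^2\,\d\ppi\to 0$. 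Since $k_t\to h$ in $L^2(\ppi)$, we conclude $h_t\to h$ in $L^2(\geo(X),\ppi)$, which is precisely \eqref{eq:goodslopepi}. The delicate step is the $L^2$ upper bound: it relies crucially on the weak upper gradient inequality from Lemma~\ref{le:slopekanto}, on the global $\mm$-integrability of $|\nabla^+\varphi|^2$ afforded by $\rho\ge c>0$, and on the $s\downarrow 0$ continuity of $s\mapsto\int|\nabla^+\varphi|^2\,\d\mu_s$, which is exactly where the interpolation hypothesis \eqref{eq:veryweakco} inherited from Theorem~\ref{thm:brweak} enters.
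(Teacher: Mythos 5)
Your proof is correct and follows the paper's own argument: the pointwise lower bound from \eqref{eq:simple_ineq}, the $L^2(\ppi)$ upper bound obtained by averaging the squared weak-upper-gradient inequality along geodesics (using Jensen's inequality and the $s\downarrow 0$ continuity of $s\mapsto\int_X|\nabla^+\varphi|^2\,\d\mu_s$), and the closing polarization step, which is exactly the content of Lemma~\ref{le:verysimple} written out. Replacing $\weakgradG\varphi$ by $|\nabla^+\varphi|$ (legitimate, since they agree $\mm$-a.e.\ by Theorem~\ref{thm:brweak}) and explicitly invoking the restriction $({\rm restr}_0^t)_\sharp\ppi\in\calT$ are careful elaborations of the paper's terse ``argue as in the proof of Theorem~\ref{thm:brweak}, this time integrating on the whole of $\geo(X)$''.
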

\begin{proof} The lower bound on $\rho$ yields in this case
$|\rmD^+\varphi|\in L^2(X,\mm)$, hence one can argue as in the proof
of Theorem~\ref{thm:brweak}, this time integrating on the whole of
$\geo(X)$, to get
$$
 \int_X\weakgradG\varphi^2\,\d\mu\geq
 \limsup_{t\downarrow 0}
 \int_{\geo(X)}\Big(\frac{\varphi(\gamma_0)-\varphi(\gamma_t)}{\sfd(\gamma_0,\gamma_t)}\Big)^2
 \,\d\ppi(\gamma)\geq\int_{\geo(X)} \sfd^2(\gamma_1,\gamma_0)\,\d\ppi(\gamma)
$$
in place of \eqref{eq:11}. Since \eqref{eq:wbre2} yields that all
inequalities are equalities, and \eqref{eq:simple_ineq} yields
$$
\liminf_{t\downarrow
0}\frac{\varphi(\gamma_0)-\varphi(\gamma_t)}{\sfd(\gamma_0,\gamma_t)}\geq
|\rmD^+\varphi|(\gamma_0)\qquad\text{for $\ppi$-a.e.
$\gamma\in\geo(X)$}
$$
we can use Lemma~\ref{le:verysimple} below to obtain
\eqref{eq:goodslopepi}.
\end{proof}

\begin{lemma}\label{le:verysimple}
Let $\sigma$ be a positive, finite measure in a measurable space
$(Z,{\mathcal F})$ and let $f_n,\,f\in L^2(Z,{\mathcal F},\sigma)$
be satisfying
\begin{equation}\label{eq:nocancell}
\limsup_{n\to\infty}\int_Z f_n^2\,\d\sigma\leq\int_Z
f^2\,\d\sigma<\infty
\end{equation}
 and $\liminf_nf_n\geq f\geq 0$ $\sigma$-a.e. in
$Z$. Then $f_n\to f$ in $L^2(Z,{\mathcal F},\sigma)$.
\end{lemma}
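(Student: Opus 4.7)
\textbf{Plan of proof for Lemma \ref{le:verysimple}.}

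The strategy is the Radon--Riesz-type identity
\[
\int_Z (f_n-f)^2\,\d\sigma \;=\; \int_Z f_n^2\,\d\sigma \;-\; 2\int_Z f f_n\,\d\sigma \;+\; \int_Z f^2\,\d\sigma ,
\]
so the conclusion will follow once I show $\int f_n^2\,\d\sigma\to\int f^2\,\d\sigma$ and $\int f f_n\,\d\sigma\to\int f^2\,\d\sigma$. Neither limit is automatic because $f_n$ is not assumed to be nonnegative, so Fatou's lemma cannot be applied directly to $f_n\cdot f$; the trick is that the nonnegativity of $f$ still makes squaring compatible with the liminf hypothesis.

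\emph{Step 1 (convergence of norms and of negative parts).} I first claim that $\liminf_n f_n^2\ge f^2$ $\sigma$-a.e.: on $\{f=0\}$ this is trivial, while on $\{f>0\}$, for any $\eps\in(0,f(x))$ we have $f_n(x)\ge f(x)-\eps>0$ eventually, so $f_n^2(x)\ge (f(x)-\eps)^2$ and letting $\eps\downarrow 0$ gives the claim. Fatou's lemma and the hypothesis \eqref{eq:nocancell} yield
\[
\int_Z f^2\,\d\sigma\le \liminf_n\int_Z f_n^2\,\d\sigma\le \limsup_n\int_Z f_n^2\,\d\sigma\le \int_Z f^2\,\d\sigma,
\]
so $\lim_n\int f_n^2\,\d\sigma=\int f^2\,\d\sigma$. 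The same Fatou argument applied to $(f_n)_+^2$, using $\liminf_n (f_n)_+\ge (\liminf_n f_n)_+\ge f$ (because $f\ge0$), gives $\int f^2\,\d\sigma\le\liminf_n\int (f_n)_+^2\,\d\sigma$; combined with $(f_n)_+^2\le f_n^2$ this forces $\lim_n\int (f_n)_+^2\,\d\sigma=\int f^2\,\d\sigma$, whence
\[
\int_Z (f_n)_-^2\,\d\sigma \;=\; \int_Z f_n^2\,\d\sigma-\int_Z (f_n)_+^2\,\d\sigma \;\longrightarrow\; 0.
\]

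\emph{Step 2 (convergence of the inner product and conclusion).} Split
\[
\int_Z f f_n\,\d\sigma \;=\; \int_Z f(f_n)_+\,\d\sigma \;-\; \int_Z f(f_n)_-\,\d\sigma .
\]
By Cauchy--Schwarz and Step 1, $\bigl|\int f(f_n)_-\bigr|\le \|f\|_2\,\|(f_n)_-\|_2\to 0$. For the first term, the integrand is nonnegative (as $f\ge0$), $\liminf f(f_n)_+\ge f\cdot f=f^2$, so Fatou yields $\liminf_n\int f(f_n)_+\,\d\sigma\ge\int f^2\,\d\sigma$; on the other hand, Cauchy--Schwarz gives $\int f(f_n)_+\,\d\sigma\le\|f\|_2\|(f_n)_+\|_2\to\|f\|_2^2$, so $\lim_n\int f(f_n)_+\,\d\sigma=\int f^2\,\d\sigma$. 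Substituting into the Radon--Riesz identity above gives $\lim_n\int (f_n-f)^2\,\d\sigma=0$, as desired.

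The only real obstacle is handling the potentially negative values of $f_n$ (which prevent a direct application of Fatou to $f_n^2$ or to $ff_n$); this is overcome by the observation that $\liminf_n f_n\ge f\ge 0$ is strong enough to give the pointwise bound $\liminf_n f_n^2\ge f^2$, and consequently forces the negative parts of $f_n$ to vanish in $L^2$.
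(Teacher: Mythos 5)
Your proof is correct and follows essentially the same route as the paper's: you expand the square, use Fatou together with \eqref{eq:nocancell} to pin down the convergence of norms, and decompose $f_n=(f_n)_+-(f_n)_-$ to handle the possibly negative values, which is precisely the paper's reduction from the general case to the nonnegative one.
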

\begin{proof} If $f_n\geq 0$, it suffices to expand the square $(f_n-f)^2$ and to
apply Fatou's lemma. In the general case we obtain first the
convergence of $f_n^+$ to $f$ in $L^2$, and then use
\eqref{eq:nocancell} once more to obtain that $f_n^-\to 0$ in $L^2$.
\end{proof}

 Example~\ref{ex:strict} shows that
 the localization technique provided by the potential $\Wgh$ and
 \eqref{eq:75} plays an important role:
indeed, in the same situation of that example, let
$\mm=\delta_0+x^{-1}\Leb{1}$ be a $\sigma$-finite measure in
$X=[0,1]$, so that ${\d\mu_t}/{\d\mm}(x)\leq 1$ for any $t,\,x$. In
this case the conclusions of the metric Brenier theorem are not
valid, since $\mu_0$ is concentrated at $0$ and $d(0,y)$ takes all
values in $[0,1]$. Notice that $\mm$ is not locally finite and the
class of continuous $\mm$-integrable functions is not dense in
$L^1([0,1];\mm)$ (any continuous and integrable function must vanish
at $x=0$).

\begin{remark}{\rm
We remark that in the generality we are working with, it is not
possible to prove uniqueness of the optimal plan, and the fact that
it is induced by a map, not even if we add a $CD(K,\infty)$
assumption. To see why, consider the following example. Let $X=\R^2$
with the $L^\infty$ distance and the Lebesgue measure. Let
$\mu_0:=\nchi_{[0,1]^2}\Leb{2}$ and
$\mu_1:=\nchi_{[3,4]\times[0,1]}\Leb{2}$. Then, using standard tools
of optimal transport theory, one can see that the only information
that one can get by analyzing the $c$-superdifferential of an
optimal Kantorovich potential is, shortly said, that any vertical
line $\{t\}\times[0,1]$ must be sent onto the vertical line
$\{t+3\}\times[0,1]$. The constraint on the marginals gives that
this transport of $\{t\}\times[0,1]$ on  $\{t+3\}\times[0,1]$ must
send the 1-dimensional Hausdorff measure on $\{t\}\times[0,1]$ in
the 1-dimensional Hausdorff measure on $\{t+3\}\times[0,1]$ for a.e.
$t$. Apart from this, there is no other constraint, so we see that
there are quite many optimal plans and that most of them are not
induced by a map. Yet, the metric Brenier theorem is true, as the
distance each point travels is independent of the optimal plan
chosen (and equal to 3 for $\mu_0$-a.e. $x$).}\fr
\end{remark}

\begin{remark}\label{rem:alternoption} {\rm
Theorem~\ref{thm:brweak} and Theorem~\ref{thm:brweak1}, with the
same proof, hold if we replace condition \eqref{eq:veryweakco} with
the weaker one (at least in finite measure spaces)
\[
\limsup_{s\downarrow 0}\int_X\rho_s\log\rho_s\,\d\mm<\infty,
\]
but adding the condition $|\rmD^+\varphi|\in L^\infty(\{\Wgh\leq
M\},\mm)$ for all $M\geq 0$. This, however, requires a slight
modification of the class of test plans, and consequently of the
concept of minimal weak upper gradient, requiring that the marginals
have only bounded entropy instead of bounded density. This approach,
that we do not pursue here, might be particularly appropriate when
$\sfd$ is a bounded distance (e.g. in compact metric spaces),
because in this situation Kantorovich potentials are Lipschitz.}\fr
\end{remark}

\def\cprime{$'$}

\end{document}